\newcommand{\ra}[1]{\renewcommand{\arraystretch}{#1}}
\newtheorem{theorem}{Theorem}
\newtheorem{lemma}[theorem]{Lemma} 
\newtheorem{proposition}[theorem]{Proposition} 
\newtheorem{remark}[theorem]{Remark}
\newtheorem{corollary}[theorem]{Corollary}
\newtheorem{definition}[theorem]{Definition}
\newtheorem{assumption}{Assumption}
\newcommand\Qh{{\widehat{Q}}}
\newcommand{\calD}{{\cal D}}
\newcommand{\calE}{{\cal E}}
\newcommand\calX{{\cal X}}
\newcommand\calY{{\cal Y}}
\newcommand\Chat{{\widehat{\mathcal{C}}}}
\newcommand\fh{{\widehat{f}}}
\newcommand\IE{{\mathbb{E}}}
\newcommand\E{{\mathbb{E}}}
\newcommand\IR{{\mathbb{R}}}
\newcommand\IN{{\mathbb{N}}}
\newcommand\R{{\mathbb{R}}}
\newcommand\IP{{\mathbb{P}}}
\newcommand\calS{{\cal S}}
\renewcommand\P{{\mathbb{P}}}
\renewcommand{\leq}{\leqslant}
\renewcommand{\geq}{\geqslant}
\DeclareMathOperator{\Var}{Var}
\newcommand{\egalloi}{\stackrel{d}{=}}
\DeclareMathOperator{\crit}{crit}
\newcommand{\methodCentral}{\text{CentralM}}
\newcommand{\methodCentralC}{\text{CentralC}}
\newcommand{\method}{\text{QQM}}
\newcommand{\methodAvg}{\text{FedCP-Avg}}
\newcommand{\methodlow}{\text{QQM-Fast}}
\newcommand{\methodcondFL}{\text{QQC}}
\newcommand{\methodcondFLk}{\text{QQC-Fast}}
\newcommand{\1}{\mathds{1}}
\newcommand{\intset}[1]{\llbracket #1 \rrbracket}
\DeclareMathOperator{\Beta}{Beta}
\newtheorem{algorithm}{Algorithm}
\DeclareMathOperator*{\argmin}{argmin}
\DeclareMathOperator*{\argmax}{argmax}
\newcommand{\egaldef}{:=}
\newcommand{\defegal}{=:}
\newcommand{\Xu}{\widetilde{Z}}
\renewcommand{\Im}{\ensuremath{\mathrm{Im}}}
\newcommand{\tU}{\widetilde{U}}
\begin{document}

\begin{frontmatter}
	\title{Marginal and training-conditional guarantees in one-shot federated conformal prediction}
	\runtitle{Marginal and training-conditional validity in one-shot FCP}
	
	\begin{aug}
\author[A]{\fnms{Pierre}~\snm{Humbert}\ead[label=e1]{pierre.humbert@universite-paris-saclay.fr}},
\author[B]{\fnms{Batiste}~\snm{Le Bars}\ead[label=e2]{batiste.le-bars@inria.fr}},
\author[C]{\fnms{Aur\'{e}lien}~\snm{Bellet}\ead[label=e3]{aurelien.bellet@inria.fr}}
\and
\author[A,D]{\fnms{Sylvain}~\snm{Arlot}\ead[label=e4]{sylvain.arlot@universite-paris-saclay.fr}}
\address[A]{
	Universit\'{e} Paris-Saclay, CNRS, Inria, Laboratoire de math\'{e}matiques d'Orsay, 91405, Orsay, France\printead[presep={,\ }]{e1,e4}}

\address[B]{
Inria Paris - Ecole Normale Sup\'{e}rieure, PSL Research University\printead[presep={,\\ }]{e2}}

\address[C]{
	PreMeDICaL project team, Inria, Univ Montpellier, France\printead[presep={,\\ }]{e3}}

\address[D]{
	Institut Universitaire de France (IUF)}

\end{aug}

\begin{abstract}
We study conformal prediction in the one-shot federated learning setting.
The main goal is to compute marginally and training-conditionally valid prediction sets, at the server-level, in only one round of communication between the agents and the server. 
Using the quantile-of-quantiles family of estimators 
and split conformal prediction, we 
introduce a collection of computationally-efficient and distribution-free algorithms that 
satisfy the aforementioned requirements.
Our approaches come from theoretical results related to order statistics and the analysis of the Beta-Beta distribution.
We also prove upper bounds on the coverage of all proposed algorithms 
when the nonconformity scores are almost surely distinct.
For algorithms with training-conditional guarantees, 
these bounds are of the same order of magnitude as those of the centralized case. Remarkably, this implies that the one-shot federated learning setting entails no significant loss compared to the centralized case.
Our experiments confirm that our algorithms return prediction sets with coverage and length similar to those obtained in a centralized setting.
\end{abstract}

\begin{keyword}
	\kwd{conformal prediction}
	\kwd{one-shot federated learning}
	\kwd{prediction set}
	\kwd{tolerance region}
	\kwd{uncertainty quantification}
\end{keyword}
	
\end{frontmatter}

\section{Introduction} \label{sec:intro}

\subsection{Problem statement and motivation}
We consider the one-shot federated learning (FL) set-prediction problem, where 
a set of agents connected to a central server try to compute a valid prediction
set in only one round of communication, and without sharing their raw data \citep{humbert2023one}.

Formally, assume that a data set $\calD$ is distributed over 
$m \in \IN^*$ agents connected to a central server. 
In addition, suppose that the server has an independent 
test point $(X, Y) \in \calX \times \calY$ following 
the same distribution as the elements 
$(X_{i, j}, Y_{i, j})$ of $\calD$ 
---which are assumed to be identically distributed--- 
but whose outcome $Y$ is unobserved.  
The goal of the server is to compute a distribution-free \emph{valid} prediction set for $Y$. 
One can aim at marginal validity, that is, 
for a given miscoverage level $\alpha \in (0, 1)$, constructing 
from $\calD$ a set $\Chat(X)$ such that 
\begin{equation}\label{eq:marg_cov}
	\IP\left(Y \in \Chat(X) \right) \geq 1-\alpha \; ,
\end{equation}
whatever the data distribution.
In addition, the set $\Chat(\cdot)$ must be
computed in a single round of communication between the agents and the server to satisfy the one-shot constraint ---a condition motivated by the fact that the number of communication rounds is often the main bottleneck in FL \citep{kairouz2021advances}.
Note also that we want the set to be small (for instance, with respect to the counting or the Lebesgue measure), a property called \textit{efficiency}.

The probability in \eqref{eq:marg_cov} is taken 
with respect to $(X, Y)$ and the data set~$\calD$.
However, in practice, we only have access to one particular data set. Another quantity of interest is therefore the training-conditional miscoverage rate defined by 
\begin{equation}\label{eq:misc}
\alpha(\calD) 
:= \IP\bigl(Y \notin \Chat(X) \,\big\vert\, \calD \bigr) 
\, ,
\end{equation}
where the probability is now only taken with respect to the test point $(X, Y)$. 
Notice that $\alpha(\calD)$ is a $\sigma(\calD)$-measurable random variable and that the marginal guarantee \eqref{eq:marg_cov} corresponds to a bound on the miscoverage rate \eqref{eq:misc} on average over all possible datasets. 
In other words, Eq.~\eqref{eq:marg_cov} is equivalent to a control on the expectation of the training-conditional miscoverage rate: $\IP\bigl(Y \notin \Chat(X)  \bigr)  = \IE [ \alpha(\mathcal{D})] \leq \alpha$. 
However, the random variable $\alpha(\mathcal{D})$ can have high variance, and 
it is important to also control its deviation from the desired upper-bound $\alpha$ 
with high probability.
For any $\alpha, \beta \in (0, 1)$, we are therefore also interested in constructing prediction sets 
with training-conditional coverage guarantees, that is, of the form
\begin{equation}\label{eq:cond_cov}
\IP\bigl(
1-\alpha(\calD) \geq 1-\alpha 
\bigr) \geq 1-\beta 
\, .
\end{equation}
Prediction sets satisfying \eqref{eq:cond_cov} are also known in the statistical literature as $(\alpha, \beta)$-tolerance regions or ``Probably Approximately Correct'' (PAC) predictive sets.
This type of guarantee dates back to \cite{wilks1941determination} and an overview can be found in \cite{krishnamoorthy2009statistical}. 
More recent works on this subject include \cite{vovk2012conditional, park2019pac, kivaranovic2020adaptive, park2021pac, bian2023training}.

\subsection{Related works} 
Our contribution takes place in the FL framework, a rather recent paradigm that allows training from decentralized data sets stored locally by multiple agents \citep{kairouz2021advances}. 
In this framework, the learning is made without exchanging raw data, making FL advantageous when data are highly sensitive and cannot be centralized for privacy or security reasons.
So far, the design of FL algorithms has mainly focused on the learning step of statistical machine learning,
the goal being to fit a (pointwise) predictor to decentralized data sets while minimizing the amount of communication \citep[see e.g.][]{mcmahan2017communication, li2020federated,scaffold}. 
However, quantifying the uncertainty in the prediction of these FL algorithms has not been widely studied yet.

Conformal prediction (CP) methods have become the state-of-the-art to construct marginally and conditionally valid distribution-free prediction sets \citep{papadopoulos2002inductive, vovk2005algorithmic, vovk2012conditional, romano2019conformalized}. 
Unfortunately, one of the key steps of 
CP methods is the ordering of some computed scores, 
which is not possible in FL settings without sharing the full local data sets or performing many agent-server communication rounds. 
These methods are thus not well-suited to the constraints of FL in which agents process their data locally and only interact with a central server by sharing some aggregate statistics. 
Constructing a valid prediction set is even more difficult in the one-shot FL setting 
\cite{zhang2012communication,guha2019one,Bayesian-oneshot,practical-oneshot,clustering-oneshot, one-shot-jmlr} 
considered in this work, where only one round of communication between the agents and the server is allowed. 

To our knowledge, \cite{lu2021distribution} is the first paper considering the one-shot federated set-prediction problem with conformal prediction.
Its idea is to locally calculate some quantiles of computed scores for each agent and to average them in the central server. 
Unfortunately, \cite{lu2021distribution} does not prove that 
the corresponding prediction sets are valid and its method 
is non-robust, especially when the size of local data sets 
is small (see Appendix~\ref{app.avg_qq} for more details).
To address these issues, \citep{humbert2023one} has recently proposed a family of estimators called \emph{quantile-of-quantiles}.  
The idea is that each agent sends to the server a local empirical quantile of its scores and the server aggregates them by computing a quantile of these quantiles. 
Interestingly, these estimators can therefore be seen as a clever way of aggregating several quantiles calculated locally by $m$ agents, 
who each locally use the (centralized) split CP method (see Section~\ref{subsec:splitCP} for details). 
Last but not least, when $m=1$ this approach exactly recovers split CP, 
emphasizing that methods based on quantile-of-quantiles estimators generalize centralized split CP to the FL setting.
However, an important limitation of \cite{humbert2023one} is that in order to determine which order of quantile to select for marginal validity \eqref{eq:marg_cov}, 
the proposed methodology can be computationally intensive at the server level. 
Moreover, the training-conditional guarantee \eqref{eq:cond_cov} given by \cite{humbert2023one} is obtained with a conservative procedure, leading to large prediction sets in practice. 
The present paper shows how to solve these two issues. 

Outside the one-shot FL setting considered here, 
we can also mention \citep{lu2023federated} and \citep{pmlr-v202-plassier23a}, 
which focus on data-heterogeneous settings but require many communication rounds between the agents and the server. 
Finally, we can also mention recent works on federated evaluation of classifiers \citep{cormode2023federated}, 
federated quantile computation \cite{andrew2021differentially, pillutla2022differentially}, 
and on uncertainty quantification with Bayesian FL \cite{el2021federated,kotelevskii2022fedpop} which, 
although related to our work, do not study CP and do not obtain formal coverage guarantees.

\subsection{Contributions}
In this work, we consider the quantile-of-quantiles family of estimators 
proposed in \cite{humbert2023one} and introduce several
algorithms to find the appropriate order of quantiles so that the marginal condition \eqref{eq:marg_cov} or the training-conditional condition \eqref{eq:cond_cov} are satisfied. 
Each of these algorithms is computationally-efficient, 
distribution-free (depending only on the number of agents and the size of their local data sets) 
and specially tailored to satisfy the aforementioned conditions. 
Importantly, they come from novel theoretical results, 
which take their roots in the theory of order statistics. 
For clarity and simplicity of reading, all the contributions 
are first presented in the case where agents have 
the same number of data points $n \geq 1$ 
(Assumption~\ref{ass:same_n} in Section~\ref{sec:QQ}).\\

\noindent
For distribution-free marginal guarantees \eqref{eq:marg_cov}, in Section~\ref{subsec:marg_valid}: 
\begin{itemize}
	\item We prove that when $\Chat$ is obtained using our method, its probability of coverage 
	$\IE[1-\alpha(\calD)] = \IP(Y \in \Chat(X) )$ 
	is lower bounded by the expectation of a random variable following a particular Beta-Beta distribution \citep{cordeiro2013simple, makgai2019beta} 
	(when the scores are almost surely distinct, these two quantities are equal). 
	We also derive a closed-form expression for this expectation (Theorem~\ref{them:main}), 
	improving the one obtained in \citep{humbert2023one} and leading to Algorithm \ref{alg:FedCPQQ}.
	\item This closed-form expression remains difficult to compute 
	for large values of $n$ and $m$, 
	leading to a quite computationally demanding algorithm. 
	To tackle this problem, we show that the expectation of this Beta-Beta distribution is lower and 
	upper bounded by the quantile function of a standard Beta distribution evaluated at particular values (Proposition~\ref{cor:bound_Ekl}). 
	These bounds are tight and fast to compute, which makes them interesting for practical use: they lead to Algorithm~\ref{algo.FCP-QQ-marg.2}, 
	which is more computationally efficient.
\end{itemize}
\noindent
In Section~\ref{sec.multi-order.algos}, we build the first 
(to the best of our knowledge) 
one-shot FL algorithms with distribution-free \emph{training-conditional} guarantees~\eqref{eq:cond_cov}. 
More precisely: 
\begin{itemize}
\item We prove that $1-\alpha(\calD)$ is stochastically larger 
(equal when the scores are a.s. distinct) than a Beta-Beta random variable (Theorem~\ref{them:cond_main}). 
We also show that the quantiles of the Beta-Beta are fast to compute, 
leading to an efficient algorithm that constructs training-conditionally valid prediction sets in one-shot FL 
(Algorithm~\ref{algo.FCP-QQ-cond.1}). 
\item In order to obtain an even faster algorithm, 
we provide a tight bound on the cumulative distribution
function (cdf) of the Beta-Beta distribution (Proposition~\ref{prop:low_bound_Fl})
which allows the automatic selection of the empirical-quantiles order that the agents should send to the server  
(Algorithm~\ref{algo.FCP-QQ-cond.2}). 

\end{itemize}
Importantly, our results allow to trade-off the tightness of the bounds for computational efficiency. 
More generally, our contributions go beyond the setting of FL, 
in the sense that they investigate the open question of 
how several split CP estimators obtained over independent 
data sets should be aggregated to obtain valid prediction sets. \\

In Section~\ref{sec.uppbound_cov}, we give
several upper-bounds on
the probability of coverage of our prediction sets.
We first derive an upper-bound of order $1-\alpha + \mathcal{O}(m^{-1}n^{-1/2})$ 
for the marginal coverage of our methods from Section~\ref{subsec:marg_valid} 
(Algorithms~\ref{alg:FedCPQQ}--\ref{algo.FCP-QQ-marg.2}). 
This result shows that our coverage is not too much above $1-\alpha$ and strictly 
improves upon the work of \cite{humbert2023one}, 
which did not provide such type of results. 
In the same vein, we also prove high-probability upper bounds 
on the training-conditional miscoverage rate obtained by 
the methods of Section~\ref{sec.multi-order.algos} 
(Algorithms~\ref{algo.FCP-QQ-cond.1}--\ref{algo.FCP-QQ-cond.2}). 
Remarkably, these bounds are of order $1-\alpha + \mathcal{O}((mn)^{-1/2})$, 
the same order of magnitude as those of the centralized case. 
Hence, up to constant factors, the one-shot federated learning 
setting does not incur any loss. 

In Section~\ref{sec:diff_n}, we extend our theoretical results and associated methods 
to the more complicated setting where the agents can have 
different data set sizes
(Theorems~\ref{thm:main_nj}--\ref{them:cond_main_nj} 
and Algorithms~\ref{algo.FCP-QQ.1_nj}--\ref{algo.FCP-QQ-cond.1_nj}), 
improving again upon the results presented in \cite{humbert2023one}. 

Finally, in Section~\ref{sec:xps}, we empirically evaluate the performance of our algorithms on standard CP benchmarks,  
validating that they produce prediction sets that are computationally efficient and close to those obtained when data are centralized.

\section{Preliminaries} \label{sec:prelim}

\subsection{Split Conformal Prediction} \label{subsec:splitCP}
Conformal Prediction (CP) is a framework to construct prediction sets satisfying \eqref{eq:marg_cov} 
without relying on any distributional assumption on the data \citep{vovk2005algorithmic}. 
In a centralized setting, one of the most popular methods to perform CP is the split conformal method (split CP) \citep{papadopoulos2002inductive}. 
Since the quantile-of-quantiles procedure 
studied in this paper generalizes split CP, 
let us detail here how it is defined and 
recall some of the key results of previous literature. 
Simple and full proofs of these results are 
provided in Appendix~\ref{app.pr.split-CP}. 

Assume that we have access to a centralized 
dataset $\calD$, that we split into 
a learning set $\calD^{lrn}$ and a calibration set 
$\calD^{cal}=(X_i,Y_i)_{1 \leq i \leq n_c}$, 
where $n_c \geq 1$ and the calibration data 
$(X_i,Y_i)$, $1 \leq i \leq n_c$, are i.i.d. 
and follow the same distribution as the independent 
test point $(X,Y)$.

First, a predictor $\fh$ is built from $\calD^{lrn}$ only, 
and it is used to define a nonconformity score function 
$s = s_{\fh}: \calX \times \calY \rightarrow \IR$, 
such that for every $(x,y) \in \calX \times \calY$, 
$s_{\fh}(x,y)$ measures how far the prediction $\fh(x)$ is from the true output~$y$. 
Whether we are in the regression or classification setting, many different score functions exist in the literature (see e.g. \cite{angelopoulos2023conformal}). 
In regression, for instance, a common choice is the fitted absolute residual
$s_{\fh} : (x,y) \mapsto \lvert y - \fh(x) \rvert$. 
In the sequel, we often write $s$ instead of $s_{\fh}$ for simplicity. 
Furthermore, note that split CP does not assume a particular choice of score function, so throughout the paper, we keep the function $s$ abstract. 

Second, we calculate the values of $s_{\fh}$ taken on the calibration set $\calD^{cal}$, 
called the nonconformity scores 
$S_i := s_{\fh}(X_i,Y_i)$, $i=1, \ldots, n_c$. 

Third, we compute the $r$-th smallest nonconformity score 
$S_{(r)} := \Qh_{(r)}(\calS^{cal}_{n_c})$ for some $r \in \intset{n_c} := \{ 1 , \ldots , n_c \}$, 
where 
$\calS^{cal}_{n_c} := ( S_{1},\ldots, S_{n_c} )$
and $\Qh_{(\cdot)}$ is the sample quantile function defined by 
\begin{equation}
\label{def:Qk}
\forall r , N \geq 1 \, , \, 
\forall \calS' \in \R^N \, ,
\qquad 
\Qh_{(r)}(\calS') := \begin{cases}
\calS'_{(r)} & \text{if } r \leq N  \\
+ \infty & \text{otherwise} 
\, ,
\end{cases}
\end{equation}
with $\calS'_{(1)} \leq \ldots \leq \calS'_{(N)}$ the ordered 
values of $\calS' = (\calS'_1, \ldots, \calS'_N)$. 
Finally, the split CP prediction set is defined, for any $x\in \calX$, by
\begin{align}
\label{set_conf}
\Chat_{r}(x) 
:= \Bigl\{ y \in \calY \,:\, s_{\fh}(x, y) \leq S_{(r)} \Bigr\} 
\, . 
\end{align} 

Following Eq.~\eqref{eq:misc} in Section~\ref{sec:intro}, 
we define the training-conditional miscoverage rate of $\Chat_{r}$ by 
\[
\alpha_r (\calD) := \IP\bigl(Y \notin \Chat_{r}(X) \,\big\vert\, \calD \bigr)
\, , 
\]
where $(X,Y)$ is a test point, independent of $\calD$ and with the same distribution as the $(X_i,Y_i)$. 
We then have
\[ 
1 - \alpha_r(\calD) 
= \P\bigl( s_{\fh}(X, Y) \leq S_{(r)} \bigr) 
= F_S ( S_{(r)} )
\]
where $F_S$ is the common cdf of the scores~$S_i$. 
It is well known that for every $r \in \intset{n_c}$, 
\begin{equation} 
\notag 
\IP\bigl(Y \in \Chat_{r}(X) \bigr) =
\E\bigl[ 1 - \alpha_r (\calD) \bigr] 
\geq \frac{r}{n_c+1} 
\, , 
\end{equation}
with equality if the scores $S_i$ are almost surely distinct \citep{vovk2005algorithmic, lei2018distribution}. 
As a consequence, for any $\alpha \in (0,1)$, 
if $(1-\alpha) (n_c+1) \leq n_c$, 
taking $r = \lceil (1-\alpha) (n_c+1)\rceil$ in Eq.~\eqref{set_conf} 
yields a prediction set satisfying Eq.~\eqref{eq:marg_cov}, 
and if the $S_i$ are almost surely distinct, 
its expected miscoverage rate is 
\begin{equation}
\label{eq.splitCP.marg-cov-upp}
\E\bigl[ 1 - \alpha_{\lceil (1-\alpha) (n_c+1)\rceil} (\calD) \bigr] 
= \frac{ \lceil (1-\alpha) (n_c+1)\rceil }{n_c+1} 
\leq 1 - \alpha + \frac{1}{n_c+1} 
\, . 
\end{equation}

Regarding training-conditional guarantees,
a straightforward consequence of \cite[Proposition~2b]{vovk2012conditional}
 is that 
\begin{equation}
\label{eq:miscov_vovk}
\forall r \geq 1 \, , \, 
\forall \beta \in (0,1) \, , \qquad 
\IP\bigl( 1-\alpha_{r}(\calD) \geq F^{-1}_{U_{(r:n_c)}}(\beta) \bigr) \geq 1-\beta 
\, ,
\end{equation}
where $F^{-1}_{U_{(r:n_c)}}$ denotes the quantile function of the
Beta$(r, n_c-r+1)$ distribution.
In other words, $1-\alpha_{r}(\calD)$ is \emph{stochastically larger} than the Beta$(r, n_c-r+1)$ distribution. 
Furthermore, Eq.~\eqref{eq:miscov_vovk} becomes an equality if the scores $S_i$ are almost surely distinct 
---that is, in such a case, $\alpha_{r}(\calD)$ exactly follows a Beta$(r, n_c-r+1)$ distribution. 
Finally, taking $r \in \intset{n_c}$ such that 
$F^{-1}_{U_{(r:n_c)}}(\beta) \geq 1-\alpha$, 
Eq.~\eqref{eq:miscov_vovk} implies that the split CP prediction set $\Chat_{r}$ 
satisfies Eq.~\eqref{eq:cond_cov}, that is, 
$\Chat_{r}$ is a $(\alpha, \beta)$-tolerance region 
---see also Eq.~\eqref{eq.splitCP.algo-cond} in Appendix~\ref{app.pr.split-CP.cond}.

\begin{remark}
\label{rk.splitCP.asympt}
When $n_c$ tends to infinity, the optimal 
asymptotically training-conditionally valid~$r$ 
---given by Eq.~\eqref{eq.app.pr.split-CP.cond.algo-asympt} 
in Appendix~\ref{app.pr.split-CP.cond}--- 
yields a prediction set $\Chat_r$ with coverage 
between $1-\alpha$ and $1-\alpha + \mathcal{O}(1/\sqrt{n_c})$ 
with high probability 
---see Eq.~\eqref{eq.app.pr.split-CP.cond.algo-asympt.coverage} 
in Appendix~\ref{app.pr.split-CP.cond} 
for a precise statement. 
\end{remark}

The problem of split CP in a federated setting 
is that computing the quantile $S_{(r)}$ requires in general 
several (and often many) communications between 
the central server and the agents, 
hence it cannot be used in one-shot FL. 
Therefore, we consider in this paper another family of procedures (quantile-of-quantiles estimators), 
that we define in the next section.

Note that, although the first CP methods were the \emph{split} 
and the related \emph{full} methods \citep{papadopoulos2002inductive, vovk2005algorithmic}, 
many extensions based upon them and with similar guarantees have been proposed in the literature. 
Their principal novelty lies in a clever choice of the non-conformity score function~$s$. 
In regression, \cite{lei2018distribution} presents a method called locally weighted CP and provides theoretical insights for conformal inference. 
More recently, \cite{romano2019conformalized} has developed a variant of the split CP 
called Conformal Quantile Regression (CQR). 
Other recent alternatives have been proposed \citep{kivaranovic2020adaptive, sesia2021conformal, gupta2022nested, ndiaye2022stable, han2022split, guan2023localized}. 
We refer to \cite{vovk2005algorithmic}, \cite{angelopoulos2023conformal}, and \cite{fontana2023conformal} for in-depth presentations of CP and 
to \cite{manokhin_2022_6467205} for a curated list of papers related to CP.

\subsection{Federated conformal prediction with the quantile-of-quantiles} \label{sec:QQ}
We now present the quantile-of-quantiles family of estimators, first introduced in \citep{humbert2023one}, 
and how it can be used to obtain valid prediction sets in a one-shot FL setting, that is, 
in a setting where only one round of communication between the agents and the server is allowed \citep{guha2019one, zhang2012communication} 
and where only aggregated statistics computed locally by the agents can be sent to the server. 
From now on, we assume that the decentralized data set 
$\calD$ is divided into a learning set $\calD^{lrn}$ 
and a calibration set 
$\calD^{cal} = (X_{i,j}, Y_{i,j})_{1 \leq j \leq m, 1 \leq i \leq n_j}$ 
where $n_j \geq 1$ for every $j \in \intset{m}$. 
Among calibration data, agent $j \in \intset{m}$ 
has only access to $(X_{i,j}, Y_{i,j})_{1 \leq i \leq n_j}$. 
We assume that $\calD^{lrn}$ is independent from $\calD^{cal}$, 
and that the calibration data $(X_{i, j}, Y_{i, j})$, $j \in \intset{m}$, $i \in \intset{n_j}$ are i.i.d. 
with the same distribution as the test point $(X,Y)$ 
(which is independent from $\calD$). 

We assume that a (pointwise) predictor $\fh$ is learned on $\calD^{lrn}$ only, 
using for instance standard FL algorithms such as FedAvg \citep{mcmahan2017communication}. 
Therefore, $\fh$ is independent from $\calD^{cal}$. 
As in the centralized setting, $\fh$ is used to define a nonconformity score function 
$s = s_{\fh}: \calX \times \calY \rightarrow \IR$ 
such that for every $(x,y) \in \calX \times \calY$, 
$s_{\fh}(x,y)$ measures how far the prediction $\fh(x)$ is from the true output~$y$. 
In the sequel, we only focus on the calibration of the prediction set and not on the learning part.

\begin{remark} \label{rk.fh-general}
	Note that $\fh$ does not have to be a point-wise predictor, 
	that is, a function $\calX \to \calY$. 
	For instance, like in CQR 
	\cite{romano2019conformalized}, we can rely on 
	the use of a score function $s$ depending on $\fh = (\fh_-,\fh_+)$
	a pair of quantile functions $\calX \to \calY$.
\end{remark}

\begin{remark}
In the following, all probabilistic statements are 
valid conditionally to $\calD^{lrn}$. 
This amounts to acting as if $\fh$ were deterministic, since $\calD^{lrn}$ 
appears only through $\fh$ and $\calD^{cal}$ is independent from $\calD^{lrn}$.
\end{remark}

For simplicity, from now on and until the end of Section~\ref{sec.uppbound_cov}, 
we also make the following assumption. 
\begin{assumption}
\label{ass:same_n}
Each agent $j \in \intset{m}$ has exactly $n_j=n \geq 1$ calibration data points. 
\end{assumption}
Under Assumption~\ref{ass:same_n}, the calibration data set size is equal to $nm$. 
We refer to Section~\ref{sec:diff_n} for the more general case where agents 
have data sets of calibration of different sizes $(n_j)_{1\leq j \leq m}$.

For calibration, the first step 
is to ask each agent
$j\in \intset{m}$ to compute its $n$ i.i.d. local calibration scores $\mathcal{S}_j := (S_{1, j}, \ldots, S_{n, j})$, 
where $S_{i, j} = s_{\fh}(X_{i, j}, Y_{i, j})$ is the score associated to the $i$-th calibration data point of agent $j$. 
Then, the server and the agents jointly compute the quantile-of-quantiles (QQ) estimator, defined as follows.
\begin{definition} (Quantile-of-quantiles \cite{humbert2023one}) \label{def:QQ}
	For any $(\ell,k) \in \intset{n} \times \intset{m}$, the QQ estimator of order $(\ell,k)$ calculated on the sets of scores $(\mathcal{S}_j)_{1 \leq j \leq m}$ is
	\begin{equation} \label{eq:CP-QQ}
	S_{(\ell, k)} := \Qh_{(k)}\left(\Qh_{(\ell)}(\mathcal{S}_1), \ldots, \Qh_{(\ell)}(\mathcal{S}_m)\right)\; ,
	\end{equation}
	where 
	$\Qh_{(\cdot)}(\cdot)$ is the sample quantile function defined in Eq.~\eqref{def:Qk}.
\end{definition}
In other words, each agent sends to the server its $\ell$-th smallest local score, denoted by $\Qh_{(\ell)}(\mathcal{S}_j)$, and the server then computes the $k$-th smallest value of these scores, denoted by $S_{(\ell, k)}$. 
This strategy requires a single round of communication and thus fits the constraints of one-shot FL. 
Finally, for any $x \in \calX$, we define the prediction set 
\begin{equation}\label{CPQQ_set}
\Chat_{\ell, k}(x) 
= \Bigl\{y \in \calY : s(x, y) \leq S_{(\ell, k)} \Bigr\} 
\, ,
\end{equation}
similarly to the centralized split CP method ---see Eq. \eqref{set_conf}---, 
but with $S_{(r)}$ replaced by $S_{(\ell, k)}$. 

The decentralized QQ prediction set can be seen as 
a generalization of (centralized) split CP 
since they coincide when $m=1$. 
The crucial remaining component is a computationally efficient approach to identify a pair $(\ell, k)$ so that 
$\Chat_{\ell, k}$ defined in Eq.~\eqref{CPQQ_set} satisfies the marginal condition \eqref{eq:marg_cov} or the training-conditional condition \eqref{eq:cond_cov} while being as small as possible. 
We investigate these points in the next sections.

\subsection{Prediction set performance measure} \label{sec.prelim.perf}
A natural way to evaluate the performance of a valid prediction set 
$x \mapsto \Chat(x)$ is to measure its size 
$\mu(\Chat(x))$, where $\mu$ is some measure on $\calY$, 
for instance, the counting measure when $\calY$ is finite, 
or the Lebesgue measure when $\calY \subset \R^p$ for some $p \geq 1$.
This size should be minimized, 
either at a given $x \in \calX$ or on average over $x = X$. 
For a prediction set of the form $\Chat_{\ell,k}$, 
as defined by Eq.~\eqref{CPQQ_set}, 
its size depends on $\mu$, 
on the score function $s$, on the predictor~$\fh$, and on the pair $(\ell , k)$.
In order to build general-purpose algorithms for choosing $(\ell,k)$, 
a natural strategy is thus to select, 
among all the pairs such that $\Chat_{\ell,k}$ 
is marginally or conditionally valid, the one which also 
minimizes the size $\mu(\Chat_{\ell,k}(x))$.
By Eq.~\eqref{CPQQ_set}, this size
is a nondecreasing function of 
$S_{(\ell,k)}$. Furthermore, we know that
\begin{align} \label{eq.lien-score-coverage.QQ}
S_{(\ell,k)} 
&\stackrel{a.s.}{=} F_S^{-1} \circ F_S ( S_{(\ell,k)} ) 
= F_S^{-1} \bigl( 1 - \alpha_{\ell, k}(\calD) \bigr)
\, ,
\\
\notag 
\text{where} \qquad 
1 - \alpha_{\ell, k}(\calD)
&:= \P \bigl( Y \in \Chat_{\ell,k}(X) \,\vert\, \calD \bigr)
= F_S ( S_{(\ell,k)} )
\end{align}
is the coverage of $\Chat_{\ell,k}$, 
$F_S$ is the cdf of the scores~$S_{i,j}$, 
and $F_S^{-1}$ its generalized inverse 
---see Eq.~\eqref{eq.def-gal-inverse} in Appendix~\ref{app.order-stat.order}. 
Hence, the size of $\Chat_{\ell,k}(x)$ 
is also a nondecreasing function of the coverage. 

Therefore, the quantiles or the expectation of the coverage are good ways to measure the performance of prediction 
sets of the form $\Chat_{\ell,k}$. 
In the following, we will use these quantities as criteria
(to be minimized) for choosing among pairs $(\ell,k)$ 
such that $\Chat_{\ell,k}$ is (marginally or training-conditionally) valid. 
More detailed arguments about this strategy 
can be found in Appendix~\ref{app.prelim.perf-compl}. 

\section{Choice of the quantiles for coverage guarantees} \label{sec:FCP-QQ}

We now present theoretical results together with one-shot FL algorithms 
which return prediction sets with marginal (Section~\ref{subsec:marg_valid}) 
or training-conditional (Section~\ref{sec.multi-order.algos}) guarantees.

\subsection{Marginal guarantees} \label{subsec:marg_valid}
In this section, we present two strategies for choosing a pair $(\ell, k)$ 
ensuring that $\Chat_{\ell, k}(x)$, 
defined by Eq.~\eqref{CPQQ_set}, 
is a marginally valid prediction set, 
that is, satisfies~\eqref{eq:marg_cov} for a given $\alpha \in (0,1)$. 
Following Section~\ref{sec.prelim.perf}, 
in order to get the best possible prediction set, 
our strategy is to take $(\ell,k)$ such that
the marginal coverage 
\[
\P\bigl( Y \in \Chat_{\ell, k}(X) \bigr) 
= \P\bigl( s_{\fh}(X,Y) \leq S_{(\ell,k)} \bigr) 
\]
is above $1-\alpha$ while being as small as possible.

Our first algorithm is based on the following theorem, which simplifies the formula given in \cite[Theorem 3.2]{humbert2023one}.
\begin{theorem} \label{them:main}
In the setting of Section~\ref{sec:QQ}, with Assumption~\ref{ass:same_n}, 
for any $(\ell,k) \in \intset{n} \times \intset{m}$, 
the set $\Chat_{\ell, k}$ defined by Eq.~\eqref{CPQQ_set} satisfies 
\begin{equation}
\label{eq:main_equa}
\IP\bigl( Y \in \Chat_{\ell, k}(X) \bigr) \geq M_{\ell, k}
\end{equation}
where
\begin{align*}
M_{\ell, k} 
:= \dfrac{ 
	\displaystyle k \binom{m}{k} \sum^{n}_{i_1=\ell} \ldots \sum^{n}_{i_{k-1}=\ell} \sum^{n}_{i_{k+1}=0} \ldots \sum^{n}_{i_{m}=0} 
	\dfrac{\binom{n}{i_1} \cdots \binom{n}{i_{k-1}} \binom{n}{i_{k+1}} \cdots \binom{n}{i_{m}}}{\binom{m n}{i_1 + \cdots + i_{k-1} + \ell + i_{k+1} + \ldots + i_{m}}}
	}{(m n + 1)\mathrm{B}(\ell, n-\ell+1)} 
\end{align*}
and 
\[ 
\mathrm{B} : (a,b) \in (0,+\infty)^2 \mapsto \int_0^1 t^{a-1} (1-t)^{b-1} \mathrm{d}t
\]
denotes the Beta function \citep{temme1996special}. 
Moreover, when the associated scores $(S_{i, j})_{1 \leq j \leq m, 1 \leq i \leq n}$ and $S:= s(X,Y)$ are almost surely distinct, 
Eq.~\eqref{eq:main_equa} is an equality.
\end{theorem}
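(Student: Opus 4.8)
The plan is to reduce the coverage probability to a distribution‑free quantity via a coupling with uniform variables, recognize that quantity as the expectation of a Beta--Beta order statistic, and then evaluate that expectation in closed form through the classical Beta--Binomial relationship.

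\emph{Step 1 (coupling with uniforms).} Let $F_S$ be the common cdf of the scores $S_{i,j}$ and $S:=s(X,Y)$, and let $F_S^{-1}$ be its generalized inverse. On a suitable probability space I would introduce i.i.d.\ $\mathrm{Unif}(0,1)$ variables $U$ and $(U_{i,j})_{1\leq j\leq m,\,1\leq i\leq n}$, write $\mathcal U_j:=(U_{1,j},\dots,U_{n,j})$, and set $S:=F_S^{-1}(U)$, $S_{i,j}:=F_S^{-1}(U_{i,j})$; these have the prescribed joint law. Since $F_S^{-1}$ is nondecreasing it commutes with the sample‑quantile maps $\Qh_{(\ell)}$ and $\Qh_{(k)}$, so $S_{(\ell,k)}=F_S^{-1}(W_{\ell,k})$, where $W_{\ell,k}:=\Qh_{(k)}\bigl(\Qh_{(\ell)}(\mathcal U_1),\dots,\Qh_{(\ell)}(\mathcal U_m)\bigr)\in(0,1)$ a.s. Using monotonicity of $F_S^{-1}$ once more, $\{U\leq W_{\ell,k}\}\subseteq\{F_S^{-1}(U)\leq F_S^{-1}(W_{\ell,k})\}=\{S\leq S_{(\ell,k)}\}$, hence $\IP(Y\in\Chat_{\ell,k}(X))=\IP(S\leq S_{(\ell,k)})\geq\IP(U\leq W_{\ell,k})$. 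When the scores are a.s.\ distinct, $F_S$ has no atoms, so $F_S(F_S^{-1}(u))=u$ for every $u\in(0,1)$; applying $F_S$ to $\{S\leq S_{(\ell,k)}\}$ then gives the reverse inclusion, turning the bound into an equality. This is the only place where the distinctness hypothesis is used.

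\emph{Step 2 (Beta--Beta representation).} For each agent $j$, $\Qh_{(\ell)}(\mathcal U_j)$ is the $\ell$‑th order statistic of $n$ i.i.d.\ uniforms, hence $W_j:=\Qh_{(\ell)}(\mathcal U_j)\sim\mathrm{Beta}(\ell,n-\ell+1)$, and $W_1,\dots,W_m$ are i.i.d.; thus $W_{\ell,k}=\Qh_{(k)}(W_1,\dots,W_m)=W_{(k:m)}$, the $k$‑th order statistic of an i.i.d.\ Beta sample (the Beta--Beta variable). Since $U$ is independent of $(W_j)_j$ and uniform, $\IP(U\leq W_{(k:m)}\mid W_{(k:m)})=W_{(k:m)}$, so $\IP(U\leq W_{\ell,k})=\IE[W_{(k:m)}]$. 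It then remains to show $\IE[W_{(k:m)}]=M_{\ell,k}$.

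\emph{Step 3 (closed‑form evaluation).} Let $f$ and $F$ be the pdf and cdf of $\mathrm{Beta}(\ell,n-\ell+1)$: $f(w)=w^{\ell-1}(1-w)^{n-\ell}/\mathrm B(\ell,n-\ell+1)$ and, by the Beta--Binomial identity, $F(w)=\sum_{i=\ell}^{n}\binom{n}{i}w^i(1-w)^{n-i}$, $1-F(w)=\sum_{i=0}^{\ell-1}\binom{n}{i}w^i(1-w)^{n-i}$. Using the density $k\binom{m}{k}F(w)^{k-1}(1-F(w))^{m-k}f(w)$ of the $k$‑th order statistic,
\[
\IE[W_{(k:m)}] = k\binom{m}{k}\int_0^1 w\,F(w)^{k-1}\bigl(1-F(w)\bigr)^{m-k}f(w)\,\mathrm{d}w .
\]
I would expand $F(w)^{k-1}$ and $(1-F(w))^{m-k}$ into multi‑indexed sums over exponents $(i_t)_{t\neq k}$, merge the powers of $w$ and $1-w$ (after absorbing the outer $w$ and the $w^{\ell-1}$ from $f$ the exponent of $w$ becomes $i_1+\cdots+i_{k-1}+\ell+i_{k+1}+\cdots+i_m$ and that of $1-w$ its complement to $mn$), and integrate term by term via $\int_0^1 w^{a}(1-w)^{b}\,\mathrm{d}w=\mathrm B(a+1,b+1)=\bigl[(mn+1)\binom{mn}{a}\bigr]^{-1}$ when $a+b=mn$. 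Collecting the leftover binomial factors, together with the normalisation $1/\mathrm B(\ell,n-\ell+1)$ coming from $f$, reproduces the expression $M_{\ell,k}$ in the statement.

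\emph{Main obstacle.} The conceptual part is routine once the coupling is in place; the real work is the combinatorial bookkeeping in Step~3 — matching the multi‑indexed sum and the $\binom{mn}{\cdot}$ denominators exactly, and arguing that the factor $k\binom{m}{k}$ emerges correctly from $m!/((k-1)!(m-k)!)$ — and, in Step~1, the careful handling of ties so that the inclusion becomes an identity precisely under the ``a.s.\ distinct'' hypothesis. It is this combinatorial simplification that yields the improvement over \cite[Theorem~3.2]{humbert2023one}.
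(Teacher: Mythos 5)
Your proof is correct and follows essentially the same route as the paper: a coupling of the scores with standard uniform variables (this is exactly the content of the paper's Lemma~\ref{lemma:centra_unif_kl}, obtained via $S_{(\ell,k)}=F_S^{-1}(W_{\ell,k})$ and monotonicity of $F_S^{-1}$) reduces the coverage to $\E[U_{(\ell:n,k:m)}]$, and the closed form then comes from the same expansion of the order-statistic density and term-by-term Beta integration carried out in Appendix~\ref{thm:main_proof}. The only cosmetic difference is that you couple the test score $S$ directly to a uniform $U$ and compute $\IP(U\leq W_{\ell,k})$, whereas the paper first conditions on $\calD$ to write the coverage as $\E[F_S(S_{(\ell,k)})]$ and then invokes the stochastic-dominance lemma; the two formulations are equivalent.
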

Theorem~\ref{them:main} is proved in Appendix~\ref{thm:main_proof}.
It shows that we can lower bound the probability of coverage of the quantile-of-quantiles prediction set by a distribution-free quantity $M_{\ell, k}$, 
which depends only on $m$, $n$, $\ell$ and~$k$. 
Furthermore, the lower bound is sharp as it becomes an equality when the scores have a continuous cdf. 
This is for instance the case with the fitted absolute residual if 
the noise distribution given $X$ is almost surely atomless.

Theorem~\ref{them:main} suggests the following algorithm for the selection of $\ell$ and $k$, 
which we call \method~(QQ stands for \emph{Quantile-of-Quantiles} and M for \emph{Marginal}).\footnote{
\method~is a slight modification of the algorithm FedCP-QQ proposed in \citep{humbert2023one}. 
More specifically, Eq.~\eqref{eq:main_equa} is simpler and easier to compute than the corresponding formula for $M_{\ell,k}$ in \citep{humbert2023one}.
}

\begin{algorithm}[\method]\label{alg:FedCPQQ}
	Given $\alpha \in (0, 1)$,
	\begin{align*}
	&\text{compute} \quad (\ell^*, k^*) 
	= {\argmin_{(\ell, k)\in \intset{n} \times \intset{m}}} \left\{M_{\ell, k} : M_{\ell, k} \geq 1-\alpha\right\} \; ,\\
	&\text{and output} \quad \Chat_{\ell^*, k^*}(x) = \left\{y \in \calY : s(x, y) \leq S_{(\ell^*, k^*)} \right\} 
\, .
	\end{align*}
\end{algorithm}
\begin{remark}
\label{rk.algos.argmin-vide}
By convention, when the $\argmin$ defining $(\ell^*, k^*)$ 
in Algorithm~\ref{alg:FedCPQQ} is empty, 
we define $\Chat_{\ell^*, k^*}(x) = \calY$. 
A similar convention is used in 
all of our algorithms.
\end{remark}
The minimization step of Algorithm~\ref{alg:FedCPQQ} 
comes from the fact that all pairs $(\ell,k)$ 
such that $M_{\ell, k} \geq 1-\alpha$ ensure 
the marginal coverage, by Theorem~\ref{them:main}, 
but we need to select one specific pair. 
For the prediction set to be as small as possible, 
following Section~\ref{sec.prelim.perf}, 
we should minimize the coverage. 
In Algorithm~\ref{alg:FedCPQQ}, we minimize $M_{\ell,k}$ 
since it is equal to the expected coverage 
when the scores cdf is continuous. 

By Theorem~\ref{them:main}, the set $\Chat_{\ell^*, k^*}$ is marginally valid. 
It is also nontrivial when $mn$ is large enough, 
according to the following lemma. 
\begin{lemma}
\label{le.alg:FedCPQQ.non-trivial}
The $\argmin$ defining $(\ell^*, k^*)$ 
in Algorithm~\ref{alg:FedCPQQ} is non-empty 
---hence $\Chat_{\ell^*, k^*}$ is nontrivial--- 
if and only if $mn \geq \alpha^{-1} - 1$. 
\end{lemma}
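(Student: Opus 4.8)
The plan is to reduce the non-emptiness of the feasible set $\{(\ell,k): M_{\ell,k}\geq 1-\alpha\}$ to a single scalar inequality, by identifying the largest value that $M_{\ell,k}$ can take over $\intset{n}\times\intset{m}$ and computing it in closed form.

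First I would show that $(\ell,k)\mapsto M_{\ell,k}$ is non-decreasing in each of its two arguments. This rests on the pathwise monotonicity of the sample quantile function: by Eq.~\eqref{def:Qk}, for any fixed vector $\calS'$ the map $r\mapsto \Qh_{(r)}(\calS')$ is non-decreasing, so, since $\ell\leq n$ and $k\leq m$ entail that no $+\infty$ value occurs, the QQ estimator $S_{(\ell,k)}$ of Eq.~\eqref{eq:CP-QQ} is, almost surely and pointwise, non-decreasing in $\ell$ and in $k$. Hence $F_S(S_{(\ell,k)})$ is non-decreasing in $\ell$ and $k$, and taking expectations while using the equality case of Theorem~\ref{them:main} for some score distribution with continuous cdf (for instance, scores uniform on $[0,1]$) shows that $M_{\ell,k}$ is non-decreasing in both arguments. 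Since $M_{\ell,k}$ is a distribution-free quantity, this monotonicity holds unconditionally, so $\max_{(\ell,k)\in\intset{n}\times\intset{m}} M_{\ell,k} = M_{n,m}$.

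Next I would compute $M_{n,m}$. For $(\ell,k)=(n,m)$ the QQ estimator is $S_{(n,m)} = \max_{1\leq j\leq m}\max_{1\leq i\leq n} S_{i,j}$, the maximum of the $mn$ calibration scores. Taking again a continuous score distribution so that Theorem~\ref{them:main} is an equality, $Y\in\Chat_{n,m}(X)$ holds iff $s(X,Y)\leq \max_{i,j}S_{i,j}$, i.e. iff the test score $s(X,Y)$ is not the strict maximum among the $mn+1$ exchangeable scores $\{S_{i,j}\}_{i,j}\cup\{s(X,Y)\}$; by symmetry this has probability $mn/(mn+1)$. Therefore $M_{n,m}=mn/(mn+1)$, and this identity is again unconditional since $M_{n,m}$ does not depend on the underlying distribution.

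Putting the pieces together: the $\argmin$ in Algorithm~\ref{alg:FedCPQQ} is non-empty iff there exists $(\ell,k)$ with $M_{\ell,k}\geq 1-\alpha$, which by the monotonicity step is equivalent to $M_{n,m}\geq 1-\alpha$, i.e. $mn/(mn+1)\geq 1-\alpha$; rearranging gives $\alpha\, mn\geq 1-\alpha$, that is $mn\geq \alpha^{-1}-1$, and the nontriviality of $\Chat_{\ell^*,k^*}$ then follows from Remark~\ref{rk.algos.argmin-vide} and Theorem~\ref{them:main}. The only step with any subtlety is the identification of $(n,m)$ as the maximiser of $M_{\ell,k}$, but this is immediate from the monotonicity of the sample quantile function; the remaining arguments are a one-line symmetry computation and elementary algebra.
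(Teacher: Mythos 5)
Your proof is correct and follows essentially the same route as the paper: identify $(n,m)$ as the maximiser of $M_{\ell,k}$ by a monotonicity argument, evaluate $M_{n,m}=mn/(mn+1)$, and rearrange. Two small remarks. First, the paper derives $M_{n,m}$ directly from the formula $\E[U_{(mn:mn)}]=mn/(mn+1)$ for the maximum of $mn$ uniforms (via $M_{\ell,k}=\E[U_{(\ell:n,k:m)}]$), whereas you use the classical conformal exchangeability argument applied to the $mn+1$ exchangeable scores; both are one-liners and equally valid. Second, you state the monotonicity in the correct form --- $M_{\ell,k}$ is non-decreasing \emph{in each argument separately}, i.e.\ for the product (componentwise) order --- which is what is actually needed and true; the paper's phrasing ``nondecreasing for the lexicographic order'' is slightly off (lexicographic monotonicity in fact fails in general, e.g.\ one can have $M_{1,m}>M_{2,1}$), but since $(n,m)$ is the maximum for the componentwise order as well, the paper's conclusion is unaffected.
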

This lemma is proved in Appendix~\ref{le.alg:FedCPQQ.non-trivial_proof}. 
Remarkably, the necessary and sufficient condition in Lemma~\ref{le.alg:FedCPQQ.non-trivial} 
depends on the calibration samples sizes only through 
the total number~$mn$ of calibration data points, 
hence it is the same for the centralized case 
(with one agent holding $mn$ calibration data points) 
and for the one-shot FL case (with $m \geq 2$ agents,  
each having access to $n$ calibration data points). 

A critical limitation of \method~is that computing $(\ell^*,k^*)$, 
and even a single $M_{\ell, k}$, is costly when $m \times n$ is large, 
preventing the approach to scale to a large number of data points or agents. 
For example, based on techniques from \citep{lebrun2013efficient}, 
\citep{humbert2023one} describes an algorithm that can be used to compute a single $M_{\ell, k}$ 
with a worst-case complexity of $\mathcal{O}(m^4 n \log(n))$. 
Although this complexity could be slightly improved using more advanced ideas from \citep{lebrun2013efficient}, 
the overall complexity of Algorithm~\ref{alg:FedCPQQ} 
would still remain too high for very large data sets (e.g., when the number of agents $m$ is large). 
Our second strategy enables us to find a valid pair $(\ell, k)$ much faster. 
It is based on sharp upper and lower bounds over $M_{\ell, k}$ that can be computed efficiently.
\begin{proposition} \label{cor:bound_Ekl}
Let $n, m \geq 1$ be two integers, $(\ell, k) \in \intset{n} \times \intset{m}$, and $M_{\ell, k}$ be defined in Theorem~\ref{them:main}. 
Then,
\begin{align}\label{eq:bound_marg}
F_{U_{(\ell:n)}}^{-1} \left( \frac{k-1/2}{m+1/2} \right) 
< M_{\ell, k} 
< F_{U_{(\ell:n)}}^{-1} \left( \frac{k}{m+1/2} \right) 
\, ,
\end{align}
where $F_{U_{(\ell:n)}}^{-1}$ is the quantile function of the $\Beta(\ell, n-\ell+1)$ distribution \citep{temme1996special}.
\end{proposition}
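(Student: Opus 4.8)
The plan is to interpret $M_{\ell,k}$ probabilistically and then bound it by comparison with a single Beta quantile. Recall from Theorem~\ref{them:main} that when the scores are a.s.\ distinct, $M_{\ell,k}=\P(Y\in\Chat_{\ell,k}(X))=\P(S\leq S_{(\ell,k)})$, where $S_{(\ell,k)}=\Qh_{(k)}(\Qh_{(\ell)}(\mathcal S_1),\dots,\Qh_{(\ell)}(\mathcal S_m))$. Since $M_{\ell,k}$ is distribution-free, we may take the scores to be i.i.d.\ uniform on $[0,1]$; then $F_S=\mathrm{id}$ and $M_{\ell,k}=\E[S_{(\ell,k)}]$. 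Now $U_{j}:=\Qh_{(\ell)}(\mathcal S_j)$ is the $\ell$-th order statistic of $n$ i.i.d.\ uniforms, so $U_j\sim\Beta(\ell,n-\ell+1)$, and the $U_j$ are i.i.d.\ across $j$; hence $S_{(\ell,k)}$ is the $k$-th order statistic of $m$ i.i.d.\ $\Beta(\ell,n-\ell+1)$ variables. Writing $G:=F_{U_{(\ell:n)}}$ for their common cdf, and $V_j:=G(U_j)$ which are i.i.d.\ uniform on $[0,1]$, we get $S_{(\ell,k)}=G^{-1}(V_{(k:m)})$ with $V_{(k:m)}\sim\Beta(k,m-k+1)$. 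Therefore $M_{\ell,k}=\E\bigl[G^{-1}(V_{(k:m)})\bigr]$, which we must sandwich between $G^{-1}((k-1/2)/(m+1/2))$ and $G^{-1}(k/(m+1/2))$.

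The key analytic input is the convexity behaviour of $G^{-1}$, i.e.\ of the $\Beta(\ell,n-\ell+1)$ quantile function. The cleanest route is Jensen-type reasoning applied on either side of a point where $G^{-1}$ changes curvature: concretely, $G^{-1}$ is convex on the part of $[0,1]$ where the Beta density is decreasing and concave where it is increasing, with a single inflection at the mode-related threshold. I would split into cases according to the shape of the $\Beta(\ell,n-\ell+1)$ density (the cases $\ell=1$, $\ell=n$, and $2\leq \ell\leq n-1$ behave differently), and in each regime apply Jensen's inequality to the appropriate convex or concave branch of $G^{-1}$, using $\E[V_{(k:m)}]=k/(m+1)$. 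This already gives one-sided bounds at $G^{-1}(k/(m+1))$; the sharper endpoints $k/(m+1/2)$ and $(k-1/2)/(m+1/2)$ come from a second-order correction that accounts for the variance $\Var(V_{(k:m)})=\frac{k(m-k+1)}{(m+1)^2(m+2)}$ of the Beta order statistic, combined with quantitative bounds on the second derivative of $G^{-1}$. An alternative I would try in parallel is to work directly with $\E\bigl[G^{-1}(V_{(k:m)})\bigr]=\int_0^1 G^{-1}(v)\,f_{k,m}(v)\,dv$ and compare the density $f_{k,m}$ of $\Beta(k,m-k+1)$ against a point mass, exploiting log-concavity of the Beta density and a stochastic-ordering argument to move the evaluation point from $k/(m+1)$ to the stated $k/(m+1/2)$ and $(k-1/2)/(m+1/2)$.

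The strictness of both inequalities in~\eqref{eq:bound_marg} requires care: it should follow from the fact that $G^{-1}$ is strictly monotone and not affine on any nondegenerate subinterval (for $n\geq 2$; the case $n=1$ forces $\ell=1$ and must be checked by hand, where $G^{-1}$ is the identity and the claim reduces to $\frac{k-1/2}{m+1/2}<\frac{k}{m+1}<\frac{k}{m+1/2}$, which is elementary), so that all the Jensen steps are strict.

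I expect the main obstacle to be obtaining the precise constants $1/2$ and $m+1/2$ rather than the cruder $m+1$ that Jensen alone yields. This is the heart of the proposition — the $1/2$-shift is a Cornish–Fisher/Wallace-type refinement — and making it rigorous and \emph{uniform} over all $\ell,k,n,m$ (including boundary cases $k=1$, $k=m$, $\ell=1$, $\ell=n$, where the Beta densities are unbounded or monotone) will be the delicate part. A plausible way through is to reduce, via the substitution above, to a sharp two-sided bound on $\E[H^{-1}(W)]$ for $W\sim\Beta(k,m-k+1)$ and $H$ a generic Beta cdf, and to prove that bound by a monotonicity argument in the parameters combined with an explicit estimate of the incomplete Beta function; the key inequality to nail down is that the map $a\mapsto H^{-1}$ composed with the Beta-order-statistic expectation is squeezed between the two displayed quantiles, which I would verify by differentiating in $k$ (or $m$) and controlling the sign.
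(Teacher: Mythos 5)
Your setup is right: $M_{\ell,k}=\E\bigl[G^{-1}(V_{(k:m)})\bigr]$ with $G=F_{U_{(\ell:n)}}$ and $V_{(k:m)}\sim\Beta(k,m-k+1)$, and you correctly flag that Jensen alone lands you at $G^{-1}(k/(m+1))$ and that the $1/2$-shifts in $k-1/2$ and $m+1/2$ are the whole difficulty. But that is where the proposal stops: the ``second-order correction with variance and bounds on $(G^{-1})''$'' and the ``monotonicity in $k$ or $m$'' routes are announced, not carried out, and you acknowledge as much. As it stands there is a genuine gap --- no step actually delivers the stated endpoints.

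Two concrete obstacles make your sketch unlikely to close without a new idea. First, $G^{-1}$ on $(0,1)$ has one inflection point (for $2\leq\ell\leq n-1$), but $V_{(k:m)}$ is supported on all of $(0,1)$, so you cannot simply ``apply Jensen on the convex branch'' or ``on the concave branch'' --- the order statistic's mass straddles the inflection. Second, a Cornish--Fisher/Taylor correction with an explicit control on $(G^{-1})''$ has to be made \emph{uniform} over $(\ell,n,k,m)$, including $\ell=1$ and $\ell=n$ where the Beta density is unbounded near an endpoint and $(G^{-1})''$ blows up; pushing a second-order argument through there is not a minor technicality.

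The paper's route sidesteps all of this. It uses that $1-F_{U_{(\ell:n)}}$ is log-concave (Bagnoli--Bergstrom) and invokes the classical bound for expected order statistics under log-concavity of the survival function, namely \cite[Eq.~(4.5.7)]{david2004order}:
\begin{equation*}
F_{U_{(\ell:n)}}\bigl(\E[U_{(\ell:n,k:m)}]\bigr) \leq 1 - \exp\Bigl(-\textstyle\sum_{i=0}^{k-1}\tfrac{1}{m-i}\Bigr) < \frac{k}{m+1/2}\, ,
\end{equation*}
which immediately yields the upper bound after applying $F_{U_{(\ell:n)}}^{-1}$. The lower bound then follows from the exact reflection symmetry $U_{(\ell:n,k:m)}\egalloi 1-U_{(n-\ell+1:n,\,m-k+1:m)}$ together with $F_{U_{(n-\ell+1:n)}}^{-1}(x)=1-F_{U_{(\ell:n)}}^{-1}(1-x)$. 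Neither the David--Nagaraja inequality nor the symmetry argument appear in your proposal; finding one of them (the known sharp inequality is really the key) is what is missing.
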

Proposition~\ref{cor:bound_Ekl} is proved in Appendix~\ref{cor:bound_Ekl_proof}.
By combining Proposition~\ref{cor:bound_Ekl} 
and Theorem~\ref{them:main}, 
if a pair $(\ell, k)$ is such that the left-hand side of 
Eq.~\eqref{eq:bound_marg} is greater or equal to $1-\alpha$, 
then the associated prediction set $\Chat_{\ell,k}$ is marginally valid. 
As a consequence, 
for any $\ell \in \intset{n}$ and $\alpha \in (0, 1)$, if we set
\begin{gather} 
\label{eq:k_low_bound}
k = \tilde{k}_{m, n}(\ell, \alpha) 
:= \left\lceil (m+1/2) \cdot F_{U_{(\ell:n)}}(1-\alpha) + 1/2 \right\rceil 
\, ,
\\
\notag 
\text{then} \qquad 
1-\alpha 
\leq F_{U_{(\ell:n)}}^{-1} \left(\dfrac{\tilde{k}_{m, n}(\ell, \alpha)-1/2}{m+1/2}\right)  
\leq M_{\ell, \tilde{k}_{m, n}(\ell, \alpha)} 
\leq \IP\left(Y \in \Chat_{\ell, \tilde{k}_{m, n}(\ell, \alpha)}(X)\right) 
\, ,
\end{gather}
provided that 
$\tilde{k}_{m, n}(\ell, \alpha) \in \intset{m}$. 
Therefore, choosing the associated pair 
$(\ell, \tilde{k}_{m, n}(\ell, \alpha))$ leads to 
a marginally-valid prediction set. 
Following our idea to minimize $M_{\ell,k}$ 
among marginally-valid pairs $(\ell,k)$ 
in Algorithm~\ref{alg:FedCPQQ}, 
it is here natural to choose $\ell$ by minimizing the upper-bound 
$F_{U_{(\ell:n)}}^{-1} \left(\frac{\tilde{k}_{m, n}(\ell, \alpha)}{m+1/2}\right)$ provided by Proposition~\ref{cor:bound_Ekl}, 
which leads to Algorithm~\ref{algo.FCP-QQ-marg.2} below. 
\begin{algorithm}[\methodlow]
	\label{algo.FCP-QQ-marg.2}
	Given $\alpha \in (0, 1)$,
	\begin{align*}
	&\text{compute} \quad \tilde{\ell} = \argmin_{\ell \in \intset{n} \text{ s.t. } \tilde{k}_{m, n}(\ell, \alpha) \in \intset{m} } 
	\left\{F_{U_{(\ell:n)}}^{-1} \left(\dfrac{\tilde{k}_{m, n}(\ell, \alpha)}{m+1/2}\right)\right\} 
	\quad \text{and} \quad \tilde{k}_{m, n}(\tilde{\ell}, \alpha) 
\, , 
\\
	& \qquad \qquad \text{where $\tilde{k}_{m, n}(\ell, \alpha)$ is defined by Eq.~\eqref{eq:k_low_bound}} 
\, , 
\\
	&\text{and output} \quad \Chat_{\tilde{\ell}, \tilde{k}_{m, n}(\tilde{\ell}, \alpha)}(x) = \left\{y \in \calY : s(x, y) \leq S_{(\tilde{\ell}, \tilde{k}_{m, n}(\tilde{\ell}, \alpha))} \right\} \; .
	\end{align*}
\end{algorithm}
By construction, 
following the arguments detailed after Proposition~\ref{cor:bound_Ekl}, 
the set 
$\Chat_{\tilde{\ell}, \tilde{k}_{m, n}(\tilde{\ell}, \alpha)}$ 
is marginally valid. 
It is also nontrivial when $mn$ is large enough, 
according to the next lemma. 
\begin{lemma} \label{le.algo.FCP-QQ-marg.2.non-trivial}
The $\argmin$ defining $\tilde{\ell}$ 
in Algorithm~\ref{algo.FCP-QQ-marg.2} is non-empty 
---hence $\Chat_{\tilde{\ell}, \tilde{k}_{m, n}(\tilde{\ell}, \alpha)}$ is nontrivial--- 
if and only if 
\begin{equation}
\label{eq.le.algo.FCP-QQ-marg.2.non-trivial.CNS} 
(1-\alpha)^n \leq \frac{m-1/2}{m+1/2} 
\, . 
\end{equation} 
In particular, condition 
\eqref{eq.le.algo.FCP-QQ-marg.2.non-trivial.CNS} 
holds true if 
$n (m-1/2) \geq \alpha^{-1} - 1$. 
\end{lemma}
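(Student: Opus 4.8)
The plan is to convert the non-emptiness of the $\argmin$ into the explicit inequality \eqref{eq.le.algo.FCP-QQ-marg.2.non-trivial.CNS} by examining, for each $\ell\in\intset{n}$, when the constraint $\tilde{k}_{m,n}(\ell,\alpha)\in\intset{m}$ holds. First I would note that, since the $\Beta(\ell,n-\ell+1)$ law has positive density on all of $(0,1)$, we have $F_{U_{(\ell:n)}}(1-\alpha)\in(0,1)$ for every $\ell\in\intset{n}$ and $\alpha\in(0,1)$; hence the argument $(m+1/2)F_{U_{(\ell:n)}}(1-\alpha)+1/2$ of the ceiling in \eqref{eq:k_low_bound} lies strictly in $(1/2,\,m+1)$. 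Consequently $\tilde{k}_{m,n}(\ell,\alpha)\geq 1$ always, so the lower-bound half of the constraint $\tilde{k}_{m,n}(\ell,\alpha)\in\intset{m}$ is automatic, and the constraint reduces to $\tilde{k}_{m,n}(\ell,\alpha)\leq m$. Using $\lceil x\rceil\leq m\iff x\leq m$ for the integer $m$, this is in turn equivalent to $F_{U_{(\ell:n)}}(1-\alpha)\leq (m-1/2)/(m+1/2)$.

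Next I would use the monotonicity in $\ell$ of $F_{U_{(\ell:n)}}(1-\alpha)$. Writing $F_{U_{(\ell:n)}}(t)=\sum_{i=\ell}^{n}\binom{n}{i}t^i(1-t)^{n-i}$ (the probability that at least $\ell$ of $n$ i.i.d.\ uniforms are $\leq t$) shows that $\ell\mapsto F_{U_{(\ell:n)}}(1-\alpha)$ is non-increasing on $\intset{n}$, so its minimum is attained at $\ell=n$, where $F_{U_{(n:n)}}(t)=t^n$. Therefore some $\ell\in\intset{n}$ satisfies $\tilde{k}_{m,n}(\ell,\alpha)\in\intset{m}$ if and only if $(1-\alpha)^n\leq (m-1/2)/(m+1/2)$, which is exactly \eqref{eq.le.algo.FCP-QQ-marg.2.non-trivial.CNS}. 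The equivalence with non-triviality of $\Chat_{\tilde{\ell},\tilde{k}_{m,n}(\tilde{\ell},\alpha)}$ then comes from the convention of Remark~\ref{rk.algos.argmin-vide} together with the fact that, when the $\argmin$ is non-empty, $S_{(\tilde{\ell},\tilde{k}_{m,n}(\tilde{\ell},\alpha))}$ is one of the finite local quantiles (since $\tilde{\ell}\leq n$ and $\tilde{k}_{m,n}(\tilde{\ell},\alpha)\leq m$).

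For the ``in particular'' claim, I would invoke Bernoulli's inequality: $(1-\alpha)^{-n}=\bigl(1+\frac{\alpha}{1-\alpha}\bigr)^{n}\geq 1+\frac{n\alpha}{1-\alpha}$. If $n(m-1/2)\geq\alpha^{-1}-1=\frac{1-\alpha}{\alpha}$, then $\frac{n\alpha}{1-\alpha}\geq\frac{1}{m-1/2}$, hence $(1-\alpha)^{-n}\geq 1+\frac{1}{m-1/2}=\frac{m+1/2}{m-1/2}$, i.e.\ $(1-\alpha)^n\leq\frac{m-1/2}{m+1/2}$, so \eqref{eq.le.algo.FCP-QQ-marg.2.non-trivial.CNS} holds.

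I do not anticipate a genuine difficulty; the only points needing a little care are (i) observing that the lower bound $\tilde{k}_{m,n}(\ell,\alpha)\geq 1$ holds for free, so that $\tilde{k}_{m,n}(\ell,\alpha)\in\intset{m}$ collapses to a single inequality, and (ii) choosing the sharp form of Bernoulli's inequality — applied to $(1-\alpha)^{-n}$ rather than, say, the cruder $(1-\alpha)^n\leq 1/(1+n\alpha)$ — so as to recover precisely the threshold $\alpha^{-1}-1$ and not a slightly larger one.
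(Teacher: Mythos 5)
Your proof is correct and follows essentially the same route as the paper: reduce non-emptiness of the $\argmin$ to the existence of $\ell$ with $\tilde{k}_{m,n}(\ell,\alpha)\leq m$, use monotonicity of $\ell\mapsto F_{U_{(\ell:n)}}(1-\alpha)$ to identify $\ell=n$ as the best candidate and apply $F_{U_{(n:n)}}(1-\alpha)=(1-\alpha)^n$, then deduce the sufficient condition from an elementary inequality. The only cosmetic difference is in the last step, where you apply Bernoulli's inequality to $(1-\alpha)^{-n}$ whereas the paper equivalently uses the concavity of $x\mapsto(1+x)^{1/n}$ at $0$; and you make explicit the (correct, if obvious) observation that $\tilde{k}_{m,n}(\ell,\alpha)\geq 1$ always holds.
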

Lemma~\ref{le.algo.FCP-QQ-marg.2.non-trivial} 
is proved in Appendix \ref{le.algo.FCP-QQ-marg.2.non-trivial_proof}. 
In addition, \methodlow~is computationally efficient. 
Indeed, the functions $F_{U_{(\ell:n)}}$ and 
$F^{-1}_{U_{(\ell:n)}}$ are fast to evaluate 
even when $n$ is large 
(it takes a few milliseconds for $n = 10^6$). 
Furthermore, 
the minimization step in Algorithm~\ref{algo.FCP-QQ-marg.2} 
requires at most $n$ evaluations, 
and the complexity of Algorithm~\ref{algo.FCP-QQ-marg.2} is almost not impacted by the value of~$m$.
For example, on a standard personal machine 
(Intel i5 with 4 CPU at 2.50GHz), with $m=n=10^6$, 
\methodlow{} takes a few seconds to return the valid pair 
$(\tilde{\ell}, \tilde{k}_{m, n}(\tilde{\ell}, \alpha))$ 
using the SciPy implementation \citep{2020SciPy-NMeth} of the cdf $F_{U_{(\ell:n)}}$ and the quantile function $F_{U_{(\ell:n)}}^{-1}$ of the Beta distribution,
whereas \method~can take several hours.

While the two methods presented above ensure that 
the marginal coverage at level $1-\alpha$ 
is satisfied whatever the data distribution, 
one may wonder how much above $1-\alpha$ it can be. 
This question is answered in detail in Section~\ref{sec.cov-upper.marginal}.

\subsection{Training-conditional guarantees} \label{sec.multi-order.algos}
In this section, we present two algorithms for choosing $(\ell, k)$ such that 
the quantile-of-quantiles prediction set $\Chat_{\ell, k}(x)$ 
defined by Eq.~\eqref{CPQQ_set} is a 
distribution-free training-conditionally valid prediction set, 
that is, satisfies Eq.~\eqref{eq:cond_cov}. 
In other words, the goal is to select $\ell$ and $k$ such that the miscoverage random variable
\begin{align} \label{eq:Mkl_cond}
&\alpha_{\ell, k}(\calD) 
= \IP\bigl(Y \notin \Chat_{\ell, k}(X) \,\big\vert\, \calD \bigr)
\end{align}
is smaller than $\alpha \in (0, 1)$ with probability at least $1-\beta  \in (0, 1)$, that is,
\begin{equation} \label{eq.def.cond-cov}
\P \left( 
	\P\bigl( Y \in \Chat_{\ell, k}(X) \,\big\vert\,\calD \bigr) 
	\geq 1-\alpha \right) 
= \P \bigl(1- \alpha_{\ell, k}(\calD) \geq 1-\alpha \bigr) 
\geq 1-\beta
\, .
\end{equation}
Our first algorithm is based on the following theorem. 
\begin{theorem} \label{them:cond_main}
\looseness=-1 In the setting of Section~\ref{sec:QQ}, with Assumption~\ref{ass:same_n}, 
for any $(\ell,k) \in \intset{n} \times \intset{m}$
and any $\beta \in (0, 1)$, the miscoverage random variable $\alpha_{\ell, k}(\calD)$ defined 
by  Eq.~\eqref{eq:Mkl_cond} satisfies
\begin{align}\label{eq:cond_main_equa}
\P \bigl(1-\alpha_{\ell, k}(\calD) \geq F_{U_{(\ell:n, k:m)}}^{-1} ( \beta ) \bigr) 
&\geq 1-\beta \; ,
\end{align}
where $F_{U_{(\ell:n, k:m)}}^{-1} := F_{U_{(\ell:n)}}^{-1} \circ F_{U_{(k:m)}}^{-1}$ 
and for every $1 \leq r \leq N$, 
$F_{U_{(r:N)}}^{-1}$ is the quantile function of the 
Beta$(r,N-r+1)$ distribution.

Moreover, when the associated scores $(S_{i, j})_{1 \leq i \leq n , 1 \leq j \leq m}$ and $S:= s(X,Y)$ are almost surely distinct, 
Eq.~\eqref{eq:cond_main_equa} is an equality and, for any $\beta' \in (0, 1)$ such that $\beta \leq 1-\beta'$, 
\begin{equation}\label{eq:cov_two_side}
\P \bigl(F_{ U_{(\ell:n, k:m)} }^{-1} (\beta) \leq 1 - \alpha_{\ell, k}(\calD) \leq F_{ U_{(\ell:n, k:m)} }^{-1} (1-\beta') \bigr) = 1 - \beta - \beta' 
\, .
\end{equation}
The interval $[F_{ U_{(\ell:n, k:m)} }^{-1}(\beta), F_{ U_{(\ell:n, k:m)} }^{-1}(1-\beta')]$ is thus 
a two-sided fluctuation interval for 
the coverage random variable $1-\alpha_{\ell, k}(\calD)$. 
\end{theorem}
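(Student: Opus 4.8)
The plan is to reduce everything to order statistics of i.i.d.\ uniform random variables and then invoke the probability integral transform. First I would recall that, conditionally on $\calD^{lrn}$, the scores $S_{i,j}$ are i.i.d.\ with cdf $F_S$, and by Eq.~\eqref{eq.lien-score-coverage.QQ} one has $1-\alpha_{\ell,k}(\calD) = F_S(S_{(\ell,k)})$ almost surely. The key structural observation is that $F_S(S_{(\ell,k)})$ should be compared to the analogous quantity built from uniforms: setting $U_{i,j} := F_S(S_{i,j})$, these are i.i.d.\ with a distribution that is stochastically at least $\mathrm{Uniform}([0,1])$ (with equality when $F_S$ is continuous, i.e.\ when the scores are a.s.\ distinct), and $F_S(S_{(\ell,k)}) \geq \Qh_{(k)}\bigl(\Qh_{(\ell)}(\mathcal{U}_1),\dots,\Qh_{(\ell)}(\mathcal{U}_m)\bigr) =: U_{(\ell,k)}$ by monotonicity of $\Qh_{(\cdot)}$ and of $F_S$, again with a.s.\ equality in the distinct-scores case. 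This is the step where one must be careful with the non-continuous case and with the $+\infty$ convention in Eq.~\eqref{def:Qk}; I would handle it via the standard coupling $U_{i,j} \geq$ (uniform) used for split CP in Appendix~\ref{app.pr.split-CP}.

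Next, I would identify the distribution of $U_{(\ell,k)}$ in the continuous case (uniforms on $[0,1]$). For each fixed agent $j$, the $\ell$-th order statistic $\Qh_{(\ell)}(\mathcal{U}_j)$ of $n$ i.i.d.\ uniforms follows $\Beta(\ell,n-\ell+1)$, and these $m$ random variables are i.i.d.\ across $j$; hence $U_{(\ell,k)}$ is the $k$-th order statistic of $m$ i.i.d.\ $\Beta(\ell,n-\ell+1)$ variables. Writing $V_j := \Qh_{(\ell)}(\mathcal{U}_j)$, we have $V_j = F_{U_{(\ell:n)}}^{-1}(W_j)$ for i.i.d.\ uniforms $W_j$ (probability integral transform, using that $F_{U_{(\ell:n)}}$ is continuous and strictly increasing on $(0,1)$), so the $k$-th order statistic satisfies $V_{(k)} = F_{U_{(\ell:n)}}^{-1}\bigl(W_{(k)}\bigr)$ where $W_{(k)} \sim \Beta(k,m-k+1)$. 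Therefore $U_{(\ell,k)} \egalloi F_{U_{(\ell:n)}}^{-1}\bigl(F_{U_{(k:m)}}^{-1}(B)\bigr)$ with $B\sim\mathrm{Uniform}([0,1])$, i.e.\ $U_{(\ell,k)}$ has quantile function exactly $F_{U_{(\ell:n,k:m)}}^{-1} = F_{U_{(\ell:n)}}^{-1}\circ F_{U_{(k:m)}}^{-1}$. This is the Beta-Beta distribution of the statement.

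From here Eq.~\eqref{eq:cond_main_equa} is immediate: in general $1-\alpha_{\ell,k}(\calD)$ is stochastically larger than (or, in the distinct-scores case, equal in distribution to) a random variable with quantile function $F_{U_{(\ell:n,k:m)}}^{-1}$, so $\P\bigl(1-\alpha_{\ell,k}(\calD)\geq F_{U_{(\ell:n,k:m)}}^{-1}(\beta)\bigr) \geq \P\bigl(U_{(\ell,k)}\geq F_{U_{(\ell:n,k:m)}}^{-1}(\beta)\bigr) = \P(B\geq\beta) = 1-\beta$, using that $B$ is uniform and that $t\mapsto F_{U_{(\ell:n,k:m)}}^{-1}(t)$ is nondecreasing with the correct quantile relation. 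For the two-sided statement~\eqref{eq:cov_two_side} in the distinct-scores case, equality in distribution gives $1-\alpha_{\ell,k}(\calD)\egalloi F_{U_{(\ell:n,k:m)}}^{-1}(B)$ exactly, and then
\[
\P\bigl(F_{U_{(\ell:n,k:m)}}^{-1}(\beta)\leq F_{U_{(\ell:n,k:m)}}^{-1}(B)\leq F_{U_{(\ell:n,k:m)}}^{-1}(1-\beta')\bigr) = \P(\beta\leq B\leq 1-\beta') = 1-\beta-\beta',
\]
where the first equality uses that $F_{U_{(\ell:n,k:m)}}^{-1}$ is continuous and strictly increasing on $(0,1)$ (both $F_{U_{(\ell:n)}}^{-1}$ and $F_{U_{(k:m)}}^{-1}$ are, being quantile functions of Beta distributions with positive parameters), so the event is exactly $\{\beta\leq B\leq 1-\beta'\}$ up to a null set; the condition $\beta\leq 1-\beta'$ ensures this interval is nonempty.

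The main obstacle I anticipate is the careful treatment of the non-continuous / tie case: making precise the stochastic-dominance coupling $F_S(S_{(\ell,k)}) \succeq U_{(\ell,k)}$ when $F_S$ has atoms, and verifying that the inner and outer $\Qh_{(\cdot)}$ operations (including the $+\infty$ branch when $\ell>n$ cannot occur here but the convention still matters at the server level) preserve the inequality. Once the continuous case is isolated, everything else is a clean application of the probability integral transform to the composition of two Beta quantile functions.
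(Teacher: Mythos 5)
Your proposal is correct and follows essentially the same route as the paper: reduce $1-\alpha_{\ell,k}(\calD)=F_S(S_{(\ell,k)})$ to the quantile-of-quantiles of uniforms via a monotone coupling (the content of Lemma~\ref{lemma:centra_unif_kl}), identify the cdf of $U_{(\ell:n,k:m)}$ as the composition $F_{U_{(k:m)}}\circ F_{U_{(\ell:n)}}$ via the probability integral transform (the content of Lemma~\ref{lemma:beta_beta}), and then read off the two bounds. One small imprecision to note: the coupling should be taken in the direction $\widetilde{S}_{i,j}=F_S^{-1}(V_{i,j})$ with $V_{i,j}$ uniform (giving $F_S(\widetilde{S}_{(\ell,k)})\geq V_{(\ell,k)}$ by Eq.~\eqref{eq.prop-gal-inverse} after commuting $F_S^{-1}$ through the $\Qh_{(\cdot)}$ operators), rather than setting $U_{i,j}:=F_S(S_{i,j})$ — with your definition the displayed ``$\geq$'' would actually be an equality and the $U_{i,j}$ would not be uniform, so the Beta-Beta identification would not apply directly — but you correctly flag this as the delicate step and the standard construction closes it.
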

Theorem~\ref{them:cond_main} is proved in Appendix~\ref{app:proof-them:cond_main}. 
Eq.~\eqref{eq:cond_main_equa} is sharp since 
it becomes an equality when the scores are a.s. distinct. 
It can be seen as a generalization of Eq.~\eqref{eq:miscov_vovk} to the case of the quantile-of-quantiles estimator. 
Theorem~\ref{them:cond_main} 
is based on the fact that 
if $U_{(\ell:n, k:m)}$ is a random variable 
with cdf $F_{ U_{(\ell:n, k:m)} }$, then 
$1 - \alpha_{\ell, k}(\calD)$ stochastically dominates 
$U_{(\ell:n, k:m)}$ in general,  
and they have the same distribution 
when the scores are a.s. distinct 
by Lemma~\ref{lemma:centra_unif_kl} 
in Appendix~\ref{app.order-stat.order-of-order} 
(which also implies that 
Theorem~\ref{them:main} holds true with $M_{\ell,k} = \E[ U_{(\ell:n, k:m)} ]$).

\begin{remark}
The distribution associated to $F_{U_{(\ell:n, k:m)}}$ is a particular case of the Beta-Beta distribution \citep{cordeiro2013simple, makgai2019beta}. 
It can also be seen as the cdf of the $k$-th order statistics of 
a sample of $m$ independent Beta$(\ell, n-\ell+1)$ 
random variables \cite{castellares2021note, jones2004families}.
\end{remark}

Theorem~\ref{them:cond_main} suggests the following 
algorithm to select $\ell$ and $k$ 
for training-conditional validity,
which we call \methodcondFL~(QQ stands for \emph{Quantile-of-Quantiles} and C for \emph{Conditional}).
\begin{algorithm}[\methodcondFL]
	\label{algo.FCP-QQ-cond.1}
	Given $\alpha \in (0, 1)$ and $\beta \in (0, 1)$,
	\begin{align} \label{eq.def.algo.FCP-QQ-cond.1}
	&\text{compute} \quad
	(\ell_c^*, k_c^*) 
	= \argmin_{(\ell, k) \in \intset{n} \times \intset{m}} 
	\left\{ F_{U_{(\ell:n, k:m)}}^{-1} (1-\beta) 
		\, : \, 
		F_{U_{(\ell:n, k:m)}}^{-1} (\beta) \geq 1 - \alpha 
	\right\}  
	\\
	&\text{and output} \quad 
	\Chat_{\ell_c^*, k_c^*}(x) 
	= \left\{y \in \calY : s(x, y) \leq S_{(\ell_c^*, k_c^*)} \right\} 
	\; . \notag
	\end{align}
\end{algorithm}
By construction and Theorem~\ref{them:cond_main}, 
the set $\Chat_{\ell_c^*, k_c^*}$ satisfies the training-conditional condition~\eqref{eq.def.cond-cov}. 
It is also nontrivial when $mn$ is large enough, 
according to the following lemma. 
\begin{lemma}
\label{le.algo.FCP-QQ-cond.1.non-trivial}
The $\argmin$ defining $(\ell^*_c,k^*_c)$ 
in Algorithm~\ref{algo.FCP-QQ-cond.1} is non-empty 
---hence $\Chat_{\ell_c^*, k_c^*}$ is nontrivial--- 
if and only if 
\begin{equation}
\label{eq.le.FCP-QQ-cond.1.non-trivial.CNS} 
mn \geq \frac{\log(\beta)}{\log(1-\alpha)}
\, . 
\end{equation} 
\end{lemma}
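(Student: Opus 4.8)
The plan is to reduce the statement to computing a single maximum. Observe that the $\argmin$ in Eq.~\eqref{eq.def.algo.FCP-QQ-cond.1} is taken over the finite feasible set $\mathcal{F} := \{(\ell,k)\in\intset{n}\times\intset{m} \,:\, F_{U_{(\ell:n,\,k:m)}}^{-1}(\beta)\geq 1-\alpha\}$, and an $\argmin$ over a finite set is non-empty precisely when that set is non-empty. Hence $\mathcal{F}\neq\varnothing$ if and only if $\max_{(\ell,k)\in\intset{n}\times\intset{m}} F_{U_{(\ell:n,\,k:m)}}^{-1}(\beta)\geq 1-\alpha$, so the whole question boils down to evaluating this maximum.

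To evaluate it, I would use the factorization $F_{U_{(\ell:n,\,k:m)}}^{-1} = F_{U_{(\ell:n)}}^{-1}\circ F_{U_{(k:m)}}^{-1}$ recalled in Theorem~\ref{them:cond_main}, together with the elementary monotonicity fact that, for any fixed $q\in(0,1)$, the map $r\mapsto F_{U_{(r:N)}}^{-1}(q)$ is nondecreasing (the Beta$(r,N-r+1)$ laws are stochastically increasing in $r$, being the distributions of the $r$-th order statistic of $N$ i.i.d.\ uniform$[0,1]$ variables, and order statistics are pathwise nondecreasing in their rank). Since $F_{U_{(k:m)}}^{-1}(\beta)\in(0,1)$ and $F_{U_{(\ell:n)}}^{-1}$ is increasing, the composition is maximized over $\ell$ at $\ell=n$; then, as $F_{U_{(n:n)}}^{-1}$ is increasing, the resulting quantity is maximized over $k$ at $k=m$. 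Thus the maximum equals $F_{U_{(n:n)}}^{-1}\bigl(F_{U_{(m:m)}}^{-1}(\beta)\bigr)$.

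I would then make this explicit: $U_{(N:N)}$ is the maximum of $N$ i.i.d.\ uniform$[0,1]$ variables, so its cdf is $t\mapsto t^{N}$ on $[0,1]$ and $F_{U_{(N:N)}}^{-1}(q)=q^{1/N}$. Consequently the maximum equals $(\beta^{1/m})^{1/n}=\beta^{1/(mn)}$, and $\mathcal{F}\neq\varnothing$ if and only if $\beta^{1/(mn)}\geq 1-\alpha$. Finally I would put this in the stated form by taking logarithms: since $\beta,1-\alpha\in(0,1)$ we have $\log\beta<0$ and $\log(1-\alpha)<0$, so $\beta^{1/(mn)}\geq 1-\alpha$ is equivalent to $\tfrac{1}{mn}\log\beta\geq\log(1-\alpha)$, i.e.\ to $\log\beta\geq mn\log(1-\alpha)$, and dividing by the negative number $\log(1-\alpha)$ reverses the inequality to yield exactly $mn\geq\log(\beta)/\log(1-\alpha)$, which is~\eqref{eq.le.FCP-QQ-cond.1.non-trivial.CNS}. (The parenthetical ``hence $\Chat_{\ell_c^*,k_c^*}$ is nontrivial'' follows from the convention of Remark~\ref{rk.algos.argmin-vide}.)

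There is no serious obstacle here: the computation is short. The only two points that need a little care are (i) justifying the monotonicity of the Beta quantile in its order parameter, which is exactly where the order-statistics structure enters, and (ii) arguing correctly that the \emph{joint} maximum of the \emph{composition} $F_{U_{(\ell:n)}}^{-1}\circ F_{U_{(k:m)}}^{-1}$ over the rectangle $\intset{n}\times\intset{m}$ is attained at the corner $(\ell,k)=(n,m)$, which one gets by optimizing $\ell$ first (for each fixed $k$) and then $k$, using monotonicity at each stage.
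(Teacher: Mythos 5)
Your proof is correct and follows essentially the same route as the paper: reduce to checking whether $\max_{(\ell,k)} F_{U_{(\ell:n,k:m)}}^{-1}(\beta)\geq 1-\alpha$, identify $(\ell,k)=(n,m)$ as the maximizer, evaluate the maximum to $\beta^{1/(mn)}$, and take logarithms. The only cosmetic difference is in how the maximizer is justified and the value computed: the paper invokes almost-sure monotonicity of the random variable $U_{(\ell:n,k:m)}$ in $(\ell,k)$ together with the identity $U_{(n:n,m:m)}=U_{(nm:nm)}$ (so that $F^{-1}_{U_{(nm:nm)}}(\beta)=\beta^{1/(mn)}$), whereas you optimize coordinate-by-coordinate via the factorization $F_{U_{(\ell:n,k:m)}}^{-1}=F_{U_{(\ell:n)}}^{-1}\circ F_{U_{(k:m)}}^{-1}$ and then compute $(\beta^{1/m})^{1/n}$ directly — both give the same conclusion with comparable effort.
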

Lemma~\ref{le.algo.FCP-QQ-cond.1.non-trivial} 
is proved in Appendix~\ref{app.pr.le.algo.FCP-QQ-cond.1.non-trivial}.
Furthermore, as long as $m$ and $n$ are not too large, Algorithm~\ref{algo.FCP-QQ-cond.1} 
is computationally efficient since $F^{-1}_{U_{(\ell:n)}}$ and $F^{-1}_{U_{(k:m)}}$ can be computed quickly 
(see the comments below Algorithm~\ref{algo.FCP-QQ-marg.2} 
in Section~\ref{subsec:marg_valid}). 

Note that any pair $(\ell, k)$ satisfying the condition 
$F_{U_{(\ell:n, k:m)}}^{-1} (\beta) \geq 1 - \alpha$ leads to a conditionally valid prediction set. 
As a selection criterion 
among these pairs $(\ell,k)$, following Section~\ref{sec.prelim.perf}, 
we minimize $F_{U_{(\ell:n, k:m)}}^{-1}(1-\beta)$ which, 
according to Eq.~\eqref{eq:cov_two_side} in Theorem~\ref{them:cond_main}, 
is equal to the $(1-\beta)$-quantile of the distribution
of $1-\alpha_{\ell,k} (\calD)$
when the scores are a.s. distinct.

\methodcondFL~may require to evaluate $F_{U_{(\ell:n, k:m)}}^{-1}$ 
for many pairs $(\ell, k) \in \intset{n} \times \intset{m}$, 
which can be too costly when $n$ and $m$ are large. 
In the sequel, we propose a simpler sufficient condition 
for Eq.~\eqref{eq.def.cond-cov} to be satisfied, leading to a faster algorithm. 
It is based on the following lower bound 
on the quantile function $F_{U_{(\ell:n, k:m)}}^{-1}$.
\begin{proposition} \label{prop:low_bound_Fl}
Let $n, m \geq 1$ be two integers, $k \in \intset{m}$, $\ell \in \intset{n}$, and $F_{U_{(\ell:n, k:m)}}^{-1}$ be defined as in Theorem~\ref{them:cond_main}. For any $\beta \in (0, 1)$, we have 
\begin{equation}
\label{eq.cond-cov-Xlk.lower.minor.1}
F_{U_{(\ell:n, k:m)}}^{-1} (\beta) 
\geq F_{U_{(\ell:n)}}^{-1} \left(\frac{k}{m+1} - \sqrt{\dfrac{\log(1/\beta)}{2(m+2)}} \, \right) 
\, .
\end{equation}
\end{proposition}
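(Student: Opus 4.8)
The plan is to exploit the composition structure $F_{U_{(\ell:n, k:m)}}^{-1} = F_{U_{(\ell:n)}}^{-1} \circ F_{U_{(k:m)}}^{-1}$ and reduce Eq.~\eqref{eq.cond-cov-Xlk.lower.minor.1} to a purely one-dimensional lower bound on the $\beta$-quantile of a single Beta distribution. Since $F_{U_{(\ell:n)}}^{-1}$ is nondecreasing, the claim will follow as soon as I establish
\[
F_{U_{(k:m)}}^{-1}(\beta) \;\geq\; \frac{k}{m+1} - \sqrt{\dfrac{\log(1/\beta)}{2(m+2)}} \, ,
\]
by then applying $F_{U_{(\ell:n)}}^{-1}$ to both sides. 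The right-hand side above is automatically $<1$ since $k\leq m$; and if it is $\leq 0$, then Eq.~\eqref{eq.cond-cov-Xlk.lower.minor.1} is immediate because its left-hand side, being a $\Beta$ quantile, is nonnegative. Hence I may assume $\tfrac{k}{m+1} - \sqrt{\log(1/\beta)/(2(m+2))} \in (0,1)$.

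For the one-dimensional claim, recall that $F_{U_{(k:m)}}^{-1}$ is the quantile function of the $\Beta(k, m-k+1)$ distribution, i.e.\ of the law of the $k$-th order statistic $U_{(k:m)}$ of $m$ i.i.d.\ uniform variables, with mean $\mu := \IE[U_{(k:m)}] = k/(m+1)$. The key input is the sub-Gaussianity of the Beta distribution with the sharp proxy variance $\sigma^2 = 1/(4(m+2))$ --- this is the value $1/(4(\alpha+\beta+1))$ for $\Beta(\alpha,\beta)$, specialized to $\alpha+\beta = m+1$, established by Marchal and Arbel. It yields the one-sided deviation bound $\IP(U_{(k:m)} \leq \mu - t) \leq \exp(-t^2/(2\sigma^2)) = \exp(-2(m+2)t^2)$ for every $t\geq 0$. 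Choosing $t = \sqrt{\log(1/\beta)/(2(m+2))}$ makes the right-hand side exactly $\beta$; since the $\Beta(k,m-k+1)$ cdf is continuous and strictly increasing on $(0,1)$, the inequality $\IP(U_{(k:m)} \leq \mu - t) \leq \beta$ forces $\mu - t \leq F_{U_{(k:m)}}^{-1}(\beta)$, which is precisely the displayed claim. Composing back with the nondecreasing map $F_{U_{(\ell:n)}}^{-1}$ then gives Eq.~\eqref{eq.cond-cov-Xlk.lower.minor.1}.

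The delicate point --- and the reason a genuine sub-Gaussian estimate for the Beta is needed --- is the constant $m+2$. Running a plain Chernoff bound on either the binomial representation $\IP(U_{(k:m)} \leq q) = \IP(\mathrm{Bin}(m,q) \geq k)$ or the Gamma-ratio representation $U_{(k:m)} \egalloi \Gamma_k/(\Gamma_k + \Gamma'_{m-k+1})$ produces a tail bound whose effective proxy variance behaves like $p(1-p)/(m+1)$ with $p = k/(m+1)$; for $k$ close to $m/2$ this exceeds $1/(4(m+2))$, so these elementary routes only yield the weaker statement with $m+1$ (or worse) in place of $m+2$. The refined moment-generating-function estimate for the Beta distribution is therefore the crucial ingredient; it can be invoked from the work of Marchal and Arbel, or reproved as a standalone lemma, after which the argument above is the short two-step deduction sketched here.
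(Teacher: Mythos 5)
Your proof is correct and takes essentially the same route as the paper's: both use the composition identity $F_{U_{(\ell:n, k:m)}}^{-1} = F_{U_{(\ell:n)}}^{-1} \circ F_{U_{(k:m)}}^{-1}$ from Lemma~\ref{lemma:beta_beta}, the Marchal--Arbel sub-Gaussian bound for $\Beta(k,m-k+1)$ with proxy variance $1/(4(m+2))$ to lower-bound $F_{U_{(k:m)}}^{-1}(\beta)$, and the monotonicity of $F_{U_{(\ell:n)}}^{-1}$. The only cosmetic difference is that the paper isolates the one-dimensional deviation bound as Proposition~\ref{pro.FUr} (proved via Theorem~\ref{them:subgaussbeta}) and invokes it, whereas you unroll that derivation inline.
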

Proposition~\ref{prop:low_bound_Fl} is proved in Appendix~\ref{prop:low_bound_Fl_proof}.
From this result, 
we see that for any $\ell \in \intset{n}$, taking (if possible)
\begin{equation}
\label{eq.algo.FCP-QQ-cond.k}
k = \tilde{k}^{\text{cond}}_{m,n} (\ell, \alpha, \beta) 
\egaldef \left\lceil (m+1) \left(F_{ U_{(\ell:n)} } (1-\alpha) + \sqrt{\dfrac{\log(1/\beta)}{2(m+2)}} \, \right) \right \rceil
\end{equation}
implies that $F_{ U_{(\ell:n, \tilde{k}^{\text{cond}}_{m,n} (\ell, \alpha, \beta):m)} }^{-1} (\beta) \geq 1 - \alpha$ 
and by Theorem~\ref{them:cond_main},
$\Chat_{\ell, \tilde{k}^{\text{cond}}_{m,n} (\ell, \alpha, \beta)}$ 
satisfies the training-conditional condition~\eqref{eq.def.cond-cov}. 
Choosing $\ell$ similarly to Algorithm~\ref{algo.FCP-QQ-cond.1} leads to the following algorithm.
\begin{algorithm}[\methodcondFLk]
	\label{algo.FCP-QQ-cond.2}
	Given $\alpha \in (0, 1)$ and $\beta \in (0, 1)$,
	\begin{align*}
	&\text{compute} \qquad 
	\tilde{\ell}^{\text{cond}} 
	= \argmin_{\ell \in \intset{n} \text{ s.t. } \tilde{k}^{\text{cond}}_{m,n} (\ell, \alpha, \beta) \leq m} 
	\left\{F^{-1}_{U_{(\ell:n, \tilde{k}^{\text{cond}}_{m,n} (\ell, \alpha, \beta):m)}} (1-\beta)\right\} 
	\\
	&\text{and} \quad \tilde{k}^{\text{cond}}_{m,n} (\tilde{\ell}^{\text{cond}}, \alpha, \beta) 
	\, ,  
	\qquad \text{where $\tilde{k}^{\text{cond}}_{m,n} (\ell, \alpha, \beta)$ is defined by Eq.~\eqref{eq.algo.FCP-QQ-cond.k}} \; . \\
	& \text{Output} \qquad 
	\Chat_{\tilde{\ell}^{\text{cond}}, \tilde{k}^{\text{cond}}_{m,n} 
	(\tilde{\ell}^{\text{cond}}, \alpha, \beta)}(x) 
= \left\{y \in \calY : s(x, y) 
	\leq S_{(\tilde{\ell}^{\text{cond}}, \tilde{k}^{\text{cond}}_{m,n} 
	(\tilde{\ell}^{\text{cond}}, \alpha, \beta))} \right\} 
\; .
	\end{align*}
\end{algorithm}
By construction, following the arguments detailed 
after Proposition~\ref{prop:low_bound_Fl},  
the set \\$\Chat_{\tilde{\ell}^{\text{cond}}, \tilde{k}^{\text{cond}}_{m,n} 
	(\tilde{\ell}^{\text{cond}}, \alpha, \beta)}$ satisfies the training-conditional condition~\eqref{eq.def.cond-cov}. 
It is also nontrivial when $mn$ is large enough, 
as shown by the following lemma. 
\begin{lemma}
\label{le.algo.FCP-QQ-cond.2.non-trivial}
The $\argmin$ defining $\tilde{\ell}^{\text{cond}}$ 
in Algorithm~\ref{algo.FCP-QQ-cond.2} is non-empty 
---hence the set $\Chat_{\tilde{\ell}^{\text{cond}}, \tilde{k}^{\text{cond}}_{m,n} 
	(\tilde{\ell}^{\text{cond}}, \alpha, \beta)}$ is nontrivial--- 
if and only if 
\begin{equation}
\label{eq.le.FCP-QQ-cond.2.non-trivial.CNS} 
\frac{m}{m+1} - \sqrt{\frac{\log(1/\beta)}{2 (m+2)}} 
\geq (1-\alpha)^n 
\, . 
\end{equation} 
For instance, it holds true when 
\begin{equation}
\label{eq.le.FCP-QQ-cond.2.non-trivial.CS} 
n \geq \frac{\log(1/3)}{\log(1-\alpha)}
\qquad \text{and} \qquad 
m \geq \max \left\{ 2 \,,\, \frac{9}{2}  \log ( 1 / \beta) - 2 \right\} 
\, . 
\end{equation} 
\end{lemma}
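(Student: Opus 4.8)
The plan is to reduce the non-emptiness of the $\argmin$ in Algorithm~\ref{algo.FCP-QQ-cond.2} to a single explicit scalar inequality, and then to optimize over $\ell$ by hand. The structure parallels that of Lemma~\ref{le.algo.FCP-QQ-marg.2.non-trivial}. First I would observe that the $\argmin$ defining $\tilde{\ell}^{\text{cond}}$ is non-empty precisely when the index set $\{\ell \in \intset{n} : \tilde{k}^{\text{cond}}_{m,n}(\ell, \alpha, \beta) \leq m\}$ is non-empty (the objective $F^{-1}_{U_{(\ell:n, k:m)}}(1-\beta)$ being finite, valued in $[0,1]$, for every feasible pair, and the feasible set being finite). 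The extra constraint $\tilde{k}^{\text{cond}}_{m,n}(\ell, \alpha, \beta) \geq 1$ implicit in $\intset{m}$ is automatic, since $\tilde{k}^{\text{cond}}_{m,n}(\ell, \alpha, \beta)$ is the ceiling of a strictly positive real. Plugging in the definition \eqref{eq.algo.FCP-QQ-cond.k} and using $\lceil x \rceil \leq m \iff x \leq m$ for integer $m$, this feasibility condition rewrites as the existence of $\ell \in \intset{n}$ with $F_{U_{(\ell:n)}}(1-\alpha) \leq \frac{m}{m+1} - \sqrt{\log(1/\beta)/(2(m+2))}$.

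Next I would minimize the left-hand side over $\ell$. Since $U_{(\ell:n)}$ is the $\ell$-th order statistic of $n$ i.i.d.\ $\mathrm{Uniform}([0,1])$ variables, the $\Beta(\ell, n-\ell+1)$ family is stochastically increasing in $\ell$, so $\ell \mapsto F_{U_{(\ell:n)}}(t)$ is nonincreasing for every fixed $t$; hence its minimum over $\intset{n}$ is attained at $\ell = n$ and equals $F_{U_{(n:n)}}(1-\alpha) = (1-\alpha)^n$. This yields exactly the equivalence with \eqref{eq.le.FCP-QQ-cond.2.non-trivial.CNS} (and, as a byproduct, shows that $\ell = n$ is always feasible whenever the $\argmin$ is non-empty).

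For the sufficient condition \eqref{eq.le.FCP-QQ-cond.2.non-trivial.CS}, I would simply lower-bound the right-hand side of \eqref{eq.le.FCP-QQ-cond.2.non-trivial.CNS} by $1/3$: $m \geq 2$ gives $\frac{m}{m+1} \geq \frac{2}{3}$, while $m \geq \frac{9}{2}\log(1/\beta) - 2$ is equivalent to $\frac{\log(1/\beta)}{2(m+2)} \leq \frac{1}{9}$, i.e.\ $\sqrt{\log(1/\beta)/(2(m+2))} \leq \frac{1}{3}$; together these give $\frac{m}{m+1} - \sqrt{\log(1/\beta)/(2(m+2))} \geq \frac{1}{3}$. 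On the other side, multiplying $n \geq \log(1/3)/\log(1-\alpha)$ by the negative quantity $\log(1-\alpha)$ (which reverses the inequality) and exponentiating gives $(1-\alpha)^n \leq \frac{1}{3}$, so \eqref{eq.le.FCP-QQ-cond.2.non-trivial.CNS} follows by transitivity.

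I do not expect any genuine obstacle here; the only two points needing mild care are the monotonicity of $\ell \mapsto F_{U_{(\ell:n)}}(1-\alpha)$ (used to place the minimizer at $\ell = n$ with value $(1-\alpha)^n$) and the correct handling of the ceiling appearing in \eqref{eq.algo.FCP-QQ-cond.k}.
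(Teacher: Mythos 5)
Your proof is correct and follows essentially the same route as the paper's: both reduce non-emptiness of the $\argmin$ to the existence of a feasible $\ell \in \intset{n}$, use the monotonicity of $\ell \mapsto F_{U_{(\ell:n)}}(1-\alpha)$ (with $F_{U_{(n:n)}}(1-\alpha) = (1-\alpha)^n$) to identify $\ell = n$ as the best candidate, and verify the sufficient condition via the same two bounds $\frac{m}{m+1} \geq \frac{2}{3}$ and $\sqrt{\log(1/\beta)/(2(m+2))} \leq \frac{1}{3}$. The only cosmetic difference is that you unwrap the ceiling before minimizing over $\ell$, whereas the paper minimizes $\tilde{k}^{\text{cond}}_{m,n}(\ell,\alpha,\beta)$ directly and then rearranges.
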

Lemma~\ref{le.algo.FCP-QQ-cond.2.non-trivial} 
is proved in Appendix~\ref{app.pr.le.algo.FCP-QQ-cond.2.non-trivial}. 
In addition, Algorithm~\ref{algo.FCP-QQ-cond.2}~is computationally efficient. 
Indeed, 
$F_{U_{(\ell:n, k:m)}}^{-1} := F_{U_{(\ell:n)}}^{-1} \circ F_{U_{(k:m)}}^{-1}$ 
by Theorem~\ref{them:cond_main}, 
so Algorithm~\ref{algo.FCP-QQ-cond.2} 
requires at most $n$ evaluations of 
the functions $F_{U_{(\ell:n)}}$, 
$F^{-1}_{U_{(\ell:n)}}$ and $F^{-1}_{U_{(k:m)}}$, 
which are fast to evaluate even when $n,m$ are large 
as explained below Algorithm~\ref{algo.FCP-QQ-marg.2}. 

\begin{remark} \label{rk.complexity-k-ell}
Note that the computational complexities 
of all proposed algorithms have two parts: 
(i) the choice of a valid pair $(\ell,k)$, 
and (ii) the computation of $S_{(\ell,k)}$ 
given the data and $(\ell,k)$. 
The second part is not an issue since $\fh$ 
is supposed to be given and the score 
functions usually can be computed fastly.  
So, all the complexities discussed throughout 
Section~\ref{sec:FCP-QQ} are about the 
first part, which 
can be reused across multiple data sets, 
score functions, or predictors $\fh$, 
as long as $m$ (the number of agents) and 
$n$ (the size of local data sets) remain fixed. 
\end{remark}

\section{Upper bounds on the coverage} \label{sec.uppbound_cov}

In the previous section, we focused on the problem of choosing $\ell$ and $k$ such that 
$1-\alpha_{\ell, k}(\calD)$ is above $1-\alpha$, either in expectation (Section~\ref{subsec:marg_valid}), 
or with high probability (Section~\ref{sec.multi-order.algos}). 
In the present section, we raise the natural and important question 
of \emph{upper}-bounding $1-\alpha_{\ell, k}(\calD)$ in expectation or with high-probability, 
as a function of $k,\ell,m$, and~$n$. 
In particular, when the coverage is guaranteed to be larger than $1 - \alpha$, 
it is interesting to evaluate how far from $1 - \alpha$ it can be, 
in order to quantify the quality of the corresponding algorithms.

\subsection{Marginal coverage upper bounds}\label{sec.cov-upper.marginal}
In this section, we provide an upper bound on the expectation of the coverage 
$\E[ 1 - \alpha_{\ell,k}(\calD)]$ 
obtained by Algorithms~\ref{alg:FedCPQQ}--\ref{algo.FCP-QQ-marg.2}  
of Section~\ref{subsec:marg_valid}. 
Recall that, by Theorem~\ref{them:main}, when the scores are a.s. distinct, 
$\E[1-\alpha_{\ell,k}(\calD) ] 
= \IP(Y \in \Chat_{\ell, k}(X)) 
= M_{\ell, k}$. 
Hence, in that case it is sufficient to upper-bound $M_{\ell, k}$, 
as done by the following result.
\begin{theorem}\label{thm:cov_up_margin}
Given $\alpha \in (0,1)$, let $(\ell^{*},k^{*})$ and $(\tilde{\ell},\tilde{k}_{m, n}(\tilde{\ell}, \alpha))$ 
be the pairs respectively returned by Algorithms~\ref{alg:FedCPQQ} and~\ref{algo.FCP-QQ-marg.2} of Section~\ref{subsec:marg_valid}, 
where $\tilde{k}_{m, n}(\ell, \alpha)$ is defined by Eq.~\eqref{eq:k_low_bound}. 
Then, some $n_0(\alpha), m_0 \geq 1$ exist such that, 
if $n \geq n_0(\alpha)$ and $m \geq m_0$, we have 
\begin{align}
	\label{eq:marg-upper}
1 - \alpha 
\leq M_{\ell^{*}, k^{*}} 
\leq M_{\tilde{\ell}, \tilde{k}_{m, n}(\tilde{\ell}, \alpha)}
\leq 1 - \alpha + \frac{C}{(2m+1)\sqrt{n+2}} 
\, ,
\end{align}
where $C$ denotes a numerical constant 
\textup{(}for instance, the result holds true with 
$m_0=18$ and $C=27$\textup{)}.
\end{theorem}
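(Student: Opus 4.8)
The plan is to combine the closed-form/quantile characterisation of $M_{\ell,k}$ from Theorem~\ref{them:main} and Proposition~\ref{cor:bound_Ekl} with precise asymptotics for Beta quantiles. First I would handle the left inequality $1-\alpha \leq M_{\ell^*,k^*}$: this is immediate from the validity of Algorithm~\ref{alg:FedCPQQ} (Theorem~\ref{them:main}). For the middle inequality $M_{\ell^*,k^*} \leq M_{\tilde\ell,\tilde k_{m,n}(\tilde\ell,\alpha)}$, I would argue that the pair returned by \methodlow{} is itself a feasible candidate in the $\argmin$ defining Algorithm~\ref{alg:FedCPQQ}: indeed, by the chain of inequalities in Eq.~\eqref{eq:k_low_bound}, $M_{\tilde\ell,\tilde k_{m,n}(\tilde\ell,\alpha)} \geq 1-\alpha$, so $(\tilde\ell,\tilde k_{m,n}(\tilde\ell,\alpha))$ lies in the feasible set over which $M_{\ell,k}$ is minimised in Algorithm~\ref{alg:FedCPQQ}; hence the $\argmin$ value $M_{\ell^*,k^*}$ can only be smaller. (One should check the feasible set is non-empty under the stated lower bounds on $m,n$, using Lemmas~\ref{le.alg:FedCPQQ.non-trivial} and~\ref{le.algo.FCP-QQ-marg.2.non-trivial}.)

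The crux is the right inequality: bounding $M_{\tilde\ell,\tilde k_{m,n}(\tilde\ell,\alpha)}$ from above by $1-\alpha + C/((2m+1)\sqrt{n+2})$. By Proposition~\ref{cor:bound_Ekl}, it suffices to bound the upper envelope $F_{U_{(\tilde\ell:n)}}^{-1}\!\bigl(\tilde k_{m,n}(\tilde\ell,\alpha)/(m+1/2)\bigr)$. Since $\tilde\ell$ is chosen to minimise exactly this quantity over all admissible $\ell$, it is enough to exhibit \emph{one} good choice of $\ell$ and control the resulting value. The natural candidate is $\ell = \lceil (1-\alpha)(n+1)\rceil$ (the centralized split-CP choice at the agent level), for which $F_{U_{(\ell:n)}}(1-\alpha)$ is close to $1/2$, so that $\tilde k_{m,n}(\ell,\alpha) \approx (m+1/2)\cdot\tfrac12 + \tfrac12 + O(1)$, i.e. $\tilde k_{m,n}(\ell,\alpha)/(m+1/2) = \tfrac12 + O(1/m)$. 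Then I would use the local behaviour of the Beta$(\ell,n-\ell+1)$ quantile function near its median: writing $q(\beta) := F_{U_{(\ell:n)}}^{-1}(\beta)$, a Taylor/mean-value argument gives $q(\tfrac12 + \delta) - q(\tfrac12) \lesssim \delta \cdot \sup |q'|$, and $q' = 1/f_{U_{(\ell:n)}}$ where the Beta density at its mode is of order $\sqrt{n}$ (by Stirling), so $q(\tfrac12+\delta)-q(\tfrac12) = O(\delta/\sqrt{n})$. Combined with $q(\tfrac12) = \mathrm{median} = 1-\alpha + O(1/n)$ (the Beta$(\ell,n-\ell+1)$ median is within $O(1/n)$ of its mean $\ell/(n+1)$, which is $1-\alpha + O(1/n)$), and $\delta = O(1/m)$, this yields an overall bound of the form $1-\alpha + O(1/n) + O(1/(m\sqrt n))$; the $O(1/n)$ term must be absorbed, which requires being slightly more careful — choosing $\ell$ so that $F_{U_{(\ell:n)}}(1-\alpha)$ is \emph{just above} $1/2$ by an amount $\Theta(1/\sqrt n)$ makes $q(\tfrac12+\delta) = 1-\alpha + O(\delta/\sqrt n)$ directly with no residual $O(1/n)$, producing the claimed $C/((2m+1)\sqrt{n+2})$ scaling.

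The main obstacle I anticipate is the quantitative control of the Beta quantile function: I need non-asymptotic, explicit constants for (i) the gap between the median and the mean of Beta$(\ell,n-\ell+1)$, and (ii) a uniform-in-$(\ell,n)$ lower bound on the Beta density over the relevant quantile range, so that the Lipschitz constant of $q$ is genuinely $O(1/\sqrt{n+2})$ there (not just near the exact mode). This is delicate because $\ell = \lceil(1-\alpha)(n+1)\rceil$ drifts with $n$, and the density is only $\Theta(\sqrt n)$ in a $\Theta(1/\sqrt n)$-window around the mode — one must verify $\delta = \tilde k/(m+1/2) - 1/2$ stays inside that window, which is where the hypotheses $n \geq n_0(\alpha)$ and $m \geq m_0$ enter. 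A clean way to organise this is via a Gaussian (CLT-type) approximation to Beta with Berry–Esseen-type error $O(1/\sqrt n)$, reducing the problem to the standard normal quantile near $0$, whose derivative is $\sqrt{2\pi}$; the explicit values $m_0 = 18$, $C = 27$ would then come from tracking the Berry–Esseen constant together with the $O(1/m)$ rounding error in $\tilde k_{m,n}$. I would close by checking that for $n\ge n_0(\alpha)$, $m\ge m_0$ the admissible set for $\tilde\ell$ is non-empty (Lemma~\ref{le.algo.FCP-QQ-marg.2.non-trivial}), so the candidate $\ell$ above is actually available and the minimisation makes sense, completing the chain \eqref{eq:marg-upper}.
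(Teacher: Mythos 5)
Your handling of the first two inequalities is exactly the paper's: the left one is immediate from Theorem~\ref{them:main}, the middle one from the observation that the pair returned by \methodlow{} is feasible for the $\argmin$ of \method, and you correctly flag the non-emptiness checks via Lemmas~\ref{le.alg:FedCPQQ.non-trivial}--\ref{le.algo.FCP-QQ-marg.2.non-trivial}. You also correctly identify the skeleton of the third inequality: bound by the upper envelope of Proposition~\ref{cor:bound_Ekl}, use the minimality of $\tilde\ell$ to substitute a single candidate $\ell \approx (1-\alpha)n$, and exploit local Lipschitzness of $F_{U_{(\ell:n)}}^{-1}$ with constant $\mathcal{O}(1/\sqrt{n})$.

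The gap is in the choice of base point for the Taylor/mean-value step. You expand $q := F_{U_{(\ell:n)}}^{-1}$ around $1/2$, i.e.\ around the Beta median, and then try to control $q(1/2) - (1-\alpha)$. That detour introduces two problems that your patch does not resolve: (i) $F_{U_{(\ell:n)}}(1-\alpha)$ is not $1/2 + O(1/m)$; it deviates from $1/2$ by something of order $1/\sqrt{n}$, an amount you cannot tune to a prescribed value because $\ell$ ranges over integers with cdf-granularity $\Theta(1/\sqrt{n})$; and (ii) even granting your tuned $\ell$, writing $q(1/2+\delta) = 1-\alpha + O(\delta/\sqrt n)$ with $\delta = \Theta(1/\sqrt{n}) + O(1/m)$ still produces an $O(1/n)$ term that does not vanish as $m \to \infty$, contradicting the target $C/((2m+1)\sqrt{n+2})$. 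The clean fix --- and this is exactly what the paper does --- is to expand around $x := F_{U_{(\ell:n)}}(1-\alpha)$ rather than $1/2$: since $F_{U_{(\ell:n)}}$ is continuous and strictly increasing, $q(x) = 1-\alpha$ \emph{exactly}, so there is no median--mean residual to absorb at all. One then has $\tilde{k}_{m,n}(\ell,\alpha)/(m+1/2) \leq x + 3/(2m+1)$ by the ceiling bound, and the whole estimate reduces to $q(x + \varepsilon) - q(x)$ with $\varepsilon = 3/(2m+1)$, which the paper's Theorem~\ref{thm:len_bound2} bounds by $\varepsilon \cdot \sqrt{2}/(g(\cdot)\sqrt{n+2})$; the restriction $x \in [1/3,7/12]$ coming from $n \geq n_0(\alpha)$ (Eq.~\eqref{eq.lim.FUnr(r)}) and $m \geq 18$ then yields $C = 27$. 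Your proposed Berry--Esseen route to the Lipschitz constant is a genuinely different technical path from the paper's Theorem~\ref{thm:len_bound2} (which combines the mean value theorem, unimodality of the Beta density, and the sub-Gaussian concentration of Beta from Proposition~\ref{pro.FUr}); it is plausible but would require independent bookkeeping of constants, and in any case it does not rescue the expansion-around-the-median issue above.
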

Theorem~\ref{thm:cov_up_margin} is proved in Appendix~\ref{thm:cov_up_margin_proof}.
Combined with Theorem~\ref{them:main}, it shows that, 
if the scores are a.s. distinct, 
and when $m,n$ are sufficiently large, 
Algorithms~\ref{alg:FedCPQQ} and~\ref{algo.FCP-QQ-marg.2} return prediction sets with marginal coverage 
between $1-\alpha$ and $1-\alpha + \mathcal{O}(n^{-1/2} m^{-1})$. 
In particular, their marginal coverages tend to $1-\alpha$ 
when either $n$ or $m$ tends to infinity, 
a result that was not obtained previously for 
any one-shot FL marginally-valid prediction set~\cite{humbert2023one}. 
 
Note that we do not exactly recover the upper bound 
$1-\alpha + 1/(mn)$ obtained in the centralized case 
when there are $mn$ calibration points 
---Eq.~\eqref{eq.splitCP.marg-cov-upp} 
in Section~\ref{subsec:splitCP}. 
For Algorithm~\ref{alg:FedCPQQ}, 
following \cite{humbert2023one} we conjecture the stronger result 
$M_{\ell^{*}, k^{*}} \leq 1-\alpha + \mathcal{O}(1/mn)$.  
While our experiments in Section \ref{sec:synth_data} corroborate this, proving it would require a different analysis that we leave for future work.
For Algorithm~\ref{algo.FCP-QQ-marg.2}, 
the results of Section~\ref{sec:synth_data.equalnj} 
suggest that Eq.~\eqref{eq:marg-upper} 
is tight (up to the value of~$C$). 

\subsection{Training-conditional coverage upper bounds}\label{sec.cov-upper.conditional}
We now provide high-probability upper bounds
on the coverage $1 - \alpha_{\ell,k}(\calD)$ 
obtained by Algorithms~\ref{algo.FCP-QQ-cond.1}--\ref{algo.FCP-QQ-cond.2}
of Section~\ref{sec.multi-order.algos}. 
\begin{theorem} \label{thm:cov_up_cond}
Let $\alpha \in (0,1)$ and $\beta \in (0,1)$ be given.
Recall that the pairs $(\ell^*_{c}, k^*_{c})$ and 
$(\tilde{\ell}^{\text{cond}}, \tilde{k}^{\text{cond}}_{m,n} (\tilde{\ell}^{\text{cond}}, \alpha, \beta))$ 
respectively denote the pairs $(\ell,k)$ chosen by 
Algorithms~\ref{algo.FCP-QQ-cond.1} and~\ref{algo.FCP-QQ-cond.2}.
In the setting of Section~\ref{sec:QQ}, with Assumption~\ref{ass:same_n}, 
assuming in addition that the scores $S_{i,j},S$ are a.s. distinct, 
some $n'_0(\alpha), m'_0 (\beta) \geq 1$ exist  
such that, for any $n \geq n'_0(\alpha)$ and $m \geq m'_0 (\beta)$, 
\begin{align}
\label{eq.thm:cov_up_cond.algo-cond-1}
\P \left(
1 - \alpha_{\ell^*_{c}, k^*_{c}} (\calD) 
\leq 1-\alpha 
+ \dfrac{\Delta(\beta)}{\sqrt{(m+2)(n+2)}}
\right) 
&\geq 1 - \beta 
\\
\label{eq.thm:cov_up_cond.algo-cond-2}
\text{and} \qquad 
\P \left(
1 - \alpha_{\tilde{\ell}^{\text{cond}}, \tilde{k}^{\text{cond}}_{m,n} (\tilde{\ell}^{\text{cond}}, \alpha, \beta)} (\calD) 
\leq 1-\alpha 
+ \dfrac{\Delta(\beta)}{\sqrt{(m+2)(n+2)}}
\right) 
&\geq 1 - \beta 
\, ,
\end{align}
with $\Delta(\beta) = 12 \max\{2 \sqrt{\log(1/\beta)} , 1 \}$.
\end{theorem}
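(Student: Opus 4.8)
\noindent\emph{Proof strategy.} The plan is to reduce the probabilistic statement to a deterministic inequality on Beta--Beta quantiles. Since the scores are a.s.\ distinct, Theorem~\ref{them:cond_main} gives that $1-\alpha_{\ell,k}(\calD)$ has cdf $F_{U_{(\ell:n,k:m)}}$, so its $(1-\beta)$-quantile equals $F_{U_{(\ell:n,k:m)}}^{-1}(1-\beta)$ and $\P\bigl(1-\alpha_{\ell,k}(\calD)\leq F_{U_{(\ell:n,k:m)}}^{-1}(1-\beta)\bigr)\geq 1-\beta$. Hence it suffices to prove, for each pair $(\ell,k)$ returned by Algorithms~\ref{algo.FCP-QQ-cond.1} and~\ref{algo.FCP-QQ-cond.2}, that $F_{U_{(\ell:n,k:m)}}^{-1}(1-\beta)\leq 1-\alpha+\Delta(\beta)/\sqrt{(m+2)(n+2)}$. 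Moreover, the pair produced by \methodcondFLk{} satisfies the constraint $F_{U_{(\ell:n,k:m)}}^{-1}(\beta)\geq 1-\alpha$ (as noted after Proposition~\ref{prop:low_bound_Fl}), hence is feasible for the minimization defining \methodcondFL; since (by Lemma~\ref{le.algo.FCP-QQ-cond.2.non-trivial}) that argmin is then non-empty for $n\geq n'_0(\alpha)$, $m\geq m'_0(\beta)$, we get $F_{U_{(\ell^*_c:n,k^*_c:m)}}^{-1}(1-\beta)\leq F_{U_{(\tilde{\ell}^{\text{cond}}:n,\tilde{k}^{\text{cond}}_{m,n}:m)}}^{-1}(1-\beta)$. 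So it is enough to bound the \methodcondFLk{} quantity.

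Next I would peel off the ``outer'' order statistic. Using $F_{U_{(\ell:n,k:m)}}^{-1}=F_{U_{(\ell:n)}}^{-1}\circ F_{U_{(k:m)}}^{-1}$, I need an upper bound on $F_{U_{(k:m)}}^{-1}(1-\beta)$, the $(1-\beta)$-quantile of a $\Beta(k,m-k+1)$ law. I would establish the upper-tail counterpart of the concentration estimate behind Proposition~\ref{prop:low_bound_Fl}, namely
\[
F_{U_{(k:m)}}^{-1}(1-\beta)\leq \frac{k}{m+1}+\sqrt{\frac{\log(1/\beta)}{2(m+2)}}\,,
\]
e.g.\ from sub-Gaussianity of the Beta distribution with variance proxy $\tfrac1{4(m+2)}$ (equivalently a binomial tail bound). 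Inserting the definition $\tilde{k}^{\text{cond}}_{m,n}(\ell,\alpha,\beta)=\lceil (m+1)(F_{U_{(\ell:n)}}(1-\alpha)+\sqrt{\log(1/\beta)/(2(m+2))})\rceil$ yields $\tfrac{k}{m+1}\leq F_{U_{(\ell:n)}}(1-\alpha)+\sqrt{\log(1/\beta)/(2(m+2))}+\tfrac1{m+1}$, whence, by monotonicity of $F_{U_{(\ell:n)}}^{-1}$,
\[
F_{U_{(\tilde{\ell}^{\text{cond}}:n,\tilde{k}^{\text{cond}}_{m,n}:m)}}^{-1}(1-\beta)\leq F_{U_{(\tilde{\ell}^{\text{cond}}:n)}}^{-1}\!\Bigl(F_{U_{(\tilde{\ell}^{\text{cond}}:n)}}(1-\alpha)+\delta_m\Bigr)\,,\qquad \delta_m:=2\sqrt{\frac{\log(1/\beta)}{2(m+2)}}+\frac1{m+1}\,.
\]

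The crux is then to control the increment $F_{U_{(\ell:n)}}^{-1}\bigl(F_{U_{(\ell:n)}}(1-\alpha)+\delta_m\bigr)-(1-\alpha)$. Because $\tilde{\ell}^{\text{cond}}$ is a minimizer, it suffices to bound this quantity for one convenient feasible reference index, and I would take $\ell_0=\lceil(1-\alpha)(n+1)\rceil$, for which $1-\alpha$ sits near the mean of the $\Beta(\ell_0,n-\ell_0+1)$ law. A Stirling/local-CLT estimate then gives a lower bound on the density $f_{U_{(\ell_0:n)}}$ near $1-\alpha$ of order $c\sqrt{n}$ with $c$ an \emph{absolute} constant (indeed $f_{U_{(\ell_0:n)}}(1-\alpha)\gtrsim\sqrt{(n+1)/((1-\alpha)\alpha)}\geq\sqrt{2(n+1)/\pi}$, since $(1-\alpha)\alpha\leq\tfrac14$); writing $\int_{1-\alpha}^{x_1}f_{U_{(\ell_0:n)}}=\delta_m$ gives $x_1-(1-\alpha)\leq \delta_m/(c\sqrt n)\asymp \sqrt{\log(1/\beta)}/\sqrt{mn}$, the target rate. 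A short bootstrap shows $x_1$ stays within one standard deviation of $1-\alpha$ (so the density bound is valid there) as soon as $m\geq m'_0(\beta)$; the $\tfrac1{m+1}$ part of $\delta_m$ only adds a lower-order $O(1/(m\sqrt n))$ term. Feasibility of $\ell_0$ (i.e.\ $\tilde{k}^{\text{cond}}_{m,n}(\ell_0,\alpha,\beta)\leq m$) holds once $F_{U_{(\ell_0:n)}}(1-\alpha)$ is close to $\tfrac12$, which needs $n\geq n'_0(\alpha)$, and then $m\geq m'_0(\beta)$; collecting the numerical constants yields $\Delta(\beta)=12\max\{2\sqrt{\log(1/\beta)},1\}$. (This mirrors the centralized analysis recalled in Remark~\ref{rk.splitCP.asympt}.)

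The main obstacle is this last step: turning ``the $\Beta(\ell_0,n-\ell_0+1)$ density near $1-\alpha$ is $\gtrsim\sqrt n$'' into a fully explicit, non-asymptotic Lipschitz bound on the quantile function valid on an interval wide enough to contain $F_{U_{(\ell_0:n)}}(1-\alpha)+\delta_m$ and yielding the stated constant --- the density degenerates near $0$ and $1$, so one must control both the choice of $\ell_0$ and the effectiveness of the Stirling estimates, which is precisely where the thresholds $n'_0(\alpha),m'_0(\beta)$ originate. A secondary point to handle with care is that the two concentration steps compose multiplicatively: the $m^{-1/2}$-scale fluctuation of the outer $\Beta(k,m-k+1)$ quantile is further contracted by the $n^{-1/2}$ Lipschitz factor of the inner quantile function, which is what produces the $(mn)^{-1/2}$ rate rather than $m^{-1/2}+n^{-1/2}$.
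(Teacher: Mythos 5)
Your overall strategy matches the paper's: reduce the probabilistic statement to a deterministic upper bound on $F_{U_{(\ell:n,k:m)}}^{-1}(1-\beta)$ (via Theorem~\ref{them:cond_main} and the a.s.\ distinctness), peel off the outer $\Beta(k,m-k+1)$ quantile using sub-Gaussian concentration (the upper-tail twin of Proposition~\ref{prop:low_bound_Fl}), and then bound the resulting increment of the inner quantile function $F_{U_{(\ell:n)}}^{-1}$ at a reference index near $\lceil(1-\alpha)n\rceil$. Your streamlining ``\methodcondFL{} minimum $\leq$ \methodcondFLk{} value, and \methodcondFLk{} minimizes in $\ell$, so bound a reference $\ell_0$'' is valid and is equivalent in effect to the paper's use of the single reference pair $(\lceil n(1-\alpha)\rceil , \tilde k^{\text{cond}}_{m,n}(\lceil n(1-\alpha)\rceil,\alpha,\beta))$ as a feasible comparator for both algorithms. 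Where you genuinely diverge is in the crux you correctly identify: a non-asymptotic, explicit local-Lipschitz bound on $F_{U_{(\ell:n)}}^{-1}$. You propose a direct Stirling/local-CLT density lower bound of order $\sqrt{n}$ near $1-\alpha$, together with a bootstrap to stay in the bulk where the estimate holds; the paper instead proves Theorem~\ref{thm:len_bound2}, which avoids Stirling entirely: it uses the mean value theorem, the fact that $f_{U_{(\ell:n)}}$ is unimodal to reduce the infimum over an interval to a one-sided supremum, and then converts the concentration estimate of Proposition~\ref{pro.FUr} (applied twice, once on each side) into the needed density lower bound and hence the $\mathcal{O}(n^{-1/2})$ quantile increment. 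The paper's route has the advantage of producing a clean explicit constant through the auxiliary function $g$, and of only requiring inputs (Beta concentration, unimodality/log-concavity) that are already in hand; your Stirling route would also work but would need a careful non-asymptotic Stirling bound plus verification that $x_1$ stays within the validity region, and you would additionally have to track the $\sqrt{2\pi}$ factor you dropped in your density estimate. The thresholds $n_0'(\alpha),m_0'(\beta)$ you locate at the same places as the paper (via $F_{U_{(\lceil(1-\alpha)n\rceil:n)}}(1-\alpha)\to 1/2$ and smallness of $\delta_m$).
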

Theorem~\ref{thm:cov_up_cond} is proved in Appendix~\ref{thm:cov_up_cond_proof}.
Remarkably, the bounds 
are of the form $1 - \alpha + \mathcal{O}(1/\sqrt{mn})$, 
exactly as for split CP in the centralized case 
(see Remark~\ref{rk.splitCP.asympt} in Section~\ref{subsec:splitCP}). 
Therefore, up to constants, using 
one-shot FL training-conditionally valid prediction sets 
---also known as $(\alpha,\beta)$-tolerance regions---
such as Algorithms~\ref{algo.FCP-QQ-cond.1} and~\ref{algo.FCP-QQ-cond.2} incurs no loss
compared to a centralized algorithm such as split CP \cite{vovk2012conditional}. 

A consequence of Theorem~\ref{thm:cov_up_cond} is that, 
under the same assumptions, 
for any $n \geq n'_0(\alpha)$ and $m \geq m'_0 (\beta)$, 
\begin{align}
	\label{eq:bilateral-confidence}
\P \left( 
1-\alpha \leq 
1 - \alpha_{\ell^*_{c}, k^*_{c}} (\calD) 
\leq 1-\alpha 
+ \dfrac{\Delta(\beta)}{\sqrt{(m+2)(n+2)}}
\right) 
\geq 1 - 2 \beta 
\end{align} 
(by Theorem~\ref{them:cond_main} and the union bound), 
and a similar results holds true for the output of 
Algorithm~\ref{algo.FCP-QQ-cond.2}. 
This shows that the deviations of the coverage are (at most) of order $(mn)^{-1/2}$, 
as for split CP in the centralized case \cite[Proposition~2a]{vovk2012conditional}.
The proof of this theorem, 
and by extension Eq.~\eqref{eq:bilateral-confidence}, 
rely on the fact that  
\[ 
\bigl[ F_{ U_{(\ell:n, k:m)} }^{-1}(\beta) 
\, , \, 
F_{ U_{(\ell:n, k:m)} }^{-1}(1-\beta) \bigr] 
\subseteq 
\left[ 1-\alpha 
\, , \, 
1-\alpha + \frac{\Delta(\beta)}{\sqrt{(m+2)(n+2)}}
\right] 
\] 
when the pair $(\ell, k)$ is chosen by our algorithms. 
Then, thanks to Eq.~\eqref{eq:cov_two_side} 
in Theorem~\ref{them:cond_main}, we know that, 
with probability $1-2\beta$, 
the coverage random variable $1-\alpha_{\ell,k}(\calD)$ 
belongs to the first interval, which concludes the proof.
This means that, when $(\ell, k)$ is the pair 
chosen by either Algorithm~\ref{algo.FCP-QQ-cond.1} 
or Algorithm~\ref{algo.FCP-QQ-cond.2}, 
we have that $[F_{ U_{(\ell:n, k:m)} }^{-1}(\beta), F_{ U_{(\ell:n, k:m)} }^{-1}(1-\beta)]$ 
contains $1-\alpha_{\ell, k}(\calD)$ with probability $1-2\beta$, 
and that both $F_{ U_{(\ell:n, k:m)} }^{-1}(\beta)$ 
and $F_{ U_{(\ell:n, k:m)} }^{-1}(1-\beta)$ 
tend to $1-\alpha$ at a rate of at least $(mn)^{-1/2}$. 
According to the numerical experiments of 
Section~\ref{sec:synth_data}, this rate $(mn)^{-1/2}$ 
seems exact (that is, not improvable in worst-case) 
when $(\ell, k)$ is chosen by Algorithm~\ref{algo.FCP-QQ-cond.2}.

\begin{remark} \label{rk.algo-cond.ell-fixe}
The proof of Theorem~\ref{thm:cov_up_cond} also shows that  
\[
\P \left( 
1 - \alpha_{ \ell_n , \tilde{k}^{\text{cond}}_{m,n} ( \ell_n , \alpha, \beta)} (\calD) 
\leq 1-\alpha 
+ \dfrac{\Delta(\beta)}{\sqrt{(m+2)(n+2)}}
\right) 
\geq 1 - \beta 
\]
for $\ell_n = \lceil n(1-\alpha) \rceil$. 
This inequality could be used to build prediction sets 
with a smaller computational complexity 
than Algorithm~\ref{algo.FCP-QQ-cond.2}, 
at the price of being slightly more conservative. 
By Theorem~\ref{thm.asympt-unif-order-stat} 
in Appendix~\ref{app.more-details-order-stat.asymp}, 
the proof can also be generalized to 
any sequence $(\ell_n)_{n \geq 1}$ such that 
$\ell_n \in \intset{n}$ for every $n$ 
and $\ell_n = n (1-\alpha) + \mathrm{o}(\sqrt{n})$ 
as $n \to +\infty$. 
\end{remark}

\section{Generalization to different $n_j$} \label{sec:diff_n}

In the previous sections, for simplicity, 
we assume that all agents have the same number $n$ 
of (calibration) data points. 
Let us now consider the general case, that is, 
the setting described in Section~\ref{sec:QQ} 
\emph{without} Assumption~\ref{ass:same_n}. 
Since the $m$ agents have data sets of possibly different sizes $n_1, \ldots, n_m$, 
one needs to adapt the order of the local quantile to each agent. 
Given $(\ell_1, \ldots, \ell_m) \in \intset{n_1} \times \cdots \times \intset{n_m}$, 
the QQ estimator (Definition~\ref{def:QQ}) is now defined by
\begin{equation} \label{eq:CP-QQ_nj}
	S_{(\ell_1, \dots, \ell_m, k)} 
	:= \Qh_{(k:m)}\left(\Qh_{(\ell_1:n_1)}(\mathcal{S}_1), \ldots, \Qh_{(\ell_m:n_m)}(\mathcal{S}_m)\right) 
	\, ,
\end{equation}
where $\mathcal{S}_j = (S_{i, j})_{1 \leq i \leq n_j}$ denotes 
the set of local scores computed by the $j$-th agent. 
In words, each agent $j \in \intset{m}$ now sends to the server 
its $\ell_j$-th smallest score and the server computes 
the $k$-th smallest value of these values. 
The associated prediction set is, for any $x\in \calX$,
\begin{equation} \label{eq:set_nj}
\Chat_{\ell_1, \dots, \ell_m, k}(x) 
:= \left\{ y \in \calY : s(x, y) \leq S_{(\ell_1, \dots, \ell_m, k)} \right\} 
\, ,
\end{equation}
and its miscoverage rate is denoted by 
\begin{equation} \label{eq:Mkl_cond_nj}
\alpha_{\ell_1, \dots, \ell_m, k}(\calD) 
:= \IP\bigl( Y \notin \Chat_{\ell_1, \dots, \ell_m, k}(X) \,\big\vert\, \calD \bigr)
\, .
\end{equation}
When the $n_j$ are different, the difficulty is that the random variables 
$(\Qh_{(\ell_j:n_j)}(\mathcal{S}_j))_{1 \leq j \leq m}$ 
in the right-hand side of Eq.~\eqref{eq:CP-QQ_nj} 
are not identically distributed as they are computed on data sets of different sizes. 
Hence, in this setting, the counterpart of Theorem~\ref{them:main} 
of Section~\ref{subsec:marg_valid} 
(marginal guarantee) is as follows.
\begin{theorem} \label{thm:main_nj}
In the setting of Section~\ref{sec:QQ}, \emph{without} Assumption~\ref{ass:same_n}, 
for any $(\ell_1, \ldots, \ell_m, k) \in \intset{n_1} \times \cdots \times \intset{n_m} \times \llbracket m \rrbracket$, 
the set $\Chat_{\ell_1, \dots, \ell_m, k}$ defined by Eq.~\eqref{eq:set_nj} satisfies 
\begin{equation} \label{eq:main_equa_nj}
\IP\bigl( Y \in \Chat_{\ell_1, \dots, \ell_m, k}(X) \bigr) 
\geq M_{\ell_1, \ldots, \ell_m, k}
\end{equation}
with 
\begin{align*}
M_{\ell_1, \ldots, \ell_m, k} 
&= 1 - \dfrac{1}{1+\sum_{j=1}^{m}n_j} 
\sum^{m}_{j=k} \sum_{a \in \mathcal{P}_j} \sum^{n_{a_1}}_{i_1=\ell_{a_1}} \cdots \sum^{n_{a_{j}}}_{i_{j}=\ell_{a_{j}}} \sum^{\ell_{a_{j+1}} - 1}_{i_{j+1}=0} \cdots \sum^{\ell_{a_m}-1}_{i_m=0} 
\dfrac{\binom{n_{a_1}}{i_1} \cdots \binom{n_{a_m}}{i_m}}{\binom{n_{1} + \cdots + n_{m}}{i_1 + \cdots + i_m}} 
\, ,
\end{align*}
where 
$\mathcal{P}_{j}$ denotes the set of permutations 
$a=(a_1, \ldots, a_m)$ of $\{1, \ldots, m\}$ 
such that $a_1 < a_2 < \ldots < a_{j}$ 
and $a_{j + 1 } < a_{j + 2 } < \ldots < a_m$. 
Moreover, when the associated scores $(S_{i, j})_{1 \leq j \leq m, 1 \leq i \leq n_j}$ and $S:= s(X,Y)$ are almost surely distinct, 
Eq.~\eqref{eq:main_equa_nj} is an equality.
\end{theorem}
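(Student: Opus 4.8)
The plan is to generalize the proof of Theorem~\ref{them:main} to accommodate agents with heterogeneous sample sizes. The starting point is the identity from Section~\ref{sec.prelim.perf}: when the scores are a.s.\ distinct, $1-\alpha_{\ell_1,\dots,\ell_m,k}(\calD) = F_S(S_{(\ell_1,\dots,\ell_m,k)})$, and by the probability integral transform we may assume without loss of generality that the scores $S_{i,j}$ and $S$ are i.i.d.\ uniform on $[0,1]$ (the general non-distinct case then follows by a coupling/monotonicity argument, exactly as in the proof of Theorem~\ref{them:main}, since adding ties can only enlarge the prediction set and hence increase coverage). Under this reduction, $\IP(Y\in\Chat_{\ell_1,\dots,\ell_m,k}(X))=\E[U_{(\ell_1,\dots,\ell_m,k)}]$, where $U_{(\ell_1,\dots,\ell_m,k)}$ is the $k$-th order statistic among the $m$ independent variables $V_j := U_{(\ell_j:n_j)}$, the $\ell_j$-th order statistic of an i.i.d.\ uniform sample of size $n_j$. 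Note the $V_j$ are independent but \emph{not} identically distributed, which is the crux of the difference with Theorem~\ref{them:main}.

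The key computational step is to evaluate $\E[V_{(k)}]$ where $V_{(k)}$ is the $k$-th smallest of the independent $V_1,\dots,V_m$. I would use the standard representation $\E[V_{(k)}] = \int_0^1 \IP(V_{(k)} > t)\,dt = \int_0^1 \IP(\text{at most } k-1 \text{ of the } V_j \text{ exceed } t)\,dt$. Expanding the event ``exactly $m-j$ of the $V_j$ are $>t$'' over all subsets and summing $j$ from $k$ to $m$ — equivalently summing over the permutation classes $\mathcal{P}_j$ to avoid overcounting — gives a sum of products $\prod \IP(V_{a_i}>t)\prod \IP(V_{a_i}\le t)$. Since $V_j = U_{(\ell_j:n_j)}$, its cdf at $t$ is $\IP(V_j \le t) = \sum_{i=\ell_j}^{n_j}\binom{n_j}{i}t^i(1-t)^{n_j-i}$ (the regularized incomplete Beta function), and the survival function is $\IP(V_j > t) = \sum_{i=0}^{\ell_j-1}\binom{n_j}{i}t^i(1-t)^{n_j-i}$. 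Substituting these, expanding the product over agents into a multi-index sum over $(i_1,\dots,i_m)$, and integrating term by term using $\int_0^1 t^{a}(1-t)^{b}\,dt = \mathrm{B}(a+1,b+1) = a!\,b!/(a+b+1)!$ produces the ratio $\binom{n_{a_1}}{i_1}\cdots\binom{n_{a_m}}{i_m}/\binom{n_1+\cdots+n_m}{i_1+\cdots+i_m}$ divided by $1+\sum_j n_j$, after the factorials telescope. Writing $\E[V_{(k)}] = 1 - \int_0^1 \IP(V_{(k)}\le t)\,dt$ and performing the analogous expansion for the lower-tail event yields precisely the displayed formula for $M_{\ell_1,\dots,\ell_m,k}$; care must be taken that the summation ranges ($i_{j+1},\dots,i_m$ running from $0$ to $\ell_{a_r}-1$ and $i_1,\dots,i_j$ from $\ell_{a_r}$ to $n_{a_r}$) match the split between ``survival'' and ``cdf'' factors in each permutation class.

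The main obstacle I anticipate is bookkeeping the combinatorics of the permutation classes $\mathcal{P}_j$ correctly so that each configuration of ``which agents have their local quantile below $t$'' is counted exactly once: because the $V_j$ are not exchangeable we cannot simply multiply by $\binom{m}{j}$ as in Theorem~\ref{them:main}, and instead must sum over ordered partitions of $\{1,\dots,m\}$ into a ``small'' block $\{a_1<\dots<a_j\}$ and a ``large'' block $\{a_{j+1}<\dots<a_m\}$. A secondary technical point is justifying the reduction to the uniform case and the inequality in Eq.~\eqref{eq:main_equa_nj} for general (possibly tied) scores; this mirrors the argument already used for Theorem~\ref{them:main}, relying on Lemma~\ref{lemma:centra_unif_kl}-type stochastic-domination results (here the heterogeneous-$n_j$ analogue, which I would either state as an auxiliary lemma in the appendix or absorb into the same proof by noting that $S_{(\ell_1,\dots,\ell_m,k)}$ is a monotone function of the empirical quantiles and applying the standard quantile-coupling bound $\IP(Y\in\Chat(X)\mid\calD)\ge F_S^-(S_{(\ell_1,\dots,\ell_m,k)})$ with equality under continuity). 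Once the uniform-case identity is established, the "$\ge$" in the general case and the "$=$" under a.s.\ distinctness both follow, completing the proof.
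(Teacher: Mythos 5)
Your proposal is correct and follows essentially the same route as the paper's proof: reduce to the uniform case via the heterogeneous-$n_j$ analogue of Lemma~\ref{lemma:centra_unif_kl}, write $M_{\ell_1,\dots,\ell_m,k}=\E[U_{(\ell_1,\dots,\ell_m,k)}]=1-\int_0^1 F_{U_{(\ell_1,\dots,\ell_m,k)}}(t)\,dt$, expand the cdf of the $k$-th order statistic of the inid $V_j=U_{(\ell_j:n_j)}$ as a sum over the ordered bipartitions $\mathcal{P}_j$ (this is exactly the Balakrishnan permanent-type formula the paper invokes), substitute the incomplete-Beta cdf and survival formulas, expand into a multi-index sum, and integrate term by term with the Beta function. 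One small slip worth flagging: $\{V_{(k)}>t\}$ means ``at most $k-1$ of the $V_j$ are $\leq t$,'' not ``at most $k-1$ of the $V_j$ exceed $t$''; your subsequent expansion (summing $j$ from $k$ to $m$ over the number of $V_{a_i}\leq t$) is in fact the cdf event $\{V_{(k)}\leq t\}$, which is consistent with your final identity $\E[V_{(k)}]=1-\int_0^1\P(V_{(k)}\leq t)\,dt$ and matches the paper.
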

Theorem~\ref{thm:main_nj} is proved in Appendix~\ref{thm:main_nj_proof}.
Furthermore, the miscoverage rate defined 
by Eq.~\eqref{eq:Mkl_cond_nj} can be controlled 
with high probability thanks to the following result, 
which generalizes Theorem~\ref{them:cond_main} 
in Section~\ref{sec.multi-order.algos}.
\begin{theorem} \label{them:cond_main_nj}
In the setting of Section~\ref{sec:QQ}, \emph{without} Assumption~\ref{ass:same_n}, 
for any $(\ell_1, \ldots, \ell_m, k) \in \intset{n_1} \times \cdots \times \intset{n_m} \times \llbracket m \rrbracket$, 
and any $\alpha \in (0, 1)$, 
the miscoverage random variable 
$\alpha_{\ell_1, \dots, \ell_m, k}(\calD)$ defined 
by Eq.~\eqref{eq:Mkl_cond_nj} satisfies 
\begin{align}\label{eq:cond_main_equa_nj}
\P \bigl(1-\alpha_{\ell_1, \dots, \ell_m, k}(\calD) \geq 1-\alpha\bigr) 
&\geq \mathrm{PB}\left(k-1 ; 
	\bigl( F_{U_{(\ell_j:n_j)}}(1-\alpha) \bigr)_{1 \leq j \leq m} 
	\right) 
\, ,
\end{align}
where $F_{U_{(\ell_j:n_j)}}$ denotes the cdf of 
the $\Beta(\ell_j, n_j-\ell_j+1)$ distribution 
and for every $t \in \R$ and $u \in [0,1]^m$, 
$\mathrm{PB}(\cdot;u)$ denotes the cdf of a Poisson-Binomial (PB) 
random variable with parameter $u$\footnote{Recall that the PB distribution 
with parameter $u \in [0,1]^m$ is the distribution 
of $Z_1 + \cdots + Z_m$ where the random variables 
$Z_j$ are independent and respectively follow 
the Bernoulli$(u_j)$ distribution. 
It therefore generalizes the Binomial$(m,p)$ 
distribution, which corresponds to the case 
where $u_1 = \cdots = u_m = p$. 
More details can be found in \citep{tang2023poisson}. }.
Moreover, when the associated scores 
$(S_{i, j})_{1 \leq j \leq m, 1 \leq i \leq n_j}$ 
and $S:= s(X,Y)$ are almost surely distinct, 
Eq.~\eqref{eq:cond_main_equa_nj} is an equality.
\end{theorem}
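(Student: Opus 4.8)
The plan is to reduce Theorem~\ref{them:cond_main_nj} to an event about order statistics of independent (but non-identically distributed) uniform random variables, mirroring the proof of Theorem~\ref{them:cond_main} but using a Poisson-Binomial in place of a Binomial. First I would recall, as in Eq.~\eqref{eq.lien-score-coverage.QQ}, that $1-\alpha_{\ell_1,\dots,\ell_m,k}(\calD) = F_S(S_{(\ell_1,\dots,\ell_m,k)})$, so the event $\{1-\alpha_{\ell_1,\dots,\ell_m,k}(\calD)\geq 1-\alpha\}$ is equivalent to $\{S_{(\ell_1,\dots,\ell_m,k)} \geq F_S^{-1}(1-\alpha)\}$ up to a null set (with equality throughout when the scores are a.s.\ distinct, by the monotonicity of $F_S$ and $F_S^{-1}$). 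Using the probability integral transform, write $S_{i,j} \egalloi F_S^{-1}(U_{i,j})$ for i.i.d.\ $U_{i,j}\sim\mathcal{U}(0,1)$; then $\Qh_{(\ell_j:n_j)}(\mathcal{S}_j) \geq F_S^{-1}(1-\alpha)$ is (essentially) equivalent to $U_{(\ell_j:n_j)} \geq q$ where $q := F_S(F_S^{-1}(1-\alpha))\geq 1-\alpha$, since at most $\ell_j - 1$ of the $U_{i,j}$ fall below $q$. The key combinatorial observation is then that $S_{(\ell_1,\dots,\ell_m,k)}$, the $k$-th order statistic of the $m$ local quantiles, is $\geq F_S^{-1}(1-\alpha)$ if and only if at least $m-k+1$ of the agents satisfy $\Qh_{(\ell_j:n_j)}(\mathcal{S}_j)\geq F_S^{-1}(1-\alpha)$.

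Next I would introduce the indicator variables $Z_j := \Ind{U_{(\ell_j:n_j)} \geq q}$ for $j\in\intset{m}$. These are independent (the local samples $\mathcal{S}_j$ are independent across agents) but \emph{not} identically distributed, since the agents have different sample sizes $n_j$; specifically, $\P(Z_j = 0) = \P(U_{(\ell_j:n_j)} < q)$. The crucial identity is $\P(U_{(\ell_j:n_j)} < q) = \P(\mathrm{Binomial}(n_j,q) \geq \ell_j)$; equivalently, $\P(Z_j = 1) = \P(U_{(\ell_j:n_j)} \geq q) = \P(\mathrm{Binomial}(n_j, q) \leq \ell_j - 1) = F_{U_{(\ell_j:n_j)}}(q)$, using the standard relation between the cdf of a Beta$(\ell_j, n_j-\ell_j+1)$ order statistic and a Binomial tail. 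Since $q \geq 1-\alpha$ and $F_{U_{(\ell_j:n_j)}}$ is nondecreasing, $\P(Z_j = 1) \geq F_{U_{(\ell_j:n_j)}}(1-\alpha)$, with equality when $q = 1-\alpha$, which holds when $F_S$ is continuous (a.s.\ distinct scores).

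Then the event $\{1-\alpha_{\ell_1,\dots,\ell_m,k}(\calD) \geq 1-\alpha\}$ contains $\{\sum_{j=1}^m Z_j \geq m-k+1\}$, i.e.\ its complement is contained in $\{\sum_{j=1}^m Z_j \leq m-k\}$. By definition, $\sum_{j=1}^m Z_j$ follows a Poisson-Binomial distribution with parameter vector $(\P(Z_j=1))_{j}$. A monotonicity argument for the Poisson-Binomial cdf (lower success probabilities make $\{\sum Z_j \leq m-k\}$ \emph{more} likely, hence $\{\sum Z_j \geq m-k+1\}$ less likely) shows that $\P(\sum_j Z_j \geq m-k+1) \geq \P(\mathrm{PB}((F_{U_{(\ell_j:n_j)}}(1-\alpha))_j) \geq m-k+1) = 1 - \mathrm{PB}(m-k;\,(F_{U_{(\ell_j:n_j)}}(1-\alpha))_j)$. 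A reindexing of the Poisson-Binomial tail via the complement (replacing each $u_j$ by $1-u_j$ and $m-k$ by $k-1$, using the symmetry $\mathrm{PB}(\text{upper tail at } m-k;\,u) = \mathrm{PB}(k-1;\,\mathbf{1}-u)$ of the Poisson-Binomial) turns this into the stated bound $\mathrm{PB}(k-1;\,(F_{U_{(\ell_j:n_j)}}(1-\alpha))_j)$; I would double-check which convention matches the one fixed in the footnote. Finally, when the scores are a.s.\ distinct, every containment above is an equality, so \eqref{eq:cond_main_equa_nj} becomes an equality. The main obstacle is bookkeeping: getting the combinatorics of ``$k$-th order statistic of the $m$ local quantiles exceeds a threshold'' into the right Poisson-Binomial tail with the correct index $k-1$ (versus $k$, versus $m-k$), and cleanly handling the non-continuous case so that the inequalities only ever go in the favorable direction; the probabilistic content (independence across agents, Beta-Binomial duality, Poisson-Binomial monotonicity) is standard.
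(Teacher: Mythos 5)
Your high-level strategy is the same as the paper's: reduce to uniform order statistics via the probability integral transform, note independence across agents, and identify a Poisson-Binomial count. But the execution has a genuine gap in the handling of the (possibly discontinuous) cdf $F_S$, and the inequalities you write do not close. Three concrete problems:

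First, a sign error: you write $\P(Z_j = 1) = \P\bigl(U_{(\ell_j:n_j)} \geq q\bigr) = F_{U_{(\ell_j:n_j)}}(q)$, but in the preceding sentence you already (correctly) have $\P(Z_j=0) = \P\bigl(U_{(\ell_j:n_j)} < q\bigr) = F_{U_{(\ell_j:n_j)}}(q)$. The correct value is $\P(Z_j = 1) = 1 - F_{U_{(\ell_j:n_j)}}(q)$. Second, and more seriously, the detour through $q := F_S\bigl(F_S^{-1}(1-\alpha)\bigr) \geq 1-\alpha$ makes the monotonicity go the wrong way. Since $q \geq 1-\alpha$, the events $\{U_{(\ell_j:n_j)} \geq q\}$ are \emph{smaller} than $\{U_{(\ell_j:n_j)} \geq 1-\alpha\}$, so $\sum_j Z_j$ is stochastically \emph{smaller} than the count obtained with threshold $1-\alpha$; you therefore establish $\P\bigl(\sum_j Z_j \geq m-k+1\bigr) \leq \mathrm{PB}\bigl(k-1; (F_{U_{(\ell_j:n_j)}}(1-\alpha))_j\bigr)$, which combines with $\P\bigl(1-\alpha_{\ell_1,\dots,\ell_m,k}(\calD)\geq 1-\alpha\bigr) \geq \P\bigl(\sum_j Z_j \geq m-k+1\bigr)$ to give no usable bound. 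Third, even granting the other steps, the reindexing identity is $1-\mathrm{PB}(m-k;u) = \mathrm{PB}\bigl(k-1;\mathbf{1}-u\bigr)$, so it would produce $\mathrm{PB}\bigl(k-1;(1-F_{U_{(\ell_j:n_j)}}(1-\alpha))_j\bigr)$, not the claimed $\mathrm{PB}\bigl(k-1;(F_{U_{(\ell_j:n_j)}}(1-\alpha))_j\bigr)$.

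The fix, which is exactly what the paper does, is to keep the threshold at $1-\alpha$ throughout and never introduce $q$. Using Eq.~\eqref{eq.nj-diff.lem-lien-unif} (the $n_j$-version of Lemma~\ref{lemma:centra_unif_kl}), one directly has $\P\bigl(1-\alpha_{\ell_1,\dots,\ell_m,k}(\calD)\geq 1-\alpha\bigr)\geq\P\bigl(U_{(\ell_1,\dots,\ell_m,k)}\geq 1-\alpha\bigr)$ with equality when $F_S$ is continuous, and then $\P\bigl(U_{(\ell_1,\dots,\ell_m,k)}\geq 1-\alpha\bigr) = \P\Bigl(\sum_{j=1}^m \Ind{U_{(\ell_j:n_j)}\leq 1-\alpha}\leq k-1\Bigr)$ is an \emph{exact} Poisson-Binomial computation (counting the ``bad'' agents, with Bernoulli parameters $F_{U_{(\ell_j:n_j)}}(1-\alpha)$ directly), so no monotonicity, no reindexing, and no approximation are needed. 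The ``bookkeeping'' obstacle you flagged is a real error, not a mere convention issue.
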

Theorem~\ref{them:cond_main_nj} is proved 
in Appendix~\ref{them:cond_main_nj_proof}.
Theorem~\ref{thm:main_nj} and Theorem~\ref{them:cond_main_nj} 
imply that 
(i) if $M_{\ell_1, \ldots, \ell_m, k} \geq 1-\alpha$, 
then  $\Chat_{\ell_1, \dots, \ell_m, k}$ 
is a marginally valid prediction set, and 
(ii) given $\beta \in (0, 1)$ if 
$\mathrm{PB}\left(k-1 ; (F_{U_{(\ell_j:n_j)}}(1-\alpha))_{1 \leq j \leq m}\right) \geq 1-\beta$, 
then $\Chat_{\ell_1, \dots, \ell_m, k}$ is an $(\alpha, \beta)$-tolerance region, 
that is, a training-conditionally valid prediction set. 
It remains to select $( \ell_1, \dots, \ell_m, k )$, 
among those satisfying the desired condition. 
Applying directly the strategy of 
Algorithms \ref{alg:FedCPQQ} or~\ref{algo.FCP-QQ-cond.1}, 
although theoretically possible, 
can be computationally untractable since the $\ell_j$ 
can be different, hence a parameter set 
of cardinality $m \cdot n^m$. 
Therefore, we propose to fix 
$\ell_j=\lceil (1-\alpha)(n_j+1)\rceil$ 
for all $j \in \intset{m}$, 
similarly to the classical (centralized) split CP methodology. 
Then, we only need to select $k \in \intset{m}$, 
thereby reducing significantly 
the computational complexity.\footnote{This strategy 
can also be used when $n_j = n$ (Section~\ref{sec:FCP-QQ}) 
at the cost of being less accurate than searching for 
all values of $\ell$ and $k$. 
See also Section~\ref{sec:synth_data.diffnj} and 
Remark~\ref{rk.algo-cond.ell-fixe} in Appendix~\ref{thm:cov_up_cond_proof}.} 
Following the principles described in  Section~\ref{sec.prelim.perf} 
for choosing $k$, 
Theorems~\ref{thm:main_nj}--\ref{them:cond_main_nj} 
lead to the two following algorithms. 
\begin{algorithm}[\method-$n_j$~\citep{humbert2023one}]
	\label{algo.FCP-QQ.1_nj}
	Given $\alpha \in (0, 1)$,
	\begin{align*}
	&\text{compute} \quad 
	\ell^*_j = \lceil (1-\alpha)(n_j+1)\rceil \quad
	\text{ for every } j \in \intset{m} 
	\\
	&\text{and} \quad 	
	k^*(\ell^*_{1 \ldots m}) = {\arg\min_{k \in \intset{m}}} \left\{M_{\ell^*_1, \ldots, \ell^*_m, k} : M_{\ell^*_1, \ldots, \ell^*_m, k} \geq 1-\alpha\right\} 
	\, .
	\\
	&\text{Output} \quad 
	\Chat_{\ell^*_1, \ldots, \ell^*_m, k^*(\ell^*_{1 \ldots m})}(x) 
	= \left\{ y \in \calY : s(x, y) \leq S_{(\ell^*_1, \ldots, \ell^*_m, k^*(\ell^*_{1 \ldots m}))} \right\} 
	\, .
	\end{align*}
\end{algorithm}
\begin{algorithm}[\methodcondFL-$n_j$]
	\label{algo.FCP-QQ-cond.1_nj}
	Given $\alpha \in (0, 1)$ and $\beta \in (0, 1)$,
	\begin{align*}
	&\text{compute} \quad 
	\ell^*_j = \lceil (1-\alpha)(n_j+1)\rceil \quad
	\text{ for every } j \in \intset{m} 
	\\
	&\text{and} \quad 
	k_c^*(\ell^*_{1 \ldots m}) 
	= \argmin_{k \in \mathcal{K}} \left\{ 
		\mathrm{PB}\left(k-1 ; (F_{U_{(\ell^*_j:n_j)}}(1-\alpha))_{1 \leq j \leq m}\right) 
		\right\} 
	\, , 
	\\
	&\text{where} \quad
	\mathcal{K} := \left\{ 
		k \in \intset{m} \,:\, 
		\mathrm{PB}\left(k-1 ; (F_{U_{(\ell^*_j:n_j)}}(1-\alpha))_{1 \leq j \leq m}\right) \geq 1-\beta 
	\right\}
	\, . 
	\\
	&\text{Output} \qquad 
	\Chat_{\ell^*_1, \ldots, \ell^*_m, k_c^*(\ell^*_{1 \ldots m})}(x) 
	= \left\{ y \in \calY : s(x, y) \leq S_{(\ell^*_1, \ldots, \ell^*_m, k_c^*(\ell^*_{1 \ldots m}))} \right\} 
	\, . \notag
	\end{align*}
\end{algorithm}
For reasons detailed below Theorem \ref{them:cond_main_nj}, 
Algorithm~\ref{algo.FCP-QQ.1_nj} yields 
a distribution-free marginally valid prediction set, 
and Algorithm~\ref{algo.FCP-QQ-cond.1_nj} yields 
a distribution-free training-conditionally valid prediction set. 
The exact computation of $M_{\ell_1, \ldots, \ell_m, k}$ 
can be done in the same way as that of $M_{\ell, k}$ 
(see details in \cite[Appendix A.2.]{humbert2023one}). 
Furthermore, because $M_{\ell_1, \ldots, \ell_m, k} = \IE[Z]$ 
where $Z$ has cdf 
$\mathrm{PB}(k-1 ; \bigl( F_{U_{(\ell_j:n_j)}}(1-\alpha) \bigr)_{1 \leq j \leq m})$, 
another possibility is to numerically integrate this 
Poisson-Binomial cdf to approximate the expectation.
There exist many algorithms to compute the Poisson-Binomial cdf of 
Eq.~\eqref{eq:cond_main_equa_nj} efficiently, see for instance \citep{hong2013computing}. 
Moreover, precise approximations of it can be obtained when $m$ is large. 
We refer to the recent review of \citep{tang2023poisson} on the Poisson-Binomial distribution for more details. \\
\begin{remark}[Other variants of
Algorithms~\ref{algo.FCP-QQ.1_nj}--\ref{algo.FCP-QQ-cond.1_nj}]
\label{rk.diff_nj.variants}
Following the comments below Theorem~\ref{them:cond_main_nj}, 
Algorithm~\ref{algo.FCP-QQ.1_nj} (resp. Algorithm~\ref{algo.FCP-QQ-cond.1_nj}) 
remains marginally (resp. training-conditionally) valid 
with any other choice for the $\ell^*_j$, 
leading to many possible variants. 
Let us detail three of them. 
First, in Algorithm~\ref{algo.FCP-QQ-cond.1_nj}, 
one can replace $\ell^*_j$ by the value of $r_c$ defined 
by Eq.~\eqref{def.Cond-Central.rc} when 
$\lvert \calD^{cal} \rvert = n_j$, 
which corresponds to training-conditionally valid splitCP 
applied to the data of agent~$j$ alone. 
Second, when the $\argmin$ defining $k^*$ or $k^*_c$ is empty, 
it is possible to increase the $\ell^*_j$ 
(for instance, by adding $1$ to each of them) 
until the $\argmin$ becomes nonempty 
(which happens for large enough $\ell^*_j$ 
if and only if Eq.~\eqref{eq.le.FCP-QQ-cond.1.non-trivial.CNS} 
in Lemma~\ref{le.algo.FCP-QQ-cond.1.non-trivial}) holds true. 
Third, when $m$ is small, one can consider small sets 
$\mathcal{L}^*_j$ of candidate values for $\ell^*_j$, 
and replace the $\argmin$ in Algorithms~\ref{algo.FCP-QQ.1_nj}--\ref{algo.FCP-QQ-cond.1_nj} 
by an $\argmin$ over $(k , \ell^*_1, \ldots , \ell^*_m) 
\in \mathcal{K} \times \mathcal{L}^*_1 \times \cdots 
\times \mathcal{L}^*_m$, 
which yields a tractable algorithm provided this set remains small. 
While our Theorems \ref{thm:main_nj}--\ref{them:cond_main_nj} 
prove coverage lower bounds for each of these variants, 
we leave their detailed study for future work. \\
\end{remark}

\begin{remark}
If we set $n_1 = \cdots = n_m = n$ 
and $\ell_1 = \cdots = \ell_m = \ell \in \intset{n}$ 
in Eq.~\eqref{eq:main_equa_nj}, 
respectively in Eq.~\eqref{eq:cond_main_equa_nj}, 
we exactly recover the results obtained in Theorem~\ref{them:main} 
of Section~\ref{subsec:marg_valid} (marginal guarantee), 
resp. in Theorem~\ref{them:cond_main} of 
Section~\ref{sec.multi-order.algos} (conditional guarantee). 
Indeed, the summation over permutations $a \in \mathcal{P}_j$ 
in Theorem~\ref{thm:main_nj} then becomes 
the $\binom{m}{k}$ of Theorem~\ref{them:main}, after rearranging the terms. 
For Theorem~\ref{them:cond_main_nj}, the parameters 
of the Poisson-Binomal $(F_{U_{(\ell_j:n_j)}}(1-\alpha))_{1 \leq j \leq m}$ 
are then equal, hence we get the 
Binomial$(m; F_{U_{(\ell:n)}}(1-\alpha))$ distribution. 
This leads to Theorem~\ref{them:cond_main} 
with $\beta = F_{U_{(\ell:n, k:m)}}(1-\alpha)) 
= F_{U_{(k:m)}} \circ F_{U_{(\ell:n)}}(1-\alpha))$ 
since $F_{\mathrm{Binomial}(m,p)} (k-1) 
= 1 - F_{U_{(k:m)}}(p)$, 
here used with $p = F_{U_{(\ell:n)}}(1-\alpha)$. 
\end{remark}

\section{Numerical experiments} \label{sec:xps}

In this section, we first study the behavior of our algorithms 
in a generic setting and then evaluate them on real datasets. 
The code of our experiments is publicly available.\footnote{\url{https://github.com/pierreHmbt/One-shot-FCP}}
In the experiments, we compare the performance 
of the following conformal-based prediction-set algorithms: 
\begin{itemize}
\item Algorithms~\ref{alg:FedCPQQ}--\ref{algo.FCP-QQ-cond.2},  
respectively called \method, \methodlow, \methodcondFL{} and \methodcondFLk{} 
(when the $n_j$ are equal);

\item Algorithms~\ref{algo.FCP-QQ.1_nj}--\ref{algo.FCP-QQ-cond.1_nj}, 
respectively called \method-$n_j$ and \methodcondFL-$n_j$ 
(when the $n_j$ may be different);

\item \methodCentral: 
split conformal prediction $\Chat_{r}$ 
(as defined in Section~\ref{subsec:splitCP}) 
on the full (centralized) calibration data set $\calD^{cal}$, 
with 
$r= \lceil (1-\alpha) (\lvert \calD^{cal} \rvert + 1)\rceil$ 
so that it is marginally valid;
in other words, \methodCentral~is the centralized version of 
\method{} and they coincide when $m=1$;

\item \methodCentralC: 
split conformal prediction $\Chat_{r}$ 
on the full (centralized) calibration data set $\calD^{cal}$ 
with 
\begin{equation} 
\label{def.Cond-Central.rc}
r = r_c \egaldef \min
\bigl\{ 
\widetilde{r} \in \intset{ \lvert \calD^{cal} \rvert}
: F^{-1}_{U_{(\widetilde{r}: \lvert \calD^{cal} \rvert)}} ( \beta ) \geq 1-\alpha
\bigr\}
\, , 
\end{equation}
so that it is conditionally valid, 
by Eq.~\eqref{eq:miscov_vovk} in Section~\ref{subsec:splitCP}; 
in other words, \methodCentralC~is the centralized version of 
\methodcondFL{} and they coincide when $m=1$; 

\item \methodAvg{}: federated approach proposed by~\citep{lu2021distribution}, 
which averages the $m$ quantiles of order $\lceil (n_j+1)(1-\alpha) \rceil$ sent by the agents, 
hence the prediction set
\[
\Chat_{\methodAvg} (x) := 
\bigg\{y \in \calY : s(x, y) \leq 
\frac{1}{m} \sum_{j=1}^m \Qh_{(\lceil (n_j+1)(1-\alpha) \rceil)}(\mathcal{S}_j)
\bigg\}
\, . 
\]
\end{itemize}

All our experiments are done with 
$\alpha = 0.1$ 
and $\beta = 0.2$. 

As explained in Section~\ref{sec.prelim.perf}, 
we compare the performance of these methods either 
through the length of the prediction sets they produce 
(in Section~\ref{sec:xps:real}, where these are intervals) 
or their coverage, 
which should both be as small as possible while keeping the associated set valid. 
Note that it is fair to compare split CP (\methodCentral~and \methodCentralC), 
\methodAvg{} and our quantile-of-quantiles prediction sets 
(Algorithms~\ref{alg:FedCPQQ}--\ref{algo.FCP-QQ-cond.1_nj})
through their coverages since they all are of the form 
$
\{y \in \calY \,:\, s(x,y) \leq \overline{S}\}
$
for some random variable $\overline{S}$.

\subsection{Generic comparison.}
\label{sec:synth_data}
In this section, we compare numerically the performance (measured by their coverages) 
of our algorithms and their centralized counterparts (\methodCentral{} and \methodCentralC). 
This comparison is made under the mild assumption that $F_S$ is continuous, 
which implies that the cdf of the training-conditional coverage distribution 
of each algorithm is known and, more importantly, universal --
see Eq.~\eqref{eq:miscov_vovk} and the discussion below for centralized algorithms, 
Theorem~\ref{them:cond_main} for federated algorithms with equal $n_j$ 
and Theorem~\ref{them:cond_main_nj} for federated algorithms with general~$n_j$.
We also have closed-form formulas for the expected 
coverage when $F_S$ is continuous, 
by Eq.~\eqref{eq.splitCP.marg-cov-upp} 
for centralized algorithms, 
and by Theorems \ref{them:main} and~\ref{thm:main_nj} 
for our federated algorithms. 
Under this \emph{generic setting}, we thus consider simultaneously 
all learning problems, data distributions, predictors $\fh$ 
and score functions $s$ such that the scores cdf $F_S$ 
is continuous.

Throughout this subsection, for each algorithm considered, 
denoting by $1-\alpha(\calD)$ its (random) coverage, 
we are interested in $F_{1-\alpha(\calD)}$ the coverage cdf, 
$\Delta\E \egaldef \E[ 1-\alpha(\calD) ] - (1-\alpha)$ 
the difference between the expected coverage 
and the nominal coverage $(1-\alpha)$, 
and, for several values of $\zeta \in (0,1)$, 
$\Delta q_{\zeta} \egaldef F_{1-\alpha(\calD)}^{-1} (\zeta) - (1-\alpha)$ 
the difference between the $\zeta$-quantile of the coverage 
and the nominal coverage. 

\subsubsection{Equal $n_j$}
\label{sec:synth_data.equalnj}
We first compare \method, \methodlow, \methodcondFL, \methodcondFLk, 
\methodCentral{} and \methodCentralC{} under Assumption~\ref{ass:same_n}, 
that is, when each agent $j \in \intset{m}$ 
has the same number $n_j = n$ of calibration data points. 
In Section~\ref{sec.uppbound_cov}, we prove that 
$\Delta\E = \mathcal{O} ( m^{-1} n^{-1/2} )$ 
for our marginally valid federated prediction sets 
(Theorem~\ref{thm:cov_up_margin}) 
and that $\Delta q_{1-\beta} = \mathcal{O}  ( m^{-1/2} n^{-1/2} )$ 
for our training-conditionally valid federated prediction sets 
(Theorem~\ref{thm:cov_up_cond}). 
But these only are \emph{upper bounds}, 
and we would like to know whether they match 
the true order of magnitude of their coverages 
---at least in worst case, since one can for instance 
get $\Delta\E = 0$ for \method{} 
by choosing $\alpha = 1 - M_{\ell,k}$ for some 
$(\ell,k) \in \intset{n} \times \intset{m}$, 
or for \methodCentral{} by choosing 
$\alpha \in \{ \frac{r}{n_c+1} \,:\, r \in \intset{n_c+1} \}$. 
Furthermore, knowing the true (worst-case) order 
of magnitude of the coverage of each algorithm 
is crucial to determine when one of the federated 
prediction sets we propose should be preferred to another, 
and for knowing precisely what may be lost by 
considering the one-shot FL setting instead of 
the centralized setting.

\paragraph*{Methods.} 
In order to answer these questions, 
for each algorithm considered,
we evaluate below the rates of convergence 
towards zero of $\Delta \E$, $\Delta q_{\beta}$ 
and $\Delta q_{1-\beta}$
as $m$ and $n$ increase.
The choice of the quantile orders $\beta$ and $(1-\beta)$ 
comes from the fact that 
(i) the conditional validity is equivalent to 
$\Delta q_{\beta} \geq 0$, 
so for prediction sets satisfying this condition, 
knowing how tight is this inequality in worst case 
is meaningful, 
and (ii) since $\beta = 0.2$, 
the $\beta$ and $(1-\beta)$-quantiles of 
the coverage are the bounds of an interval containing 
``typical values'' of the coverage. 
In particular, by definition, the coverage $1-\alpha(\calD)$ 
is below $(1-\alpha) + \Delta q_{1-\beta}$ 
with probability $1 - \beta = 80\%$. 
 
We consider values of $(m,n)$ 
in the grid $\{ \lfloor 10^{i/3} \rfloor \,:\, i \in \intset{9} \}^2$. 
For each pair $(m, n)$ and each algorithm, 
we compute $\Delta \E$, $\Delta q_{\beta}$ and $\Delta q_{1-\beta}$. 
Then, for each algorithm, 
using empirical risk minimization with the Huber loss \citep{huber1992robust}, 
we robustly fit the log-linear regression model 
$\log{y} = \log(c) - \gamma \log(m) - \delta \log(n) + \varepsilon$, 
where $\varepsilon$ is some residual term, 
$c>0$ and $\gamma, \delta \in \R$ are the model parameters, 
and $y$ is either $\Delta \E$, $\Delta q_{\beta}$ or $\Delta q_{1-\beta}$.  
Note that by construction, 
$\Delta \E \geq 0$ for marginally-valid algorithms, 
and $\Delta q_{1-\beta} \geq \Delta q_{\beta} \geq 0$ 
for conditionally-valid algorithms. 
The estimated values of $c,\gamma,\delta$ 
for each algorithm and each quantity of interest 
are reported in Table~\ref{tab:res_coeff_lin_all}. 
Plots showing the values of $\Delta \E$, $\Delta q_{\beta}$, $\Delta q_{1-\beta}$ 
(together with their log-linear model approximation) 
are provided in Figures \ref{fig.DeltaE-Algos-margin} 
and~\ref{fig.qbeta-Algos-cond},
and in Appendix~\ref{app:rate.bound}.

In order to compare the performance of the algorithms more closely, 
we also plot in Figures \ref{fig.cdf-Algos-margin} and~\ref{fig.cdf-Algos-cond} 
the full cdfs of the coverage distribution 
for two specific pairs $(m,n)$, $(200,20)$ and $(20,200)$, 
and report the corresponding expectations, standard-deviations 
(computed by numerically integrating the cdfs), 
and $\zeta$-quantiles for $\zeta \in \{ \beta , 1- \beta\}$ 
in Table~\ref{tab:es_std}.  

\begin{table}[t]
	\footnotesize
	{\centering
	\ra{1.}
	\begin{adjustbox}{max width=\textwidth}
		\begin{tabular}{@{}llll|lll|lll@{}}
			\toprule
& 
\multicolumn{3}{c}{
$\boldsymbol{\Delta\E \approx c_1 m^{-\gamma_1} n^{-\delta_1}}$
}
& 
\multicolumn{3}{c}{
$\boldsymbol{\Delta q_{\beta} \approx c_2 m^{-\gamma_2} n^{-\delta_2}}$
}
& 
\multicolumn{3}{c}{
$\boldsymbol{\Delta q_{1-\beta} \approx c_3 m^{-\gamma_3} n^{-\delta_3}}$
}
\\ 
\textbf{Method} & $\boldsymbol{c_1}$ & $\boldsymbol{\gamma_1}$ & $\boldsymbol{\delta_1}$ & $\boldsymbol{c_2}$ & $\boldsymbol{\gamma_2}$ & $\boldsymbol{\delta_2}$ & $\boldsymbol{c_3}$ & $\boldsymbol{\gamma_3}$ & $\boldsymbol{\delta_3}$ \\
\midrule
\methodCentral & $0.502$ & $0.999$ &  $0.999$ & $0.235$ & $0.498$ & $0.498$ & $0.269$ & $0.501$ & $0.501$ 
\\
\method & $0.924$ & $1.026$ & $1.019$ & $0.463$ & $0.492$ & $0.487$ & $0.522$ & $0.501$ & $0.495$
\\
\methodlow & $0.336$ & $1.001$ & $0.582$ & $0.227$ & $0.483$ & $0.499$ & $0.400$ & $0.512$  & $0.504$
\\
\midrule
\methodCentralC & $0.257$ & $0.501$ & $0.501$ & $0.590$ & $1.014$ & $1.014$ & $0.504$ & $0.499$ & $0.499$ 
\\
\methodcondFL & $0.341$ & $0.507$ & $0.502$ & $0.508$ & $0.910$ & $0.847$ & $0.637$ & $0.502$  & $0.497$
\\
\methodcondFLk & $0.676$ & $0.498$ & $0.501$ & $0.400$ & $0.503$ & $0.509$ & $0.958$ & $0.496$  & $0.498$
\\
\bottomrule
\end{tabular}
\end{adjustbox}
}
\caption{%
Estimated parameters of the log-linear model 
$\log{y} = \log(c_i) - \gamma_i \log(m) - \delta_i \log(n)$ 
where $y$ is either $\Delta\E$, $\Delta q_{\beta}$ 
or $\Delta q_{1-\beta}$, 
for the six algorithms compared in Section~\ref{sec:synth_data.equalnj}\textup{;} 
see text for details.
\label{tab:res_coeff_lin_all}
}
\end{table}
\paragraph*{Marginally-valid algorithms.}
\begin{figure}[t]
	\centering
	\includegraphics[width=0.47\linewidth]{./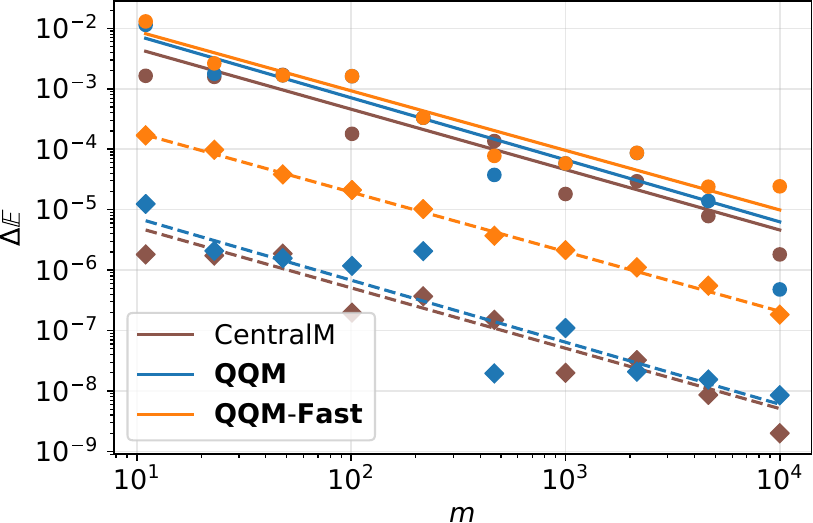}\hspace{2em}
	\includegraphics[width=0.47\linewidth]{./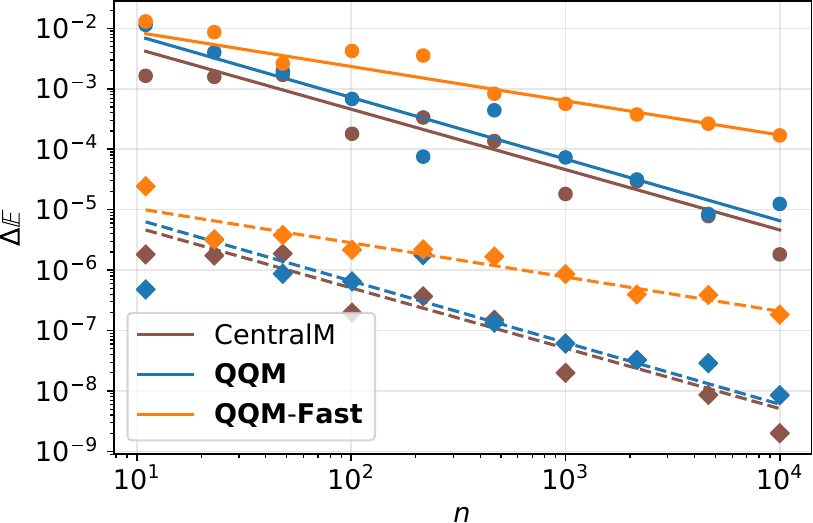}
\caption{%
Marginally-valid algorithms\textup{:}  
log-log plot of $\Delta\E$ as a function of 
$m$ \textup{(}left\textup{)} or $n$ \textup{(}right\textup{)}. 
Lines show the approximation 
$\log{\Delta \E} \approx \log(c_1) - \gamma_1 \log(m) - \delta_1 \log(n)$ 
with $c_1,\gamma_1,\delta_1$ given by Table~\ref{tab:res_coeff_lin_all}. 
Plain lines and dots correspond to 
$n=10$ \textup{(}left\textup{)} or 
$m=10$ \textup{(}right\textup{)}. 
Dashed lines and diamonds correspond to 
$n=10^4$ \textup{(}left\textup{)} or 
$m=10^4$ \textup{(}right\textup{)}. 
\label{fig.DeltaE-Algos-margin}
}
\end{figure}
\begin{figure}[t]
	\centering
	\includegraphics[width=0.49\linewidth]{./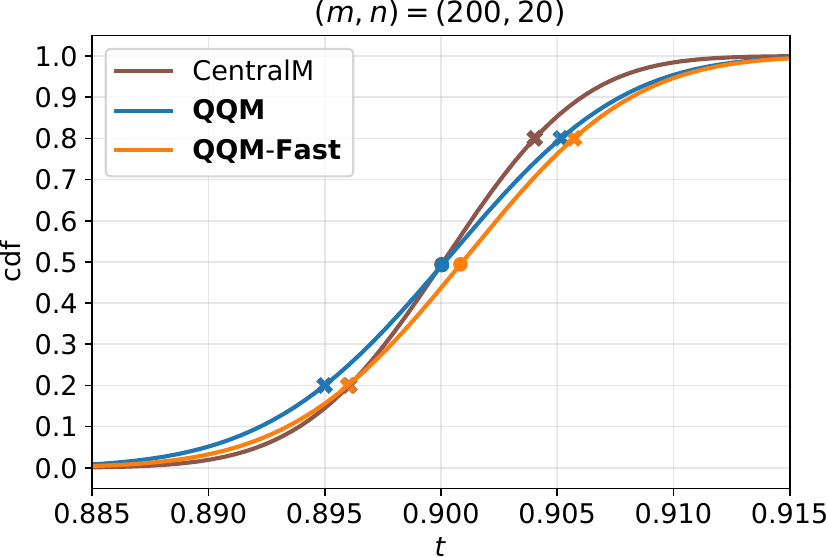}
	\includegraphics[width=0.49\linewidth]{./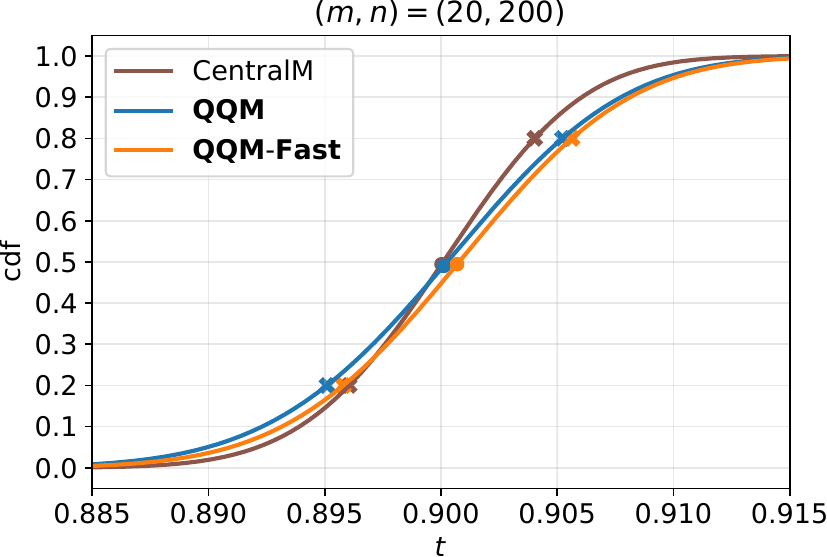}
\caption{%
Marginally-valid algorithms\textup{:}  
cdf of the coverage $1-\alpha(\calD)$.  
The mean of each distribution is shown by a dot 
and the quantiles of order $\beta$ and $1-\beta$ by crosses.
Left\textup{:} $(m, n) = (200,20)$. 
Right\textup{:} $(m, n) = (20, 200)$.  
See Table~\ref{tab:es_std} for additional information. 
\label{fig.cdf-Algos-margin}
}
\end{figure}
For \method{} and \methodCentral, 
according to Table~\ref{tab:res_coeff_lin_all}, 
$\Delta \E$ decrease to zero 
at the same rate, close to $m^{-1}n^{-1}$, 
with only a multiplicative constant $\approx 2$ 
in favour of \methodCentral. 
Overall, the difference between expected coverages 
of \method{} and \methodCentral{} 
is small, as shown by Figure~\ref{fig.DeltaE-Algos-margin}. 
In comparison, \methodlow{}  
shows a worse performance with $\Delta \E$ 
decreasing to zero at a rate close to $m^{-1}n^{-1/2}$ 
(Table~\ref{tab:res_coeff_lin_all}). 
This leads to a significant gap between the 
performances of \methodlow{} 
and the two other ones 
as shown by Figure~\ref{fig.DeltaE-Algos-margin}. 
For $(m,n) \in \{ (200,20) , (20,200)\}$, 
the coverage cdfs of the three algorithms have similar shapes 
(Figure~\ref{fig.cdf-Algos-margin}) 
apart from a shift corresponding to already mentioned differences between 
expectations, 
and a slightly larger standard-deviation for federated algorithms 
compared to \methodCentral{} (see also Table~\ref{tab:es_std}). 
The $(1-\beta)$-quantile of the coverage is also slightly 
larger for the federated algorithms compared to \methodCentral, 
even if they all are of the same order of magnitude 
$(1-\alpha) + \mathcal{O}(m^{-1/2} n^{-1/2})$ 
at first order (see Tables~\ref{tab:res_coeff_lin_all} and~\ref{tab:es_std}). 

Note also that Table~\ref{tab:res_coeff_lin_all} 
suggests that our theoretical expected coverage upper bound 
(Theorem~\ref{thm:cov_up_margin}) 
$(1-\alpha) + \mathcal{O}(m^{-1} n^{-1/2})$ 
provides the correct worst-case order of magnitude 
for \methodlow{} 
(even if the estimated exponent for $n$ is $\delta_1 = 0.58$ instead of exactly $1/2$), 
and supports our conjecture that it could be improved to 
$(1-\alpha) + \mathcal{O}(m^{-1} n^{-1})$ 
for \method{}. 

Overall, these numerical results show that among 
one-shot FL prediction sets, 
\method{} should be preferred to 
\methodlow{} as long as its 
computational complexity remains tractable 
(keeping in mind Remark~\ref{rk.complexity-k-ell} 
about the computational complexity of our federated algorithms). 
Then, using \method{} 
yields a mild loss compared to \methodCentral, 
with a one-shot FL algorithm.
Note however that the expected coverage of 
\methodlow{} also converges to $1-\alpha$ 
when $mn$ tends to infinity, 
as proved by Theorem~\ref{thm:cov_up_margin} 
and illustrated in this section, 
so when $m$ or $n$ is too large so that 
one must use \methodlow{}, 
the loss in terms of expected coverage remain very small; 
for instance, when $m=200$, the expected coverage of 
\methodlow{} is $0.90084$ 
while the one of \method{} 
is $0.90004$ (Table~\ref{tab:es_std}).

\begin{table}[t]
\footnotesize
\centering
\ra{1.}
\begin{adjustbox}{max width=\textwidth}
\begin{tabular}{@{}lllll|llll@{}}
\toprule
& 
\multicolumn{4}{c}{
$\boldsymbol{(m,n) = (200,20)}$
}
&
\multicolumn{4}{c}{
$\boldsymbol{(m,n) = (20,200)}$
}
\\
\textbf{Method} & $\boldsymbol{\IE[\cdot]}$ & \textbf{Std} & $\boldsymbol{q_{\beta}}$ & $\boldsymbol{q_{1-\beta}}$ & $\boldsymbol{\IE[\cdot]}$ & \textbf{Std} & $\boldsymbol{q_{\beta}}$ & $\boldsymbol{q_{1-\beta}}$\\ \midrule
\methodCentral &  $0.90002$ & $0.00474$ & $0.89605$ & $0.90403$ & $0.90002$ & $0.00474$ & $0.89605$ & $0.90403$\\
\method &  $0.90004$ & $0.00604$ & $0.89500$ & $0.90515$ & $0.90012$ & $0.00603$ & $0.89510$ & $0.90522$\\
\methodlow & $0.90084$ & $0.00577$ & $0.89601$ & $0.90572$ & $0.90070$ & $0.00585$ & $0.89580$ & $0.90563$\\
\midrule
\methodCentralC & $0.90402$ & $0.00466$ & $0.90013$ & $0.90796$ & $0.90402$ & $0.00466$ & $0.90013$ & $0.90796$ \\
\methodcondFL & $0.90524$ & $0.00569$ & $0.90048$ & $0.91004$ & $0.90526$ & $0.00589$ & $0.90036$ & $0.91025$ \\
\methodcondFLk & $0.91084$ & $0.00558$ & $0.90618$ & $0.91556$ & $0.91046$ & $0.00560$ & $0.90579$ & $0.91519$\\
\bottomrule
\end{tabular}
\end{adjustbox}
\caption{%
Expectation, standard-deviation, 
$\beta$-quantile and $(1-\beta)$-quantile 
of the coverage $1-\alpha(\calD)$ 
of the algorithms compared in Section~\ref{sec:synth_data.equalnj} 
\textup{(}equal $n_j$\textup{)}.
\label{tab:es_std}
}
\medskip
\end{table}
\paragraph*{Conditionally-valid algorithms.} 
\begin{figure}[t]
	\centering
	\includegraphics[width=0.47\linewidth]{./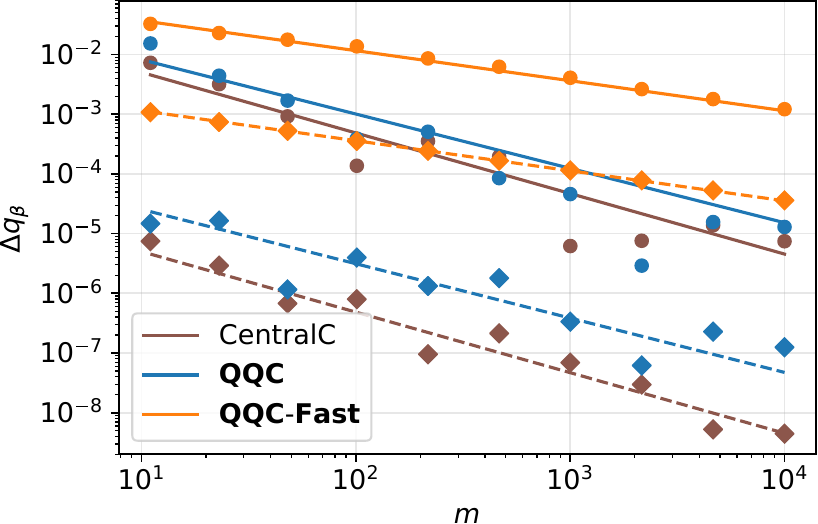}\hspace{2em}
	\includegraphics[width=0.47\linewidth]{./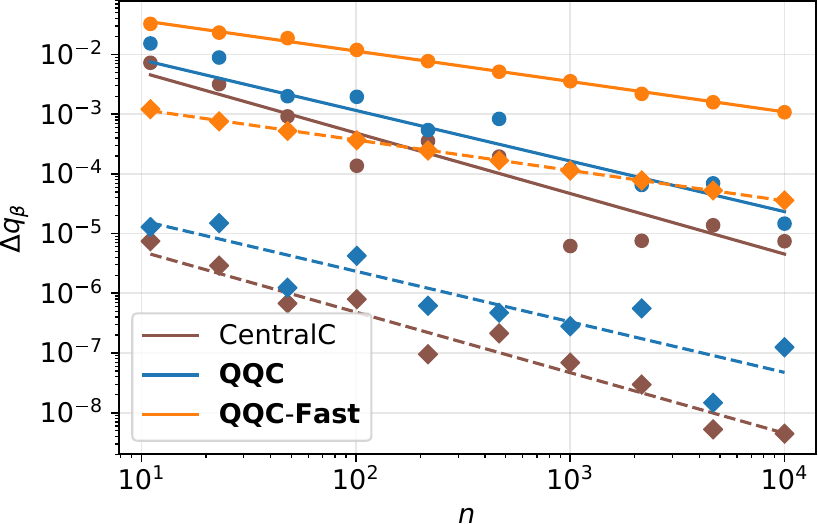}
	\caption{%
Conditionally-valid algorithms\textup{:}  
log-log plot of $\Delta q_{\beta}$ as a function of 
$m$ \textup{(}left\textup{)} or $n$ \textup{(}right\textup{)}. 
Lines show the approximation 
$\log{\Delta q_{\beta}} \approx \log(c_2) - \gamma_2 \log(m) - \delta_2 \log(n)$ 
with $c_2,\gamma_2,\delta_2$ given by Table~\ref{tab:res_coeff_lin_all}. 
Plain lines and dots correspond to 
$n=10$ \textup{(}left\textup{)} or 
$m=10$ \textup{(}right\textup{)}. 
Dashed lines and diamonds correspond to 
$n=10^4$ \textup{(}left\textup{)} or 
$m=10^4$ \textup{(}right\textup{)}. 
\label{fig.qbeta-Algos-cond}
	}
\end{figure}
\begin{figure}[t]
	\centering
	\includegraphics[width=0.49\linewidth]{./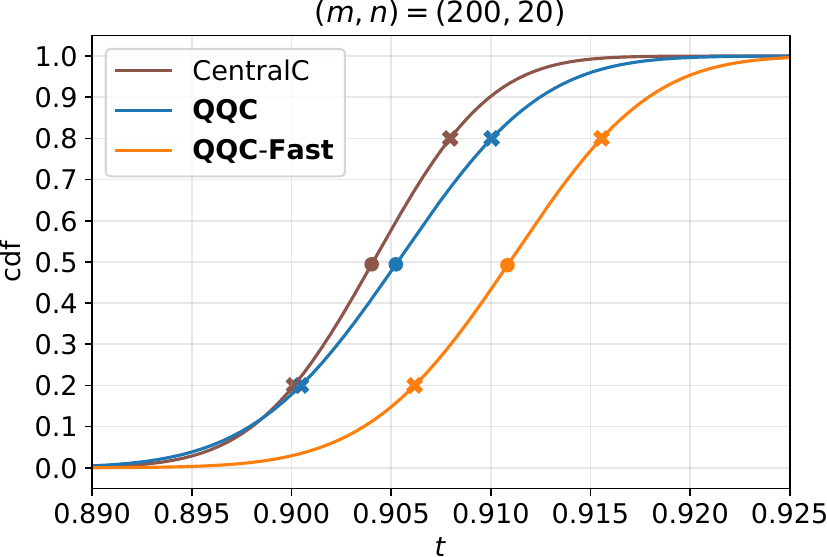}
	\includegraphics[width=0.49\linewidth]{./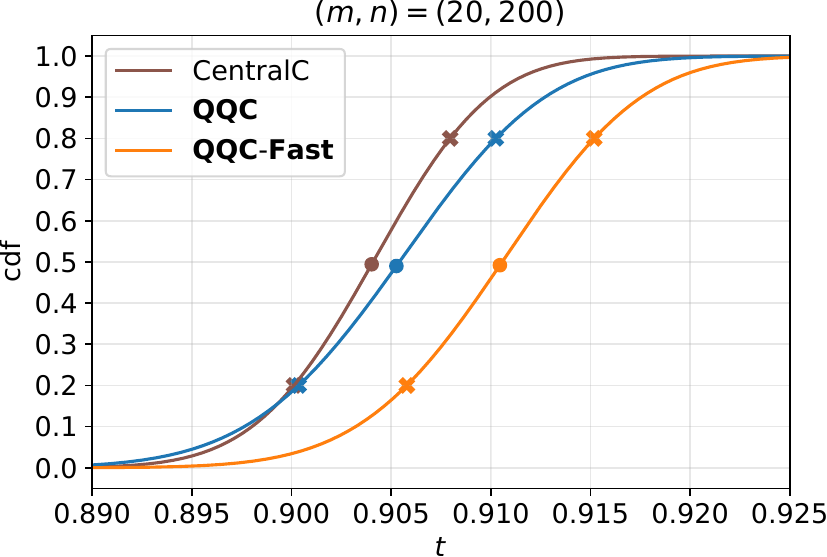}
\caption{%
Conditionally-valid algorithms\textup{:}  
cdf of the coverage $1-\alpha(\calD)$.  
The mean of each distribution is shown by a dot 
and the quantiles of order $\beta$ and $1-\beta$ by crosses.
Left\textup{:} $(m, n) = (200,20)$. 
Right\textup{:} $(m, n) = (20, 200)$.  
See Table~\ref{tab:es_std} for additional information. 
\label{fig.cdf-Algos-cond}
}
\end{figure}
The $(1-\beta)$-quantiles of the coverages of 
the three conditionally-valid algorithms 
are of the same order of magnitude 
$1-\alpha + \mathcal{O}(m^{-1/2} n^{-1/2})$, 
with constants within a factor $2$ in front of the 
remainder term (Table~\ref{tab:res_coeff_lin_all}, 
where the estimated $\gamma_3,\delta_3$ all are very 
close to $1/2$; see also 
Figure~\ref{fig.q1_beta-Algos-cond} in Appendix~\ref{app:rate.bound}). 
This matches our theoretical upper bound 
(Theorem~\ref{them:cond_main}), suggesting its tightness.  
Bigger differences appear when considering the $\beta$-quantiles 
of the coverage, 
which seem to be of order $1-\alpha + \mathcal{O}(m^{-1} n^{-1})$ 
for \methodCentralC{} and \methodcondFL{} 
(with $\gamma_2,\delta_2$ slightly smaller than $1$ for 
\methodcondFL{}), 
while it is clearly larger, of order $1-\alpha + \mathcal{O}(m^{-1/2} n^{-1/2})$,  
for \methodcondFLk{},   
according to Table~\ref{tab:res_coeff_lin_all} and Figure~\ref{fig.qbeta-Algos-cond}. 
A good summary of what happens here may be found 
in Figure~\ref{fig.cdf-Algos-cond}: 
\methodCentralC{} and \methodcondFL{} tightly adjust the 
$\beta$-quantile of the coverage 
(which is the one that must be above $1-\alpha$), 
better than \methodcondFLk{}.

Overall, as expected, the computationally more efficient method 
\methodcondFLk{} is also a bit 
more conservative than the other two methods, 
which would suggest to use \methodcondFL{} 
(in a one-shot FL context) 
if one has enough computational power, 
leading to only a small loss compared to the centralized case. 
Note finally that all three training-conditionally valid algorithms 
also are marginally valid in the settings considered by 
Figure~\ref{fig.cdf-Algos-cond}, since the expectations of their coverages 
are larger than their $\beta$-quantiles 
(which is not surprising since $\beta = 0.2 < 1/2$ 
and the medians here are close to the expectations). 

\subsubsection{Different $n_j$}
\label{sec:synth_data.diffnj}
We now study the case where the $n_j$ are different, 
so that only \method-$n_j$ and \methodcondFL-$n_j$  
are available in the one-shot FL case. 

\paragraph*{Methods.} 
We consider several settings with a total of $N=4000$ 
calibration points distributed 
across $m$ agents, where $m$ divides~$N$.  
For each value of $m$ considered, 
the values of $(n_1, \ldots, n_m)$ 
are chosen (once for all) randomly, 
according to a multinomial distribution with 
parameters $N$ and $(1/m, \ldots, 1/m)$; 
in other words, each of the $N$ calibration points 
is uniformly assigned among the $m$ agents, independently. 
Figure~\ref{fig.val_nj.25-160} in Appendix~\ref{app:rate.bound_diffnj}
provides the exact values of the $n_j$ for $m=4$ 
and $m=25$.

In addition to \method-$n_j$ and \methodcondFL-$n_j$,  
which are the only ones able to deal with 
agents having different number of data points 
in a one shot FL setting, 
we also consider several other algorithms for comparison:
\begin{itemize}
\item Centralized algorithms (\methodCentral{} and \methodCentralC{}); 

\item Algorithms~\ref{algo.FCP-QQ.1_nj}--\ref{algo.FCP-QQ-cond.1_nj} 
with data split equally into $m$ agents 
(hence having each $n_j=N/m$ points), 
that we call \method-$(N/m)$ and \methodcondFL-$(N/m)$, respectively; 

\item \method{} and \methodcondFL{}, 
with data split equally into $m$ agents, 
as in Section~\ref{sec:synth_data.equalnj}. 

\end{itemize}
For each of these algorithms, 
we report the values of the expectation, standard-deviation 
and $\zeta$-quantile for $\zeta \in \{ \beta , 1-\beta \}$ 
of their coverages when $m \in \{ 4 , 25 \}$
in Table~\ref{tab:es_std_nj_160}, 
and we plot the cdfs of their coverages when $m=25$  
in Figure~\ref{fig.diffnj.cdf-m=25}. 
Since specific values of $m$ can be misleading 
(as explained at the beginning of Section~\ref{sec:synth_data.equalnj}), 
we report $\Delta \E$ (respectively, $\Delta q_{\beta}$) 
as a function of $m$ 
for marginally-valid (respectively, conditionally-valid) algorithms 
in Figure~\ref{fig.diffnj.m-variable.E-qb}; 
for completeness, the values of $\Delta q_{1-\beta}$ 
for conditionally-valid algorithms are plotted 
in Figure~\ref{fig.diffnj.m-variable.q1-b} of Appendix~\ref{app:rate.bound_diffnj}. 
Note that the values $m \in \{4, 25\}$ in Table~\ref{tab:es_std_nj_160} 
and $m=25$ in Figure~\ref{fig.diffnj.cdf-m=25} 
have been chosen because they are typical of the worst-case 
performance of most algorithms, 
according to Figure~\ref{fig.diffnj.m-variable.E-qb}. 

\begin{table}[H]
	\footnotesize
	\centering
	\ra{1.}
	\begin{adjustbox}{max width=\textwidth}
		\begin{tabular}{@{}lllll|llll@{}}
			\toprule
			& 
			\multicolumn{4}{c}{
				$\boldsymbol{(m, N) = (4, 4000)}$
			}
			&
			\multicolumn{4}{c}{
				$\boldsymbol{(m, N) = (25, 4000)}$
			}
			\\
			\textbf{Method} 
			& $\boldsymbol{\IE[\cdot]}$ & \textbf{Std} & $\boldsymbol{q_{\beta}}$ & $\boldsymbol{q_{1-\beta}}$
			& $\boldsymbol{\IE[\cdot]}$ & \textbf{Std} & $\boldsymbol{q_{\beta}}$ & $\boldsymbol{q_{1-\beta}}$\\ 
			\midrule
			\methodCentral 
			& $0.90002$ & $0.00474$ & $0.89605$ & $0.90403$
			& $0.90002$ & $0.00474$ & $0.89605$ & $0.90403$
			\\
			\method{}
			& $0.90009$ & $0.005659$ & $0.89535$ & $0.90485$
			& $0.90017$ & $0.00706$ & $0.89435$ & $0.90617$
			\\
			\method-$(N/m)$ 			
			& $0.90305$ & $0.00558$ & $0.89838$ & $0.90775$
			& $0.90217$ & $0.00581$ & $0.89731$ & $0.90709$
			\\
			\method-$n_j$ 
			& $0.90335$ & $0.00558$ & $0.89869$ & $0.90804$
			& $0.90238$ & $0.00583$ & $0.89755$ & $0.90733$
			\\
			\midrule
			\methodCentralC 
			& $0.90402$ & $0.00465$ & $0.90012$ & $0.90796$
			& $0.90402$ & $0.00465$ & $0.90012$ & $0.90796$
			\\
			\methodcondFL{}
			& $0.90503$ & $0.00553$ & $0.90040$ & $0.90968$
			& $0.90549$ & $0.00581$ & $0.90062$ & $0.91039$
			\\
			\methodcondFL-$(N/m)$ 
			& $0.90969$ & $0.00622$ & $0.90444$ & $0.91487$
			& $0.90679$ & $0.00569$ & $0.90203$ & $0.91160$
			\\
			\methodcondFL-$n_j$ 
			& $0.90997$ & $0.00620$ & $0.90474$ & $0.91512$
			& $0.90698$ & $0.00567$ & $0.90222$ & $0.91177$  
			\\
			\bottomrule
		\end{tabular}
	\end{adjustbox}
	\caption{%
		Different $n_j$\textup{:}  
		Expectation, standard-deviation, 
		$\beta$-quantile and $(1-\beta)$-quantile 
		of the coverage $1-\alpha(\calD)$ 
		of the eight algorithms listed at the beginning 
		of Section~\ref{sec:synth_data.diffnj}. 
		\label{tab:es_std_nj_160}
	}
	\medskip
\end{table}

\begin{figure}[H]
	\centering 
	\includegraphics[width=0.49\linewidth]{./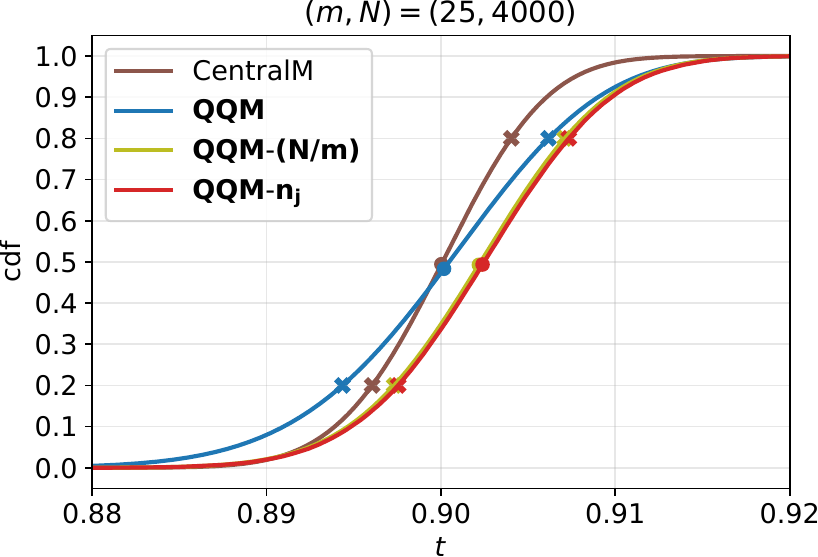}
	\includegraphics[width=0.49\linewidth]{./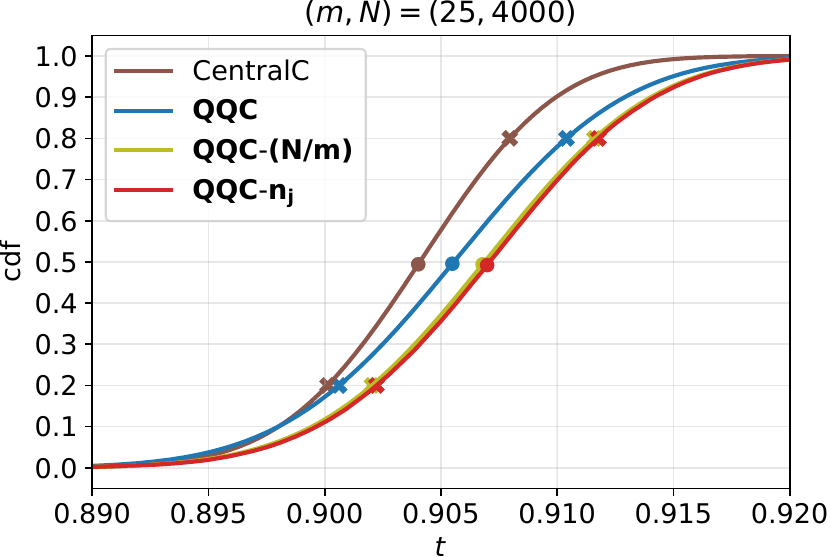}
	\caption{%
Different $n_j$\textup{:}  
cdf of the coverage $1-\alpha(\calD)$ 
of the algorithms 
considered in Section~\ref{sec:synth_data.diffnj} 
with $N=\sum_{j=1}^{m} n_j = 4000$ data points 
distributed among $m=25$ agents. 
The mean of each distribution is shown by a dot 
and the quantiles of order $\beta$ and $1-\beta$ by crosses.
See Table~\ref{tab:es_std_nj_160} for additional information. 
Left\textup{:} marginally-valid algorithms\textup{:}  
\methodCentral, 
\method{} with $n_j = N/m$ for all $j$, 
\method{}-$(N/m)$ \textup{(}that is, 
Algorithm~\ref{algo.FCP-QQ.1_nj} with $n_j = N/m$ for all $j$\textup{)} 
and \method{}-$n_j$. 
Right\textup{:} conditionally-valid algorithms\textup{:}  
\methodCentralC, 
\methodcondFL{} with $n_j = N/m$ for all $j$, 
\methodcondFL{}-$(N/m)$ \textup{(}that is, 
Algorithm~\ref{algo.FCP-QQ-cond.1_nj} with $n_j = N/m$ for all $j$\textup{)} 
and \methodcondFL{}-$n_j$. 
}
\label{fig.diffnj.cdf-m=25}
\end{figure}

\begin{figure}[H]
	\centering
	\includegraphics[width=0.49\linewidth]{./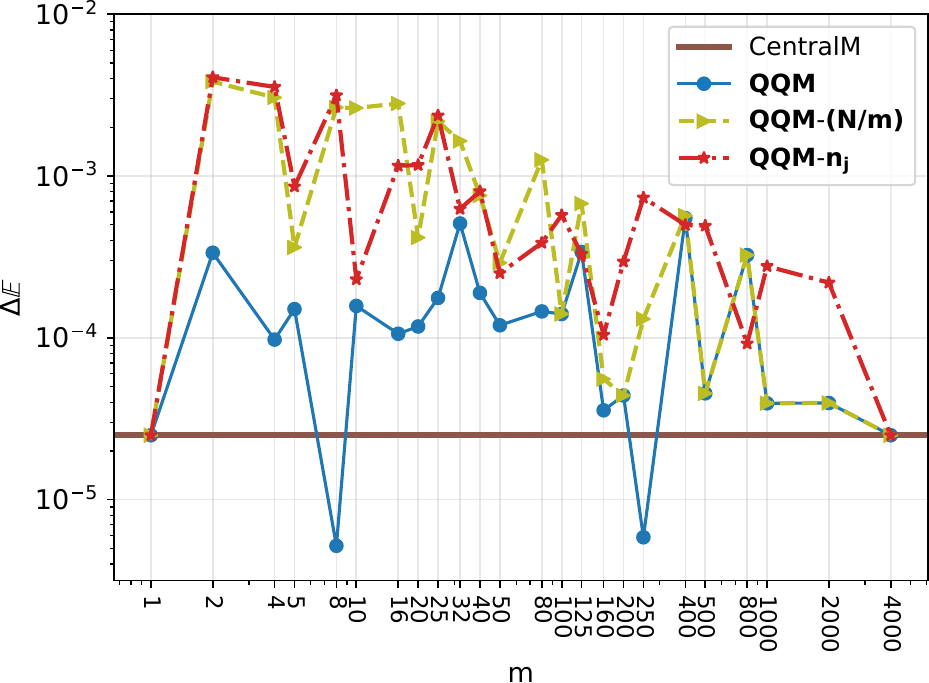}
	\includegraphics[width=0.49\linewidth]{./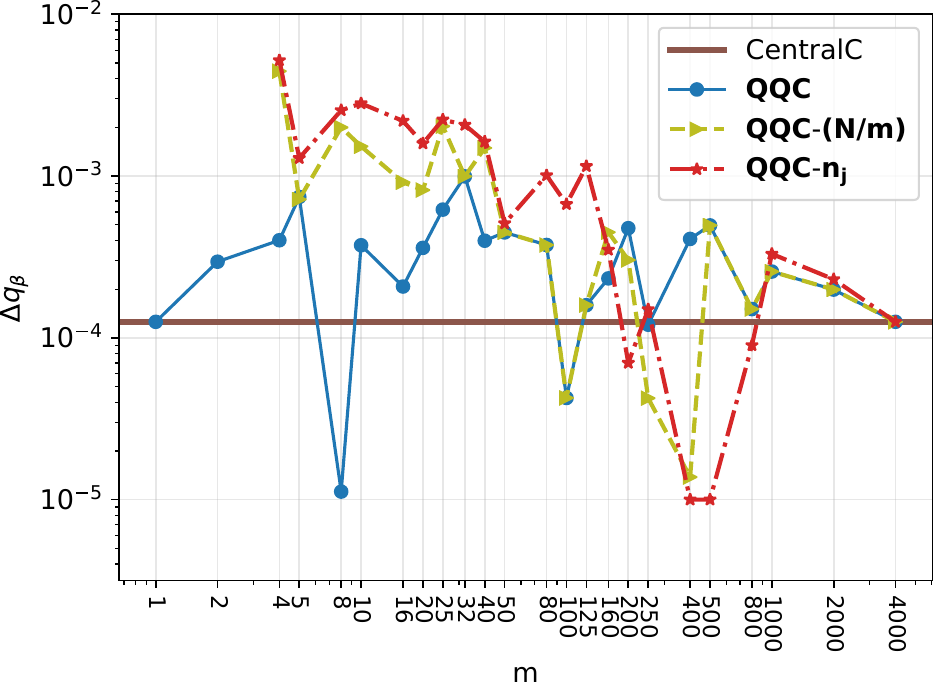}
	\caption{%
Different $n_j$\textup{:}  
log-log plot of the performance 
\textup{(}left\textup{:} $\Delta\E$; right\textup{:} $\Delta q_{\beta}$\textup{)} 
of the algorithms 
considered in Section~\ref{sec:synth_data.diffnj} 
\textup{(}left\textup{:} marginally-valid algorithms\textup{;}  
right\textup{:} conditionally-valid algorithms\textup{)} 
as a function of the number of agents~$m$.  
The total number of data points $N=\sum_{j=1}^{m} n_j = 4000$ 
is fixed and $m$ varies within the set of divisors of~$N$. The values when $m=1, 2$ for \methodcondFL-$(N/m)$ and \methodcondFL-$n_j$ are missing because there is no $k$ such that the resulting sets are conditionally valid.
}
\label{fig.diffnj.m-variable.E-qb}
\end{figure}
\paragraph*{Comparison to centralized algorithms and to federated algorithms with $n_j=n$.} 
Let us start with marginally-valid algorithms, 
that is, the ones appearing in the top half 
of Table~\ref{tab:es_std_nj_160} 
and on the left of Figures~\ref{fig.diffnj.cdf-m=25}--\ref{fig.diffnj.m-variable.E-qb}. 
The typical ordering of their performance 
(for instance measured by their expected coverages) 
is not surprising: first \methodCentral, 
then \method{} (with a small loss compared to \methodCentral, 
as already studied in Section~\ref{sec:synth_data.equalnj}), 
and finally \method{}-$(N/m)$ and \method{}-$n_j$, 
with similar performances (and a larger loss compared to \methodCentral). 
In addition to the increase of expected coverage, 
the quantiles also increases slightly, 
with the same ordering. 
Nevertheless, \method-$n_j$   
has a reasonably good performance, 
with an expected coverage below $1 - \alpha + 0.005$. 

One possible reason explaining the fact 
that \method{}-$n_j$ is more conservative 
than the federated algorithms we propose for 
the case of equal $n_j$ (with the same overall sample size) can be 
the suboptimality of the choice $\ell^*_j = \lceil (1-\alpha) (n_j+1) \rceil$ 
made for computational reasons.
Indeed, we see on Table~\ref{tab:es_std_nj_160} and Figures~\ref{fig.diffnj.cdf-m=25}--\ref{fig.diffnj.m-variable.E-qb} that \method{}-$(N/m)$ and \method{}-$n_j$ have similar 
performances and are not as good as \method{} or \methodCentral.

Similar comments can be made about 
conditionally-valid algorithms, 
that is, the ones appearing in the bottom half 
of Table~\ref{tab:es_std_nj_160} 
and on the right of Figures~\ref{fig.diffnj.cdf-m=25}--\ref{fig.diffnj.m-variable.E-qb},
whose performance should primarily be measured 
by the $\beta$ and $(1-\beta)$-quantiles 
of their coverages. 
\paragraph*{Impact of the number of agents~$m$.}
Let us now consider when the number of agents $m$ vary 
while the total number of samples $N$ is fixed. 
Once again, let us start with marginally-valid algorithms. 
First, notice that when $m=1$ and $m=N$, 
the four algorithms considered choose the same pair $(\ell,k)$ 
by definition, hence they have the same performance. 
Then, putting aside the value $m=1$, 
Figure~\ref{fig.diffnj.m-variable.E-qb} shows that 
the performance of both \method{}-$(N/m)$ and \method{}-$n_j$ 
generally improve when $m$ increases, 
while the one of \method{} is approximately constant. 
The fact that distributing the data across more agents 
can yield a better performance may seem surprising, 
but it can easily be explained. 
Indeed, as shown by Table~\ref{tab:res_coeff_lin_N_M} and Figure~\ref{fig.DeltaE-Algos-margin_lfix} 
in Appendix~\ref{app:rate.bound_lfix}, 
for \method{}-$(N/m)$, when both $m$ and $n \egaldef N/m$ vary, 
$\Delta \E$ is approximately proportional 
to $m^{-1} (N/m)^{-1/2} = m^{-1/2} N^{-1/2}$. 
So, in the experiment of Figure~\ref{fig.diffnj.m-variable.E-qb},  
where $N$ is fixed, $\Delta \E$ is approximately proportional to $m^{-1/2}$, 
hence it decreases when $m$ increases. 
In addition, we propose the following intuition 
for explaining the behaviors of both \method{}-$n_j$ and \method{}-$(N/m)$: 
when $m$ increases, the algorithm has more options for $k \in \intset{m}$  
(while each $\ell_j$ remains fixed equal to $\ell^*_j$), 
allowing to adjust more precisely the coverage (in worst case), 
hence a better performance. 
The fact that \method{} behaves differently 
(with $\Delta \E$ roughly constant when $m$ varies) 
should not be surprising, since the number of candidate values 
for $(\ell,k)$ is equal to $N=nm$ (hence it remains constant), 
and Table~\ref{tab:res_coeff_lin_all} shows that for \method{}, 
$\Delta \E$ is approximately proportional to $(mn)^{-1} = N^{-1}$.

For conditionally-valid algorithms, 
Figure~\ref{fig.diffnj.m-variable.E-qb} shows a similar behavior 
for $\Delta q_{\beta}$ 
(decreasing function of $m$ for \methodcondFL-$n_j$ and 
\methodcondFL-$(N/m)$, roughly constant for \methodcondFL),
for which we propose the same intuitive explanation 
(see also Table~\ref{tab:res_coeff_lin_N_M} and Figure~\ref{fig.DeltaE-Algos-cond_lfix} 
in Appendix~\ref{app:rate.bound_lfix}). 
Note however that \methodcondFL-$n_j$ and 
\methodcondFL-$(N/m)$ do not coincide with \methodcondFL{} 
and \methodCentralC~when $m=1$ because of the choice 
made for $\ell_j$ in Algorithm~\ref{algo.FCP-QQ-cond.1_nj}, which prevents to find a value of $k$ satisfying the training-conditional guarantee.
Other choices would be possible 
(see Remark~\ref{rk.diff_nj.variants} in Section~\ref{sec:diff_n}). 

\paragraph*{Conclusion.} 
Our one-shot FL algorithms dealing with 
non-equal $n_j$ provide prediction sets 
only slightly more conservative than 
their centralized counterparts, 
while being computationally tractable, 
hence they seem effective for performing CP 
in one-shot FL with different~$n_j$.
Note that the values of $m$ for which 
\method-$n_j$ and \methodcondFL-$n_j$ are the most conservative 
are the small values $m \geq 2$, 
which precisely are the ones for which 
\method-$n_j$ and \methodcondFL-$n_j$ could be improved 
(while remaining tractable) 
by choosing among a few values of $\ell_j$ 
for every $j \in \intset{m}$ (see Remark~\ref{rk.diff_nj.variants}).


\subsection{Real data} \label{sec:xps:real}

\looseness=-1 
In this section, we evaluate the performance of each algorithm 
(in terms of coverage and length of the returned prediction sets) 
on five public-domain regression data sets also considered in 
\citep{romano2019conformalized} and \citep{sesia2021conformal}: 
physicochemical properties of protein tertiary structure 
(bio) \citep{rana2013physicochemical}, 
bike sharing (bike) \cite{bikeshare}, 
communities and crimes (community) \citep{2011crime}, Tennessee's student teacher achievement ratio (star) \citep{achilles2008tennessee}, 
and concrete compressive strength (concrete)~\citep{yeh1998modeling}.

\subsubsection{Setup}
\label{sec:xps:real:setup}
For each experiment, we split the full data set 
into three parts: 
a learning set ($40\%$), a calibration set ($40\%$), and a test set ($20\%$). 
To simulate a FL scenario, we also split 
the calibration set in $m$ disjoint subsets of equal size~$n$. 
We consider scenarios where either $m > n$ or $m < n$, 
with $\max\{m/n , n/m\} \in \{4,8\}$ 
depending on the data set; 
the exact values of $(m,n)$ for each data set are given 
in Appendix~\ref{app:add-xp}. 
All features are then standardized to have zero mean and unit variance. 
For each algorithm, we compute the empirical coverage 
obtained on the test set and the average length of the prediction set over the test set.
These two metrics are collected over $50$ different 
learning-calibration-test random splits. 

Prediction sets 
are constructed using the Conformalized Quantile Regression method 
(CQR) \citep{romano2019conformalized}, 
a popular variant of split CP directly compatible with our approach, 
following Remark \ref{rk.fh-general} in Section~\ref{sec:QQ}. 
In CQR, $\fh$ is $(\fh_{\alpha/2}, \fh_{1-\alpha/2})$ 
where $\fh_{\delta}$ is a quantile regressor 
of order $\delta$ \citep{koenker1978regression} 
and $s(x, y)= \max(\fh_{\alpha/2}(x)-y, y - \fh_{1-\alpha/2}(x))$, 
so that the prediction set 
$\{y \in \IR : s(x, y) \leq \hat{q}\} 
= [\fh_{\alpha/2}(x) - \hat{q}, \fh_{1-\alpha/2}(x) + \hat{q}]$ 
has a size adaptive to heteroscedasticity. 
In the following experiments $\fh_{\alpha/2}$ 
and $\fh_{1-\alpha/2}$ are quantile regression forests 
\citep{meinshausen2006quantile} built from the learning 
set only. 
The number of trees in the forest is set to $1000$, 
the two parameters controlling the coverage rate 
on the learning data are tuned using cross-validation 
(within the learning set), 
and the remaining hyperparameters are set as done in \citep{romano2019conformalized}.

\subsubsection{Results}
\label{sec:xps:real:results}
Figure~\ref{fig:all_dataset_m} displays the boxes 
of the empirical coverages obtained by each method 
over all the data sets and all the $50$  
different data splits (one point represents 
the empirical coverage obtained on one random split of one data set). 
Figures~\ref{fig:bike} and~\ref{fig:bike_low_m} show 
the empirical coverages as well as the lengths 
of the intervals obtained on the bike data set. 
All the results on other individual data sets 
are provided in Appendix~\ref{app:add-xp}. 
For each box, the white circle indicates the mean, 
the left-end of the box corresponds to 
the empirical quantile of order $\beta = 0.2$ and 
the right-end to the empirical quantile of order $1-\beta = 0.8$.

\begin{figure}[t]
\centering
\includegraphics[width=0.49\linewidth]{./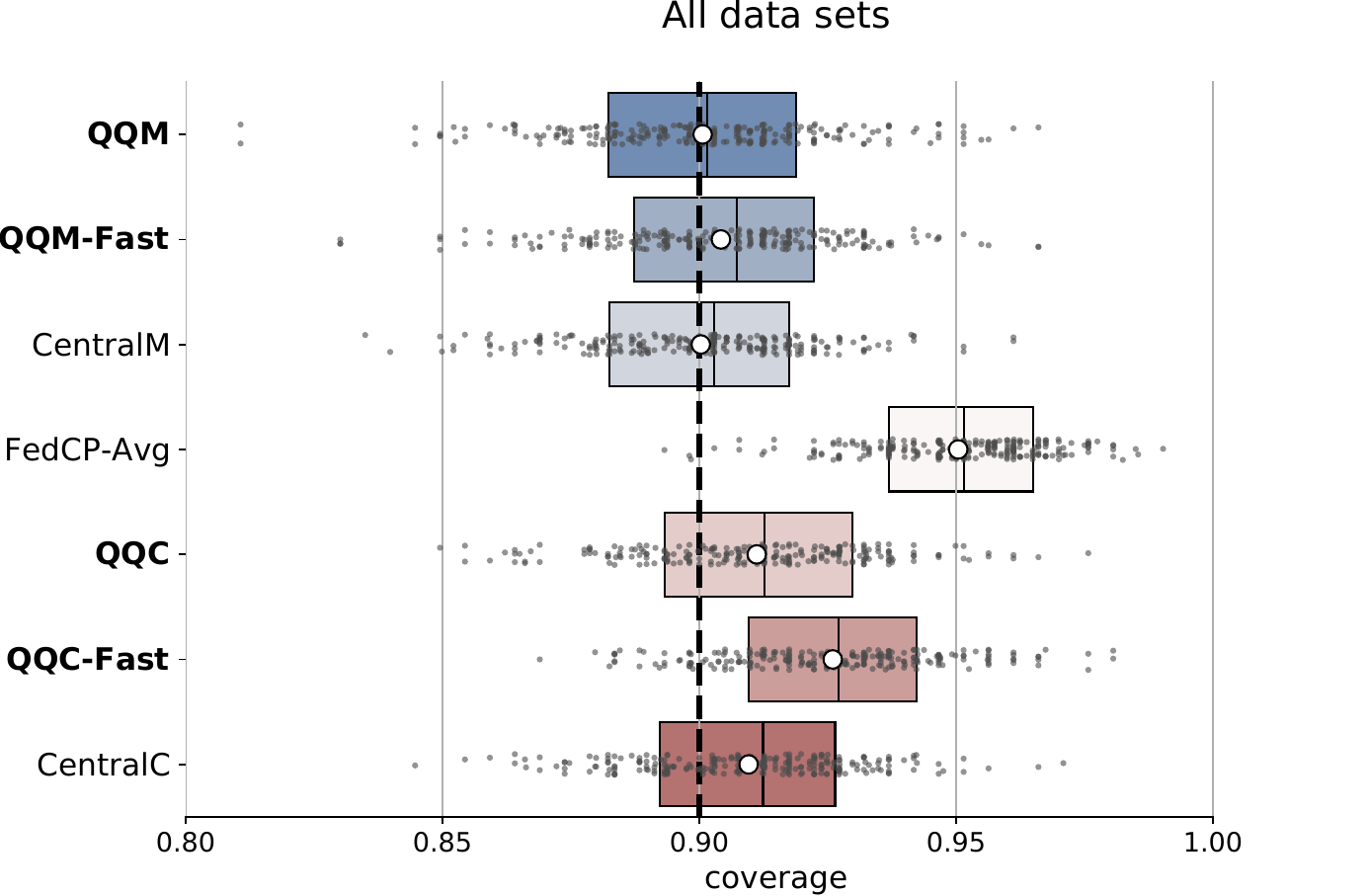}
\includegraphics[width=0.49\linewidth]{./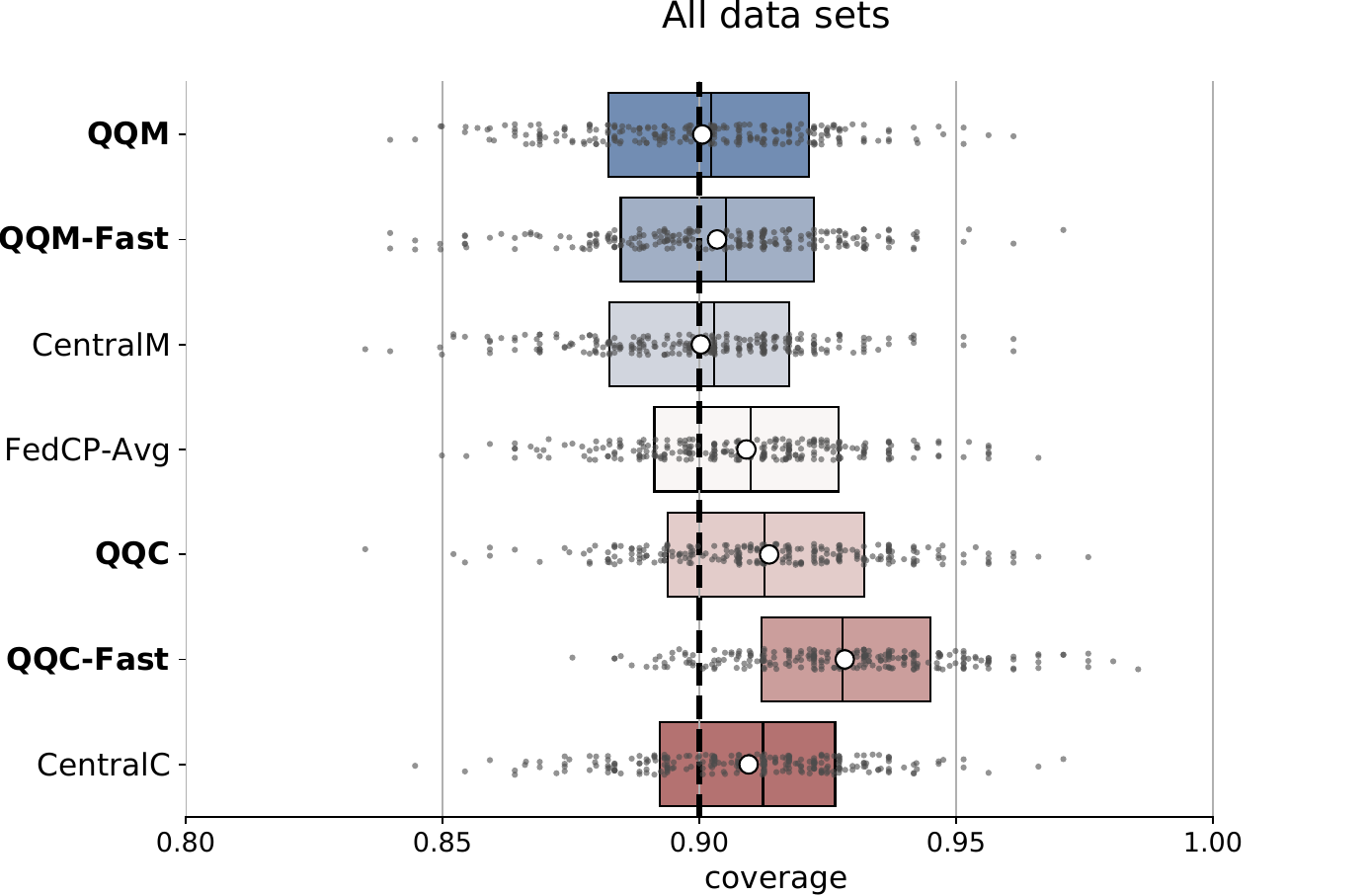}
\caption{Empirical coverages of prediction intervals 
($\alpha= 0.1$) constructed by various methods, 
aggregated across all data sets. 
Our methods are shown in bold font. 
See the beginning of Section~\ref{sec:xps:real:results} for details. 
Left: $m>n$. Right: $m<n$.} 
\label{fig:all_dataset_m}
\end{figure}

\begin{figure}[t]
	\centering
	\includegraphics[width=0.49\linewidth]{./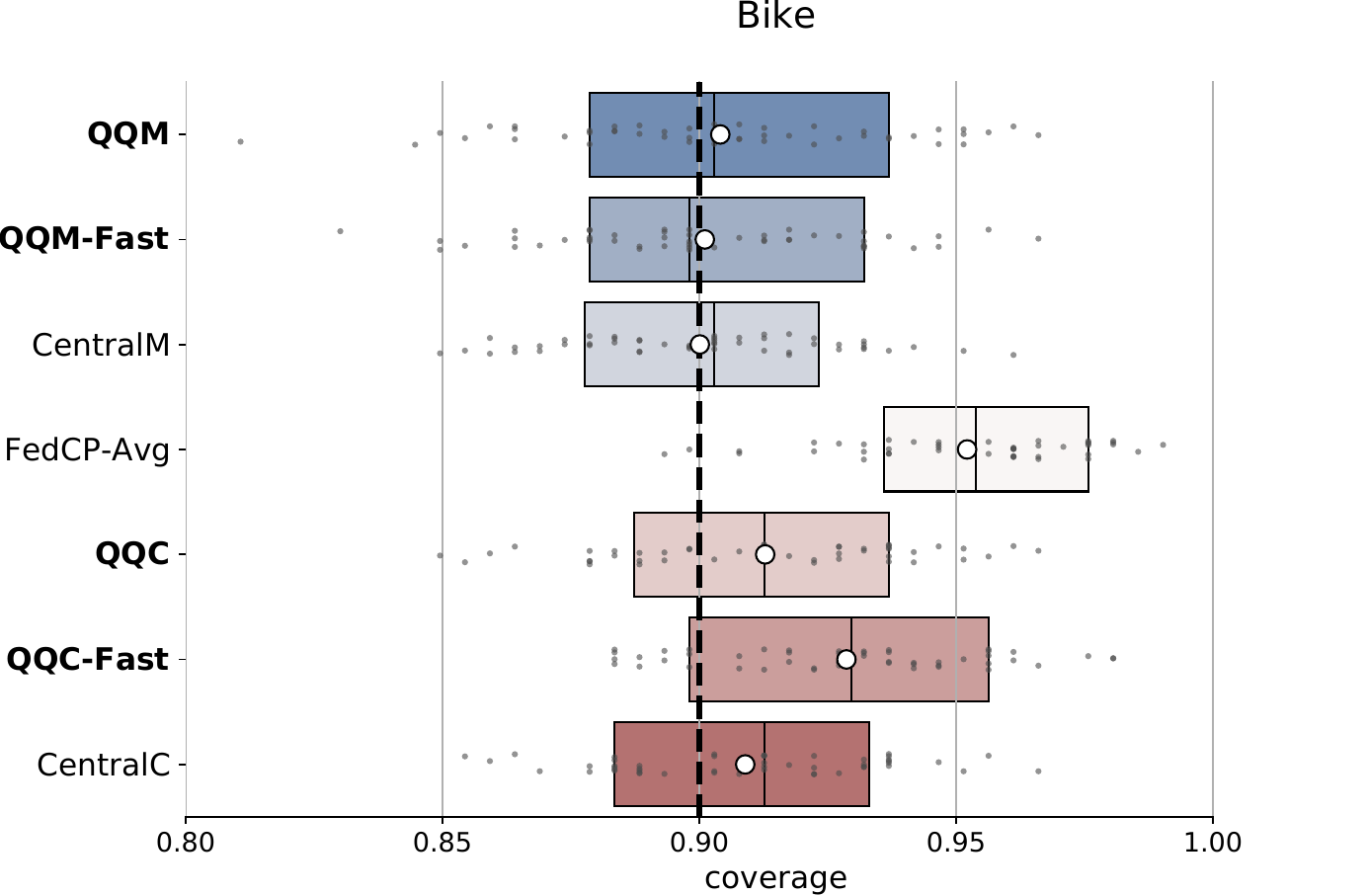}
	\includegraphics[width=0.49\linewidth]{./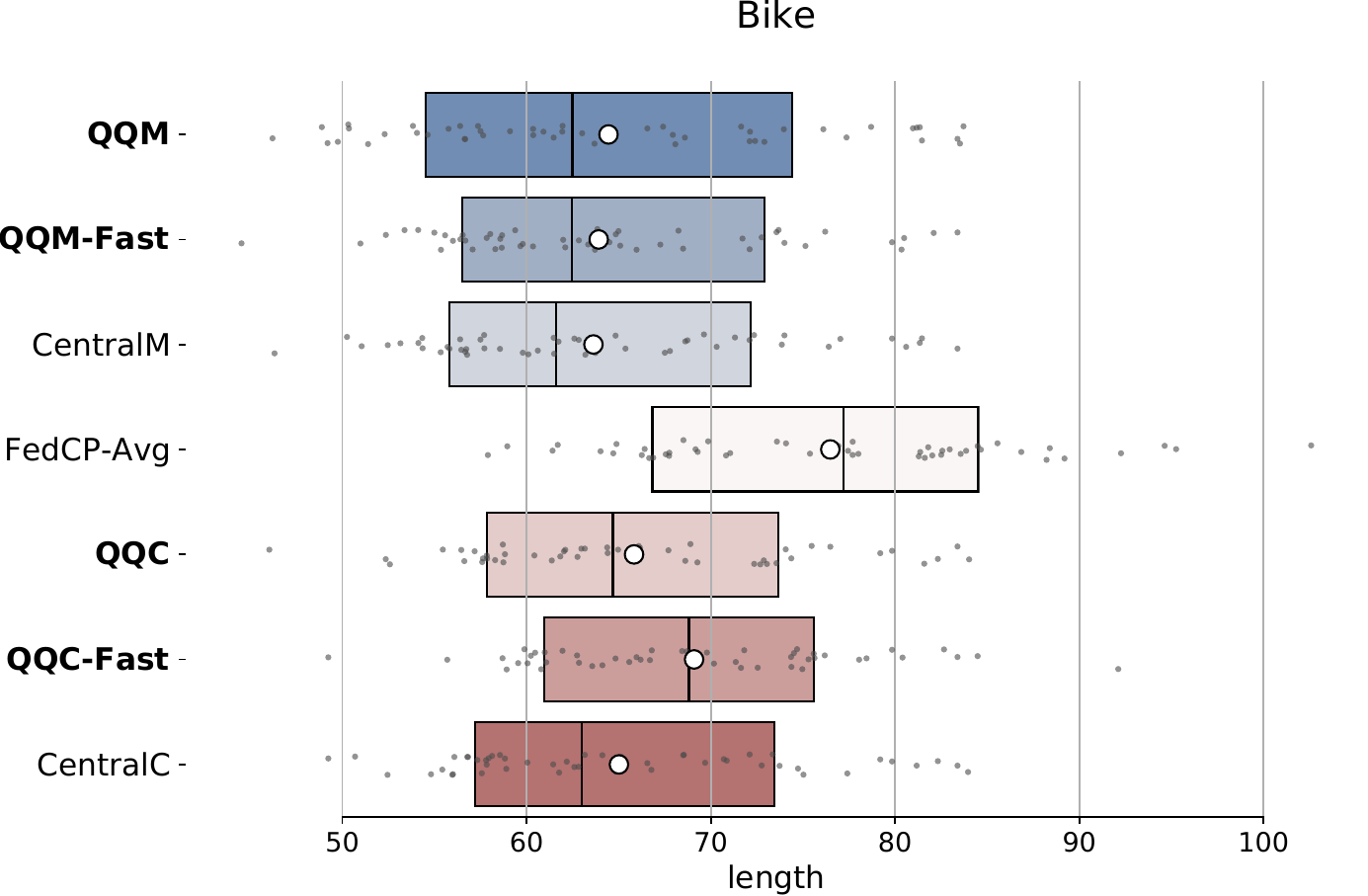}
\caption{Coverage (left) and average length (right) 
of prediction intervals for $50$ random learning-calibration-test 
splits of the bike data set. 
The miscoverage is $\alpha=0.1$, $\beta=0.2$, 
and the calibration set is split into $m=40$ disjoint subsets of equal size $n=10$. 
See the beginning of Section~\ref{sec:xps:real:results} for details. 
	}
	\label{fig:bike}
\end{figure}

\begin{figure}[t]
	\centering
	\includegraphics[width=0.49\linewidth]{./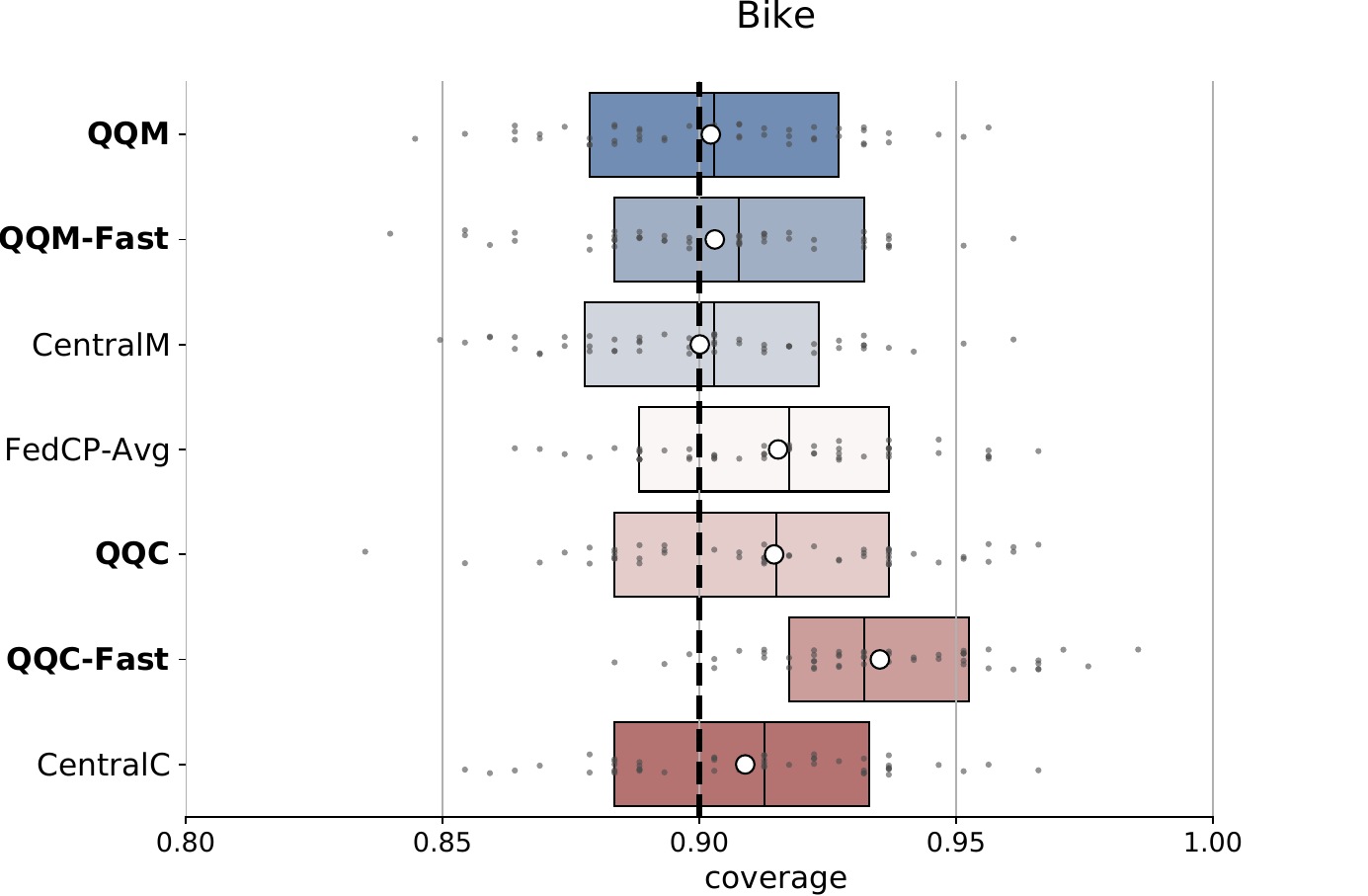}
	\includegraphics[width=0.49\linewidth]{./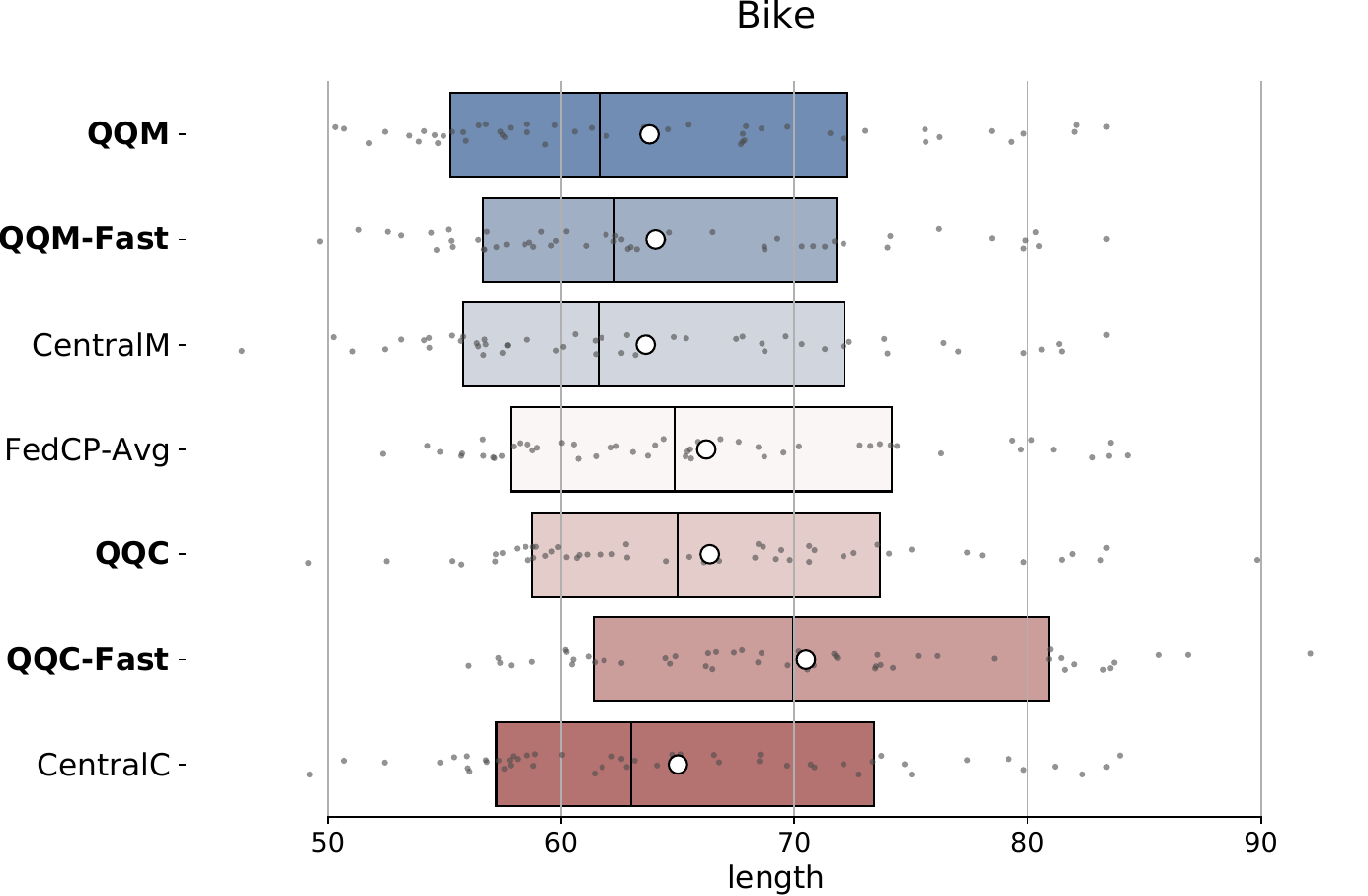}
\caption{Coverage (left) and average length (right) 
of prediction intervals for $50$ random learning-calibration-test 
splits of the bike data set. 
The miscoverage is $\alpha=0.1$, $\beta=0.2$, 
and the calibration set is split into $m=10$ disjoint subsets of equal size $n=40$. 
See the beginning of Section~\ref{sec:xps:real:results} for details. 
}
\label{fig:bike_low_m}
\end{figure}

\paragraph*{Marginally-valid algorithms.}
First, on average, 
\methodCentral, \method{} and \methodlow{} all return 
intervals with coverage greater than $1-\alpha = 0.9$ 
(the nominal coverage), without being too far from it. 
Importantly, our two one-shot FL methods 
return prediction sets with coverage and length 
close to those returned by split CP (\methodCentral). 
So, there is only a small loss when using our one-shot FL algorithms 
(whose coverage/length are slightly larger in terms of 
both expectation and dispersion). 
It is interesting to note that the results obtained 
with our two FL methods are quite similar, 
with slightly better results for \method{} notably 
when $m > n$ (left panel of Figure~\ref{fig:all_dataset_m}). 
As \methodlow{} is much faster, it is a 
good alternative to \method{} and can be preferred 
in real applications. 
Note finally that for marginal algorithms, 
the results for $m > n$ and $m < n$ 
are comparable.

\paragraph*{Conditionally-valid algorithms.} 
The comparison between the three training conditionally valid algorithms 
on Figures~\ref{fig:all_dataset_m}--\ref{fig:bike_low_m} 
is similar to the comparison made above between 
the three marginally-valid algorithms. 
The main difference is the loss of using our 
fast one-shot FL algorithm \methodcondFLk{} 
instead of \methodcondFL{} is larger than 
for our marginally-valid one-shot FL algorithms. 
Therefore, our advice here is to use \methodcondFL{} 
as long as it is computationally tractable, 
that is, when the total number $mn$ of calibration data points is not huge 
---since its complexity might be proportional to $mn$. 

Note that on Figures~\ref{fig:all_dataset_m}--\ref{fig:bike_low_m}, 
the empirical $\beta$-quantile of the coverage of 
some training-conditionally-valid algorithms 
is slightly smaller than $1-\alpha$. 
This apparent contradiction with the theoretical 
guarantees about these algorithms 
is due to the fact that the coverages reported in these experiments 
are only estimations obtained from finite test sets
(200 points for the bike dataset, for instance). 

Finally, as in Section~\ref{sec:synth_data}, 
we can notice that the (empirical) marginal coverages 
of training-conditional algorithms 
are higher than those of the marginal algorithms. 

\paragraph*{\methodAvg{}.}
First, in our experiments, \methodAvg{} appears to be 
marginally valid for each data set, 
and training-conditionally valid at a confidence level $\beta$ 
for all data sets except bike, community and star 
(each time when $m<n$). 
So, even without general theoretical guarantees 
(and even some counterexamples, see Appendix~\ref{app.avg_qq}), 
it here turns out to be valid in most cases 
(but not all, which is an issue). 
Second, \methodAvg{} is much more conservative 
than all competing algorithms in most data sets, 
and the few cases where it is less conservative 
than training-conditional algorithms 
precisely are the cases where \methodAvg{} is not 
training-conditionally valid. 
Therefore, \methodAvg{} appears to be clearly worse 
than our one-shot FL algorithms in all our experiments, 
a fact which is also supported by the few theoretical 
results presented in Appendix~\ref{app.avg_qq}. 

\paragraph*{Conclusion.}
Overall, these experiments support the fact that 
our one-shot FL methods are well-suited 
for building prediction sets in a federated setting, 
with only a mild loss compared to the centralized case.

\section{Discussion} \label{sec:conclusion}

All prediction sets proposed in this paper 
are of the form $\Chat_{\bar{\ell},\bar{k}}$ 
---as defined by Eq.~\eqref{CPQQ_set}--- 
with 
\[
(\bar{\ell},\bar{k}) 
\in \argmin_{(\ell,k) \in \calE} \bigl\{ \crit(\ell,k) \bigr\} 
\, , 
\]
where $\calE \subset \intset{n} \times \intset{m}$ 
is a set of marginally 
(resp. training-conditionally) valid pairs, 
and $\crit(\ell,k)$ is an upper bound 
on the expectation (resp. some quantile) 
of the coverage $1 - \alpha_{\ell,k}(\calD)$ 
when the scores cdf is continuous. 
Note that we adopt here Assumption~\ref{ass:same_n} 
to maintain notation simplicity, 
but the comments made in this paragraph also apply  
to Algorithms~\ref{algo.FCP-QQ.1_nj}--\ref{algo.FCP-QQ-cond.1_nj} 
in Section~\ref{sec:diff_n}. 
The selection criteria ($\crit$) used in Algorithms~\ref{alg:FedCPQQ}--\ref{algo.FCP-QQ-cond.1_nj} 
come from 
arguments presented in Section~\ref{sec.prelim.perf} 
(and Appendix~\ref{app.prelim.perf-compl}), 
and they are validated by the theoretical and numerical 
results obtained in 
Sections \ref{sec.uppbound_cov} and~\ref{sec:xps}. 
Yet, other choices are possible, 
leading to numerous variants of our algorithms. 
For instance, one could modify 
Algorithms~\ref{algo.FCP-QQ-cond.1}--\ref{algo.FCP-QQ-cond.2} 
by taking 
\[
\crit(\ell,k) 
= F_{U_{(\ell:n, k:m)}}^{-1} (1-\beta') 
\]
with any $\beta' \in (0,1)$ 
---instead of fixing $\beta'=\beta$---, 
leading to the algorithms respectively defined 
by Eq.~\eqref{eq.def.algo.FCP-QQ-cond.1.var} 
and~\eqref{eq.def.algo.FCP-QQ-cond.2.var} 
in Appendix~\ref{thm:cov_up_cond_proof}. 
Remarkably, theoretical coverage upper bounds 
similar to Theorem~\ref{thm:cov_up_cond} 
can also be obtained for these variants, 
as proved by 
Eq. \eqref{eq.cov-upper.algo.FCP-QQ-cond-1.var-min-upper-bound}  
and~\eqref{eq.cov-upper.algo.FCP-QQ-cond-2.var-min-upper-bound} 
in Appendix~\ref{thm:cov_up_cond_proof}. 
One could also use this criterion 
for selecting among marginally-valid pairs $(\ell,k)$, 
or use $\crit(\ell,k) = M_{\ell,k}$ 
for selecting among training-conditionally-valid pairs $(\ell,k)$. 
The function $\crit$ can also be chosen by the user 
---for instance, with a specific application in mind---, 
together with a set $\calE$ chosen 
among those used by our Algorithms~\ref{alg:FedCPQQ}--\ref{algo.FCP-QQ-cond.1_nj}, 
and the resulting prediction set would satisfy 
the corresponding distribution-free coverage 
lower bounds proved in this paper. 

This work brings many possible future research directions. 
It would be interesting to investigate 
how our quantile-of-quantiles estimators 
can be adapted to the heterogeneous case, 
that is, when agents have data following different distributions. 
This could potentially extend the approaches of \citep{lu2023federated} and \citep{pmlr-v202-plassier23a} to the one-shot setting. 
Another interesting line of research would be 
to propose and study differentially private versions of our algorithms. 
On a more theoretical aspect, 
it would be interesting to control the deviations 
of the coverage of marginal prediction sets  
built in Section~\ref{subsec:marg_valid}, 
and conversely to control the expected coverage of the 
training-conditional prediction sets of Section~\ref{sec.multi-order.algos}.  
Finally, our paper focuses on the calibration step, 
making it particularly suited for split-based conformal methods. 
It would be interesting to study 
how our FL approach could be extended to 
full conformal prediction, 
and to the more general framework of nested conformal prediction \citep{gupta2022nested}.

\bibliographystyle{imsart-number}
\bibliography{biblio}

\newpage
\appendix

\section{Prediction set performance measure: complements} \label{app.prelim.perf-compl}

Recall that we prove in Section~\ref{sec.prelim.perf} 
that for every $(\ell,k)$, 
the size of $\Chat_{\ell,k}(x)$ is 
a nondecreasing function of the coverage 
$1 - \alpha_{\ell, k}(\calD)$. 

As a consequence, for any $\beta \in (0,1)$, 
minimizing the $(1-\beta)$-quantile of the coverage $1 - \alpha_{\ell, k}(\calD)$ 
yields a minimizer of the $(1-\beta)$-quantile of the size of $\Chat_{\ell,k}(x)$, 
simultaneously for all $x \in \calX$. 
We therefore use this strategy for building the two 
training-conditionally-valid algorithms described in Section~\ref{sec.multi-order.algos}. 

The problem is a bit more difficult when one measures 
the performance of a predictor set by its size \emph{on average over $\calD$} 
(even at a single $x \in \calX$). 
As an illustration, let us consider regression with the fitted absolute residual score function. 
Then, 
\[ 
\forall x \in \calX \, , \qquad 
\Chat_{\ell,k} (x) 
= \bigl[ \fh(x) - S_{(\ell, k)} \, , \, \fh(x) + S_{(\ell, k)} \bigr] 
\]
is a prediction interval of length 
\[
2 S_{(\ell, k)} 
= 2 F_S^{-1} \bigl( 1 - \alpha_{\ell, k}(\calD) \bigr) 
\] 
almost surely. 
In this example, the size of $\Chat_{\ell,k}(x)$ 
(its length) 
is a nondecreasing function 
of the coverage $1 - \alpha_{\ell, k}(\calD)$, 
but the relationship between size and coverage is highly non-linear in general, 
hence the expected size $\E[ \Chat_{\ell,k}(x) ]$ 
cannot easily be linked to the expected coverage 
$\E [ 1 - \alpha_{\ell, k}(\calD) ]$. 
Nevertheless, following what is usually done for split CP 
(see Section~\ref{subsec:splitCP}), 
choosing $(\ell,k)$ by minimizing $\E [ 1 - \alpha_{\ell, k}(\calD) ]$ 
---or at least its value when $F_S$ is continuous, 
see Sections \ref{subsec:marg_valid} and~\ref{subsec:splitCP}--- 
is a natural choice, which is efficient according to numerical experiments. 

The latter argument might seem questionable, 
especially when $F_S$ is not guaranteed to be continuous. 
Yet, the strategy we propose is reasonable for the following reasons.  
$S_{(\ell,k)}$ is a nondecreasing function 
of $(\ell,k)$ (for the lexicographic order on $\intset{n} \times \intset{m}$), 
hence both the expected size and the expected coverage are nondecreasing functions of $(\ell,k)$. 
Therefore, minimizing the value $M_{\ell,k}$ 
of the expected coverage $\E [ 1 - \alpha_{\ell, k}(\calD) ]$ when $F_S$ is continuous 
(see Section~\ref{subsec:marg_valid}) 
over the set of 
(marginally or conditionally) valid pairs $(\ell,k)$ 
yields some $(\ell,k)$ minimal for the lexicographic order 
in the set of valid pairs. 
So, even if this does not choose the optimal valid pair $(\ell,k)$, 
it yields a performance close to its optimal value 
---at least numerically. 
We therefore follow this strategy when building 
marginally-valid algorithms in Section~\ref{subsec:marg_valid}.

\section{Key preliminary results on order statistics}\label{app.order-stat}

In this section, 
we provide some known and new important results about order statistics 
that play a key role in our proofs. 
We refer to Appendix~\ref{app.more-details-order-stat} 
for proofs and additional results, and to 
\cite{david2004order} for an in-depth presentation on this topic. 

\subsection{Order statistics} \label{app.order-stat.order}
Let us first introduce some notation. 
Given a real-valued random variable $Z$ 
with arbitrary cumulative distribution function (cdf)~$F_Z$, 
its quantile function is defined as the generalized inverse of~$F_Z$:
\begin{equation} 
\label{eq.def-gal-inverse}
\forall p \in (0,1) \, , \qquad 
F_Z^{-1}(p) 
:= \inf \{x \in \IR : F_Z(x) \geq p  \} 
\, .
\end{equation}
A key property of the generalized inverse 
---straightforward from Eq.~\eqref{eq.def-gal-inverse}--- 
is that 
\begin{equation} 
\label{eq.prop-gal-inverse}
\forall p \in [0,1] \, , \quad 
F_Z \circ F_Z^{-1} (p) \geq p 
\qquad \text{with equality if} \qquad 
p \in \Im(F_Z)
\, . 
\end{equation}
In particular, when $F_Z$ is continuous, 
Eq.~\eqref{eq.prop-gal-inverse} is an equality 
for every $p \in [0,1] = \Im(F_Z)$. 
Given a sample $Z_1, \ldots, Z_N$, 
we denote by $Z_{(1:N)} \leq \cdots \leq Z_{(N:N)}$ 
the corresponding order statistics, 
so that $Z_{(i:N)} = \Qh_{(i)} ( Z_1 , \ldots, Z_N )$ for every $i \in \intset{N}$, 
using the notation defined by Eq.~\eqref{def:Qk} 
in Section~\ref{subsec:splitCP}. 
When the sample size is clear from context, 
we note $Z_{(i)} := Z_{(i:N)}$. 

We first recall the well-known following link between general order statistics 
and uniform order statistics 
(see e.g. \citep{david2004order}).
\begin{lemma} \label{lemma:centra_unif}
For any $N \geq 1$, 
let $U_1, \ldots U_N$ be independent random variables 
with uniform distribution over $[0,1]$, 
and $Z_1, \ldots, Z_N$ be independent and 
identically distributed real-valued random variables with common cdf $F_Z$. 
For any $r \in \intset{N}$, 
let $U_{(r)} = U_{(r:N)}$ and $Z_{(r)} = Z_{(r:N)}$ respectively denote the corresponding 
$r$-th order statistics. 
Then, we have 
\begin{equation}
\label{eq.lemma:centra_unif.domin-stoch}
F_Z( Z_{(r)} ) 
\egalloi F_Z \circ F_Z^{-1} ( U_{(r)} ) 
\geq U_{(r)} 
\, , \qquad \text{hence} \qquad 
F_Z( Z_{(r)} ) \succeq U_{(r)}
\, , 
\end{equation}
that is, $F_Z( Z_{(r)} )$ stochastically dominates $U_{(r)}$. 
In particular, Eq.~\eqref{eq.lemma:centra_unif.domin-stoch} implies that 
\begin{equation}
\label{eq.lemma:centra_unif.domin-stoch.conseq}
\forall a \in \R \, , \quad 
\P\bigl( a \leq F_Z(Z_{(r)}) \bigr)
\geq 
\P\bigl( a \leq U_{(r)} \bigr) 
\quad \text{and} \quad 
\E \bigl[ F_Z(Z_{(r)}) \bigr]
\geq 
\E[U_{(r)}] 
\, . 
\end{equation}
Furthermore, when $F_Z$ is continuous, 
\begin{equation}
\label{eq.lemma:centra_unif.egal-loi}
F_Z( Z_{(r)} ) 
\egalloi U_{(r)} 
\end{equation}
both follow the 
$\Beta(r,N-r+1)$ distribution, 
and they have the same expectation and cdf. 
\end{lemma}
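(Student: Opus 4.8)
The plan is to reduce the statement to the uniform case by the quantile coupling. First I would use the standard inverse-transform representation: since $F_Z^{-1}$ is nondecreasing, the random vector $(F_Z^{-1}(U_1),\ldots,F_Z^{-1}(U_N))$ consists of i.i.d.\ entries, each with cdf $F_Z$ (the one-dimensional fact $\P(F_Z^{-1}(U_1)\leq x)=\P(U_1\leq F_Z(x))=F_Z(x)$ being the usual Galois-connection property of the generalized inverse), hence it has the same law as $(Z_1,\ldots,Z_N)$. Applying a nondecreasing map to a finite sample sorts the image in the same order as the sample itself, so the $r$-th order statistic of $(F_Z^{-1}(U_i))_{1\leq i\leq N}$ equals $F_Z^{-1}(U_{(r)})$, and therefore $Z_{(r)}\egalloi F_Z^{-1}(U_{(r)})$. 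Composing once more with the nondecreasing function $F_Z$ yields $F_Z(Z_{(r)})\egalloi F_Z\circ F_Z^{-1}(U_{(r)})$, which is the first equality in distribution of Eq.~\eqref{eq.lemma:centra_unif.domin-stoch}.

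Next I would invoke Eq.~\eqref{eq.prop-gal-inverse}: $F_Z\circ F_Z^{-1}(p)\geq p$ for every $p\in[0,1]$. Since $U_{(r)}\in[0,1]$ almost surely, this gives $F_Z\circ F_Z^{-1}(U_{(r)})\geq U_{(r)}$ almost surely, which is the pointwise inequality in Eq.~\eqref{eq.lemma:centra_unif.domin-stoch}; combined with the equality in law just established, it delivers the stochastic dominance $F_Z(Z_{(r)})\succeq U_{(r)}$. The two consequences in Eq.~\eqref{eq.lemma:centra_unif.domin-stoch.conseq} then follow at once. For the tail probabilities, the inclusion $\{a\leq U_{(r)}\}\subseteq\{a\leq F_Z\circ F_Z^{-1}(U_{(r)})\}$ gives $\P(a\leq U_{(r)})\leq\P(a\leq F_Z\circ F_Z^{-1}(U_{(r)}))=\P(a\leq F_Z(Z_{(r)}))$, the last equality by the equality in law. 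For the expectations, monotonicity of the expectation gives $\E[F_Z\circ F_Z^{-1}(U_{(r)})]\geq\E[U_{(r)}]$, and again $\E[F_Z\circ F_Z^{-1}(U_{(r)})]=\E[F_Z(Z_{(r)})]$.

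Finally, for the continuous case, Eq.~\eqref{eq.prop-gal-inverse} is an equality on all of $[0,1]$, so $F_Z\circ F_Z^{-1}(U_{(r)})=U_{(r)}$ almost surely, whence $F_Z(Z_{(r)})\egalloi U_{(r)}$ and the two share a cdf and an expectation. It then remains only to identify the law of $U_{(r:N)}$: a direct computation of the density of the $r$-th order statistic of $N$ i.i.d.\ uniforms (or the standard reference \cite{david2004order}) shows it is $\Beta(r,N-r+1)$.

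The argument is essentially bookkeeping, and I expect no genuine obstacle. The only point requiring care is the handling of the generalized inverse when $F_Z$ is neither continuous nor strictly increasing — specifically, that the representation $Z_{(r)}\egalloi F_Z^{-1}(U_{(r)})$ and the one-sided identity $F_Z\circ F_Z^{-1}\geq\mathrm{id}$ survive in that generality, which is exactly what Eq.~\eqref{eq.prop-gal-inverse} (together with the Galois-connection property) provides.
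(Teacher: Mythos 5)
Your proof is correct and follows essentially the same route as the paper's: the inverse-transform coupling $(F_Z^{-1}(U_1),\ldots,F_Z^{-1}(U_N)) \egalloi (Z_1,\ldots,Z_N)$, the fact that a nondecreasing map commutes with taking order statistics, and the one-sided identity $F_Z\circ F_Z^{-1}\geq\mathrm{id}$ from Eq.~\eqref{eq.prop-gal-inverse}. The only cosmetic difference is that the paper states the commutativity of monotone maps with $\Qh_{(r)}$ as a standalone intermediate remark with its own short proof, whereas you invoke it more informally; both are complete.
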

Lemma~\ref{lemma:centra_unif} is proved in Appendix~\ref{lemma:centra_unif_proof} for completeness. 
It shows that concentration/deviation inequalities for order statistics 
can be obtained in general from results on uniform order statistics. 
Therefore, any result about the Beta distribution can be used to bound $F_Z(Z_{(r)})$ 
---hence the bounds on the coverage of 
(centralized) split CP in Section~\ref{subsec:splitCP}, 
for instance. 

We now present non-asymptotic bounds on the 
quantile function of uniform order statistics, 
which rely on a concentration result for the Beta distribution 
\cite[Theorem~1]{marchal2017sub}. 
\begin{proposition} \label{pro.FUr}
For any integer $N \geq 1$ and $r \in \intset{N}$, 
the quantile function $F_{U_{(r:N)}}^{-1}$ of the $r$-th order statistics 
of a sample of $N$ independent standard uniform variables 
satisfies the following inequalities, 
for any $\delta>0$\textup{:} 
\begin{align}
\label{eq.pro.FUr.dev-gauche.inv}
F_{U_{(r:N)}}^{-1} (\delta) 
&\geq \frac{r}{N+1} - \sqrt{\frac{\log(1/\delta)}{2(N+2)}}
\\
\label{eq.pro.FUr.dev-droite.inv}
\text{and} \qquad 
F_{U_{(r:N)}}^{-1} (1-\delta) 
&\leq \frac{r}{N+1} + \sqrt{\frac{\log(1/\delta)}{2(N+2)}}
\, . 
\end{align}
\end{proposition}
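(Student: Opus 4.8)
The plan is to identify the distribution of $U_{(r:N)}$ and then invoke a known sub-Gaussian concentration bound for the Beta family. By Lemma~\ref{lemma:centra_unif} (see Eq.~\eqref{eq.lemma:centra_unif.egal-loi}), $U_{(r:N)}$ follows the $\Beta(r, N-r+1)$ distribution, whose expectation is $\mu := r/(N+1)$. I would then apply \cite[Theorem~1]{marchal2017sub}, which states that a $\Beta(a,b)$ random variable is sub-Gaussian with variance proxy at most $1/\bigl(4(a+b+1)\bigr)$; here $a+b+1 = N+2$, so $U_{(r:N)} - \mu$ is sub-Gaussian with proxy $\sigma^2 = 1/\bigl(4(N+2)\bigr)$.

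From this, the standard Chernoff bound for sub-Gaussian variables gives, for every $t > 0$,
\[
\P\bigl(U_{(r:N)} - \mu \geq t\bigr) \leq e^{-t^2/(2\sigma^2)} = e^{-2(N+2)t^2}
\qquad\text{and}\qquad
\P\bigl(U_{(r:N)} - \mu \leq -t\bigr) \leq e^{-2(N+2)t^2}
\, .
\]
Taking $t = \sqrt{\log(1/\delta)/\bigl(2(N+2)\bigr)}$ makes both right-hand sides equal to $\delta$. Writing $F := F_{U_{(r:N)}}$, the upper-tail inequality yields $F(\mu + t) \geq 1 - \delta$, hence $F^{-1}(1-\delta) \leq \mu + t$ by the definition~\eqref{eq.def-gal-inverse} of the generalized inverse, which is~\eqref{eq.pro.FUr.dev-droite.inv}. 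The lower-tail inequality yields $F(\mu - t) \leq \delta$; since the $\Beta(r, N-r+1)$ cdf is continuous and strictly increasing on $(0,1)$, this forces $F(x) < \delta$ for every $x < \mu - t$, so that $F^{-1}(\delta) \geq \mu - t$, which is~\eqref{eq.pro.FUr.dev-gauche.inv}.

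There is no real obstacle here; the only points where I would be careful are (i) getting the direction of the quantile inequalities right when passing from the tail bounds, and (ii) the boundary cases $\mu + t \geq 1$ (where~\eqref{eq.pro.FUr.dev-droite.inv} is trivial since $F^{-1}(1-\delta) \leq 1$) and $\mu - t \leq 0$ (where~\eqref{eq.pro.FUr.dev-gauche.inv} is trivial since $F^{-1}(\delta) \geq 0$). Away from these cases, both $\mu \pm t$ lie in $(0,1)$ and the argument above applies directly, using the continuity and strict monotonicity of the Beta cdf. The whole content of the proposition is thus the proxy variance $1/\bigl(4(N+2)\bigr)$ imported from \cite{marchal2017sub}.
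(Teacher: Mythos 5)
Your proof is correct and follows the same route as the paper: the paper also recalls (as Theorem~\ref{them:subgaussbeta}) the sub-Gaussian concentration bound for $\Beta(a,b)$ from \cite[Theorem~1]{marchal2017sub} with $a=r$, $b=N-r+1$, so that $a+b=N+1$ and $a+b+1=N+2$, plugs in $\varepsilon = \sqrt{\log(1/\delta)/\bigl(2(N+2)\bigr)}$, and reads off the quantile inequalities from the resulting tail bounds exactly as you do. The only cosmetic difference is that the paper states the tail bound directly rather than re-deriving it from the variance proxy via Chernoff, and leaves the monotonicity/boundary details implicit where you spell them out.
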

Proposition~\ref{pro.FUr} is proved in Appendix~\ref{sec.pr.cor.FUr}.

Finally, our coverage upper bounds 
(Theorems~\ref{thm:cov_up_margin} and~\ref{thm:cov_up_cond}) 
rely on the following non-asymptotic upper bound on 
the increments of the quantile function 
$F^{-1}_{U_{(\ell:n)}}$ of the $\Beta(\ell,n-\ell+1)$ distribution, 
showing that it is locally $\mathcal{O}(n^{-1/2})$-Lipschitz. 
\begin{theorem} \label{thm:len_bound2}
For any $n \geq 1$, $\ell \in \intset{n}$, 
$\varepsilon \in (0 , 1)$, and $x \in (0, 1-\varepsilon)$, 
we have 
\begin{align}
\label{eq.thm:len_bound2}
0 
\leq F^{-1}_{U_{(\ell:n)}}\left(x + \varepsilon \right) - F^{-1}_{U_{(\ell:n)}}\left(x \right) 
&\leq 
\frac{\varepsilon}{\sqrt{n+2}} 
\, \frac{\sqrt{2}}{g( \min\{ x , 1-x-\varepsilon \})}
\, ,
\\
\notag 
\text{where} \quad 
\forall t \in \left( 0 \, , \, \frac{1}{2} \right] \, , \qquad 
g(t) 
&\egaldef \sup_{ \delta \in ( 0 , \min\{ t \, , \, 1-t\} ) } 
\frac{t-\delta}{\sqrt{\log(1/\delta)}}
\geq \frac{t}{2 \sqrt{\log(2/t)}}
\\
\notag 
\text{and} \qquad  
\forall t \in [1/3 , 1/2] \, , 
\qquad 
\frac{ \sqrt{2} }{g(t)} 
&\leq 9 
\, . 
\end{align}
\end{theorem}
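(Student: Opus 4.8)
The plan is to reduce the increment bound to a pointwise lower bound on the density of the $\Beta(\ell,n-\ell+1)$ distribution along its own quantile function, and then to feed in Proposition~\ref{pro.FUr}. Write $F := F_{U_{(\ell:n)}}$ for the $\Beta(\ell,n-\ell+1)$ cdf, $f$ for its density, and $s_\delta := \sqrt{\log(1/\delta)/(2(n+2))}$, so that Proposition~\ref{pro.FUr} applied with $N=n$, $r=\ell$ reads $F^{-1}(1-\delta)-F^{-1}(\delta)\leq 2 s_\delta$ for all $\delta\in(0,1/2)$. The case $n=1$ (hence $\ell=1$, $F^{-1}(p)=p$, so the asserted bound is equivalent to $g(t)\leq\sqrt{2/3}$, which is immediate because $\delta<t\leq 1/2$ forces $\log(1/\delta)>\log 2$) is dispatched first, so one may assume $n\geq 2$. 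Since $F$ is $C^1$ and strictly increasing on $(0,1)$ with $F'=f>0$ there, $F^{-1}$ is $C^1$ on $(0,1)\supseteq[x,x+\varepsilon]$ with $(F^{-1})'(p)=1/f(F^{-1}(p))$; the left inequality in Eq.~\eqref{eq.thm:len_bound2} is then just monotonicity of $F^{-1}$, and the fundamental theorem of calculus gives $F^{-1}(x+\varepsilon)-F^{-1}(x)=\int_x^{x+\varepsilon}\mathrm{d}p/f(F^{-1}(p))$, reducing everything to lower bounding $f\circ F^{-1}$ on $[x,x+\varepsilon]$.

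The core step is the pointwise estimate: for every $q\in(0,1)$ and every $\delta\in(0,\min\{q,1-q\})$,
\[ f\bigl(F^{-1}(q)\bigr)\;\geq\;\frac{\min\{q,1-q\}-\delta}{2\,s_\delta}\,. \]
To prove it I would use that the $\Beta(\ell,n-\ell+1)$ density is unimodal, non-decreasing on $[0,u^\star]$ and non-increasing on $[u^\star,1]$, where $u^\star=(\ell-1)/(n-1)$ (with $u^\star=0$ if $\ell=1$ and $u^\star=1$ if $\ell=n$). Fix $q$ and set $v:=F^{-1}(q)$. If $v\leq u^\star$, then $f\leq f(v)$ on $[F^{-1}(\delta),v]$, hence $q-\delta=F(v)-F(F^{-1}(\delta))\leq f(v)\,(v-F^{-1}(\delta))\leq f(v)\bigl(F^{-1}(1-\delta)-F^{-1}(\delta)\bigr)\leq 2 s_\delta f(v)$, where $\delta<q$ and $\delta<1-q$ ensure $F^{-1}(\delta)<v<F^{-1}(1-\delta)$ and that the comparison points stay in the region where $f$ is non-decreasing. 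If instead $v\geq u^\star$, the mirror computation on $[v,F^{-1}(1-\delta)]$ gives $(1-q)-\delta\leq 2 s_\delta f(v)$. Since one of the two alternatives always holds, the displayed bound follows.

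Next I would combine this with the unimodality of $f\circ F^{-1}$ (composition of the unimodal $f$ with the increasing $F^{-1}$): its minimum over $[x,x+\varepsilon]$ is attained at an endpoint. Setting $t:=\min\{x,1-x-\varepsilon\}$, one checks $t\in(0,1/2]$ (so $\min\{t,1-t\}=t$), and for $\delta\in(0,t)$ that $\delta<\min\{q,1-q\}$ and $\min\{q,1-q\}\geq t$ for both $q=x$ and $q=x+\varepsilon$; hence the pointwise estimate yields $\min_{p\in[x,x+\varepsilon]}f(F^{-1}(p))\geq(t-\delta)/(2 s_\delta)$. Plugging into the integral representation gives $F^{-1}(x+\varepsilon)-F^{-1}(x)\leq 2\varepsilon s_\delta/(t-\delta)$ for every such $\delta$; taking the infimum over $\delta\in(0,t)$ and using $2 s_\delta=\sqrt{2/(n+2)}\,\sqrt{\log(1/\delta)}$ together with $\inf_{\delta\in(0,t)}\sqrt{\log(1/\delta)}/(t-\delta)=1/g(t)$ produces exactly the right-hand side of Eq.~\eqref{eq.thm:len_bound2}.

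Finally, for the two auxiliary inequalities on $g$: taking $\delta=t/2$ in the supremum defining $g(t)$ gives $g(t)\geq(t/2)/\sqrt{\log(2/t)}=t/(2\sqrt{\log(2/t)})$ for all $t\in(0,1/2]$; and for $t\in[1/3,1/2]$, taking $\delta=t/7$ gives $g(t)\geq(6t/7)/\sqrt{\log(7/t)}$, which is increasing in $t$ on $[1/3,1/2]$ (increasing numerator, decreasing denominator), hence $g(t)\geq 2/(7\sqrt{\log 21})$, so that $\sqrt 2/g(t)\leq(7\sqrt 2/2)\sqrt{\log 21}<9$. The main obstacle I anticipate is the pointwise density bound: one must correctly pair the side of the mode $u^\star$ on which $F^{-1}(q)$ lies with the matching tail bound from Proposition~\ref{pro.FUr}, and keep the comparison points $F^{-1}(\delta),F^{-1}(1-\delta)$ inside the monotonicity region being exploited; everything after that (one integral, one optimization in $\delta$, two short numerical checks) is routine.
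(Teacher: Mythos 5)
Your proof is correct and follows the same overall strategy as the paper's: express the increment of $F^{-1}_{U_{(\ell:n)}}$ through the derivative of the inverse (you via the integral representation, the paper via the mean value theorem), lower-bound $f_{U_{(\ell:n)}}$ on the relevant interval by combining unimodality of the Beta density with the concentration bound of Proposition~\ref{pro.FUr}, and optimize over $\delta$ to produce~$g$. Your pointwise lemma for $f_{U_{(\ell:n)}}\circ F^{-1}_{U_{(\ell:n)}}$ builds the $\min\{q,1-q\}$ in upfront, which lets you bypass the short monotonicity argument the paper uses at the end to replace $\min\{g(x),g(1-x-\varepsilon)\}$ with $g(\min\{x,1-x-\varepsilon\})$, but this is a cosmetic reorganization rather than a different method.
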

Theorem~\ref{thm:len_bound2} is proved in Appendix~\ref{app.pr.thm:len_bound2}. 
To the best of our knowledge, 
such an upper bound has never been proved previously. 
It is a key result of the paper, 
used in the proof of our coverage upper bounds.
\begin{remark}
The upper bound in Eq.~\eqref{eq.thm:len_bound2} is always 
well defined since $g$ is defined on $(0,1/2]$ 
and $\min \{ x , 1 - x- \varepsilon \} \leq 1/2$ 
for every $\varepsilon \in (0,1)$ and $x \in (0,1-\varepsilon)$. 
\end{remark}

\subsection{Order statistics of order statistics}  \label{app.order-stat.order-of-order}
The federated learning methods of Section~\ref{sec:FCP-QQ} 
are all based on the quantile-of-quantiles family of estimators 
(Definition~\ref{def:QQ} in  Section~\ref{sec:QQ}), 
which are order statistics of order statistics, 
so we focus on them in this subsection. 

Given a collection $(Z_{i,j})_{1 \leq i \leq n , 1 \leq j \leq m}$ 
of $m$ samples of size $n$, for every $j \in \intset{m}$, 
we denote the order statistics of 
the $j$-th sample $(Z_{i,j})_{1 \leq i \leq n}$ by 
\[ 
Z_{(1:n),j} \leq Z_{(2:n),j} \leq \cdots \leq Z_{(n:n),j}
\, . 
\]
Then, for every $\ell \in \intset{n}$, 
we denote the order statistics of the sample of the 
$\ell$-th order statistics $( Z_{(\ell:n),j} )_{1 \leq j \leq m}$ by 
\[ 
Z_{(\ell:n),(1:m)} \leq Z_{(\ell:n),(2:m)} \leq \cdots \leq Z_{(\ell:n),(m:m)}
\, . 
\]
For every $\ell \in \intset{n}$ and $k \in \intset{m}$, 
$Z_{(\ell:n),(k:m)} \defegal Z_{(\ell:n, k:m)}$ is the $(\ell,k)$-th 
order statistics of order statistics of 
$(Z_{i,j})_{1 \leq i \leq n , 1 \leq j \leq m}$, 
which can also be written as follows with the 
notation defined by Eq.~\eqref{def:Qk} 
in Section~\ref{subsec:splitCP}: 
\begin{align*}
Z_{(\ell:n, k:m)} 
&= \Qh_{(k)} \Bigl( \bigl( \Qh_{(\ell)}(\mathcal{Z}_j) \bigr)_{j \in \intset{m}} \Bigr) 
\\
\text{where} \quad 
\forall j \in \intset{m} \, , \quad 
\mathcal{Z}_j 
&\egaldef ( Z_{i,j} )_{i \in \intset{n}}  
\qquad \text{so that} \qquad 
Z_{(\ell:n), j} 
= \Qh_{(\ell)}(\mathcal{Z}_j) 
\, .
\end{align*}
In the following, we write $Z_{(\ell),j}$ 
(resp. $Z_{(\ell,k)}$) instead of 
$Z_{(\ell:n),j}$ (resp. $Z_{(\ell:n, k:m)}$) 
when the sample sizes $n$ and $m$ are clear from context. 

Our first key result is a link with order statistics 
of uniform order statistics, 
similar to Lemma~\ref{lemma:centra_unif}. 
\begin{lemma} \label{lemma:centra_unif_kl}
For any $n,m \geq 1$, 
let $(U_{i,j})_{1 \leq i \leq n , 1 \leq j \leq m}$ 
be independent random variables with uniform distribution over $[0,1]$, 
and $(Z_{i,j})_{1 \leq i \leq n , 1 \leq j \leq m}$ 
be independent and identically distributed real-valued 
random variables with common cdf~$F_Z$. 
For every $\ell \in \intset{n}$ and $k \in \intset{m}$, 
let $U_{(\ell,k)}$ and $Z_{(\ell, k)}$ 
respectively denote the corresponding 
$(\ell,k)$-th order statistics of order statistics, 
as defined at the beginning of Appendix~\ref{app.order-stat.order-of-order}.  
Then, we have 
\begin{equation}
\label{eq.lemma:centra_unif_kl.domin-stoch}
F_Z( Z_{(\ell,k)} ) 
\egalloi F_Z \circ F_Z^{-1} ( U_{(\ell,k)} ) 
\geq U_{(\ell,k)} 
\, , \quad \text{hence} \quad 
F_Z( Z_{(\ell,k)} ) \succeq U_{(\ell,k)}
\, ,  
\end{equation}
that is, $F_Z( Z_{(\ell,k)} )$ stochastically dominates $U_{(\ell,k)}$. 
In particular, Eq.~\eqref{eq.lemma:centra_unif_kl.domin-stoch} 
implies that 
\begin{equation}
\label{eq.lemma:centra_unif_kl.domin-stoch.conseq}
\forall a \in \R \, , \quad 
\P \bigl( a \leq F_Z ( Z_{(\ell,k)} )\bigr)
\geq 
\P (a \leq U_{(\ell,k)} ) 
\quad \text{and} \quad 
\E \bigl[ F_Z( Z_{(\ell,k)} ) \bigr]
\geq \E [ U_{(\ell,k)} ] 
\, . 
\end{equation}
Furthermore, when $F_Z$ is continuous, 
\begin{equation}
\label{eq.lemma:centra_unif_kl.egal-loi}
F_Z( Z_{(\ell,k)} ) 
\egalloi U_{(\ell,k)} 
\end{equation}
hence they have the same expectation and cdf. 
\end{lemma}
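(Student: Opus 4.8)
The plan is to reduce everything to the single-sample result of Lemma~\ref{lemma:centra_unif} applied twice, combined with the elementary monotonicity property \eqref{eq.prop-gal-inverse} of the generalized inverse. First I would observe that the map $F_Z$ is nondecreasing, so it commutes with taking order statistics: for each fixed $j \in \intset{m}$ we have $F_Z(Z_{(\ell:n),j}) = F_Z\bigl(\Qh_{(\ell)}(\mathcal{Z}_j)\bigr)$, and since $F_Z$ is monotone, applying it to the $\ell$-th smallest of $\mathcal{Z}_j$ gives the $\ell$-th smallest of the transformed sample $(F_Z(Z_{i,j}))_{i}$. The same argument then applies one level up: $F_Z$ commutes with $\Qh_{(k)}$ over the $m$ transformed local quantiles, so $F_Z(Z_{(\ell,k)})$ equals the $(k,m)$-order statistic of the collection $\bigl(F_Z(Z_{(\ell:n),j})\bigr)_{1\le j\le m}$.

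Next I would invoke Lemma~\ref{lemma:centra_unif} at the inner level: for each $j$, $F_Z(Z_{(\ell:n),j}) \egalloi F_Z\circ F_Z^{-1}(U_{(\ell:n),j})$ where $U_{(\ell:n),j}$ is the $\ell$-th order statistic of $n$ i.i.d.\ uniforms, and these are independent across $j$. Applying $\Qh_{(k)}$ and using that $F_Z\circ F_Z^{-1}$ is nondecreasing, we obtain
\[
F_Z(Z_{(\ell,k)}) \egalloi \Qh_{(k)}\Bigl(\bigl(F_Z\circ F_Z^{-1}(U_{(\ell:n),j})\bigr)_{j}\Bigr) = F_Z\circ F_Z^{-1}\bigl(U_{(\ell,k)}\bigr),
\]
the last equality again because a nondecreasing function commutes with $\Qh_{(k)}$. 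Then \eqref{eq.prop-gal-inverse} gives $F_Z\circ F_Z^{-1}(U_{(\ell,k)}) \geq U_{(\ell,k)}$ pointwise, which establishes \eqref{eq.lemma:centra_unif_kl.domin-stoch} and hence the stochastic domination. The consequences in \eqref{eq.lemma:centra_unif_kl.domin-stoch.conseq} follow immediately: stochastic domination implies $\P(a \le F_Z(Z_{(\ell,k)})) \ge \P(a \le U_{(\ell,k)})$ for all $a$, and integrating the tail identity $\E[W] = \int_0^\infty \P(W\ge t)\,dt$ for the $[0,1]$-valued variables gives the inequality on expectations.

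Finally, for the continuous case I would note that $F_Z$ continuous forces $F_Z\circ F_Z^{-1}(p) = p$ for all $p\in[0,1]$ by the equality clause of \eqref{eq.prop-gal-inverse}, so every inequality above collapses to an equality and $F_Z(Z_{(\ell,k)}) \egalloi U_{(\ell,k)}$, yielding identical cdf and expectation. The only mildly delicate point --- and the step I would be most careful about --- is justifying that a nondecreasing (but possibly non-strictly-increasing, non-left-continuous) function commutes with the sample-quantile operator $\Qh_{(\cdot)}$, including the bookkeeping around the $+\infty$ convention in \eqref{def:Qk}; since $F_Z$ and $F_Z\circ F_Z^{-1}$ both take values in $[0,1]$ this never triggers the $+\infty$ branch, and the commutation is then just the statement that reordering a finite list and applying a monotone map are interchangeable, which I would state cleanly as a one-line sublemma or simply invoke as elementary.
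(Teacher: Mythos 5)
Your proof is correct and is essentially the argument the paper gives: both reduce the claim to the commutation of nondecreasing maps with the sample-quantile operator $\Qh_{(\cdot)}$ at both levels, the probability-integral-transform coupling, and the inequality $F_Z \circ F_Z^{-1} \geq \mathrm{id}$ from Eq.~\eqref{eq.prop-gal-inverse}. The only stylistic difference is the direction of the bookkeeping --- you apply $F_Z$ first, commute it down through both quantile levels, and reuse Lemma~\ref{lemma:centra_unif} per agent $j$, whereas the paper defines a coupled array $\Xu_{i,j} := F_Z^{-1}(U_{i,j})$, pushes $F_Z^{-1}$ up through both levels in a single pass to obtain $\Xu_{(\ell,k)} = F_Z^{-1}(U_{(\ell,k)})$, and applies $F_Z$ only at the end.
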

Lemma~\ref{lemma:centra_unif_kl} is proved in Appendix~\ref{lemma:centra_unif_kl_proof}. 
It shows that concentration/deviation inequalities 
for order statistics of order statistics  
can be obtained in general from results on 
order statistics of uniform order statistics. 
Therefore, we now focus on the distribution of 
$U_{(\ell:n , k:m )}$. 

By definition, $U_{(\ell:n , k:m )}$ is the 
$k$-th order statistics of a sample of 
$m$ independent and identically distributed random variables 
following a $\Beta(\ell, n-\ell+1)$ distribution. 
This is called a Beta order statistics, 
and it follows a Beta-Beta distribution 
\citep{jones2004families, al2012composition, makgai2019beta, connor2022integrals}. 
The Beta-Beta distribution is difficult to analyze directly 
\citep{cordeiro2013simple, castellares2021note} 
---as one can see from the formula of its expectation 
$M_{\ell,k}$ provided by Theorem~\ref{them:main} 
in Section~\ref{subsec:marg_valid}---, 
so it is more convenient to first relate it to 
the well-known Beta distribution, as done by the next result.
\begin{lemma} 
\label{lemma:beta_beta}
Let $n, m \geq 1$ be two integers, 
$k \in \intset{m}$, $\ell \in \intset{n}$. 
Let $U_{(\ell:n)}$ and $U_{(k:m)}$ be defined as in Lemma~\ref{lemma:centra_unif},  
and $U_{(\ell:n, k:m)} = U_{(\ell,k)}$ as in Lemma~\ref{lemma:centra_unif_kl}. 
Then, their cdfs satisfy   
\begin{align}
\label{eq.lemma:beta_beta.cdf-U}
F_{ U_{(\ell:n, k:m)} } 
&= F_{U_{(k:m)}} \circ F_{U_{(\ell:n)}}
\, . 
\end{align}
\end{lemma}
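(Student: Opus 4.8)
The plan is to identify $U_{(\ell:n, k:m)}$ with a classical order statistic and then apply the standard binomial-tail formula for order-statistic cdfs. Recall from the construction at the beginning of Appendix~\ref{app.order-stat.order-of-order} that $U_{(\ell:n, k:m)} = \Qh_{(k)}\bigl( (\Qh_{(\ell)}(\mathcal{U}_j))_{j \in \intset{m}} \bigr)$, where the blocks $\mathcal{U}_j = (U_{i,j})_{i \in \intset{n}}$ are independent samples of $n$ i.i.d.\ uniform variables. First I would observe that, by Lemma~\ref{lemma:centra_unif} applied with $F_Z$ the identity on $[0,1]$, each $\Qh_{(\ell)}(\mathcal{U}_j) = U_{(\ell:n),j}$ follows the $\Beta(\ell, n-\ell+1)$ distribution, and these $m$ variables are independent since the blocks $\mathcal{U}_1, \ldots, \mathcal{U}_m$ are. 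Hence $U_{(\ell:n, k:m)}$ is exactly the $k$-th order statistic of $m$ i.i.d.\ random variables with common cdf $F_{U_{(\ell:n)}}$.

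Next I would invoke the elementary fact that for any i.i.d.\ sample $X_1,\dots,X_m$ with common cdf $G$, the events $\{X_i \leq x\}$ are i.i.d.\ $\mathrm{Bernoulli}(G(x))$, so that
\[
\P\bigl( X_{(k:m)} \leq x \bigr)
= \P\bigl( \#\{ i : X_i \leq x \} \geq k \bigr)
= \sum_{j=k}^{m} \binom{m}{j} G(x)^j \bigl(1 - G(x)\bigr)^{m-j}
\, .
\]
Applying this with $G = F_{U_{(\ell:n)}}$ yields $F_{U_{(\ell:n, k:m)}}(x) = \sum_{j=k}^{m} \binom{m}{j} F_{U_{(\ell:n)}}(x)^j (1 - F_{U_{(\ell:n)}}(x))^{m-j}$ for every $x \in \R$, while applying it to $m$ i.i.d.\ uniform variables (so $G(p) = p$ for $p \in [0,1]$) gives $F_{U_{(k:m)}}(p) = \sum_{j=k}^{m} \binom{m}{j} p^j (1-p)^{m-j}$ for every $p \in [0,1]$.

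Finally I would combine these two displays: since $F_{U_{(\ell:n)}}(x) \in [0,1]$ for every $x \in \R$, substituting $p = F_{U_{(\ell:n)}}(x)$ into the formula for $F_{U_{(k:m)}}$ reproduces exactly the expression obtained for $F_{U_{(\ell:n, k:m)}}(x)$, which is precisely Eq.~\eqref{eq.lemma:beta_beta.cdf-U}. I do not expect a genuine obstacle here; the only points needing a little care are verifying that the $m$ local order statistics $U_{(\ell:n),j}$ are truly independent and identically distributed (which follows from the independence of the blocks $\mathcal{U}_j$ together with Lemma~\ref{lemma:centra_unif}), and noting that the binomial-tail identity for $F_{U_{(k:m)}}$ only needs to hold on $[0,1]$, which is exactly the range of $F_{U_{(\ell:n)}}$.
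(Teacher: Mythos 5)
Your proof is correct. It rests on the same key structural observation as the paper's proof, namely that $U_{(\ell:n,k:m)}$ is the $k$-th order statistic of an i.i.d.\ sample of size $m$ drawn from the $\Beta(\ell,n-\ell+1)$ distribution, but the route from there to Eq.~\eqref{eq.lemma:beta_beta.cdf-U} is genuinely different. The paper reuses its own Lemma~\ref{lemma:centra_unif} (the probability-integral-transform identity $F_Z(Z_{(r:N)}) \egalloi U_{(r:N)}$ for continuous $F_Z$) with $F_Z = F_{U_{(\ell:n)}}$, obtains $F_{U_{(\ell:n)}}(U_{(\ell:n,k:m)}) \egalloi U_{(k:m)}$, and then unwinds this through $F_{U_{(k:m)}}^{-1}$ to read off the cdf as a composition. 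You bypass the quantile-function manipulation entirely: you write out the binomial-tail cdf formula $\P(X_{(k:m)} \leq x) = \sum_{j=k}^m \binom{m}{j} G(x)^j (1-G(x))^{m-j}$ twice — once with $G = F_{U_{(\ell:n)}}$ and once with $G = \mathrm{id}_{[0,1]}$ — and observe by direct substitution that the two expressions compose exactly as claimed. Your version is more elementary and self-contained (it needs only Eq.~\eqref{eq.Xr.cdf} and Eq.~\eqref{eq.FUl.pdf-cdf}, not the machinery of Lemma~\ref{lemma:centra_unif}), while the paper's version is more modular and extends automatically to settings where the explicit binomial formula is less convenient. You were also right to flag that the identity for $F_{U_{(k:m)}}$ only needs to hold on $[0,1]$, which is exactly the range of the inner cdf; that is the only point of care in your argument and you handled it correctly.
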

Lemma~\ref{lemma:beta_beta} is proved in Appendix~\ref{lemma:beta_beta_proof}. 
Since the cdfs of $U_{(\ell:n)}$ and $U_{(k:m)}$ are known 
(see Lemma~\ref{lemma:centra_unif}), 
it shows that the cdf of $U_{(\ell:n, k:m)} $ 
is the composition of the cdf of a $\Beta(k, m-k+1)$ 
with the cdf of a $\Beta(\ell, n-\ell+1)$.

\section{Proofs of Section \ref{subsec:marg_valid} (marginal guarantees)}

\subsection{Proof of Theorem~\ref{them:main}}
\label{thm:main_proof}
Throughout the proof, we reason conditionally 
to $\calD^{lrn}$. 
Since $(X_{i,j}, Y_{i,j})_{1 \leq i \leq n , 1 \leq j \leq m}, (X, Y)$ are i.i.d., 
the associated scores $(S_{i,j})_{1 \leq i \leq n , 1 \leq j \leq m}, S$ are i.i.d. 
Let us denote by $F_S$ their cdf (given $\calD^{lrn}$). 

By definition of $\Chat_{\ell, k}(X)$, 
we have 
$\P(Y \in \Chat_{\ell, k}(X)) 
= \IP\left(S \leq S_{(\ell, k)}\right) 
= \IE[F_S(S_{(\ell, k)})]$. 
Furthermore, by Eq.~\eqref{eq.lemma:centra_unif_kl.domin-stoch.conseq} 
in Lemma~\ref{lemma:centra_unif_kl} 
with $Z_{i,j} = S_{i,j}$ (hence $F_Z = F_S$), 
we have 
\begin{equation}
\label{eq.thm:main_proof.1}
\E \bigl[ F_S( S_{(\ell, k)}) \bigr] 
\geq \E [ U_{(\ell, k)} ] 
=: M_{\ell, k} 
\end{equation}
in general, 
with equality when $F_S$ is continuous 
---or equivalently when the scores are a.s. distinct--- 
by Eq.~\eqref{eq.lemma:centra_unif_kl.egal-loi} 
in Lemma~\ref{lemma:centra_unif_kl}. 
It remains to prove the formula for $M_{\ell,k}$ 
announced in Theorem~\ref{them:main}. 

Recall that by definition, $U_{(\ell,k)} = U_{(\ell:n, k:m)}$ 
is the $k$-th order statistics of a sample 
$( U_{(\ell:n), j} )_{1 \leq j \leq m}$ of $m$ 
independent random variables with 
common distribution $\Beta (\ell, n - \ell + 1)$. 
Therefore, its pdf is given by 
Eq.~\eqref{eq.Xr.pdf} in Appendix~\ref{app.more-details-order-stat.exact} 
with $r=k$, $N=m$, $f_Z = f_{U_{(\ell:n)}}$ 
and $F_Z = F_{U_{(\ell:n)}}$. 
Using Eq.~\eqref{eq.FUl.pdf-cdf} with $r=\ell$ and $N=n$, 
we get that 
for every $t \in \R$, 
\begin{align*}
f_{U_{(\ell, k)}}(t)
&=\dfrac{m!}{(k-1)!(m-k)!} F_{U_{(\ell)}}(t)^{k-1} \bigl[1 - F_{U_{(\ell)}}(t) \bigr]^{m-k} f_{U_{(\ell)}}(t) \\
&=k \binom{m}{k} F_{U_{(\ell)}}(t)^{k-1} \bigl[1 - F_{U_{(\ell)}}(t) \bigr]^{m-k} f_{U_{(\ell)}}(t) \\
&=k \binom{m}{k} \left[\sum^n_{i=\ell} \binom{n}{i} t^i (1 - t)^{n-i}\right]^{k-1} \left[1 - \sum^n_{i=\ell} \binom{n}{i} t^i (1 - t)^{n-i}\right]^{m-k} f_{U_{(\ell)}}(t) \\
&=k \binom{m}{k} \left[\sum^n_{i=\ell} \binom{n}{i} t^i (1 - t)^{n-i}\right]^{k-1} \left[\sum^{\ell-1}_{i=0} \binom{n}{i} t^i (1 - t)^{n-i}\right]^{m-k} f_{U_{(\ell)}}(t) \\
\text{since} \qquad 
1 
&= \sum^n_{i=0} \binom{n}{i} t^i (1 - t)^{n-i} 
= \sum^{\ell-1}_{i=0} \binom{n}{i} t^i (1 - t)^{n-i}
+ \sum^n_{i=\ell} \binom{n}{i} t^i (1 - t)^{n-i} \, .
\end{align*}
Then, by developing the powers over the summations, we get that for every $t \in [0,1]$, 
\begin{align*} 
&f_{U_{(\ell, k)}}(t)\\
&= k \binom{m}{k} \sum_{i_1=\ell}^{n} \ldots \sum_{i_{k-1}=\ell}^{n} 
\sum_{i_{k+1}=0}^{\ell-1} \ldots \sum_{i_{m}=0}^{\ell-1} 
\binom{n}{i_1} \cdots \binom{n}{i_{k-1}} \binom{n}{i_{k+1}} \cdots \binom{n}{i_{m}} 
\\
& \qquad \qquad \qquad \qquad \qquad 
\times t^{i_1 + \cdots + i_{k-1} + i_{k+1} + \cdots + i_m} 
(1-t)^{n(m-1) - (i_1 + \cdots + i_{k-1} + i_{k+1} + \cdots + i_m)} f_{U_{(\ell)}}(t) 
\\
&= \dfrac{\displaystyle k \binom{m}{k}}{\mathrm{B}(\ell, n-\ell+1)} 
\cdot \sum_{i_1=\ell}^{n} \ldots \sum_{i_{k-1}=\ell}^{n} 
\sum_{i_{k+1}=0}^{\ell-1} \ldots \sum_{i_{m}=0}^{\ell-1}
\binom{n}{i_1} \cdots \binom{n}{i_{k-1}} \binom{n}{i_{k+1}} \cdots \binom{n}{i_{m}} 
\\
& \qquad \qquad \qquad \qquad \qquad 
\times t^{i_1 + \cdots + i_{k-1} + i_{k+1} + \cdots + i_m + \ell-1} 
(1-t)^{n(m-1) - (i_1 + \cdots + i_{k-1} + i_{k+1} + \cdots + i_m)+n-\ell} 
\, ,
\end{align*}
where in the last equality we used the definition of $f_{U_{(\ell)}}$. 
We now note that 
\begin{align*}
&\int_{0}^{1} t \cdot t^{i_1 + \cdots + i_{k-1} + i_{k+1} + \cdots + i_m + \ell-1} 
(1-t)^{n(m-1) - (i_1 + \cdots + i_{k-1} + i_{k+1} + \cdots + i_m)+n-\ell} \mathrm{d}t 
\\
&= \int_{0}^{1} t^{i_1 + \cdots + i_{k-1} + \ell + i_{k+1} + \cdots + i_m} (1-t)^{mn-(i_1 + \cdots + i_{k-1} + \ell +  i_{k+1} + \cdots + i_m)} \mathrm{d}t  
\\
&=\mathrm{B}(i_1 + \cdots + i_{k-1} + \ell + i_{k+1} + \cdots + i_m + 1, mn-(i_1 + \cdots + i_{k-1} + \ell + i_{k+1} + \cdots + i_m)+1) 
\\
&= \left[(mn+1)\binom{m n}{i_1 + \cdots + i_{k-1} + \ell + i_{k+1} + \ldots i_{m}}\right]^{-1} 
\, ,
\end{align*}
since $\mathrm{B}(b+1 , a-b+1) = [(a+1) \binom{a}{b}]^{-1}$ 
for every $a,b \in \IN$ such that $a \geq b$, 
here used with $a=mn$ and 
$b=(i_1 + \cdots + i_{k-1} + \ell + i_{k+1} + \cdots + i_m)$. 
We end the proof using the definition of the expectation 
(and the fact that $U_{(\ell,k)}$ is supported by $[0,1]$): 
\begin{align*}
\E[ U_{(\ell,k)} ] 
&= \int_{0}^{1} t \cdot f_{U_{(\ell, k)}}(t) \mathrm{d}t 
\\
&= 
\dfrac{ 
\displaystyle k \binom{m}{k} \sum^{n}_{i_1=\ell} \ldots \sum^{n}_{i_{k-1}=\ell} \sum^{n}_{i_{k+1}=0} \ldots \sum^{n}_{i_{m}=0} 
		\dfrac{\binom{n}{i_1} \cdots \binom{n}{i_{k-1}} \binom{n}{i_{k+1}} \cdots \binom{n}{i_{m}}}{\binom{m n}{i_1 + \cdots + i_{k-1} + \ell + i_{k+1} + \ldots + i_{m}}}
	}{(m n + 1)\mathrm{B}(\ell, n-\ell+1)} 
\, .
\end{align*}
\qed

\begin{remark}
The first part of the proof of Theorem~\ref{them:main} 
---proving Eq.~\eqref{eq:main_equa} in general, 
and showing that it is an equality when $F_S$ is continuous--- 
is identical to the first part of the proof of 
\cite[Theorem~3.2]{humbert2023one}. 
The second part of the proof is new\textup{:}  
we here compute $\E[U_{(\ell,k)}]$ from its pdf 
---contrary to \cite[Theorem~3.2]{humbert2023one} 
which uses its cdf---, 
which leads to a slightly simpler formula for $M_{\ell,k}$. 
\end{remark}

\subsection{Proof of Lemma~\ref{le.alg:FedCPQQ.non-trivial}} \label{le.alg:FedCPQQ.non-trivial_proof}
The argmin defining $(\ell^*, k^*)$ 
in Algorithm~\ref{alg:FedCPQQ} is non-empty 
if and only if 
\[
\max_{(\ell, k) \in \intset{n} \times \intset{m}} M_{\ell, k} 
\geq 1 - \alpha
\, . 
\]
Since $M_{\ell,k} = \E[U_{(\ell:n, k:m)}]$ 
---see Eq.~\eqref{eq.thm:main_proof.1} in Appendix~\ref{thm:main_proof}---, 
it is a nondecreasing function of $(\ell,k)$ 
(for the lexicographic order), 
hence  
\[
\max_{(\ell, k) \in \intset{n} \times \intset{m}} M_{\ell, k} 
= M_{n,m} 
= \E[U_{(n:n,m:m)}]
= \E \Bigl[ \max_{(i,j) \in \intset{n} \times \intset{m}} U_{i,j} \Bigr] 
\]
where the $(U_{i,j})_{(i,j) \in \intset{n} \times \intset{m}}$ 
are independent standard uniform random variables, 
hence  
\[
M_{n,m} 
= \E[U_{(mn : mn)}] 
= \frac{mn}{mn+1} 
\]
by Eq.~\eqref{eq.unif-order-stat.E-Var} 
with $r=N=mn$.  
Finally, we have proved that 
the argmin defining $(\ell^*, k^*)$ 
in Algorithm~\ref{alg:FedCPQQ} is non-empty 
if and only if
\[
\frac{mn}{mn+1}  = 1 - \frac{1}{mn+1} \geq 1 - \alpha
\, , 
\]
which is equivalent to 
$mn+1 \geq \alpha^{-1}$. 
\qed

\subsection{Proof of Proposition~\ref{cor:bound_Ekl}} \label{cor:bound_Ekl_proof}
Recall that by Eq.~\eqref{eq.thm:main_proof.1} 
in Appendix~\ref{thm:main_proof}, 
$M_{\ell,k} = \E [ U_{(\ell:n, k:m)} ]$ 
where $U_{(\ell:n, k:m)}$ is the 
$k$-th order statistics  of a sample 
$( U_{(\ell:n), j} )_{1 \leq j \leq m}$ of $m$ independent random variables 
with common distribution $\Beta (\ell, n - \ell + 1)$ 
and cdf $F_{U_{(\ell:n)}}$. 
Since $\ell \geq 1$ and $n-\ell+1 \geq 1$, 
the function $1-F_{U_{(\ell:n)}}$ is log-concave 
\cite[Section~6.3 and Theorem~3]{bagnoli2006log} 
and thus 
\citep[Equation (4.5.7)]{david2004order} 
shows that
\begin{align}
\notag 
F_{U_{(\ell:n)}} \bigl( \E [ U_{(\ell:n, k:m)} ]\bigr) 
&= F_{U_{(\ell;n)}} \Bigl( \E \bigl[ \Qh_{(k)}(U_{(\ell:n), 1}, \ldots, U_{(\ell:n), m}) \bigr]\Bigr)
\\
&\leq 1 - \exp\left( -\sum_{i=0}^{k-1} \frac{1}{m-i} \right)
< \frac{k}{m+1/2} 
\label{cor:bound_Ekl_upbound}
\, , 
\end{align}
hence the upper bound 
of Eq.~\eqref{eq:bound_marg}.

For the lower bound, we first remark that 
$\E[U_{(\ell, k)}] = 1 - \E[U_{(n-\ell+1, m-k+1)}]$.  
Indeed, defining $\tU_{i,j} = 1 - U_{i,j}$ 
for every $i,j \in \intset{n} \times \intset{m}$, 
on the one hand, we have $\tU_{(\ell,k)} = 1 - U_{(n-\ell+1,m-k+1)}$,  
and on the other hand, the fact that 
$(\tU_{i,j})_{ i,j \in \intset{n} \times \intset{m} } 
\egalloi (U_{i,j})_{ i,j \in \intset{n} \times \intset{m} }$ 
implies that $U_{(\ell,k)} \egalloi \tU_{(\ell,k)}$. 
So, we have proved that 
\begin{equation} \notag 
U_{(\ell:n, k:m)} 
\egalloi 1 - U_{((n-\ell+1):n , (m-k+1):m)} 
\end{equation}
hence the result by taking the expectation. 
Therefore, using the upper bound 
of Eq.~\eqref{eq:bound_marg}, 
we get that 
\begin{align*}
M_{\ell,k} 
= 1 - M_{n-\ell+1, m-k+1} 
> 1 - F_{U_{(n-\ell+1:n)}}^{-1} \left( \frac{m-k+1}{m+1/2} \right) 
= F_{U_{(\ell:n)}}^{-1} \left( \frac{k-1/2}{m+1/2} \right) 
\end{align*}
since $F^{-1}_{U_{(n-\ell+1:n)}}(x) 
= 1 -  F^{-1}_{U_{(\ell:n)}}(1-x)$, 
again by a symmetry argument ($U_{(\ell:n)} \egalloi 1 - U_{(n-\ell+1:n)}$) 
\cite[see also][about this property of the incomplete Beta function]{temme1996special}. 
\qed

\subsection{Proof of Lemma~\ref{le.algo.FCP-QQ-marg.2.non-trivial}} \label{le.algo.FCP-QQ-marg.2.non-trivial_proof}
The argmin defining $\tilde{\ell}$ 
in Algorithm~\ref{algo.FCP-QQ-marg.2} is non-empty 
if and only if 
\[
\min_{\ell \in \intset{n}} \tilde{k}_{m, n}(\ell, \alpha) 
\leq m
\, . 
\]
Since $F_{U_{(\ell:n)}}(1-\alpha) = \P( U_{(\ell:n)} \leq 1-\alpha)$, 
it is a nonincreasing function of $\ell$, 
hence, by Eq.~\eqref{eq:k_low_bound}, 
\begin{align*}
\min_{\ell \in \intset{n}} \tilde{k}_{m, n}(\ell, \alpha) 
= \tilde{k}_{m, n}(n, \alpha) 
&= \left\lceil (m+1/2) \cdot F_{U_{(n:n)}}(1-\alpha) + 1/2 \right\rceil 
\\
&= \left\lceil (m+1/2) (1-\alpha)^n + 1/2 \right\rceil 
\, , 
\end{align*}
using Eq.~\eqref{eq.FUl.pdf-cdf} in Appendix~\ref{app.more-details-order-stat.exact}.  
This directly leads to the necessary and sufficient 
condition~\eqref{eq.le.algo.FCP-QQ-marg.2.non-trivial.CNS}. 

Note that Eq.~\eqref{eq.le.algo.FCP-QQ-marg.2.non-trivial.CNS} 
can be rewritten 
\[
\left( 1 + \frac{1}{m-1/2} \right)^{1/n} 
\leq \frac{1}{1-\alpha} 
\, . 
\]
The function $x \in [0,+\infty) \mapsto (1+x)^{1/n}$ 
being concave, 
it is below its first-order approximation at $x=0$,
so that 
\[
\left( 1 + \frac{1}{m-1/2} \right)^{1/n} 
\leq 1 + \frac{1}{n(m-1/2)} 
\]
and, if $n (m-1/2) \geq \alpha^{-1} - 1$, 
this upper bound is smaller than 
\[
1 + \frac{1}{\alpha^{-1} - 1} 
= \frac{\alpha^{-1}}{\alpha^{-1} - 1} 
= \frac{1}{1-\alpha} 
\, . 
\]
\qed

\section{Proofs of Section \ref{sec.multi-order.algos}  (training-conditional guarantees)}

\subsection{Proof of Theorem~\ref{them:cond_main}} \label{app:proof-them:cond_main}
As previously, we make this proof conditionally to $\calD^{lrn}$ 
without writing it explicitly in the following. 
Since $(X_{i,j}, Y_{i,j})_{1 \leq i \leq n , 1 \leq j \leq m}, (X, Y)$ are i.i.d., 
the associated scores $(S_{i,j})_{1 \leq i \leq n , 1 \leq j \leq m}, S$ are i.i.d. 
We denote by $F_S$ their cdf (given $\calD^{lrn}$). 
Now, notice that 
\begin{align}
\notag 
&1 - \alpha_{\ell, k}(\calD) 
:= \IP\bigl(Y \in \Chat_{\ell, k}(X) \,\big\vert\, \calD \bigr)
= \IP\bigl( S \leq S_{(\ell, k)} \,\big\vert\, (S_{i,j})_{1 \leq i \leq n , 1 \leq j \leq m} \bigr) 
= F_S(S_{(\ell, k)})
\, .
\end{align}
Therefore, applying Lemma~\ref{lemma:centra_unif_kl} 
with $F_Z = F_S$, 
Eq.~\eqref{eq.lemma:centra_unif_kl.domin-stoch.conseq} 
with $a = F_{U_{(\ell,k)}}^{-1} (\beta)$ 
shows that 
\begin{align}
\notag 
\P \bigl( 
	1 - \alpha_{\ell, k}(\calD) \geq F_{U_{(\ell,k)}}^{-1} (\beta) 
	\bigr) 
&= \P \bigl( 
	F_S(S_{(\ell, k)}) \geq F_{U_{(\ell,k)}}^{-1} (\beta) 
	\bigr)
\\
\label{eq.pr.them:cond_main.1}
&\geq \P \bigl( 
	U_{(\ell,k)} \geq F_{U_{(\ell,k)}}^{-1} (\beta) 
	\bigr)
= 1 - \beta 
\, ,
\end{align}
that is, Eq.~\eqref{eq:cond_main_equa} holds true. 
The formula for $F_{U_{(\ell,k)}}^{-1}$ 
follows from Lemma~\ref{lemma:beta_beta}. 
When the scores are a.s. distinct 
---or equivalently when $F_S$ is continuous---, 
Eq.~\eqref{eq.lemma:centra_unif_kl.egal-loi} 
in Lemma~\ref{lemma:centra_unif_kl} 
shows that 
$1 - \alpha_{\ell, k}(\calD) \egalloi U_{(\ell,k)}$ 
hence Eq.~\eqref{eq.pr.them:cond_main.1} 
is an equality and 
Eq.~\eqref{eq:cov_two_side} holds true. 
\qed 

\subsection{Proof of Lemma~\ref{le.algo.FCP-QQ-cond.1.non-trivial}}
\label{app.pr.le.algo.FCP-QQ-cond.1.non-trivial}
The $\argmin$ defining $(\ell^*_c,k^*_c)$ 
in Algorithm~\ref{algo.FCP-QQ-cond.1} is non-empty 
if and only if 
\[
\max_{(\ell, k) \in \intset{n} \times \intset{m}} 
	F_{U_{(\ell:n, k:m)}}^{-1} (\beta) 
\geq 1 - \alpha 
\, . 
\]
Since $U_{(\ell:n, k:m)}$ is almost surely a nondecreasing 
function of $(\ell,k)$ for the lexicographic order, 
$F_{U_{(\ell:n, k:m)}}^{-1} (\beta)$ 
is a nondecreasing function of $(\ell,k)$, 
hence 
\[
\max_{(\ell, k) \in \intset{n} \times \intset{m}} 
	F_{U_{(\ell:n, k:m)}}^{-1} (\beta) 
= F_{U_{(n:n, m:m)}}^{-1} (\beta)
= F_{U_{(nm : nm)}}^{-1} (\beta)
= \beta^{1/(mn)} 
\]
by Eq.~\eqref{eq.max-uniformes.cdf} in Appendix~\ref{app.more-details-order-stat.exact}.  
So, $(\ell^*_c,k^*_c)$ is well defined 
if and only if $\beta^{1/(mn)} \geq 1 - \alpha$. 
The conclusion follows. 
\qed

\subsection{Proof of Proposition~\ref{prop:low_bound_Fl}} \label{prop:low_bound_Fl_proof}
By Eq.~\eqref{eq.lemma:beta_beta.cdf-U} 
in Lemma~\ref{lemma:beta_beta}, 
$F^{-1}_{ U_{(\ell:n, k:m)} } (\beta) 
= F^{-1}_{U_{(\ell:n)}} \circ F^{-1}_{U_{(k:m)}} (\beta)$. 
Furthermore, Eq.~\eqref{eq.pro.FUr.dev-gauche.inv} in Proposition~\ref{pro.FUr} 
with $N=m$, $r=k$ and $\delta = \beta$ shows that 
\begin{equation} \notag 
	F_{U_{(k:m)}}^{-1} (\beta) \geq \frac{k}{m+1} - \sqrt{\dfrac{\log(1/\beta)}{2(m+2)}} \; .
\end{equation}
This proves Eq.~\eqref{eq.cond-cov-Xlk.lower.minor.1} 
since $F^{-1}_{U_{(\ell:n)}}$ is nondecreasing. 
\qed

\subsection{Proof of Lemma~\ref{le.algo.FCP-QQ-cond.2.non-trivial}}
\label{app.pr.le.algo.FCP-QQ-cond.2.non-trivial}
The $\argmin$ defining $\tilde{\ell}^{\text{cond}}$ 
in Algorithm~\ref{algo.FCP-QQ-cond.2} is non-empty 
if and only if 
\[
\min_{\ell \in \intset{n}} \tilde{k}^{\text{cond}}_{m,n} (\ell, \alpha, \beta) 
\leq m
\, . 
\]
Since $F_{U_{(\ell:n)}} (1-\alpha)$ 
is a nonincreasing function of $\ell$, 
using also Eq.~\eqref{eq.max-uniformes.cdf} in Appendix~\ref{app.more-details-order-stat.exact}, 
we have 
\[
\min_{\ell \in \intset{n}} \tilde{k}^{\text{cond}}_{m,n} (\ell, \alpha, \beta) 
= \tilde{k}^{\text{cond}}_{m,n} (n, \alpha, \beta)
= \left\lceil (m+1) \left( (1-\alpha)^n + \sqrt{\dfrac{\log(1/\beta)}{2(m+2)}} \, \right) \right \rceil 
\]
hence the $\argmin$ defining $\tilde{\ell}^{\text{cond}}$ 
in Algorithm~\ref{algo.FCP-QQ-cond.2} is non-empty 
if and only if
\[
(m+1) \left( (1-\alpha)^n + \sqrt{\dfrac{\log(1/\beta)}{2(m+2)}} \, \right) 
\leq m 
\, , 
\]
which is equivalent to 
Eq.~\eqref{eq.le.FCP-QQ-cond.2.non-trivial.CNS}. 

For the sufficient condition, 
remark that when $m \geq 2$ we have 
\[
\frac{m}{m+1} \geq \frac{2}{3}
\, . 
\]
If in addition 
\[
m \geq \frac{9}{2}  \log ( 1 / \beta) - 2 
\, , 
\]
then 
\[
\sqrt{\dfrac{\log(1/\beta)}{2(m+2)}} 
\leq \frac{1}{3}
\, , 
\qquad \text{hence} \qquad 
\frac{m}{m+1} - \sqrt{\dfrac{\log(1/\beta)}{2(m+2)}} 
\geq \frac{1}{3}
\, . 
\]
Finally, if we also have 
$n \geq \frac{\log(1/3)}{\log(1-\alpha)}$, 
then $(1-\alpha)^n \leq 1/3$ and we get that 
Eq.~\eqref{eq.le.FCP-QQ-cond.2.non-trivial.CNS} 
holds true, which proves 
Eq.~\eqref{eq.le.FCP-QQ-cond.2.non-trivial.CS}. 
\qed 

\section{Proofs of Section \ref{sec.uppbound_cov}  (upper bounds)} \label{sec.pr.upper-bounds}

\subsection{Proof of Theorem~\ref{thm:cov_up_margin}}\label{thm:cov_up_margin_proof}

Let $\alpha \in (0,1)$ be fixed. 
Let $(\ell_1,k_1) \egaldef (\ell^{*},k^{*})$ as defined by Algorithm~\ref{alg:FedCPQQ}, 
and $(\ell_2,k_2) \egaldef (\tilde{\ell},\tilde{k}_{m, n}(\tilde{\ell}, \alpha))$ 
as defined by Algorithm~\ref{algo.FCP-QQ-marg.2}. 
First, by definition of Algorithms~\ref{alg:FedCPQQ}--\ref{algo.FCP-QQ-marg.2} 
and by Proposition~\ref{cor:bound_Ekl}, we have 
\begin{align}
1 - \alpha 
\leq M_{\ell_1,k_1} 
&\egaldef \min_{(\ell,k)} \left\{M_{\ell, k} : M_{\ell, k} \geq 1-\alpha\right\} 
\leq M_{\ell_2,k_2} 
\leq F_{U_{(\ell_2:n)}}^{-1} \left(\dfrac{\tilde{k}_{m, n}(\ell_2, \alpha)}{m+1/2}\right)  
\, .
\label{eq.sec.cov-upper.marginal.Alg1}
\end{align}
Second, by definition of Algorithm~\ref{algo.FCP-QQ-marg.2}, 
for every $\ell \in \intset{n}$ such that 
$\tilde{k}_{m, n}(\ell, \alpha) \leq m$, we have 
\begin{align}
\notag 
F_{U_{(\ell_2:n)}}^{-1} \left(\dfrac{\tilde{k}_{m, n}(\ell_2, \alpha)}{m+1/2}\right)
&\leq F_{U_{(\ell:n)}}^{-1} \left(\dfrac{\tilde{k}_{m, n}(\ell, \alpha)}{m+1/2}\right) 
\\
&=  F_{U_{(\ell:n)}}^{-1} \left(\dfrac{\left\lceil (m+1/2) \cdot F_{U_{(\ell:n)}}(1-\alpha) + 1/2 \right\rceil}{m+1/2}\right) \notag
\\
&\leq
F_{U_{(\ell:n)}}^{-1} \left( F_{U_{(\ell:n)}}(1-\alpha) + \frac{3}{2m+1}\right)
\, . 
\label{eq.sec.cov-upper.marginal.Alg2.1}
\end{align}
By Theorem~\ref{thm:len_bound2} with 
$x = F_{U_{(\ell:n)}}(1-\alpha)$, 
hence $F^{-1}_{U_{(\ell:n)}}(x) = 1-\alpha$, 
and $\varepsilon = \frac{3}{2m+1}$, 
we get that if $0 < F_{U_{(\ell:n)}}(1-\alpha) < 1 - \frac{3}{2m+1}$, 
then 
\begin{align}
&\qquad 
F_{U_{(\ell:n)}}^{-1} \left( F_{U_{(\ell:n)}}(1-\alpha) + \frac{3}{2m+1}\right) \notag 
\\
&\leq 1 - \alpha + \frac{3}{(2m+1)\sqrt{n+2}} 
\, \frac{\sqrt{2}}{g( \min\{ F_{U_{(\ell:n)}}(1-\alpha) , 1-F_{U_{(\ell:n)}}(1-\alpha)-\frac{3}{2m+1} \})}
\, . 
\label{eq.sec.cov-upper.marginal.Alg2.2}
\end{align}
Third, note that for every $\alpha \in (0,1)$, 
by Eq.~\eqref{eq.uniform-order-stat.loi-asympt} 
in Appendix~\ref{app.more-details-order-stat.asymp} 
with $p = 1-\alpha$, 
\begin{align}
\notag 
\sqrt{n}\left(U_{(\lceil (1-\alpha) n \rceil)} - (1-\alpha)\right) 
&\xrightarrow[n \to + \infty]{\mathcal{L}} 
\mathcal{N} \bigl( 0, \alpha (1-\alpha) \bigr) 
\\
\text{hence} \qquad 
F_{U_{(\lceil (1-\alpha) n \rceil:n)}}(1-\alpha)
&\xrightarrow[n \to + \infty]{} 
\frac{1}{2} 
\label{eq.lim.FUnr(r)}
\, . 
\end{align}
As a consequence, some $n_0(\alpha)$ exists such that, for every $n \geq n_0(\alpha)$, 
we have 
$1/3 \leq F_{U_{(\lceil (1-\alpha) n \rceil:n)}}(1-\alpha) \leq 7/12$. 
Therefore, for every $n \geq n_0(\alpha)$ and $m \geq 18$, 
\[
\min\left\{ 
	F_{U_{(\lceil (1-\alpha) n \rceil:n)}}(1-\alpha) 
	\, , \, 
	1-F_{U_{(\lceil (1-\alpha) n \rceil:n)}}(1-\alpha)-\frac{3}{2m+1} 
	\right\}
\geq \frac{1}{3} 
\]
and by Eq.~\eqref{eq.sec.cov-upper.marginal.Alg1}-- 
\eqref{eq.sec.cov-upper.marginal.Alg2.2}, we get that 
\begin{align}
\notag 
1-\alpha 
\leq M_{\ell_1,k_1} 
\leq M_{\ell_2,k_2} 
\leq 1 - \alpha + \frac{27}{(2m+1)\sqrt{n+2}} 
\, .
\end{align}
Note that the choice of the constants $1/3$ and $7/12$ in the proof is arbitrary. 
Other choices would lead to the same result with other values for 
$C$, $m_0$ and $n_0(\alpha)$. 
For instance, $C$ can be made as close to $3\sqrt{2}/g(1/2) \leq 16.1$ as desired, 
at the price of enlarging $m_0$ and $n_0(\alpha)$. 
\qed 

\subsection{Proof of Theorem~\ref{thm:cov_up_cond}}\label{thm:cov_up_cond_proof}
The proof relies on the following lemma. 
\begin{lemma} \label{le.cov-upper.conditional}
Let $n,m \geq 1$, $\ell \in \intset{n}$, 
$\alpha,\beta,\beta' \in (0,1)$ 
and let $\tilde{k}^{\text{cond}}_{m,n} (\ell, \alpha, \beta)$ 
be defined by Eq.~\eqref{eq.algo.FCP-QQ-cond.k}. 
Then, some $n'_0(\alpha), m'_0 (\beta , \beta')  \geq 1$ exist such that, 
if $n \geq n'_0(\alpha)$ and $m \geq m'_0 (\beta , \beta') $, 
we have 
$\lceil n (1-\alpha) \rceil \in \intset{n}$, 
$\tilde{k}^{\text{cond}}_{m,n} ( \lceil n (1-\alpha) \rceil , \alpha, \beta) \in \intset{m}$ 
and
\begin{equation}
\label{eq.le.cov-upper.conditional}
F_{ U_{( \lceil n (1-\alpha) \rceil :n , 
	\tilde{k}^{\text{cond}}_{m,n} ( \lceil n (1-\alpha) \rceil , \alpha, \beta):m)} }^{-1} (1-\beta')
\leq 1-\alpha 
	+ \frac{12 \max\{\sqrt{\log(1/\beta)} + \sqrt{\log(1/\beta')} , 1 \}}{\sqrt{(m+2)(n+2)}} 
\, .
\end{equation}
\end{lemma}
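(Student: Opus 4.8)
The plan is to control the right-hand side $F^{-1}_{U_{(\ell:n,\,k:m)}}(1-\beta')$ by composing the two one-dimensional quantile bounds already available in the paper, applied to the explicit choices $\ell = \lceil n(1-\alpha)\rceil$ and $k = \tilde{k}^{\text{cond}}_{m,n}(\ell,\alpha,\beta)$. Recall from Lemma~\ref{lemma:beta_beta} that $F^{-1}_{U_{(\ell:n,\,k:m)}} = F^{-1}_{U_{(\ell:n)}} \circ F^{-1}_{U_{(k:m)}}$, so the argument splits into an ``outer'' step (bounding $F^{-1}_{U_{(\ell:n)}}$ near $1-\alpha$ using the Lipschitz-type estimate of Theorem~\ref{thm:len_bound2}) and an ``inner'' step (bounding $F^{-1}_{U_{(k:m)}}(1-\beta')$ via the right-deviation bound Eq.~\eqref{eq.pro.FUr.dev-droite.inv} of Proposition~\ref{pro.FUr}).

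First I would dispose of the well-definedness claims. For $n$ large, $\ell = \lceil n(1-\alpha)\rceil \in \intset{n}$ holds as soon as $(1-\alpha)n \leq n$, i.e.\ always; and $\tilde{k}^{\text{cond}}_{m,n}(\ell,\alpha,\beta) \leq m$ follows from Lemma~\ref{le.algo.FCP-QQ-cond.2.non-trivial} once $n \geq n'_0(\alpha)$ and $m \geq m'_0(\beta)$, since with $\ell = \lceil n(1-\alpha)\rceil$ one has $F_{U_{(\ell:n)}}(1-\alpha) \to 1/2 < m/(m+1)$ (using Eq.~\eqref{eq.lim.FUnr(r)}, as in the proof of Theorem~\ref{thm:cov_up_margin}); concretely $F_{U_{(\ell:n)}}(1-\alpha) \le 7/12$ for $n$ large, and then the ceiling defining $\tilde{k}^{\text{cond}}$ stays $\le m$ for $m$ large. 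Next, for the inner step, I would bound $k/(m+1)$ from above: by definition of $\tilde{k}^{\text{cond}}$ as a ceiling,
\[
\frac{k}{m+1} \le F_{U_{(\ell:n)}}(1-\alpha) + \sqrt{\frac{\log(1/\beta)}{2(m+2)}} + \frac{1}{m+1}\,.
\]
Combining this with Eq.~\eqref{eq.pro.FUr.dev-droite.inv} applied with $N=m$, $r=k$, $\delta=\beta'$ gives
\[
F^{-1}_{U_{(k:m)}}(1-\beta') \le F_{U_{(\ell:n)}}(1-\alpha) + \sqrt{\frac{\log(1/\beta)}{2(m+2)}} + \sqrt{\frac{\log(1/\beta')}{2(m+2)}} + \frac{1}{m+1}\,.
\]

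Then, since $F^{-1}_{U_{(\ell:n)}}$ is nondecreasing and $F^{-1}_{U_{(\ell:n)}}\big(F_{U_{(\ell:n)}}(1-\alpha)\big) \le 1-\alpha$ (with equality when $1-\alpha$ is a continuity point, which it is here), I would apply Theorem~\ref{thm:len_bound2} with $x = F_{U_{(\ell:n)}}(1-\alpha)$ and $\varepsilon$ equal to the sum of the three perturbation terms above. Because $x \in [1/3, 7/12]$ for $n \geq n'_0(\alpha)$ (and $\varepsilon$ small for $m$ large, so $\min\{x, 1-x-\varepsilon\} \ge 1/3$), the quantity $\sqrt{2}/g(\min\{x,1-x-\varepsilon\})$ is bounded by the numerical constant $9$ from Theorem~\ref{thm:len_bound2}. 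This yields
\[
F^{-1}_{U_{(\ell:n,\,k:m)}}(1-\beta') \le 1-\alpha + \frac{9}{\sqrt{n+2}}\left(\sqrt{\frac{\log(1/\beta)}{2(m+2)}} + \sqrt{\frac{\log(1/\beta')}{2(m+2)}} + \frac{1}{m+1}\right),
\]
and a crude bound ($9/\sqrt2 \le 7$, $1/(m+1) \le 1/\sqrt{m+2}$, and absorbing constants) turns the parenthesis into $12\max\{\sqrt{\log(1/\beta)} + \sqrt{\log(1/\beta')},\,1\}/\sqrt{m+2}$, giving Eq.~\eqref{eq.le.cov-upper.conditional}. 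I expect the main obstacle to be the bookkeeping of constants and the verification that $x = F_{U_{(\ell:n)}}(1-\alpha)$ stays bounded away from $0$ and $1$ uniformly for $n \ge n'_0(\alpha)$: this is where the asymptotic normality of uniform order statistics (Eq.~\eqref{eq.lim.FUnr(r)}, equivalently Theorem~\ref{thm.asympt-unif-order-stat}) is essential, since Theorem~\ref{thm:len_bound2}'s bound blows up as $x \to 0$ or $x \to 1$, and one must also check that $\varepsilon \to 0$ fast enough that $1-x-\varepsilon$ does not fall below $1/3$. Everything else is a direct composition of results already proved in the excerpt.
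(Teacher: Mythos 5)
Your plan is essentially the paper's proof: same decomposition $F^{-1}_{U_{(\ell:n,\,k:m)}} = F^{-1}_{U_{(\ell:n)}} \circ F^{-1}_{U_{(k:m)}}$ via Lemma~\ref{lemma:beta_beta}, same inner deviation bound via Eq.~\eqref{eq.pro.FUr.dev-droite.inv} of Proposition~\ref{pro.FUr} combined with the ceiling definition of $\tilde{k}^{\text{cond}}$, same outer Lipschitz bound via Theorem~\ref{thm:len_bound2} at $x = F_{U_{(\ell:n)}}(1-\alpha)$, and same reliance on Eq.~\eqref{eq.lim.FUnr(r)} to keep $x$ bounded away from $0$ and $1$. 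Your reference to Lemma~\ref{le.algo.FCP-QQ-cond.2.non-trivial} is slightly off the mark (that lemma concerns the minimum of $\tilde{k}^{\text{cond}}(\ell,\cdot,\cdot)$ over $\ell$, not the specific $\ell = \lceil n(1-\alpha)\rceil$), but the argument you then give via the $1/2$-limit is exactly what the paper uses, so this is cosmetic.

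The one real gap is in the constant bookkeeping, which you flag but do not resolve. Writing $A = \sqrt{\log(1/\beta)} + \sqrt{\log(1/\beta')}$, you need
\[
\frac{\sqrt{2}}{g(\cdot)}\left(\frac{A}{\sqrt{2}} + \frac{\sqrt{m+2}}{m+1}\right) \leq 12\max\{A,1\}\,,
\]
but your crude bound $1/(m+1) \leq 1/\sqrt{m+2}$ only gives $\frac{\sqrt{m+2}}{m+1}\leq 1$, and then $9(A/\sqrt{2}+1)\leq 12\max\{A,1\}$ fails whenever $A$ lies roughly in $(0.47,\,1.6)$. The paper closes this by imposing $m\geq 12$, which gives the sharper $\frac{\sqrt{m+2}}{m+1}\leq 0.29$; plugging this in (and splitting cases $A\geq 1$ versus $A<1$) does reach $12$ with room to spare, and in fact your tighter range $x\in[1/3,7/12]$ (giving $\min\geq 1/3$, hence $\sqrt{2}/g\leq 9$ rather than the paper's $\leq 12$ at $\min\geq 1/4$) makes the final arithmetic even easier. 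So the approach is correct and complete once you replace the $1/(m+1)\leq 1/\sqrt{m+2}$ step by a threshold condition on $m$ that is absorbed into $m'_0(\beta,\beta')$.
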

\begin{proof}
The first condition holds true when $n \geq 1/\alpha$, 
hence it suffices to take $n'_0(\alpha) \geq n'_{0,a}(\alpha) = 1/\alpha$. 
For the second condition, which can be rewritten
\[
\tilde{k}^{\text{cond}}_{m,n} \bigl( \lceil n (1-\alpha) \rceil , \alpha, \beta \bigr)
= \left\lceil (m+1) \left( F_{ U_{( \lceil n (1-\alpha) \rceil :n)} } (1-\alpha) 
	+ \sqrt{\frac{\log(1/\beta)}{2(m+2)}} \, \right) \right \rceil
\leq m 
\, , 
\]
we first notice that by Eq.~\eqref{eq.lim.FUnr(r)}, 
some $n'_{0,b}(\alpha)$ exists such that, for every $n \geq n'_{0,b}(\alpha)$, 
\begin{equation}
\label{eq.pr.le.cov-upper.conditional.asympt}
\frac{1}{3} 
\leq F_{U_{(\lceil (1-\alpha) n \rceil:n)}}(1-\alpha) 
\leq \frac{2}{3} 
\, . 
\end{equation}
Therefore, some $m'_{0,a}(\beta)$ exists such that, 
if $m \geq m'_{0,a}(\beta)$ and $n \geq n'_{0,b}(\alpha)$, 
\[
\tilde{k}^{\text{cond}}_{m,n} \bigl( \lceil n (1-\alpha) \rceil , \alpha, \beta \bigr)
\leq \left\lceil (m+1) \left( \frac{2}{3} 
	+ \sqrt{\dfrac{\log(1/\beta)}{2(m+2)}} \, \right) \right \rceil
\leq m 
\, , 
\]
hence the second condition holds true if 
$m'_0 (\beta,\beta') \geq m'_{0,a}(\beta)$. 

It remains to prove Eq.~\eqref{eq.le.cov-upper.conditional}. 
For every $\ell \in \intset{n}$, 
if $\tilde{k}^{\text{cond}}_{m,n} ( \ell , \alpha, \beta) \in \intset{m}$, 
we have 
\begin{align}
\notag 
&
F_{ U_{( \ell :n, \tilde{k}^{\text{cond}}_{m,n} ( \ell , \alpha, \beta):m)} }^{-1} (1-\beta') 
\\
\notag 
&= F_{ U_{( \ell :n)} }^{-1} \circ 
	F_{U_{( \tilde{k}^{\text{cond}}_{m,n} ( \ell , \alpha, \beta):m)} }^{-1} (1-\beta') 
\qquad \text{by Eq.~\eqref{eq.lemma:beta_beta.cdf-U} in Lemma~\ref{lemma:beta_beta}}
\\
\notag 
&\leq F_{ U_{( \ell :n)} }^{-1} \left( 
\frac{\tilde{k}^{\text{cond}}_{m,n} ( \ell , \alpha, \beta)}{m+1} + \sqrt{\dfrac{\log(1/\beta')}{2(m+2)}} \, 
\right)
\qquad \text{by Eq.~\eqref{eq.pro.FUr.dev-droite.inv} in Proposition~\ref{pro.FUr}}
\\
\notag 
&\leq F_{ U_{( \ell :n)} }^{-1} \left( 
F_{ U_{( \ell :n)} }(1-\alpha) 
+ \frac{\sqrt{\log(1/\beta)} + \sqrt{\log(1/\beta')}}{\sqrt{2(m+2)}} + \frac{1}{m+1}
\right)
\quad \text{by Eq.~\eqref{eq.algo.FCP-QQ-cond.k}} \\
&\leq 1-\alpha + 
\frac{\varepsilon_m }{\sqrt{n+2}} 
\, \frac{\sqrt{2}}{g( \min\{ x_n , 1-x_n-\varepsilon_m \})}
\label{eq.pr.le.cov-upper.conditional.1}
\end{align}
by Theorem~\ref{thm:len_bound2} 
with 
\[ 
x = x_n \egaldef F_{ U_{( \ell :n)} }(1-\alpha) 
\qquad \text{and} \qquad   
\varepsilon = \varepsilon_m \egaldef 
\frac{\sqrt{\log(1/\beta)} + \sqrt{\log(1/\beta')}}{\sqrt{2(m+2)}} + \frac{1}{m+1} 
\, , 
\] 
assuming also that $x_n+\varepsilon_m < 1$. 
Now, taking $\ell = \lceil (1-\alpha) n \rceil$ 
in Eq.~\eqref{eq.pr.le.cov-upper.conditional.1}, 
when $m \geq m'_{0,a}(\beta)$ and 
$n \geq \max \{ n'_{0,a}(\alpha) , n'_{0,b}(\alpha) \}$, 
we have $\ell \in \intset{n}$, 
$\tilde{k}^{\text{cond}}_{m,n} ( 
	\lceil (1-\alpha) n \rceil , \alpha, \beta) \in \intset{m}$ 
and $x_n \in [1/3,2/3]$ by Eq.~\eqref{eq.pr.le.cov-upper.conditional.asympt}. 
Since some $m'_{0,b}(\beta,\beta')$ exists such that 
$\varepsilon_m \leq 1/12$ as soon as $m \geq m'_{0,b}(\beta,\beta')$, 
we get that for $m \geq \max\{ m'_{0,a}(\beta) , m'_{0,b}(\beta,\beta') \}$ 
and $n \geq \max \{ n'_{0,a}(\alpha) , n'_{0,b}(\alpha) \}$, 
we have $x_n + \varepsilon_m < 1$ and 
$\min\{ x_n , 1-x_n-\varepsilon_m \} \geq 1/4$,  
hence Eq.~\eqref{eq.pr.le.cov-upper.conditional.1} 
leads to 
\begin{align}
\notag 
&\qquad 
F_{ U_{( \lceil (1-\alpha) n \rceil :n, \tilde{k}^{\text{cond}}_{m,n} ( \lceil (1-\alpha) n \rceil , \alpha, \beta):m)} }^{-1} (1-\beta') 
\\
\notag 
&\leq 1-\alpha + 
\left( \frac{\sqrt{\log(1/\beta)} + \sqrt{\log(1/\beta')}}{\sqrt{2(m+2)}} 
	+ \frac{1}{m+1} \right) 
\frac{1}{\sqrt{n+2}} \, \frac{\sqrt{2}}{g( 1/4)}
\\
\notag 
&\stackrel{\text{(if $m \geq 12$)}}{\leq} 1-\alpha + 
\frac{12 \max\{\sqrt{\log(1/\beta)} + \sqrt{\log(1/\beta')} , 1 \}}{\sqrt{(m+2)(n+2)}} \, ,
\end{align}
where we assume at the last line that $m \geq 12$, 
hence $\sqrt{m+2}/(m+1) \leq 0.9949267 - 1/\sqrt{2}$, 
and we use that $g(1/4) \geq G(1/4 , 0.0316) \geq 0.1175$ 
hence $0.9949267 \times \frac{\sqrt{2}}{g(1/4)} 
\leq 11.9749 \leq 12$.
This ends the proof, by taking 
$m'_0 (\beta , \beta') = \max\{ m'_{0,a}(\beta) , m'_{0,b}(\beta,\beta') , 12 \}$ 
and $n'_0(\alpha) = \max\{ n'_{0,a}(\alpha) , n'_{0,b}(\alpha) \}$. 
\end{proof}

We can now prove Theorem~\ref{thm:cov_up_cond}.
We actually prove a more general result, about the following generalizations 
of Algorithms \ref{algo.FCP-QQ-cond.1} and ~\ref{algo.FCP-QQ-cond.2}, respectively, 
where $\beta'\in (0,1)$ is given:  
\begin{gather} 
\label{eq.def.algo.FCP-QQ-cond.1.var}
(\ell_A, k_A) 
\egaldef \argmin_{(\ell, k) \in \intset{n} \times \intset{m}} 
\left\{ F_{U_{(\ell:n, k:m)}}^{-1} (1-\beta') \, : \,F_{U_{(\ell:n, k:m)}}^{-1} (\beta) \geq 1 - \alpha 
	\right\} 
\\
\label{eq.def.algo.FCP-QQ-cond.2.var}
\ell_B 
\egaldef \argmin_{\ell \in \intset{n} \text{ s.t. } \tilde{k}^{\text{cond}}_{m,n} (\ell, \alpha, \beta) \leq m} 
	\left\{F^{-1}_{U_{(\ell:n, \tilde{k}^{\text{cond}}_{m,n} (\ell, \alpha, \beta):m)}} (1-\beta')\right\} 
\quad \text{and} \quad 
k_B 
\egaldef \tilde{k}^{\text{cond}}_{m,n} ( \ell_B , \alpha, \beta) 
\, . 
\end{gather}

\paragraph*{Algorithm~\ref{algo.FCP-QQ-cond.1} -- Proof of Eq.~\eqref{eq.thm:cov_up_cond.algo-cond-1}} 
Let us first consider the generalization of Algorithm~\ref{algo.FCP-QQ-cond.1}  
defined by Eq.~\eqref{eq.def.algo.FCP-QQ-cond.1.var}. 
Let $\ell_n \egaldef \lceil n(1-\alpha) \rceil$ 
and $k_n \egaldef \tilde{k}^{\text{cond}}_{m,n} ( \ell_n , \alpha, \beta)$. 
By Lemma~\ref{le.cov-upper.conditional}, 
when $n \geq n'_0(\alpha)$ and $m \geq m'_0 (\beta , \beta') $, 
we have 
$\ell_n \in \intset{n}$ and 
$k_n \in \intset{m}$, 
hence, 
$F_{ U_{( \ell_n :n, k_n :m)} }^{-1} (\beta) 
\geq 1-\alpha$ 
by Proposition~\ref{prop:low_bound_Fl} and Eq.~\eqref{eq.algo.FCP-QQ-cond.k}. 
Therefore, by definition 
\eqref{eq.def.algo.FCP-QQ-cond.1.var} of $(\ell_{A}, k_{A})$,  
\begin{align}
\notag 
F_{U_{(\ell_{A}:n, k_{A}:m)}}^{-1} (1-\beta')
&\leq 
F_{ U_{( \ell_n :n, k_n :m)} }^{-1} (1-\beta') 
\\
\notag 
&\leq 
1-\alpha + 
\frac{12 \max \bigl\{ \sqrt{\log(1/\beta)} + \sqrt{\log(1/\beta')} , 1 \bigr\}}{\sqrt{(m+2)(n+2)}} 
\end{align}
by Eq.~\eqref{eq.le.cov-upper.conditional} 
in Lemma~\ref{le.cov-upper.conditional}. 
Finally, using Eq.~\eqref{eq:cov_two_side} 
in Theorem~\ref{them:cond_main} 
---with $\beta=0$ and $(\ell,k) = (\ell_A,k_A)$---, 
we get that if the scores $S_{i,j},S$ are a.s. distinct, 
when $n \geq n'_0(\alpha)$ and $m \geq m'_0 (\beta , \beta') $, 
the following inequality holds true with probability  $1-\beta'$: 
\begin{align}
\label{eq.cov-upper.algo.FCP-QQ-cond-1.var-min-upper-bound}
1 - \alpha_{\ell_{A},k_{A}} (\calD) 
&\leq 1-\alpha 
+ \frac{12 \max \bigl\{\sqrt{\log(1/\beta)} + \sqrt{\log(1/\beta')} , 1 \bigr\}}{\sqrt{(m+2)(n+2)}} 
\, . 
\end{align} 
Eq.~\eqref{eq.thm:cov_up_cond.algo-cond-1} follows by taking 
$\beta'=\beta$ and defining $m'_0(\beta) \egaldef m'_0(\beta,\beta)$. 

\paragraph*{Algorithm~\ref{algo.FCP-QQ-cond.2} -- Proof of Eq.~\eqref{eq.thm:cov_up_cond.algo-cond-2}} 
Similarly, for the generalization of Algorithm~\ref{algo.FCP-QQ-cond.2} 
defined by Eq.~\eqref{eq.def.algo.FCP-QQ-cond.2.var},  
we get that when 
$n \geq n'_0(\alpha)$ and $m \geq m'_0 (\beta , \beta') $, 
the following inequality holds true with probability $1-\beta'$: 
\begin{align}
\label{eq.cov-upper.algo.FCP-QQ-cond-2.var-min-upper-bound}
1 - \alpha_{\ell_{B},k_{B}} (\calD) 
&\leq 1-\alpha 
+ \frac{12 \max \bigl\{\sqrt{\log(1/\beta)} + \sqrt{\log(1/\beta')} , 1 \bigr\}}{\sqrt{(m+2)(n+2)}} 
\, . 
\end{align}
Eq.~\eqref{eq.thm:cov_up_cond.algo-cond-2} follows by taking 
$\beta'=\beta$, 
since Eq.~\eqref{eq.pr.le.cov-upper.conditional.asympt} 
still holds true for such a sequence $\ell_n$ 
by \cite[Section~10.2]{david2004order}. 
\qed

\section{Proofs of Section \ref{sec:diff_n}  (different $n_j$)}

Before proving Theorems \ref{thm:main_nj}--\ref{them:cond_main_nj}, 
notice that the proof of Lemma~\ref{lemma:centra_unif_kl} 
straightforwardly generalizes to the case of the 
QQ estimator $S_{(\ell_1, \dots, \ell_m, k)}$ 
defined by Eq.~\eqref{eq:CP-QQ_nj}. 
So, if $(U_{i,j})_{i \in \intset{\ell_j} , j \in \intset{m}}$ 
denote independent standard uniform variables, we have 
\begin{equation}
\label{eq.nj-diff.lem-lien-unif}
F_S ( S_{(\ell_1, \dots, \ell_m, k)} ) 
\egalloi F_S \circ F_S^{-1} ( U_{(\ell_1, \dots, \ell_m, k)} )
\geq U_{(\ell_1, \dots, \ell_m, k)} 
\, , 
\end{equation}
with equality when $F_S$ is continuous. 	

\subsection{Proof of Theorem~\ref{thm:main_nj}}
\label{thm:main_nj_proof}
First, by Eq.~\eqref{eq.nj-diff.lem-lien-unif}, we have 
\begin{align*}
\P\left(Y \in \Chat_{\ell_1, \dots, \ell_m, k}(X)\right) 
&= \P\left(S \leq S_{(\ell_1, \dots, \ell_m, k)}\right) 
\\
&= \E \bigl[ F_S ( S_{(\ell_1, \dots, \ell_m, k)} ) \bigr] 
\geq \E \bigl[ U_{(\ell_1, \dots, \ell_m, k)} \bigr] 
=: M_{\ell_1, \dots, \ell_m, k} 
\, . 
\end{align*}
It remains to prove the formula for $M_{\ell_1, \dots, \ell_m, k}$. 
The difference with the proof of Theorem~\ref{them:main} 
is that the variables 
$(\Qh_{(\ell_j:n_j)}(\mathcal{S}_j))_{1 \leq j \leq m}$  
are not identically distributed, 
so $U_{(\ell_1, \dots, \ell_m, k)}$ is the $k$-th order statistics 
of a set of independent but \emph{not} identically distributed 
(inid) random variables. 
Its cdf can still be computed as follows. 
By \cite[Equation (16)]{balakrishnan2007permanents}, we have, 
for every $t \in [0,1]$, 
\begin{align*}
&F_{U_{(\ell_1, \dots, \ell_m, k)}}(t) \\
&=\sum^{m}_{j=k} \sum_{a \in \mathcal{P}_j} F_{U_{(\ell_{a_1}:n_1)}}(t) 
\cdots F_{U_{(\ell_{a_j}:n_j)}}(t) \cdot 
\left[1 - F_{U_{(\ell_{a_{j+1}}:n_{j+1})}}(t) \right] 
\cdots \left[1 - F_{U_{(\ell_{a_m}:n_m)}}(t) \right] 
\, ,
\end{align*}
where $\mathcal{P}_{j}$ denotes the set of 
permutations $(a_1, \ldots, a_m)$ of $\{ 1, \ldots, m \}$ 
for which $a_1 < a_2 < \ldots < a_{j}$ and 
$a_{j + 1 } < a_{j + 2 } < \ldots < a_m$.
Then, using Eq. \eqref{eq.FUl.pdf-cdf} and~\eqref{eq.FUl.pdf-cdf.bis} 
in Appendix~\ref{app.more-details-order-stat.exact}  
and rearranging the terms, we get that for every $t \in [0,1]$, 
\begin{align*} 
&F_{U_{(\ell_1, \dots, \ell_m, k)}}(t) 
\\
&= \sum^{m}_{j=k} \sum_{\mathcal{P}_j} \sum^{n_{a_1}}_{i_1=\ell_{a_1}} 
\cdots \sum^{n_{a_{j}}}_{i_{j}=\ell_{a_{j}}} 
\sum^{\ell_{a_{j+1}} - 1}_{i_{j+1}=0} \cdots 
\sum^{\ell_{a_m}-1}_{i_m=0} 
\binom{n_{a_1}}{i_1}\cdots\binom{n_{a_m}}{i_m} 
t^{i_1 + \cdots + i_m} (1-t)^{n_1+\cdots+ n_m-(i_1 + \cdots + i_m)} 
\, . 
\end{align*}
We finally obtain the result using that 
\begin{align*}
M_{\ell_1, \ldots, \ell_m, k} 
&= \E [ U_{(\ell_1, \ldots, \ell_m, k)} ] 
= \int_0^1 \bigl[ 1 - F_{U_{(\ell_1, \dots, \ell_m, k)}} (t) \bigr] \mathrm{d}t 
\\
\text{and} \qquad 
&\qquad 
\int_0^1 t^{i_1 + \cdots + i_m} (1-t)^{n_1+\cdots+ n_m-(i_1 + \cdots + i_m)} \mathrm{d}t 
\\
&= B ( i_1 + \cdots + i_m + 1 , n_1+\cdots+ n_m-(i_1 + \cdots + i_m) + 1) 
\\
&= ( n_1+\cdots+ n_m + 1 )^{-1} \binom{n_1+\cdots+ n_m}{i_1 + \cdots + i_m}^{-1}
\, , 
\end{align*}
using a property of the Beta function already used 
in Appendix~\ref{thm:main_proof}. 
\qed

\subsection{Proof of Theorem~\ref{them:cond_main_nj}}
\label{them:cond_main_nj_proof}
By the definition \eqref{eq:set_nj} of 
$\Chat_{\ell_1, \dots, \ell_m, k}$ and 
Eq.~\eqref{eq.nj-diff.lem-lien-unif}, we have 
\begin{align}
\notag  
\P \bigl( 
	1-\alpha_{\ell_1, \dots, \ell_m, k}(\calD) 
	\geq 1-\alpha\bigr) 
&= \P \bigl( F_S ( S_{(\ell_1, \dots, \ell_m, k)} ) \geq 1-\alpha \bigr) 
\\
&\geq \P \bigl( U_{(\ell_1, \dots, \ell_m, k)} \geq 1-\alpha\bigr) 
\label{eq:miscov_nj_unif}
\, ,
\end{align}
with equality when $F_S$ is continuous. 
By definition of $U_{(\ell_1, \dots, \ell_m, k)} 
= \Qh_{(\ell_j:n_j)} ( ( U_{(\ell_j:n_j)} )_{1 \leq j \leq m} )$, 
we have 
\begin{align*}
\P \bigl(U_{(\ell_1, \dots, \ell_m, k)} \geq 1-\alpha\bigr)
&= \P \left(
	\sum_{j=1}^{m} \1_{ \{ U_{(\ell_j:n_j)} \leq 1-\alpha \} } \leq k-1 
	\right) 
\, . 
\end{align*}
Furthermore,  
$\displaystyle\sum_{j=1}^{m} \1_{\{  U_{(\ell_j:n_j)} \leq 1-\alpha \} }$ 
is a sum of $m$ independent Bernoulli variables 
with respective parameters 
$p_j = \P (U_{(\ell_j:n_j)} \leq 1-\alpha) 
= F_{U_{(\ell_j:n_j)}}(1-\alpha)$. 
Therefore, it follows a Poisson-Binomial distribution 
of parameters $(p_1, \ldots, p_m)$. 
Using Eq.~\eqref{eq:miscov_nj_unif}, we obtain 
Eq.~\eqref{eq:cond_main_equa_nj}.  
\qed

\section{Classical results about order statistics distribution}
\label{app.more-details-order-stat}

We recall in this section some useful well-known results 
about order statistics \cite{david2004order}. 
Throughout this section, 
$U_1, \ldots, U_N, \ldots $ denote a sequence of 
independent standard uniform variables, 
and $Z_1, \ldots, Z_N, \ldots$ a sequence of 
i.i.d. random variables with common cdf~$F_Z$. 
For any integers $1 \leq r \leq N$, 
$U_{(r:N)} = \Qh_{(r)} (U_1, \ldots, U_N)$ 
and $Z_{(r:N)} = \Qh_{(r)} (Z_1, \ldots, Z_N)$ 
denote the corresponding $r$-th order statistics. 

\subsection{Exact distribution} \label{app.more-details-order-stat.exact}
For every $r \in \intset{N}$, the cdf 
of $Z_{(r:N)}$ is given by
\begin{align}
\label{eq.Xr.cdf}
\forall t \in \R \, , \qquad 
F_{Z_{(r:N)}}(t) 
&= \sum_{i=r}^N \binom{N}{i} F_Z(t)^i \bigl[ 1 - F_Z(t) \bigr]^{N-i} 
\, .
\end{align}
If we further assume that $F_Z$ is continuous 
with corresponding probability density function (pdf) $f_Z = F_Z'$, 
then, for every $r \in \intset{N}$, the pdf of $Z_{(r:N)}$ is given by
\begin{align}
\label{eq.Xr.pdf}
\forall t \in \R \, , \qquad 
f_{Z_{(r:N)}}(t) 
&= \frac{N!}{(r-1)!(N-r)!} F_Z(t)^{r-1} \bigl[ 1 - F_Z(t) \bigr]^{N-r} f_Z(t)
\, .
\end{align}

\medbreak

In the case of uniform order statistics, 
$U_{(r:N)}$ follows a $\Beta(r, N-r+1)$ distribution 
and its cdf $F_{U_{(r:N)}}$ and pdf $f_{U_{(r:N)}}$ 
are respectively given by 
\begin{gather} 
\label{eq.FUl.pdf-cdf} 
\forall t \in [0,1] \, , \
F_{U_{(r:N)}}(t) 
= \sum_{i=r}^N \binom{N}{i} t^i (1 - t)^{N-i} 
\  \text{ and } \ 
f_{U_{(r:N)}}(t) = \dfrac{t^{r-1} (1-t)^{N-r}}{\mathrm{B}(r, N-r+1)}
\\
\notag 
\text{where} \quad  
\mathrm{B} : (a,b) \in (0,+\infty)^2 \mapsto \int_0^1 t^{a-1} (1-t)^{b-1} \mathrm{d}t
\end{gather}
denotes the Beta function \citep{temme1996special}. 
In particular, for every $N \geq 1$, we have 
\begin{align}
\label{eq.max-uniformes.cdf}
\forall t \in [0,1] \, , \qquad 
F_{U_{(N:N)}} (t) &= t^N 
\qquad \text{and} \qquad 
F_{U_{(N:N)}}^{-1} (t) = t^{1/N}
\, . 
\end{align}
Since $\sum_{i=0}^N \binom{N}{i} t^i (1 - t)^{N-i}  = 1$, 
Eq.~\eqref{eq.FUl.pdf-cdf} also implies that 
\begin{align} 
\label{eq.FUl.pdf-cdf.bis} 
\forall t \in [0,1] \, , \quad 
1 - F_{U_{(r:N)}}(t) 
&= \sum_{i=0}^{r-1} \binom{N}{i} t^i (1 - t)^{N-i} 
\, , 
\\
\label{eq.min-uniformes.cdf}
\text{hence} \qquad 
F_{U_{(1:N)}} (t) &= 1 - (1-t)^N 
\, .
\end{align}

In addition, closed-form formulas are available for the  
expectation and variance of uniform order statistics: 
\begin{equation}
\label{eq.unif-order-stat.E-Var}
\E \bigl[ U_{(r:N)} \bigr] 
= \frac{r}{N+1} 
\qquad \text{and} \qquad 
\Var \bigl( U_{(r:N)} \bigr) 
= \frac{r (N-r+1)}{(N+1)^2 (N+2)} 
\, . 
\end{equation}

\subsection{Asymptotics}
\label{app.more-details-order-stat.asymp}
Central order statistics are known to be asymptotically normal. 
More precisely, following \cite{Gho:1971,Lah:1992}, we have the following result. 
\begin{theorem}[Asymptotic normality of central uniform order statistics \cite{Gho:1971,Lah:1992}]
\label{thm.asympt-unif-order-stat}
Let $p \in (0,1)$, $\gamma \in \R$ and 
$(\ell_N)_{N \geq 1}$ be a sequence such that 
$\ell_N \in \intset{N}$ for every $N \geq 1$ 
and $\ell_N = p N + \gamma \sqrt{N} + \mathrm{o}(\sqrt{N})$ 
when $N \to +\infty$. 
Then, we have 
\begin{equation}
\label{eq.thm.asympt-unif-order-stat}
\sqrt{N} \bigl( U_{(\ell_N:N)} - p \bigr) 
\xrightarrow[N \to +\infty]{\mathcal{L}} 
\mathcal{N} \bigl( \gamma , p (1-p) \bigr) 
\, . 
\end{equation}
\end{theorem}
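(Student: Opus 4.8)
The plan is to reduce the statement to the classical central limit theorem and the law of large numbers via the Rényi representation of uniform order statistics. Let $E_1, E_2, \ldots$ be i.i.d.\ standard exponential random variables and set $T_k := E_1 + \cdots + E_k$. For every $N \geq 1$ one has the distributional identity $(U_{(1:N)}, \ldots, U_{(N:N)}) \egalloi (T_1/T_{N+1}, \ldots, T_N/T_{N+1})$, so in particular $U_{(\ell_N:N)} \egalloi T_{\ell_N}/T_{N+1}$. The idea is then to write
\[
\frac{T_{\ell_N}}{T_{N+1}} - p = \frac{T_{\ell_N} - p\,T_{N+1}}{T_{N+1}},
\qquad
T_{\ell_N} - p\,T_{N+1} = (1-p)\,A_N - p\,B_N,
\]
where $A_N := T_{\ell_N}$ and $B_N := T_{N+1} - T_{\ell_N}$ are independent, being sums of $\ell_N$ and of $N+1-\ell_N$ i.i.d.\ standard exponentials, respectively.

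First I would handle the numerator. Since $p \in (0,1)$ and $\ell_N = pN + \gamma\sqrt{N} + \mathrm{o}(\sqrt{N})$, both $\ell_N \to \infty$ and $N+1-\ell_N \to \infty$, with $\sqrt{\ell_N} = \sqrt{pN}\,(1+\mathrm{o}(1))$ and $\sqrt{N+1-\ell_N} = \sqrt{(1-p)N}\,(1+\mathrm{o}(1))$. By the CLT, the centered and scaled variables $\xi_N := (A_N-\ell_N)/\sqrt{\ell_N}$ and $\eta_N := (B_N-(N+1-\ell_N))/\sqrt{N+1-\ell_N}$ each converge in distribution to $\mathcal{N}(0,1)$, and they are independent, so $(\xi_N,\eta_N) \xrightarrow{\mathcal{L}} (\xi,\eta)$ with $\xi,\eta$ independent standard normal. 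The deterministic part of $(1-p)A_N - p\,B_N$ equals $(1-p)\ell_N - p(N+1-\ell_N) = \ell_N - p(N+1) = \gamma\sqrt{N} + \mathrm{o}(\sqrt{N})$, while the random part is $(1-p)\sqrt{\ell_N}\,\xi_N - p\sqrt{N+1-\ell_N}\,\eta_N = \sqrt{N}\bigl((1-p)\sqrt{p}\,\xi_N - p\sqrt{1-p}\,\eta_N\bigr) + \mathrm{o}_{\mathbb{P}}(\sqrt{N})$, using that $\xi_N,\eta_N$ are tight. Hence $T_{\ell_N} - p\,T_{N+1} = \sqrt{N}\,(\gamma + W_N) + \mathrm{o}_{\mathbb{P}}(\sqrt{N})$, where $W_N \xrightarrow{\mathcal{L}} \mathcal{N}(0,\sigma^2)$ with $\sigma^2 = (1-p)^2 p + p^2(1-p) = p(1-p)$.

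It remains to treat the denominator: by the strong law of large numbers, $T_{N+1}/(N+1) \to 1$ almost surely, hence $T_{N+1}/N \to 1$ in probability. Combining this with the numerator estimate,
\[
\sqrt{N}\Bigl(\frac{T_{\ell_N}}{T_{N+1}} - p\Bigr)
= \frac{N\,(\gamma + W_N) + \mathrm{o}_{\mathbb{P}}(N)}{T_{N+1}}
\xrightarrow{\mathcal{L}} \gamma + \mathcal{N}(0,p(1-p)) = \mathcal{N}\bigl(\gamma, p(1-p)\bigr)
\]
by Slutsky's theorem, which together with $U_{(\ell_N:N)} \egalloi T_{\ell_N}/T_{N+1}$ proves Eq.~\eqref{eq.thm.asympt-unif-order-stat}. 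The main point requiring care is the joint bookkeeping of the remainders: one must verify that replacing $\sqrt{\ell_N}$ by $\sqrt{pN}$ (and $\sqrt{N+1-\ell_N}$ by $\sqrt{(1-p)N}$) in front of the tight sequences $\xi_N,\eta_N$ only produces an $\mathrm{o}_{\mathbb{P}}(\sqrt{N})$ error, and that the joint convergence of $(\xi_N,\eta_N)$ to an independent normal pair transfers to the linear combination; once these are in place, everything else is a routine application of the CLT, the LLN and Slutsky. An alternative route would be to work directly with the $\Beta(\ell_N, N+1-\ell_N)$ density of $U_{(\ell_N:N)}$ and run a local central limit argument via Stirling's formula, but the Rényi representation keeps the argument shortest.
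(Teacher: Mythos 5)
The paper does not prove Theorem~\ref{thm.asympt-unif-order-stat}; it states it as a classical result and cites \cite{Gho:1971,Lah:1992}. Your proof via R\'enyi's representation is correct and self-contained, and it is arguably the cleanest elementary route. The decomposition $T_{\ell_N} - p\,T_{N+1} = (1-p)A_N - p B_N$ with $A_N, B_N$ independent is exactly what makes the limiting variance $(1-p)^2 p + p^2(1-p) = p(1-p)$ drop out without cross terms; the centering computation $\ell_N - p(N+1) = \gamma\sqrt{N} + \mathrm{o}(\sqrt{N})$ correctly produces the mean shift $\gamma$; and the remainder bookkeeping --- replacing $\sqrt{\ell_N}$ by $\sqrt{pN}$ and $\sqrt{N+1-\ell_N}$ by $\sqrt{(1-p)N}$ using that $\mathrm{o}(1)$ times a tight sequence is $\mathrm{o}_{\mathbb{P}}(1)$ --- is handled properly before the final Slutsky step with $T_{N+1}/N \to 1$. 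One small point worth making explicit: $B_N = E_{\ell_N+1} + \cdots + E_{N+1}$ is a triangular array since the indices of the summands shift with $N$, so the classical CLT does not literally apply to the sequence $(B_N)_N$; either invoke Lindeberg--Feller (trivially satisfied, the summand distribution is fixed), or, most cleanly, use exchangeability to note $B_N \egalloi T_{N+1-\ell_N}$ and apply the classical CLT along the divergent subsequence $N+1-\ell_N \to \infty$, exactly as you do for $A_N = T_{\ell_N}$. With that clarification the argument is airtight.
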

In particular, Theorem~\ref{thm.asympt-unif-order-stat} 
implies that 
\begin{equation}
\label{eq.uniform-order-stat.loi-asympt}
\sqrt{N} \bigl( U_{(\lceil p N \rceil:N)} - p \bigr) 
\xrightarrow[N \to +\infty]{\mathcal{L}} 
\mathcal{N} \bigl( 0 , p (1-p) \bigr) 
\, . 
\end{equation}
By Lemma~\ref{lemma:centra_unif}, 
Eq.~\eqref{eq.thm.asympt-unif-order-stat}--\eqref{eq.uniform-order-stat.loi-asympt} 
imply that when $F_Z$ is continuous, 
under the assumptions of Theorem~\ref{thm.asympt-unif-order-stat}, 
\begin{align}
\label{eq.rk.order-stat.asympt.FZ}
\sqrt{N}\bigl( F_Z( Z_{(\ell_N:N)} ) - p \bigr) 
&\xrightarrow[N \to + \infty]{\mathcal{L}} 
\mathcal{N}\bigl( \gamma , p (1-p) \bigr) 
\\
\label{eq.rk.order-stat.asympt.FZrhoN}
\text{and} \qquad 
\sqrt{N}\bigl( F_Z( Z_{(\lceil p N \rceil : N)} ) - p \bigr) 
&\xrightarrow[N \to + \infty]{\mathcal{L}} 
\mathcal{N}\bigl( 0, p (1-p) \bigr) 
\, .
\end{align} 

\section{Proofs of Appendix~\ref{app.order-stat}}
\subsection{Proof of Lemma~\ref{lemma:centra_unif}} \label{lemma:centra_unif_proof}

First, for every $i \in \intset{N}$, let us define 
$\Xu_{i} \egaldef F_Z^{-1}(U_{i})$, 
so that $\Xu_1, \ldots, \Xu_N$ are independent and identically distributed, 
with cdf~$F_Z$. 
Therefore, $(Z_1, \ldots, Z_N) \egalloi (\Xu_1, \ldots, \Xu_N)$, 
hence 
\begin{equation}
\label{eq.pr.lemma:centra_unif_proof.1}
Z_{(r)} \egalloi \Xu_{(r)}
\, . 
\end{equation}
Second, remark that for any integers $1 \leq r \leq N$,  
any $\mathcal{S} \in \R^N$ and any nondecreasing function $h: \R \to \R$, 
we have 
\begin{equation}
\label{eq.pr.lemma:centra_unif_proof.2.Qh}
h \bigl( \Qh_{(r)} ( \mathcal{S} ) \bigr) 
= \Qh_{(r)} \bigl( h(\mathcal{S}) \bigr)
\qquad \text{where} \qquad 
h(\mathcal{S}) \egaldef \bigl( h(\mathcal{S}_i))_{i \in \intset{N}} \bigr)
\end{equation}
and $\Qh_{(r)}$ is defined by Eq.~\eqref{def:Qk} 
in Section~\ref{subsec:splitCP}. 

Indeed, let $\sigma \in $ be some permutation 
of $\intset{N}$ 
such that $\mathcal{S}_{\sigma(1)} \leq \cdots \leq \mathcal{S}_{\sigma(N)}$, 
hence $\Qh_{(r)} ( \mathcal{S} ) = \mathcal{S}_{\sigma(r)}$. 
Then, 
$h( \mathcal{S}_{\sigma(1)} ) \leq \cdots \leq h ( \mathcal{S}_{\sigma(N)} )$ 
since $h$ is nondecreasing, 
and we get that 
\[ 
\Qh_{(r)} \bigl( h(\mathcal{S}) \bigr) 
= h( \mathcal{S}_{\sigma(r)} ) 
= h \bigl( \Qh_{(r)} ( \mathcal{S} ) \bigr) 
\, . 
\]

From Eq. \eqref{eq.pr.lemma:centra_unif_proof.1} 
and~\eqref{eq.pr.lemma:centra_unif_proof.2.Qh}, 
since $F_Z^{-1}$ is nondecreasing, 
we obtain that  
\begin{align*} 
F_Z (Z_{(r)}) 
\egalloi F_Z(\Xu_{(r)}) 
= F_Z \Bigl( \Qh_{(r)} \bigl( F_Z^{-1} ( U_1, \ldots, U_N ) \bigr) \Bigr)
&= F_Z \circ F_Z^{-1} \Bigl( \Qh_{(r)} ( U_1, \ldots, U_N ) \Bigr) 
\\
&= F_Z \circ F_Z^{-1} ( U_{(r)} )
\, . 
\end{align*} 
Using Eq.~\eqref{eq.prop-gal-inverse}, 
this proves Eq.~\eqref{eq.lemma:centra_unif.domin-stoch} 
in the general case 
---Eq.~\eqref{eq.lemma:centra_unif.domin-stoch.conseq} 
follows directly---, 
and Eq.~\eqref{eq.lemma:centra_unif.egal-loi} 
when $F_Z$ is continuous. 
The distribution of the uniform order statistic $U_{(r)}$ 
is well known, see for instance \cite{david2004order} 
or Appendix~\ref{app.more-details-order-stat.exact}.

\qed 

\begin{remark}
\label{rk.lemma:centra_unif.bonus}
In the case of a general cdf $F_Z$, another consequence of Lemma~\ref{lemma:centra_unif} 
is that, 
for every $a,b \in \R$ such that $a \leq b$, 
\begin{align}
\label{eq.rk.lemma:centra_unif.bonus.dev-droite-ineq}
\IP\left(U_{(r)} \leq b \right) 
&\leq \IP\left(F_Z(Z_{(r)}) \leq F_Z \circ F_Z^{-1}(b) \right)
\\
\label{eq.rk.lemma:centra_unif.bonus.dev-ineq}
\text{and} \qquad 
\IP\left(a \leq U_{(r)} \leq b \right) 
&\leq \IP\left(a \leq F_Z(Z_{(r)}) \leq F_Z \circ F_Z^{-1}(b)\right) 
\, . 
\end{align}
\end{remark}
\begin{proof}
Since $F_Z \circ F_Z^{-1}$ is nondecreasing, 
\begin{equation}
\label{eq.pr.rk.lemma:centra_unif.bonus}
U_{(r)} \leq b  
\qquad \text{implies that} \qquad  
F_Z \circ F_Z^{-1} (U_{(r)}) \leq F_Z \circ F_Z^{-1}(b) 
\, , 
\end{equation}
hence Eq.~\eqref{eq.rk.lemma:centra_unif.bonus.dev-droite-ineq} 
thanks to Eq.~\eqref{eq.lemma:centra_unif.domin-stoch}. 
Since $F_Z \circ F_Z^{-1} (U_{(r)}) \geq U_{(r)}$ 
by Eq.~\eqref{eq.prop-gal-inverse}, 
Eq.~\eqref{eq.pr.rk.lemma:centra_unif.bonus} also shows that 
\begin{equation}
\notag
a \leq U_{(r)} \leq b  
\qquad \text{implies that} \qquad  
a \leq F_Z \circ F_Z^{-1} (U_{(r)}) \leq F_Z \circ F_Z^{-1}(b) 
\, , 
\end{equation}
hence Eq.~\eqref{eq.rk.lemma:centra_unif.bonus.dev-ineq} 
thanks to Eq.~\eqref{eq.lemma:centra_unif.domin-stoch}. 
\end{proof}

\subsection{Proof of Proposition~\ref{pro.FUr}}
\label{sec.pr.cor.FUr}
We start by recalling some concentration 
result about the Beta distribution 
\cite[Theorem~1]{marchal2017sub}. 
\begin{theorem}[taken from \cite{marchal2017sub}]
\label{them:subgaussbeta}
For any $a, b > 0$, the Beta distribution 
$\Beta(a, b)$ is $\omega$-sub-Gaussian, 
with $\omega = \frac{1}{4 (a + b + 1)}$. 
This implies that, if $Z \sim \Beta(a, b)$, 
for any $\varepsilon > 0$,
\begin{align*}
\max\left\{ 
\IP\left(Z \geq \frac{a}{a+b} + \varepsilon\right) 
\, , \, 
\IP\left(Z \leq \frac{a}{a+b} - \varepsilon\right) 
\right\} 
\leq e^{- 2 (a + b + 1) \varepsilon^2} 
\, .
\end{align*}
\end{theorem}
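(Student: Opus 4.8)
The two tail inequalities are an immediate consequence of the sub-Gaussianity claim via the Chernoff bound: if $Z$ is $\omega$-sub-Gaussian with $\omega = \tfrac{1}{4(a+b+1)}$, then for every $\varepsilon>0$,
\[
\P\Bigl(Z \geq \tfrac{a}{a+b} + \varepsilon\Bigr) \leq \inf_{\lambda>0}\exp\Bigl(\tfrac{\omega\lambda^2}{2} - \lambda\varepsilon\Bigr) = \exp\Bigl(-\tfrac{\varepsilon^2}{2\omega}\Bigr) = e^{-2(a+b+1)\varepsilon^2},
\]
and the left-tail bound follows by symmetry, applying the right-tail bound to $1-Z \sim \Beta(b,a)$ (which has the same parameter sum $a+b$). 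So the whole task is to prove that, for $Z \sim \Beta(a,b)$, the centered cumulant generating function $u(\lambda) := \log\E[e^{\lambda Z}] - \lambda\E[Z]$ satisfies $u(\lambda) \leq \tfrac{\lambda^2}{8(a+b+1)}$ for all $\lambda\in\R$.

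First I would set $M(\lambda) := \E[e^{\lambda Z}]$ and $K := \log M$, and record that $M(\lambda) = {}_1F_1(a;a+b;\lambda)$ is Kummer's confluent hypergeometric function, hence solves Kummer's equation $\lambda M'' + (a+b-\lambda)M' - aM = 0$. Writing $g := K' = M'/M$ and using $M'' = (g'+g^2)M$ converts this into the Riccati-type identity
\[
\lambda\, g'(\lambda) = a - (a+b)\,g(\lambda) + \lambda\, g(\lambda)\bigl(1-g(\lambda)\bigr), \qquad \lambda\in\R,
\]
where $g(\lambda) = \E_\lambda[Z]\in(0,1)$ and $g'(\lambda) = K''(\lambda) = \Var_\lambda(Z) \geq 0$ are the mean and variance of $Z$ under the exponential tilt $d\P_\lambda \propto e^{\lambda x} x^{a-1}(1-x)^{b-1}\mathbf{1}_{(0,1)}(x)\,dx$; in particular $g(0) = \tfrac{a}{a+b}$ and $K''(0) = \Var(Z) = \tfrac{ab}{(a+b)^2(a+b+1)}$. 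The plan is to prove the \emph{uniform} bound $K''(\lambda) \leq \tfrac{1}{4(a+b+1)}$ for all $\lambda$ — equivalently, that the variance of $Z$ stays below $\tfrac{1}{4(a+b+1)}$ under every exponential tilt of the Beta law. Granting this, the proof finishes immediately: $u(0) = u'(0) = 0$ and $u'' = K'' \in [0,\tfrac{1}{4(a+b+1)}]$, so integrating twice gives $0 \leq u(\lambda) \leq \tfrac{\lambda^2}{8(a+b+1)}$.

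The remaining, and main, obstacle is the uniform variance bound, which I would attack through the Riccati identity. As $\lambda \to \pm\infty$ the tilted law $\P_\lambda$ degenerates to a point mass at an endpoint of $[0,1]$, so $K''(\lambda) = \Var_\lambda(Z) \to 0$; since $K''$ is continuous and positive at $0$, its supremum over $\R$ is attained at some finite $\lambda_0$. If $\lambda_0 = 0$ the claim holds because $ab \leq (a+b)^2/4$. Otherwise $K'''(\lambda_0) = 0$, and differentiating the Riccati identity at $\lambda_0$ yields $g'(\lambda_0) = \dfrac{g(\lambda_0)}{\,1+a+b+\lambda_0\bigl(2g(\lambda_0)-1\bigr)\,}$; combining this with $g(\lambda_0)\in(0,1)$, the elementary inequality $g(1-g)\leq\tfrac14$, the Riccati constraint linking $\lambda_0$ and $g(\lambda_0)$, and a sign analysis of $\lambda_0$ should force $g'(\lambda_0) \leq \tfrac{1}{4(a+b+1)}$. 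An alternative, available only for $a,b\geq 1$, is the Brascamp--Lieb inequality $\Var_\lambda(Z)\leq \E_\lambda\bigl[1/W''(Z)\bigr]$ with $W''(x) = \tfrac{a-1}{x^2}+\tfrac{b-1}{(1-x)^2}$, but the Riccati route handles all $a,b>0$ at once, so I would carry that one out. The delicate points are controlling the $\lambda^{-1}$ singularity of the Riccati identity near $\lambda = 0$ — where the numerator $a-(a+b)g$ also vanishes, so $g$ and its derivatives extend smoothly, but this needs to be made quantitative — and checking that the critical-point relation above genuinely delivers the sharp constant $\tfrac{1}{4(a+b+1)}$ rather than a weaker one.
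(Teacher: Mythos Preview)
The paper does not prove this theorem at all: it is quoted from \cite{marchal2017sub} (announced as ``taken from \cite{marchal2017sub}'' and introduced by ``We start by recalling some concentration result about the Beta distribution \cite[Theorem~1]{marchal2017sub}'') and then used as a black-box input to Proposition~\ref{pro.FUr} and Corollary~\ref{coro:bernstein_Sl}. There is no proof in the paper to compare your proposal against.

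For what it is worth, your sketch is essentially the Marchal--Arbel argument: reduce the sub-Gaussian bound to the uniform variance bound $K''(\lambda)=\Var_\lambda(Z)\leq \tfrac{1}{4(a+b+1)}$, derive the Riccati-type identity from Kummer's ODE for ${}_1F_1(a;a+b;\lambda)$, and analyse the interior maximiser of $K''$. One slip: differentiating the Riccati identity and setting $g''(\lambda_0)=0$ gives
\[
g'(\lambda_0)\bigl[1+a+b+\lambda_0(2g(\lambda_0)-1)\bigr]=g(\lambda_0)\bigl(1-g(\lambda_0)\bigr),
\]
so the numerator in your displayed formula should be $g(\lambda_0)(1-g(\lambda_0))$, not $g(\lambda_0)$. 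With that correction, the bound $g(1-g)\leq 1/4$ reduces the task to showing the denominator is at least $a+b+1$ at the maximiser; this is the part that, as you correctly flag, is not a one-line sign check and is where the work in \cite{marchal2017sub} actually lies.
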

Note that Theorem~\ref{them:subgaussbeta} does not provide 
the ``best'' concentration inequality for the Beta distribution. 
Indeed, although the bound is sharp for the symmetric case $a = b$, 
when the Beta distribution is skewed, 
Theorem~\ref{them:subgaussbeta} has been improved 
with the Bernstein-type bound of \cite{skorski2023bernstein}. 
Nevertheless, it is sufficient for our needs 
and yields simpler formulas. 

We can now prove Proposition~\ref{pro.FUr}. 
Since $U_{(r:N)}$ follows a $\Beta(r,N-r+1)$ distribution, 
Theorem~\ref{them:subgaussbeta} 
with $a=r$, $b=N-r+1$ and $\varepsilon = \sqrt{\frac{\log(1/\delta)}{2(N+2)}}$ 
shows that  
\begin{equation}
\label{eq.pro.FUr.dev-gauche}
F_{U_{(r:N)}} \left(
	\frac{r}{N+1} - \sqrt{\dfrac{\log(1/\delta)}{2(N+2)}}
	\, \right)
= \P \left( 
	U_{(r:N)} \leq \frac{r}{N+1} - \sqrt{\dfrac{\log(1/\delta)}{2(N+2)}}
	\, \right)
\leq \delta 
\, , 
\end{equation}
hence Eq.~\eqref{eq.pro.FUr.dev-gauche.inv}. 
Similarly, 
\begin{equation}
\label{eq.pro.FUr.dev-droite}
F_{U_{(r:N)}} \left(
	\frac{r}{N+1} + \sqrt{\dfrac{\log(1/\delta)}{2(N+2)}}
	\right)
= 1 - \P \left( 
	U_{(r:N)} \geq \frac{r}{N+1} + \sqrt{\dfrac{\log(1/\delta)}{2(N+2)}}
	\right) 
\geq 1-\delta 
\end{equation}
by Theorem~\ref{them:subgaussbeta}, 
hence Eq.~\eqref{eq.pro.FUr.dev-droite.inv}. 
\qed 

\subsection{Deviation inequalities for general order statistics}
\label{app.order-stat-gal.dev-ineq}
The combination of Theorem~\ref{them:subgaussbeta} 
with Lemma~\ref{lemma:centra_unif} 
yields the following deviation inequalities, 
which are not directly useful in our study of 
one-shot FL CP algorithms, 
but can be of interest in other contexts 
such as the non-asymptotic analysis of 
(centralized) split CP 
---see Appendix~\ref{app.pr.split-CP}. 
\begin{corollary} \label{coro:bernstein_Sl}
For any $N \geq 1$, 
let $Z_1, \ldots, Z_N$ be independent and 
identically distributed real-valued 
random variables with common cdf~$F_Z$. 
Then, for any $\delta > 0$ and $r \in \intset{N}$, 
the following inequality holds true\textup{:}
\begin{align}
\label{eq.coro:bernstein_Sl.dev-gauche}
\IP\left(F_Z(Z_{(r)}) \geq \frac{r}{N+1} - \sqrt{\dfrac{\log(1/\delta)}{2(N+2)}} \, \right) 
&\geq 1 - \delta 
\; .
\end{align}
Moreover, if $F_Z$ is continuous, we also have 
\begin{align}
\label{eq.coro:bernstein_Sl.dev-droite}
\P\left(F_Z(Z_{(r)}) \leq \frac{r}{N+1} + \sqrt{\dfrac{\log(1/\delta)}{2(N+2)}} \, \right) 
&\geq 1 - \delta 
\\
\text{and} \qquad 
\P\left( \left\lvert F_Z(Z_{(r)}) - \frac{r}{N+1} \right\rvert 
\leq \sqrt{\frac{\log(2/\delta)}{2(N+2)}} \, \right) 
&\geq 1 - \delta 
\, .
\label{eq.coro:bernstein_Sl.dev-sym}
\end{align} 
\end{corollary}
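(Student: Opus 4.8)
The plan is to deduce the three inequalities from the stochastic-domination relation of Lemma~\ref{lemma:centra_unif} together with the sub-Gaussian tail bounds for the Beta distribution (Theorem~\ref{them:subgaussbeta}), in the very form in which they already appear at Eq.~\eqref{eq.pro.FUr.dev-gauche}--\eqref{eq.pro.FUr.dev-droite} in the proof of Proposition~\ref{pro.FUr}. Throughout, I would fix $\delta>0$, $r\in\intset{N}$, set $\varepsilon\egaldef\sqrt{\log(1/\delta)/(2(N+2))}$, and recall that $U_{(r:N)}\sim\Beta(r,N-r+1)$ has mean $r/(N+1)$.

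For Eq.~\eqref{eq.coro:bernstein_Sl.dev-gauche}, the left-tail estimate Eq.~\eqref{eq.pro.FUr.dev-gauche} says $F_{U_{(r:N)}}(r/(N+1)-\varepsilon)\leq\delta$, that is, $\P(U_{(r:N)}\geq r/(N+1)-\varepsilon)\geq 1-\delta$. Applying Eq.~\eqref{eq.lemma:centra_unif.domin-stoch.conseq} of Lemma~\ref{lemma:centra_unif} with $a=r/(N+1)-\varepsilon$ then yields $\P(F_Z(Z_{(r)})\geq r/(N+1)-\varepsilon)\geq\P(U_{(r:N)}\geq r/(N+1)-\varepsilon)\geq 1-\delta$, which is exactly Eq.~\eqref{eq.coro:bernstein_Sl.dev-gauche}. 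This step uses only the one-sided domination $F_Z(Z_{(r)})\succeq U_{(r:N)}$, hence it is valid for an arbitrary cdf $F_Z$.

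For Eq.~\eqref{eq.coro:bernstein_Sl.dev-droite}, I would now invoke continuity of $F_Z$: by Eq.~\eqref{eq.lemma:centra_unif.egal-loi}, $F_Z(Z_{(r)})\egalloi U_{(r:N)}$. The right-tail estimate Eq.~\eqref{eq.pro.FUr.dev-droite} gives $F_{U_{(r:N)}}(r/(N+1)+\varepsilon)\geq 1-\delta$, so $\P(F_Z(Z_{(r)})\leq r/(N+1)+\varepsilon)=\P(U_{(r:N)}\leq r/(N+1)+\varepsilon)\geq 1-\delta$. Finally, Eq.~\eqref{eq.coro:bernstein_Sl.dev-sym} follows by applying the two inequalities just proved with $\delta$ replaced by $\delta/2$ (so that $\varepsilon$ becomes $\sqrt{\log(2/\delta)/(2(N+2))}$) and a union bound over the two complementary events $\{F_Z(Z_{(r)})<r/(N+1)-\varepsilon\}$ and $\{F_Z(Z_{(r)})>r/(N+1)+\varepsilon\}$.

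There is no genuine obstacle here; this is a direct corollary. The only points needing a little care are the passage between strict and non-strict inequalities in the tail events — harmless because the Beta law is continuous — and the observation that the upper deviation inequality really does require $F_Z$ continuous, since for a general cdf the domination controls the upper tail of $F_Z(Z_{(r)})$ only from below.
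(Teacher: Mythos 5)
Your proof is correct and follows essentially the same route as the paper's: the lower-tail bound via stochastic domination (Eq.~\eqref{eq.lemma:centra_unif.domin-stoch.conseq} with the sub-Gaussian estimate from Proposition~\ref{pro.FUr}), the upper-tail bound via equality in law under continuity (Eq.~\eqref{eq.lemma:centra_unif.egal-loi}), and the two-sided bound by substituting $\delta/2$ and applying a union bound.
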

\begin{proof}
By Eq.~\eqref{eq.lemma:centra_unif.domin-stoch.conseq} in Lemma~\ref{lemma:centra_unif} 
with 
$a=\frac{r}{N+1} - \sqrt{\frac{\log(1/\delta)}{2(N+2)}}$ 
(and using its notation), 
we have 
\begin{equation}
\label{eq.pr.coro:bernstein_Sl.1}
\begin{split}
& \P \left( 
	F_Z(Z_{(r)}) 
	\geq \frac{r}{N+1} - \sqrt{\frac{\log(1/\delta)}{2(N+2)}} 
	\, \right)
\\
& \qquad \geq 
\P \left( 
	U_{(r:N)} 
	\geq \frac{r}{N+1} - \sqrt{\frac{\log(1/\delta)}{2(N+2)}} 
	\, \right) 
\geq 1 - \delta 
\end{split}
\end{equation}
by Eq.~\eqref{eq.pro.FUr.dev-gauche}
in the proof of Proposition~\ref{pro.FUr}, 
hence Eq.~\eqref{eq.coro:bernstein_Sl.dev-gauche}. 
If we also assume that $F_Z$ is continuous, 
combining Eq.~\eqref{eq.lemma:centra_unif.egal-loi} 
in Lemma~\ref{lemma:centra_unif} 
and Eq.~\eqref{eq.pro.FUr.dev-droite}
in the proof of Proposition~\ref{pro.FUr} 
shows that 
\begin{align*}
\P \left( 
	F_Z (Z_{(r:N)}) \leq \frac{r}{N+1} + \sqrt{\dfrac{\log(1/\delta)}{2(N+2)}}
	\right) 
&= \P \left( 
	U_{(r:N)} \leq \frac{r}{N+1} + \sqrt{\dfrac{\log(1/\delta)}{2(N+2)}}
	\, \right) 
\\
&= F_{U_{(r:N)}} \left(
	\frac{r}{N+1} + \sqrt{\dfrac{\log(1/\delta)}{2(N+2)}}
	\, \right)
\geq 1 - \delta 
\, , 
\end{align*}
hence Eq.~\eqref{eq.coro:bernstein_Sl.dev-droite}. 
Finally, replacing $\delta$ by $\delta/2$ in 
Eq.~\eqref{eq.coro:bernstein_Sl.dev-gauche}--\eqref{eq.coro:bernstein_Sl.dev-droite} 
implies Eq.~\eqref{eq.coro:bernstein_Sl.dev-sym} 
by the union bound. 
\end{proof}
\begin{remark}
For a general cdf $F_Z$, instead of Eq.~\eqref{eq.coro:bernstein_Sl.dev-droite}, 
we have  
\begin{equation}
\label{eq.coro:bernstein_Sl.dev-droite.gal} 
\IP\left(F_Z(Z_{(r)}) \leq F_Z\circ F_Z^{-1} \left( \frac{r}{N+1} + \sqrt{\dfrac{\log(1/\delta)}{2(N+2)}} \, \right) \right) 
\geq 1 - \delta 
\, . 
\end{equation}
\end{remark}
\begin{proof}
Combine Eq.~\eqref{eq.rk.lemma:centra_unif.bonus.dev-droite-ineq} 
with $b = \frac{r}{N+1} + \sqrt{\frac{\log(1/\delta)}{2(N+2)}}$ 
and Eq.~\eqref{eq.pro.FUr.dev-droite}. 
\end{proof}
\begin{remark} 
Eq.~\eqref{eq.rk.order-stat.asympt.FZrhoN} in Section~\ref{app.more-details-order-stat.asymp} 
shows that for any $p \in (0,1)$, 
$F_Z( Z_{(\lceil p N \rceil)} )$ 
converges towards $p$ at the rate $\sqrt{N}$. 
Therefore, the order of magnitude $1/\sqrt{N}$ of 
the deviations in Corollary~\ref{coro:bernstein_Sl} is optimal 
\textup{(}at least, asymptotically\textup{)}. 
\end{remark}

\subsection{Proof of Theorem~\ref{thm:len_bound2}} 
\label{app.pr.thm:len_bound2}
We proceed in four steps. 
\paragraph*{Step 1: mean value theorem.}
Since $F_{ U_{(\ell:n)} }^{-1}$ is continuous on $[x,x+\varepsilon]$ 
and differentiable on $(x,x+\varepsilon)$ 
---it is even $\mathcal{C}^{\infty}$ on $(0,1)$---, 
by the mean-value theorem, 
some $c \in (x,x+\varepsilon)$ exists such that 
\begin{align}
\notag 
F_{ U_{(\ell:n)} }^{-1} (x+\varepsilon) 
- F_{ U_{(\ell:n)} }^{-1} (x) 
= \bigl( F_{ U_{(\ell:n)} }^{-1} \bigr)' (c) \times \varepsilon 
&= \frac{\varepsilon }{f_{ U_{(\ell:n)} } \bigl( F_{ U_{(\ell:n)} }^{-1}(c) \bigr) } 
\\
&\leq \frac{\varepsilon }{
	\inf_{ \bigl[ F_{ U_{(\ell:n)} }^{-1} (x) 
	\, , \, 
	F_{ U_{(\ell:n)} }^{-1} (x+\varepsilon) \bigr] } 
	f_{ U_{(\ell:n)} }}
\, . 
\label{eq.pr.thm:len_bound.IAF}
\end{align}
Therefore, it remains to get a uniform lower bound 
on the density $f_{ U_{(\ell:n)} }$. 

\paragraph*{Step 2: density lower bound with the inverse cdf $F_{ U_{(\ell:n)} }^{-1}$.} 
Since $f_{ U_{(\ell:n)} }$ is unimodal \citep{bagnoli1989log}, 
a straightforward proof by exhaustion (depending on the relative position of 
$\argmax f_{ U_{(\ell:n)} }$ and $[ F_{ U_{(\ell:n)} }^{-1} (x) \, , \, F_{ U_{(\ell:n)} }^{-1} (x+\varepsilon) ]$) shows that 
\begin{align}
\inf_{ \bigl[ F_{ U_{(\ell:n)} }^{-1} (x) 
	\, , \, 
	F_{ U_{(\ell:n)} }^{-1} (x+\varepsilon) \bigr] } 
	f_{ U_{(\ell:n)} }
&\geq 
\min\biggl\{ 
	\sup_{\bigl[ 0 \, , \, F_{ U_{(\ell:n)} }^{-1} (x) \bigr]} f_{ U_{(\ell:n)} }
	\, , \,
	\sup_{\bigl[ F_{ U_{(\ell:n)} }^{-1} (x+\varepsilon)  
		\, , \, 1 \bigr]} f_{ U_{(\ell:n)} }
	\biggr\}
\, . 
\label{eq.pr.thm:len_bound.min-dens.1}
\end{align}
Let us consider separately the two terms 
of the right-hand side of Eq.~\eqref{eq.pr.thm:len_bound.min-dens.1}. 
On the one hand, for every $\delta \in (0,x)$, 
\begin{align}
\notag 
\sup_{\bigl[ 0 \, , \, F_{ U_{(\ell:n)} }^{-1} (x) \bigr]} f_{ U_{(\ell:n)} }
&\geq \frac{1}{F_{ U_{(\ell:n)} }^{-1} (x) - F_{ U_{(\ell:n)} }^{-1} (\delta)} 
	\int_{ F_{ U_{(\ell:n)} }^{-1} (\delta) }^{F_{ U_{(\ell:n)} }^{-1} (x)}
		f_{ U_{(\ell:n)} } (t) \mathrm{d}t
\\
&= \frac{
	F_{ U_{(\ell:n)} } \circ F_{ U_{(\ell:n)} }^{-1} (x) 
		- F_{ U_{(\ell:n)} } \circ F_{ U_{(\ell:n)} }^{-1} (\delta)
	}{
	F_{ U_{(\ell:n)} }^{-1} (x) - F_{ U_{(\ell:n)} }^{-1} (\delta)
	}
\notag 
\\
&= \frac{x-\delta}{
	F_{ U_{(\ell:n)} }^{-1} (x) - F_{ U_{(\ell:n)} }^{-1} (\delta)
	}
\, . 
\label{eq.pr.thm:len_bound.min-dens.2a}
\end{align}
On the other hand, similarly, for every $\delta' \in (0,1-x-\varepsilon)$, 
\begin{align}
\notag 
\sup_{\bigl[ F_{ U_{(\ell:n)} }^{-1} (x+\varepsilon) \, , \, 1 \bigr]} f_{ U_{(\ell:n)} }
&\geq \frac{1}{F_{ U_{(\ell:n)} }^{-1} (1-\delta') - F_{ U_{(\ell:n)} }^{-1} (x+\varepsilon)} 
	\int_{ F_{ U_{(\ell:n)} }^{-1} (x+\varepsilon) }^{F_{ U_{(\ell:n)} }^{-1} (1-\delta')}
		f_{ U_{(\ell:n)} } (t) \mathrm{d}t 
\\
&= \frac{(1-\delta')-(x+\varepsilon)}{
	F_{ U_{(\ell:n)} }^{-1} (1-\delta') - F_{ U_{(\ell:n)} }^{-1} (x+\varepsilon)
	}
\, . 
\label{eq.pr.thm:len_bound.min-dens.2b}
\end{align}

\paragraph*{Step 3: use of concentration results.}
Given Eq.~\eqref{eq.pr.thm:len_bound.min-dens.2a} 
and~\eqref{eq.pr.thm:len_bound.min-dens.2b}, 
it remains to show upper bounds on 
$F_{ U_{(\ell:n)} }^{-1} (x) - F_{ U_{(\ell:n)} }^{-1} (\delta)$
and 
$F_{ U_{(\ell:n)} }^{-1} (1-\delta') - F_{ U_{(\ell:n)} }^{-1} (x+\varepsilon)$. 
On the one hand, 
by Eq.~\eqref{eq.pro.FUr.dev-gauche.inv}--\eqref{eq.pro.FUr.dev-droite.inv} 
in Proposition~\ref{pro.FUr} with $r=\ell$ and $N=n$, 
\begin{equation}
\notag
\forall \delta \in ( 0 , 1-x ) \, , 
\quad 
F_{U_{(\ell:n)}}^{-1} (x) - F_{ U_{(\ell:n)} }^{-1} (\delta) 
\leq F_{U_{(\ell:n)}}^{-1} (1-\delta) - F_{ U_{(\ell:n)} }^{-1} (\delta) 
\leq \sqrt{\dfrac{2 \log(1/\delta)}{n+2}}
\, . 
\end{equation}
Therefore, using Eq.~\eqref{eq.pr.thm:len_bound.min-dens.2a}, 
we obtain that 
\begin{align}
\sup_{[ 0 \, , \, F_{ U_{(\ell:n)} }^{-1} (x) ]} f_{ U_{(\ell:n)} }
&\geq \sup_{ \delta \in ( 0 , \min\{ x \, , \, 1-x\} ) }
	\left\{ 
		\frac{x-\delta}{
		F_{ U_{(\ell:n)} }^{-1} (x) - F_{ U_{(\ell:n)} }^{-1} (\delta)
		}
	\right\}
\geq g(x) \sqrt{\frac{n+2}{2}}
\label{eq.pr.thm:len_bound.min-dens.3a}
\\
\notag 
\text{with} \qquad 
g(x) &\egaldef \sup_{ \delta \in ( 0 , \min\{ x \, , \, 1-x\} ) }
	\left\{ 
	\frac{x-\delta}{\sqrt{\log(1/\delta)}}
	\right\}
\, . 
\end{align}

On the other hand, similarly, 
for every $\delta' \in (0 , x + \varepsilon)$, 
\begin{equation}
\notag
F_{U_{(\ell:n)}}^{-1} (1-\delta') - F_{ U_{(\ell:n)} }^{-1} (x+\varepsilon) 
\leq F_{U_{(\ell:n)}}^{-1} (1-\delta') - F_{ U_{(\ell:n)} }^{-1} (\delta') 
\leq \sqrt{\dfrac{2 \log(1/\delta')}{n+2}}
\, . 
\end{equation}
Therefore, using Eq.~\eqref{eq.pr.thm:len_bound.min-dens.2b}, 
we obtain that 
\begin{align}
\notag 
\sup_{[ F_{ U_{(\ell:n)} }^{-1} (x+\varepsilon)  \, , \, 1 ]} f_{ U_{(\ell:n)} }
&\geq 
\sup_{\delta' \in ( 0 , \min\{ x + \varepsilon  \, , \, 1-(x+\varepsilon) \} )} \left\{
	\frac{(1-\delta')-(x+\varepsilon)}{
		F_{ U_{(\ell:n)} }^{-1} (1-\delta') - F_{ U_{(\ell:n)} }^{-1} (x+\varepsilon)
		}
	\right\}
\\
&\geq \sup_{\delta' \in ( 0 , \min\{ x + \varepsilon  \, , \, 1-(x+\varepsilon) \} )} \left\{
	\frac{1-(x+\varepsilon)-\delta')}{\sqrt{\log(1/\delta')}}
	\right\}
	\times \sqrt{\frac{n+2}{2}}
\\&= g(1-x-\varepsilon) \sqrt{\frac{n+2}{2}}
\, . 
\label{eq.pr.thm:len_bound.min-dens.3b}
\end{align}

\paragraph*{Step 4: conclusion.}
Combining Eq.~\eqref{eq.pr.thm:len_bound.IAF}, 
\eqref{eq.pr.thm:len_bound.min-dens.1}, 
\eqref{eq.pr.thm:len_bound.min-dens.3a} 
and~\eqref{eq.pr.thm:len_bound.min-dens.3b}, 
we obtain that 
\begin{align}
\label{eq.pr.thm:len_bound.maj-abstraite}
F_{ U_{(\ell:n)} }^{-1} (x+\varepsilon) 
- F_{ U_{(\ell:n)} }^{-1} (x) 
&\leq \frac{\sqrt{2}}{\sqrt{n+2}} 
	\, \frac{\varepsilon}{\min\{ g(x) \, , \, g(1-x-\varepsilon) \}}
\, . 
\end{align}

Now, let us rewrite the definition of $g$ as 
\begin{align*}
\forall t \in (0,1) \, , 
\quad 
g(t) 
\egaldef \sup_{ \delta \in ( 0 , \min\{ t \, , \, 1-t\} ) } 
	G(t,\delta) 
\quad 
\text{where} \quad 
\forall \delta > 0 \, , \, 
\qquad 
G(t,\delta) 
\egaldef \frac{t-\delta}{\sqrt{\log(1/\delta)}}
\, . 
\end{align*}
Note that $t \mapsto G(t,\delta)$ is increasing 
whatever $\delta>0$, 
so that $g$ is nondecreasing on $(0,1/2]$. 
In addition, for every $t \in (0,1/2]$, 
using that $t \leq 1-t$, 
we have 
$G(t,\delta) \leq G(1-t,\delta)$ for every $\delta>0$,  
hence $g(t) \leq g(1-t)$. 
This proves that 
\[
\forall t \in (0,1) \, , 
\qquad 
g\bigl( \min\{ t , 1-t \} \bigr) 
= \min\bigl\{ g(t) , g(1-t) \bigr\}
\, , 
\]
hence 
\begin{align*}
\min \bigl\{ 
	g(x) \, , \, g(1-x-\varepsilon) 
	\bigr\}
&\geq 
\min \bigl\{ 
	g(x) \, , \, g(1-x) \, , \, g(x+\varepsilon) \, , \, g(1-x-\varepsilon) 
	\bigr\}
\\
&= \min \Bigl\{ 
g\bigl( \min\{ x , 1-x \} \bigr) 
\, , \, 
g\bigl( \min\{ x+\varepsilon , 1-x-\varepsilon \} \bigr) 
\Bigr\}
\\
&= g\bigl( \min\{ x , 1 - x , x + \varepsilon , 1-x-\varepsilon \} \bigr) 
\qquad \text{since $g$ is nondecreasing on $(0,1/2]$} 
\\
&= g\bigl( \min\{ x , 1-x-\varepsilon \} \bigr) 
\, , 
\end{align*}
By Eq.~\eqref{eq.pr.thm:len_bound.maj-abstraite}, 
we obtain that 
Eq.~\eqref{eq.thm:len_bound2} holds true. 

Let us finally prove the inequalities stated about the function $g$. 
First, for every $t \in (0,1/2]$, we have 
$t/2 \leq \min\{t,1-t\}$, hence 
\[
g(t) \geq G(t,t/2) = \frac{t}{2 \sqrt{\log(2/t)}}
\, . 
\]
Second, since $g$ is nondecreasing over $(0,1/2]$, 
for every $t \in [1/3,1/2]$, 
\[
g(t) \geq g \left( \frac{1}{3} \right) 
\geq G\left( \frac{1}{3} \, , \, 0.047 \right) 
\geq 
0.163
\geq \frac{\sqrt{2}}{9}
\, . 
\]
\qed 

\subsection{Proof of Lemma~\ref{lemma:centra_unif_kl}}
\label{lemma:centra_unif_kl_proof}
We proceed similarly to the proof of Lemma~\ref{lemma:centra_unif}. 
First, let us define $\Xu_{i,j} = F_Z^{-1}( U_{i,j} )$ 
for every $i \in \intset{n}$ 
and $j \in \intset{m}$, 
so that $(\Xu_{i,j})_{1 \leq i \leq n , 1 \leq j \leq m}$ 
has the same distribution as $(Z_{i,j})_{1 \leq i \leq n , 1 \leq j \leq m}$. 
Therefore, the corresponding order statistics of order statistics have the same distributions: 
\begin{equation}
\label{eq.pr.lemma:centra_unif_kl.Z(ij)-egalloi-Xu(ij)}
Z_{(\ell,k)} \egalloi \Xu_{(\ell,k)} 
\, . 
\end{equation}
Second, using twice Eq.~\eqref{eq.pr.lemma:centra_unif_proof.2.Qh} 
in the proof of Lemma~\ref{lemma:centra_unif}, 
since $F_Z^{-1}$ is nondecreasing, we have 
\begin{align*}
\Xu_{(\ell,k)} 
&= \Qh_{(k)} \biggl( \Bigl( \Qh_{(\ell)} \bigl( F_Z^{-1} ( U_{1,j} ) , \ldots, F_Z^{-1} (U_{n,j} ) \bigr) \Bigr)_{j \in \intset{m}} \biggr)  
\\
&= \Qh_{(k)} \Bigl( F_Z^{-1} \bigl( \Qh_{(\ell)} ( U_{1,j} , \ldots, U_{n,j} ) \bigr)_{j \in \intset{m}} \Bigr)  
\\
&= F_Z^{-1} \biggl( \Qh_{(k)} \Bigl(  \bigl( \Qh_{(\ell)} ( U_{1,j} , \ldots, U_{n,j} ) \bigr)_{j \in \intset{m}} \Bigr) \biggr) 
= F_Z^{-1} ( U_{(\ell,k)} ) 
\, , 
\end{align*}
hence Eq.~\eqref{eq.pr.lemma:centra_unif_kl.Z(ij)-egalloi-Xu(ij)} 
implies that 
\[
F_Z ( Z_{(\ell,k)} ) 
\egalloi F_Z ( \Xu_{(\ell,k)} ) 
= F_Z \circ F_Z^{-1} ( U_{(\ell,k)} )
\, . 
\]
Using Eq.~\eqref{eq.prop-gal-inverse}, this proves 
Eq.~\eqref{eq.lemma:centra_unif_kl.domin-stoch} 
in the general case, 
and Eq.~\eqref{eq.lemma:centra_unif_kl.egal-loi}
when $F_Z$ is continuous. 
\qed

\begin{remark}
\label{rk.lemma:centra_unif_kl.bonus}
In the case of a general cdf $F_Z$, another consequence of Lemma~\ref{lemma:centra_unif_kl} 
is that, 
for every $a,b \in \R$ such that $a \leq b$, 
\begin{align}
\label{eq.lemma:centra_unif_kl.dev-droite-ineq}
\IP\left(U_{(\ell,k)} \leq b \right) 
&\leq \IP\left(F_Z(Z_{(\ell,k)}) \leq F_Z \circ F_Z^{-1}(b) \right)
\\
\label{eq.lemma:centra_unif_kl.dev-ineq}
\text{and} \qquad 
\IP\left(a \leq U_{(\ell,k)} \leq b \right) 
&\leq \IP\left(a \leq F_Z(Z_{(\ell,k)}) \leq F_Z \circ F_Z^{-1}(b)\right) 
\, . 
\end{align}
\end{remark}
\begin{proof}
Since $F_Z \circ F_Z^{-1}$ is nondecreasing, 
\begin{equation}
\label{eq.pr.rk.lemma:centra_unif_kl.bonus}
U_{(\ell,k)} \leq b  
\qquad \text{implies that} \qquad  
F_Z \circ F_Z^{-1} (U_{(\ell,k)}) \leq F_Z \circ F_Z^{-1}(b) 
\, , 
\end{equation}
hence Eq.~\eqref{eq.lemma:centra_unif_kl.dev-droite-ineq} 
thanks to Eq.~\eqref{eq.lemma:centra_unif_kl.domin-stoch}. 
Since $F_Z \circ F_Z^{-1} (U_{(\ell,k)}) \geq U_{(\ell,k)}$ 
by Eq.~\eqref{eq.prop-gal-inverse}, 
Eq.~\eqref{eq.pr.rk.lemma:centra_unif_kl.bonus} also shows that 
\begin{equation}
\notag 
a \leq U_{(\ell,k)} \leq b  
\qquad \text{implies that} \qquad  
a \leq F_Z \circ F_Z^{-1} (U_{(\ell,k)}) \leq F_Z \circ F_Z^{-1}(b) 
\, , 
\end{equation}
hence Eq.~\eqref{eq.lemma:centra_unif_kl.dev-ineq} 
thanks to Eq.~\eqref{eq.lemma:centra_unif_kl.domin-stoch}. 
\end{proof}

\subsection{Proof of Lemma~\ref{lemma:beta_beta}} \label{lemma:beta_beta_proof}
By Eq.~\eqref{eq.lemma:centra_unif.egal-loi} 
in Lemma~\ref{lemma:centra_unif} 
with $F_Z = F_{U_{(\ell:n)}}$ (which is continuous), 
$r=k$ and $N=m$, we have 
\[ 
F_{U_{(\ell:n)}} ( Z_{(k:m)} ) 
\egalloi U_{(k:m)} 
\]
where $Z_{(k:m)}$ is the $k$-th order statistics 
of an i.i.d. sample of $m$ random variables 
distributed as $U_{(\ell:n)}$. 
In other words, with the notation of Lemma~\ref{lemma:centra_unif_kl}, 
we have $Z_{(k:m)} \egalloi U_{(\ell:n, k:m)}$, 
hence 
\[ 
F_{U_{(\ell:n)}} ( U_{(\ell:n, k:m)} ) 
\egalloi U_{(k:m)} 
\egalloi F_{ U_{(k:m)} }^{-1} (U)
\]
since $F_{ U_{(k:m)} }$ is continuous 
(where $U$ follows the uniform distribution over $[0,1]$). 
So, 
\[
U_{(\ell:n, k:m)}
\egalloi F_{U_{(\ell:n)}}^{-1} \circ F_{ U_{(k:m)} }^{-1} (U)
= ( F_{ U_{(k:m)} } \circ F_{U_{(\ell:n)}} )^{-1} (U)
\]
has cdf $F_{ U_{(k:m)} } \circ F_{U_{(\ell:n)}} $. 
\qed

\section{Previous and additional results on related works}

\subsection{Proofs of Section~\ref{subsec:splitCP} (theoretical analysis of split CP)}
\label{app.pr.split-CP}

In this section, we prove the results stated 
about split conformal prediction (split CP) 
in Section~\ref{subsec:splitCP}. 
Although these results are mostly classical, 
we think useful to prove them here for completeness, 
since these proofs are particularly simple with the notation 
introduced in this paper and the results of 
Appendix~\ref{app.order-stat.order}. 

\subsubsection{Marginal guarantees}
\label{app.pr.split-CP.marginal}
By Lemma~\ref{lemma:centra_unif}, 
the expected coverage of $\Chat_{r}$ is 
\begin{align*}
\P\bigl( Y \in \Chat_{r}(X) \bigr) 
= \E\bigl[ 1 - \alpha_r (\calD) \bigr] 
= \E\bigl[ F_S (S_{(r)}) \bigr] 
\geq \E[ U_{(r:n_c)} ]
\end{align*}
with equality when $F_S$ is continuous 
---or, equivalently, when the scores are 
almost surely distinct. 
Using Eq.~\eqref{eq.unif-order-stat.E-Var} 
in Appendix~\ref{app.more-details-order-stat.exact}, 
we get that 
\begin{align*}
\P\bigl( Y \in \Chat_{r}(X) \bigr) 
\geq \frac{r}{n_c+1}
\end{align*}
with equality when $F_S$ is continuous. 
This proves that $\Chat_{r}$ is marginally valid 
when $r = \lceil (1-\alpha) (n_c+1) \rceil$, 
and Eq.~\eqref{eq.splitCP.marg-cov-upp} 
when $F_S$ is continuous. 

\subsubsection{Training-conditional guarantees}
\label{app.pr.split-CP.cond}
By Lemma~\ref{lemma:centra_unif}, 
\begin{align}
\P\bigl( Y \in \Chat_{r}(X) \,\big\vert\, \calD \bigr) 
= 1 - \alpha_r (\calD) 
= F_S (S_{(r)}) 
\succeq U_{(r:n_c)} 
\label{eq.app.pr.split-CP.cond.domin-stoch}
\end{align}
which follows the $\Beta(r,n_c-r+1)$ distribution, 
hence 
\begin{align*}
\P\bigl( 1-\alpha_{r}(\calD) \geq F_{U_{(r:n_c)}}^{-1} (\beta) \bigr) 
\geq \P\bigl( U_{(r:n_c)} \geq F_{U_{(r:n_c)}}^{-1} (\beta) \bigr) 
= 1 - F_{U_{(r:n_c)}} \circ F_{U_{(r:n_c)}}^{-1} (\beta)
= 1-\beta 
\, ,
\end{align*}
which proves Eq.~\eqref{eq:miscov_vovk}. 
Note that Eq.~\eqref{eq:miscov_vovk} is similar to 
\cite[Proposition~2b]{vovk2012conditional}, 
even if its statement and proof are quite different. 
Lemma~\ref{lemma:centra_unif} also proves that 
Eq.~\eqref{eq:miscov_vovk} is an equality 
when $F_S$ is continuous, 
that is, 
$1 - \alpha_r (\calD) \egalloi \Beta(r,n_c-r+1)$. 

Similarly to Algorithm~\ref{algo.FCP-QQ-cond.1} 
and its generalization given by 
Eq.~\eqref{eq.def.algo.FCP-QQ-cond.1.var} 
in Appendix~\ref{thm:cov_up_cond_proof}, 
in order to have the best possible 
training-conditionally valid prediction set 
$\Chat_{r}$ with split CP, 
Eq.~\eqref{eq:miscov_vovk} suggests the choice 
\begin{align}
\notag
r^*_c 
&\egaldef \argmin_{r \in \intset{n_c}} 
	\bigl\{ F_{U_{(r:n_c)}}^{-1} (1-\beta') \, : \, 
	 F_{U_{(r:n_c)}}^{-1} (\beta) \geq 1-\alpha \bigr\}
\\
\label{eq.splitCP.algo-cond}
&= \min \bigl\{ r \in \intset{n_c} \, : \, 
	 F_{U_{(r:n_c)}}^{-1} (\beta) \geq 1-\alpha \bigr\}
\, . 
\end{align}
Equality~\eqref{eq.splitCP.algo-cond} above 
comes from the fact that 
$F_{U_{(r:n_c)}}^{-1} (1-\beta')$ is an 
increasing function of $r \in \intset{n_c}$ 
for every $\beta' \in (0,1)$, 
hence minimizing $F_{U_{(r:n_c)}}^{-1} (1-\beta')$ 
is equivalent to minimizing~$r$, 
whatever $\beta' \in (0,1)$. 

By Theorem~\ref{thm.asympt-unif-order-stat}  
in Appendix~\ref{app.more-details-order-stat.asymp}, 
for any $\gamma \in \R$, 
when $r = r_{n_c} = (1-\alpha) n_c + \gamma \sqrt{n_c} + \mathrm{o}(n_c)$ 
as $n_c \to +\infty$, we have 
\begin{align}
\P \bigl( 1 - \alpha_{r_{n_c}} (\calD) \geq 1 - \alpha \bigr) 
&\geq \P ( U_{(r:n_c)} \geq 1 - \alpha ) 
\qquad \text{by Eq.~\eqref{eq.app.pr.split-CP.cond.domin-stoch}}
\label{eq.app.pr.split-CP.cond.1}
\\
\notag 
&= \P \Bigl( \sqrt{n_c} \bigl[ U_{(r:n_c)} - (1 - \alpha) \bigr] \geq 0 \Bigr)
\\
\label{eq.app.pr.split-CP.cond.2}
&\xrightarrow[n_c \to +\infty]{} 
\Phi \left( \frac{\gamma}{\sqrt{\alpha(1-\alpha)}} \right)
\end{align}
where $\Phi$ denotes the standard normal cdf. 
Taking $\gamma = \Phi^{-1}(1-\beta) \sqrt{\alpha(1-\alpha)}$, 
this proves that any  
\begin{equation}
\label{eq.app.pr.split-CP.cond.algo-asympt}
r = r_{n_c} = (1-\alpha) n_c + \Phi^{-1}(1-\beta) \sqrt{\alpha(1-\alpha)} \sqrt{n_c} 
	+ \mathrm{o}(\sqrt{n_c})
\in \intset{n_c}
\end{equation}
is (asymptotically) training-conditionally valid, 
that is, satisfies Eq.~\eqref{eq:cond_cov}. 
Furthermore, the choice \eqref{eq.app.pr.split-CP.cond.algo-asympt} 
is not improvable in general since 
Eq.~\eqref{eq.app.pr.split-CP.cond.1} is an equality 
when $F_S$ is continuous. 
Then, for any $r$ satisfying 
Eq.~\eqref{eq.app.pr.split-CP.cond.algo-asympt} 
and any $\beta' \in (0,1)$, 
when $F_S$ is continuous, 
by Theorem~\ref{thm.asympt-unif-order-stat}, 
the $(1-\beta')$-quantile of the distribution 
$\Beta( r_{n_c} , n_c - r_{n_c} + 1)$ 
of the coverage $1 - \alpha_{r_{n_c}}(\calD)$ 
of $\Chat_{r_{n_c}}$ satisfies 
\begin{equation}
\label{eq.app.pr.split-CP.cond.algo-asympt.coverage}
F_{U_{(r_{n_c}:n_c)}}^{-1} (1-\beta') 
= 1 - \alpha 
+ \frac{\sqrt{\alpha (1-\alpha)} \bigl[\Phi^{-1} (1-\beta') + \Phi^{-1} (1-\beta) \bigr]}{\sqrt{n_c}} 
+ \mathrm{o} \left( \frac{1}{\sqrt{n_c}} \right)
\, . 
\end{equation}

In addition, combining Lemma~\ref{lemma:centra_unif} 
and Eq.~\eqref{eq.coro:bernstein_Sl.dev-gauche} 
shows that 
\begin{equation}
\label{eq.app.pr.split-CP.cond.algo-non-asympt}
r = \left\lceil (n_c+1) \left( 1 - \alpha + \sqrt{\frac{\log(1/\beta)}{2 (n_c+2)}} \right) \right\rceil
\end{equation}
yields a training-conditionally valid prediction set $\Chat_{r}$ 
whatever $n_c \geq 1$, 
provided $r \in \intset{n_c}$. 
For instance, when $n_c \geq \max\{ 2 (\alpha^{-1} - 1) , (8/3) \alpha^{-2} \log(1/\beta) \}$, 
we have $(n_c+1)(1-\alpha) \leq (1-\alpha/2) n_c$ 
and $n_c \sqrt{n_c+2} / (n_c+1) \geq \sqrt{n_c} \sqrt{3/4} \geq \sqrt{2 \log(1/\beta)} / \alpha$, 
hence $(n_c+1) \sqrt{\frac{\log(1/\beta)}{2 (n_c+2)}} \leq n_c \alpha/2$, 
so that $r \in \intset{n_c}$. 
Note that the choice \eqref{eq.app.pr.split-CP.cond.algo-non-asympt} 
is similar to what can be deduced from 
\cite[Proposition~2a]{vovk2012conditional}, 
up to minor differences, and completely different 
notation and proof; 
see also \cite[Theorem~1]{bian2023training} for a 
rewriting of this result with notation closer to ours.
If $r$ satisfies Eq.~\eqref{eq.app.pr.split-CP.cond.algo-non-asympt}, 
when $F_S$ is continuous, 
Eq. \eqref{eq.lemma:centra_unif.egal-loi} 
and~\eqref{eq.pro.FUr.dev-droite.inv}  
show that for every $\beta' \in (0,1)$, 
with probability at least $1-\beta'$, 
the coverage $1 - \alpha_r (\calD)$ of $\Chat_r$ 
is smaller than 
\begin{equation}
\label{eq.app.pr.split-CP.cond.algo-non-asympt.coverage}
F_{U_{(r:n_c)}}^{-1} (1-\beta') 
\leq 1 - \alpha + \frac{ \sqrt{\log(1/\beta)} + \sqrt{\log(1/\beta')} }{\sqrt{2(n_c+2)}} + \frac{1}{n_c+1}
\, . 
\end{equation}
Notice that this upper bound is of order $1 - \alpha + \mathcal{O} (1/\sqrt{n_c})$ 
as Eq.~\eqref{eq.app.pr.split-CP.cond.algo-asympt.coverage}, 
with only a slightly worse dependence on $\alpha$ 
and $\beta,\beta'$.

\subsection{Additional results: average of quantiles}
\label{app.avg_qq}

In the one-shot FL setting, 
an alternative approach to quantile-of-quantiles is to use 
an average of quantiles, 
as proposed by \cite{lu2021distribution}. 
The idea is that each agent returns its 
$\lceil (n_j+1)(1-\alpha) \rceil$-th smallest score, 
and then the central server averages these scores. 
Using the notation of Section~\ref{sec:QQ}, 
the associated prediction set is 
\begin{equation}
\label{eq.def.Ch-Avg}
\Chat^{\mathrm{Avg}}_{\ell_1 , \ldots , \ell_m} (x) 
:= \bigg\{y \in \calY : s(x, y) \leq 
\frac{1}{m} \sum_{j=1}^m \Qh_{( \ell_j )}(\mathcal{S}_j)
\bigg\}
\end{equation}
with $\ell_j = \lceil (n_j+1) (1-\alpha) \rceil$ 
for every $j \in \intset{m}$. 
To the best of our knowledge, 
no coverage guarantee has ever been proved for 
$\Chat^{\mathrm{Avg}}_{\ell_1 , \ldots , \ell_m}$. 
We study this prediction set in this section, 
showing that its marginal coverage 
strongly depends on the scores distribution, 
so that one cannot always find some $\ell_j \in \intset{n}$ 
such that the marginal coverage is larger than a given $1-\alpha$. 

\subsubsection{First remark}
\label{app.avg_qq.1st-rk}
A first simple remark is that when each agent 
$j \in \intset{m}$ has $n_j=1$ calibration point, 
then one must take $\ell_j=1$ and the 
marginal coverage of $\Chat^{\mathrm{Avg}}$ 
cannot be close to~$1$ in general since 
it is equal to 
\begin{align*}
\E \left[ F_S \left( \frac{1}{m} \sum_{j=1}^m \Qh_{( 1 )}(\mathcal{S}_j) \right) \right]
= \E \left[ F_S \left( \frac{1}{m} \sum_{j=1}^m S_{1,j} \right) \right]
\approx F_S \bigl( \E [S] \bigr)
\end{align*}
(at least when $m$ is large) by the law of large numbers. 
Therefore, $\Chat^{\mathrm{Avg}}$ cannot be 
marginally valid for all distributions 
without assuming that $n_j \geq n_0(\alpha)$  
for some large enough $n_0(\alpha)$, 
at least for a large part of the agents $j \in \intset{m}$. 
The next result shows that such an assumption is not sufficient. 

\subsubsection{Negative result for Bernoulli scores}
\label{app.avg_qq.Bernoulli}
We now prove that when $n_j=n$ for every $j \in \intset{m}$, 
for every $\alpha \in (0,1)$, 
some distribution of the scores exists such that 
the marginal coverage of $\Chat^{\mathrm{Avg}}_{\ell_1 , \ldots , \ell_m}$ 
is smaller than $1-\alpha$ whatever $\ell_j \in \intset{m}$, 
for arbitrary large $m$ and~$n$. 
Therefore, contrary to our quantile-of-quantiles prediction sets 
of Section~\ref{sec:FCP-QQ}, 
the $\ell_j$ cannot be chosen in such a way that 
$\Chat^{\mathrm{Avg}}_{\ell_1 , \ldots , \ell_m}$ 
is a marginally-valid distribution-free prediction set, 
even when the data set is large enough. 

\begin{proposition}
\label{pro.Avg-Bernoulli}
In the setting of Section~\ref{sec:QQ}, with Assumption~\ref{ass:same_n}, 
for any $\ell_1, \ldots , \ell_m \in \intset{m}$, 
the prediction set $\Chat^{\mathrm{Avg}}_{\ell_1 , \ldots , \ell_m}$ 
defined by Eq.~\eqref{eq.def.Ch-Avg} satisfies 
\begin{equation}
\label{eq.pro.Avg-Bernoulli.max-cov} 
\P \bigl( Y \in \Chat^{\mathrm{Avg}}_{\ell_1 , \ldots , \ell_m} (X) \bigr) 
\leq \P \bigl( Y \in \Chat^{\mathrm{Avg}}_{n , \ldots , n} (X) \bigr) 
\, .
\end{equation}
Assuming in addition that the scores $S_{i,j},S$ are i.i.d. 
Bernoulli$(p)$ random variables for some $p \in [0,1]$, 
for every $\ell \in \intset{n}$, we have 
\begin{equation}
\label{eq.pro.Avg-Bernoulli.formule-cov} 
\P \bigl( Y \in \Chat^{\mathrm{Avg}}_{\ell , \ldots , \ell} (X) \bigr)
= (1-p) + p \cdot  F_{U_{(n-\ell+1:n)}} (p)^m 
\end{equation}
where $F_{U_{(n-\ell+1:n)}}$ is the cdf of 
the $\Beta(n-\ell+1,\ell)$ distribution. 
Therefore, for every $c \in (0,1)$, 
if $m = m_n$ and $p = p_n = 1 - [ \log(1/c) / m_n ]^{1/n}$ 
are such that $\log(m_n)/n \to +\infty$ as $n \to +\infty$, 
we have 
\begin{equation}
\label{eq.pro.Avg-Bernoulli.contre-ex} 
\P \bigl( Y \in \Chat^{\mathrm{Avg}}_{n , \ldots , n} (X) \bigr)
= (1-p_n) + p_n \bigl[ 1 - (1-p_n)^n \bigr]^{m_n} 
\xrightarrow[n \to +\infty]{} c
\, . 
\end{equation}
\end{proposition}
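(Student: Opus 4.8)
The plan is to treat the three displayed claims in order, relying only on elementary order-statistics facts already recalled in the paper.

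First, for Eq.~\eqref{eq.pro.Avg-Bernoulli.max-cov}, I would write the coverage as $\P\bigl(Y \in \Chat^{\mathrm{Avg}}_{\ell_1,\ldots,\ell_m}(X)\bigr) = \E\bigl[F_S\bigl(\tfrac1m\sum_{j=1}^m \Qh_{(\ell_j)}(\mathcal{S}_j)\bigr)\bigr]$, using that the test score $S = s(X,Y)$ is independent of $\calD^{cal}$ with cdf $F_S$. Since $\Qh_{(\ell_j)}(\mathcal{S}_j)$ is the $\ell_j$-th order statistic of $\mathcal{S}_j$, it is nondecreasing in $\ell_j \in \intset{n}$, and $\ell_j \le n$; hence the argument of $F_S$ is pointwise at most $\tfrac1m\sum_{j}\Qh_{(n)}(\mathcal{S}_j)$, and monotonicity of $F_S$ and of the expectation yields the inequality. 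This step is immediate.

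Second, for Eq.~\eqref{eq.pro.Avg-Bernoulli.formule-cov}, assume the scores are i.i.d.\ $\mathrm{Bernoulli}(p)$. For each agent $j$, set $B_j := \sum_{i=1}^n S_{i,j} \sim \mathrm{Binomial}(n,p)$; counting the zeros among the $n$ values of agent $j$ shows $\Qh_{(\ell)}(\mathcal{S}_j) = \1\{B_j \ge n-\ell+1\}$. By Eq.~\eqref{eq.FUl.pdf-cdf}, $\P(B_j \ge n-\ell+1) = F_{U_{(n-\ell+1:n)}}(p)$, so, the agents being independent, $K := \sum_{j=1}^m \Qh_{(\ell)}(\mathcal{S}_j)$ follows the $\mathrm{Binomial}(m,q)$ distribution with $q := F_{U_{(n-\ell+1:n)}}(p)$. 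Thus $\tfrac1m\sum_j\Qh_{(\ell)}(\mathcal{S}_j) = K/m \in [0,1]$, and conditioning on the independent test score $S \sim \mathrm{Bernoulli}(p)$: on $\{S=0\}$ the event $\{S \le K/m\}$ is certain, while on $\{S=1\}$ it reduces to $\{K=m\}$, of probability $q^m$. Summing gives $\P\bigl(Y\in\Chat^{\mathrm{Avg}}_{\ell,\ldots,\ell}(X)\bigr) = (1-p) + p\,q^m$, the claimed formula (recalling $U_{(n-\ell+1:n)}\sim\Beta(n-\ell+1,\ell)$).

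Third, for Eq.~\eqref{eq.pro.Avg-Bernoulli.contre-ex}, take $\ell=n$, so $n-\ell+1=1$ and $F_{U_{(1:n)}}(p) = 1-(1-p)^n$ by Eq.~\eqref{eq.min-uniformes.cdf}; the formula above then gives $\P\bigl(Y\in\Chat^{\mathrm{Avg}}_{n,\ldots,n}(X)\bigr) = (1-p) + p\bigl[1-(1-p)^n\bigr]^m$. Substituting $p = p_n = 1 - [\log(1/c)/m_n]^{1/n}$ gives $(1-p_n)^n = \log(1/c)/m_n$, hence $\bigl[1-(1-p_n)^n\bigr]^{m_n} = \bigl(1-\log(1/c)/m_n\bigr)^{m_n}$; when $\log(m_n)/n \to +\infty$ we get $m_n \to +\infty$ and $1-p_n = \exp\bigl(\tfrac{\log\log(1/c) - \log m_n}{n}\bigr) \to 0$, so $(1-p_n) \to 0$ and $\bigl(1-\log(1/c)/m_n\bigr)^{m_n} \to e^{-\log(1/c)} = c$, whence the limit equals $c$. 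Combined with Eq.~\eqref{eq.pro.Avg-Bernoulli.max-cov}, this shows that for any $c\in(0,1)$ the marginal coverage of $\Chat^{\mathrm{Avg}}_{\ell_1,\ldots,\ell_m}$ is at most $c$ in the limit, uniformly over all $\ell_j$, in particular below $1-\alpha$ once $c<1-\alpha$. There is no genuine obstacle here; the only delicate points to check carefully are the direction of the Beta/Binomial duality, the precise order-statistic bookkeeping ($\Qh_{(\ell)}(\mathcal{S}_j)=1 \iff B_j \ge n-\ell+1$, not $\ge \ell$), and that $p_n \in (0,1)$ for $n$ large enough.
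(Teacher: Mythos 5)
Your proof is correct and follows essentially the same route as the paper's: monotonicity of the order statistic $\Qh_{(\ell_j)}(\mathcal{S}_j)$ in $\ell_j$ for the first claim, the observation that $\Qh_{(\ell)}(\mathcal{S}_j)$ is a Bernoulli$(F_{U_{(n-\ell+1:n)}}(p))$ variable (the paper writes this directly, you add the intermediate variables $B_j$ and $K$ for bookkeeping, but the conditioning on $S\in\{0,1\}$ is identical), and the same substitution $p_n = 1-[\log(1/c)/m_n]^{1/n}$ with the standard $(1-x/m)^m \to e^{-x}$ limit. The explicit identity $\Qh_{(\ell)}(\mathcal{S}_j)=\1\{B_j\ge n-\ell+1\}$ and the $\mathrm{Binomial}(m,q)$ description of $K$ are a slight expansion of what the paper leaves implicit, but carry no new content.
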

Eq.~\eqref{eq.pro.Avg-Bernoulli.max-cov}--\eqref{eq.pro.Avg-Bernoulli.contre-ex} 
show that for any $\alpha \in (0,1)$, 
when the scores follow a Bernoulli$(p_n)$ distribution with 
\[ 
p_n 
= 1 - \left[ \frac{\log[ 2/(1-\alpha)] }{m_n}\right]^{1/n}
\qquad \text{and} \qquad 
\log(m_n) \gg n \to +\infty
\]
---for instance, when $m_n \geq n^n$---, 
the marginal coverage of $\Chat^{\mathrm{Avg}}_{\ell_1 , \ldots , \ell_m}$ 
is strictly smaller than $1-\alpha$ 
when $n$ is large enough, 
whatever the $\ell_j$, $j \in \intset{m}$. 
So, $\Chat^{\mathrm{Avg}}$ cannot be used 
as a distribution-free marginally-valid prediction set. 

Note that the counterexample provided by 
Proposition~\ref{pro.Avg-Bernoulli} 
relies on the non-robustness of the empirical mean 
of the $\Qh_{(\ell_j)} (\mathcal{S}_j)$ 
defining $\Chat^{\mathrm{Avg}}_{\ell_1 , \ldots , \ell_m}$: 
when the scores follow a Bernoulli distribution, 
it suffices to have one $\Qh_{(\ell_j)} (\mathcal{S}_j) \neq 1$ 
to make the expected coverage strictly smaller than $1-p$,  
and such an ``outlier'' event occurs with 
a large probability if $m$ is large enough, 
even when $p$ is close to~$1$. 
This phenomenon enlightens the interest of our 
quantile-of-quantiles approach, 
which does not have such a drawback thanks to 
the robustness of the $k$-th empirical quantile. 

\begin{proof}[Proof of Proposition~\ref{pro.Avg-Bernoulli}] 
First, for every $j \in \intset{m}$, 
$\Qh_{( \ell_j )}(\mathcal{S}_j)$ is a nondecreasing function 
of $\ell_j$, hence $\Chat^{\mathrm{Avg}}_{\ell_1 , \ldots , \ell_m} (X)$ 
is a nondecreasing function of $\ell_j$, 
which implies Eq.~\eqref{eq.pro.Avg-Bernoulli.max-cov}. 

Second, when the scores are i.i.d. and 
follow a Bernoulli$(p)$ distribution, 
for any $i \in \intset{m}$, 
$S_{(\ell:n), i}$ follows a Bernoulli distribution with parameter 
\[ 
\P( S_{(\ell:n), i} = 1) 
= \sum_{k = n-\ell+1}^n p^k (1-p)^{n-k} \binom{n}{k} 
= F_{U_{(n-\ell+1 : n)}} (p)
\]
by Eq.~\eqref{eq.FUl.pdf-cdf}. 
Therefore, we have 
\begin{align*}
\P \bigl( Y \in \Chat^{\mathrm{Avg}}_{\ell , \ldots , \ell} (X) \bigr)
&= \P \left( S \leq \dfrac{1}{m} \sum_{j=1}^m S_{(\ell:n), j}\right)
\\
&= \P(S=0) + \P( S=1 \text{ and } S_{(\ell:n), j} = 1 
\text{ for every } j \in \intset{m} )
\\
&= (1 - p) + p \cdot \P(S_{(\ell:n), 1} = 1)^m 
= (1 - p) + p \cdot F_{U_{(n-\ell+1 : n)}} (p)^m 
\, ,
\end{align*}
which proves Eq.~\eqref{eq.pro.Avg-Bernoulli.formule-cov}. 

Third, when $m=m_n$ is such that 
$\log(m_n)/n \to +\infty$ as $n \to +\infty$, 
we have $p_n = 1 - [ \log(1/c) / m_n ]^{1/n} \in (0,1)$ 
for $n$ large enough, 
and $p_n \to 1$ and $m_n \to +\infty$ when $n \to +\infty$.  
So, Eq.~\eqref{eq.pro.Avg-Bernoulli.formule-cov} 
together with Eq.~\eqref{eq.min-uniformes.cdf} shows that 
\begin{align*}
\P \bigl( Y \in \Chat^{\mathrm{Avg}}_{n , \ldots , n} (X) \bigr)
&= (1-p_n) + p_n \bigl[ 1 - (1-p_n)^n \bigr]^{m_n} 
\\
&= \mathrm{o}(1) + \bigl[ 1 + \mathrm{o}(1) \bigr] 
\left( 1 - \frac{\log(1/c)}{m_n} \right)^{m_n}
\\
&\xrightarrow[n \to +\infty]{} \exp \bigl( - \log(1/c) \bigr) 
= c
\, . 
\end{align*}
\end{proof}

\subsubsection{Uniform scores}
\label{app.avg_qq.uniform}
We now study the case of scores following 
a uniform distribution. 
\begin{proposition}
\label{pro.Avg-uniform}
Suppose that Assumption~\ref{ass:same_n} holds true. 
When the scores $S_{i,j} , S$ are independent 
with a uniform distribution over $[a,b]$ for some $a<b$, 
for any $\ell \in \intset{n}$, we have 
\begin{equation}
\label{eq.pro.Avg-uniform}
\P \bigl( Y \in \Chat^{\mathrm{Avg}}_{\ell , \ldots , \ell} (X) \bigr) 
= \frac{\ell}{n+1} 
\, . 
\end{equation}
\end{proposition}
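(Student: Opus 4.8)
The key observation is that the event defining coverage is affine-invariant and that a uniform variable's cdf is linear. Write $T := \frac{1}{m}\sum_{j=1}^{m} \Qh_{(\ell)}(\mathcal{S}_j) = \frac{1}{m}\sum_{j=1}^{m} S_{(\ell:n),j}$, where $S_{(\ell:n),j}$ denotes the $\ell$-th order statistic of the $j$-th agent's sample. By definition of $\Chat^{\mathrm{Avg}}_{\ell,\ldots,\ell}$ in Eq.~\eqref{eq.def.Ch-Avg}, and since $S := s(X,Y)$ is independent of $\calD^{cal}$ hence of $T$,
\[
\P\bigl( Y \in \Chat^{\mathrm{Avg}}_{\ell,\ldots,\ell}(X) \bigr)
= \P\bigl( S \leq T \bigr)
= \E\bigl[ \P( S \leq T \mid T ) \bigr]
\, .
\]
The plan is then to evaluate the inner conditional probability using the explicit cdf of the uniform distribution on $[a,b]$, and to compute $\E[T]$ using the known expectation of uniform order statistics.

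\textbf{Key steps.} First I would note that each $S_{(\ell:n),j}$ lies in $[a,b]$ almost surely, being one of the scores, hence $T \in [a,b]$ almost surely as a convex combination of such values. Second, since $S$ is uniform on $[a,b]$ and independent of $T$, for every $t \in [a,b]$ we have $\P(S \leq t) = (t-a)/(b-a)$, so $\P(S \leq T \mid T) = (T-a)/(b-a)$ almost surely, and therefore
\[
\P\bigl( Y \in \Chat^{\mathrm{Avg}}_{\ell,\ldots,\ell}(X) \bigr)
= \frac{\E[T]-a}{b-a}
= \frac{ \frac{1}{m}\sum_{j=1}^{m} \E\bigl[ S_{(\ell:n),j} \bigr] - a }{b-a}
= \frac{ \E\bigl[ S_{(\ell:n),1} \bigr] - a }{b-a}
\, ,
\]
the last equality by the identical distribution of the samples. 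Third, I would invoke the affine change of variables $U_{i,j} := (S_{i,j}-a)/(b-a)$, which are i.i.d.\ uniform on $[0,1]$ with $U_{(\ell:n),1} = (S_{(\ell:n),1}-a)/(b-a)$, so that $\E[S_{(\ell:n),1}] = a + (b-a)\,\E[U_{(\ell:n),1}] = a + (b-a)\,\frac{\ell}{n+1}$ by Eq.~\eqref{eq.unif-order-stat.E-Var}. Substituting gives $\P(Y \in \Chat^{\mathrm{Avg}}_{\ell,\ldots,\ell}(X)) = \ell/(n+1)$, which is Eq.~\eqref{eq.pro.Avg-uniform}.

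\textbf{Main obstacle.} There is essentially no hard step here: the statement is a consequence of the fact that for a uniform variable on $[a,b]$ independent of $T \in [a,b]$, $\P(S \leq T) = (\E[T]-a)/(b-a)$, combined with linearity of expectation and the classical formula $\E[U_{(\ell:n)}] = \ell/(n+1)$. The only point requiring a small amount of care is checking that $T \in [a,b]$ almost surely (so that the linear formula for the cdf applies without truncation), which is immediate since $T$ is an average of scores. This result is stated in the paper precisely to contrast with Propositions~\ref{pro.Avg-Bernoulli} (a negative result for Bernoulli scores): the averaging scheme happens to be exactly valid for uniform scores, but this is a fragile coincidence tied to the linearity of the uniform cdf and fails for other distributions.
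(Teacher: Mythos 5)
Your proof is correct and takes essentially the same route as the paper: both exploit the affine form of the uniform cdf $F_S(t)=(t-a)/(b-a)$ to reduce the coverage to the expectation of a single order statistic, then invoke $\E[U_{(\ell:n)}]=\ell/(n+1)$. The only cosmetic difference is that the paper pushes $F_S$ inside the average (using linearity of $F_S$) before taking expectation, whereas you first reduce to $\E[T]$ via conditioning and then apply the affine change of variables—these are the same computation reordered.
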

Proposition~\ref{pro.Avg-uniform} shows that 
when the scores are uniform (and using this knowledge), 
$\Chat^{\mathrm{Avg}}_{\ell , \ldots , \ell}$ is 
marginally valid as soon as $\ell \geq (n+1)(1-\alpha)$, 
which suggests to take $\ell = \lceil (n+1)(1-\alpha) \rceil$. 
Nevertheless, the difference between the expected coverage 
and the nominal coverage is then equal to 
\[
\frac{\lceil (n+1)(1-\alpha) \rceil}{n+1} - (1-\alpha) 
\in \left[ 0 , \frac{1}{n+1} \right) 
\, , 
\]
so its worst-case value $1/(n+1)$ is independent from $m$, 
which is not a desirable property. 
\begin{proof}
By definition of $\Chat^{\mathrm{Avg}}$, 
we have 
\begin{align*}
\P \bigl( Y \in \Chat^{\mathrm{Avg}}_{\ell , \ldots , \ell} (X) \bigr) 
= \P \left(S \leq \frac{1}{m} \sum^m_{j=1} S_{(\ell:n), j} \right) 
&= \E \left[ F_S \left( 
	\frac{1}{m} \sum^m_{j=1} S_{(\ell:n), j} 
	\right) \right]
\\
&= \E \left[ \frac{1}{m} \sum^m_{j=1} F_S ( S_{(\ell:n), j} ) \right]
\end{align*}
since $F_S(t) = (t-a)/(b-a)$ for every $t \in [a,b]$. 
Therefore, we get that 
\begin{align*}
\P \bigl( Y \in \Chat^{\mathrm{Avg}}_{\ell , \ldots , \ell} (X) \bigr) 
= \E \bigl[ F_S ( S_{(\ell:n), 1} ) \bigr]
= \E [ U_{(\ell:n)} ]
= \frac{\ell}{n+1} 
\end{align*}
by Eq.~\eqref{eq.lemma:centra_unif.egal-loi} 
in Appendix~\ref{app.order-stat.order} 
and Eq.~\eqref{eq.unif-order-stat.E-Var} 
in Appendix~\ref{app.more-details-order-stat.exact}. 
\end{proof}

\subsubsection{Exponential scores}
\label{app.avg_qq.exp}
We now study the case of scores following 
an exponential distribution. 
\begin{proposition}
\label{pro.Avg-exp}
Suppose that Assumption~\ref{ass:same_n} holds true. 
When the scores $S_{i,j} , S$ are independent 
with an exponential distribution with parameter $\lambda>0$, 
for any $\ell \in \intset{n}$, we have 
\begin{equation}
\label{eq.pro.Avg-exp}
\P \bigl( Y \in \Chat^{\mathrm{Avg}}_{\ell , \ldots , \ell} (X) \bigr) 
= 1 - \prod_{j=1}^{\ell} \left( 1 + \frac{1}{m(n-j+1)} \right)^{-m}
\, . 
\end{equation}
\end{proposition}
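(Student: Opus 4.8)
The plan is to follow the same first step as in the proof of Proposition~\ref{pro.Avg-uniform}: writing $T := \frac{1}{m} \sum_{j=1}^m S_{(\ell:n),j}$, the coverage equals $\P\bigl(S \leq T\bigr) = \E\bigl[F_S(T)\bigr]$. Since the scores are exponential with parameter $\lambda$, we have $F_S(t) = 1 - e^{-\lambda t}$ for $t \geq 0$, and $T \geq 0$ almost surely, so $\E[F_S(T)] = 1 - \E[e^{-\lambda T}]$. It thus remains to compute the Laplace transform $\E[e^{-\lambda T}]$.

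First I would invoke the Rényi representation of exponential order statistics (see \cite{david2004order}): for a sample of $n$ i.i.d.\ exponential variables with parameter $\lambda$, the order statistics satisfy $S_{(\ell:n)} \egalloi \sum_{j=1}^{\ell} D_j$, where the spacings $D_1, \ldots, D_\ell$ are independent with $D_j$ exponential of parameter $\lambda(n-j+1)$. Applying this to each of the $m$ independent samples $\mathcal{S}_1, \ldots, \mathcal{S}_m$, one obtains jointly independent variables $(D_{j,i})_{1 \leq j \leq \ell,\, 1 \leq i \leq m}$, with $D_{j,i}$ exponential of parameter $\lambda(n-j+1)$, such that $T \egalloi \sum_{j=1}^{\ell} \bigl( \frac{1}{m} \sum_{i=1}^m D_{j,i} \bigr)$. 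For each fixed $j$, the inner average $\frac{1}{m}\sum_{i=1}^m D_{j,i}$ is $\frac{1}{m}$ times a Gamma variable of shape $m$ and rate $\lambda(n-j+1)$, hence a Gamma variable of shape $m$ and rate $m\lambda(n-j+1)$; and these $\ell$ variables are independent across $j$.

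Then I would conclude using multiplicativity of Laplace transforms over independent sums together with the Gamma Laplace transform $\E[e^{-sG}] = (1 + s/b)^{-a}$ valid for $G$ of shape $a$ and rate $b$ and $s > -b$: with $s = \lambda$, $a = m$ and $b = m\lambda(n-j+1)$, each factor equals $\bigl(1 + \frac{1}{m(n-j+1)}\bigr)^{-m}$, so that $\E[e^{-\lambda T}] = \prod_{j=1}^{\ell} \bigl(1 + \frac{1}{m(n-j+1)}\bigr)^{-m}$, which yields Eq.~\eqref{eq.pro.Avg-exp}. There is no genuine obstacle here: the only mild care needed is in tracking the joint independence of the $(D_{j,i})$ across both indices (within-sample independence coming from the Rényi representation, between-sample independence from the independence of the agents' calibration sets) and in keeping the Gamma rate/scale parametrization consistent through the scaling by $1/m$.
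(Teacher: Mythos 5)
Your proof is correct and follows essentially the same route as the paper: both write the coverage as $1 - \E[e^{-\lambda T}]$, apply the R\'enyi spacings representation to each agent's order statistic, regroup the independent spacings across agents into Gamma variables indexed by $j$, and evaluate the Laplace transform factor by factor. The only cosmetic difference is that you parametrize the spacings directly as exponentials of rate $\lambda(n-j+1)$ while the paper writes them as scaled standard exponentials; the computations are the same.
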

Proposition~\ref{pro.Avg-exp} shows that 
when the scores are exponential (and using this knowledge), 
it is possible to choose $\ell$ such that the expected 
coverage of $\Chat^{\mathrm{Avg}}_{\ell , \ldots , \ell}$ 
is larger than $1-\alpha$, 
provided that $m$ and $n$ are large enough. 
Indeed, by Eq.~\eqref{eq.pro.Avg-exp}, 
\begin{align*}
&\qquad \max_{\ell \in \intset{n}} 
\P \bigl( Y \in \Chat^{\mathrm{Avg}}_{\ell , \ldots , \ell} (X) \bigr) 
\\
&= \P \bigl( Y \in \Chat^{\mathrm{Avg}}_{n , \ldots , n} (X) \bigr) 
\\
&= 1 - \prod_{j=1}^{n} \left( 1 + \frac{1}{m(n-j+1)} \right)^{-m}
\xrightarrow[m \to +\infty]{} 1 - \exp \left( - \sum_{j=1}^n \frac{1}{j} \right) 
\xrightarrow[n \to +\infty]{} 1
\, , 
\end{align*}
where the first limit is taken for any fixed $n \geq 1$. 
\begin{proof}[Proof of Proposition~\ref{pro.Avg-exp}] 
A classical result about the order statistics of 
a sample of exponential random variables \citep{renyi1953theory} 
shows that for any $i \in \intset{m}$, 
\[ 
S_{(\ell:n), i} 
\egalloi \frac{1}{\lambda} \sum_{j=1}^{\ell} \frac{E_{j, i}}{n-j+1} 
\]
where the $(E_{j,i})_{1 \leq j \leq n}$ 
are independent standard exponential variables. 
Therefore, we have 
\begin{align*}
\P \bigl( Y \in \Chat^{\mathrm{Avg}}_{\ell , \ldots , \ell} (X) \bigr) 
&= \E \left[ F_S \left( 
	\frac{1}{m} \sum^m_{j=1} S_{(\ell:n), j} 
	\right) \right]
\\
&= 1 - \E \left[ \exp\left( -\frac{\lambda}{m} 
	\sum^m_{i=1} S_{(\ell:n), i} 
	\right)\right] 
\\
&=  1 - \E \left[ \exp\left( -\frac{1}{m} 
	\sum^m_{i=1} \sum_{j=1}^{\ell} \frac{E_{j,i}}{n-j+1} 
	\right)\right] 
\\
&= 1 - \E \left[ \exp\left( -\sum_{j=1}^{\ell} 
	\sum^m_{i=1} \frac{E_{j,i}}{m(n-j+1)} 
	\right)\right] 
\\
&= 1 - \prod_{j=1}^{\ell} \E \left[ 
	\exp\left( -\sum^m_{i=1} \frac{E_{j,i}}{m(n-j+1)} \right)
	\right] 
= 1 - \prod_{j=1}^{\ell} \left( 1 + \frac{1}{m(n-j+1)} \right)^{-m}
\, . 
\end{align*}
For the last equality, since $1/(m(n-j+1))$ is positive, 
$E_{j,i} / [m(n-j+1)]$ follows an exponential distribution 
with parameter $m(n-j+1)$. 
Therefore, $\sum^m_{i=1} \frac{E_{j,i}}{m(n-j+1)}$ 
follows a Gamma distribution with parameter 
$( m, m(n-j+1) )$ 
and the value at $-1$ of its moment generating function 
is equal to $(1 + \frac{1}{m(n-j+1)} )^{-m}$ since $-1 < m(n-j+1)$. 
\end{proof}

\section{Additional experimental results}

\subsection{Generic comparison, equal $n_j$}
\label{app:rate.bound}

In this section, we provide additional results 
in the generic setting of Section~\ref{sec:synth_data}. 

\begin{figure}[H]
	\centering
	\includegraphics[width=0.47\linewidth]{./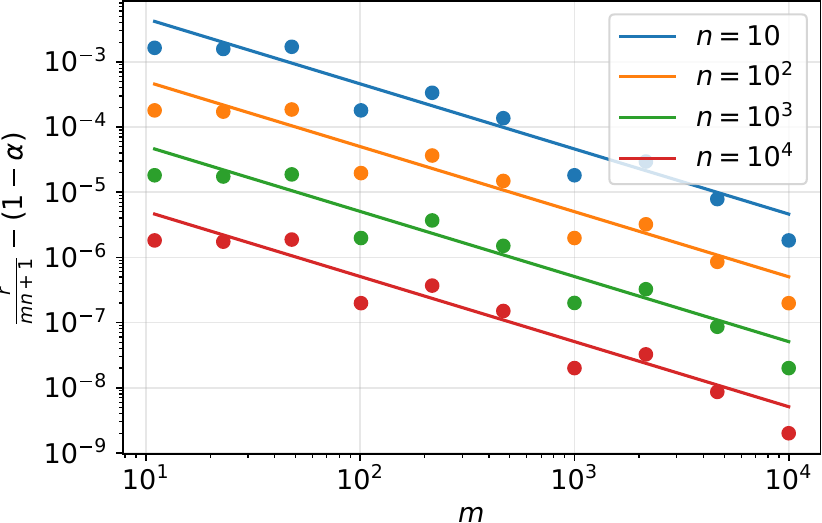}\hspace{2em}
	\includegraphics[width=0.47\linewidth]{./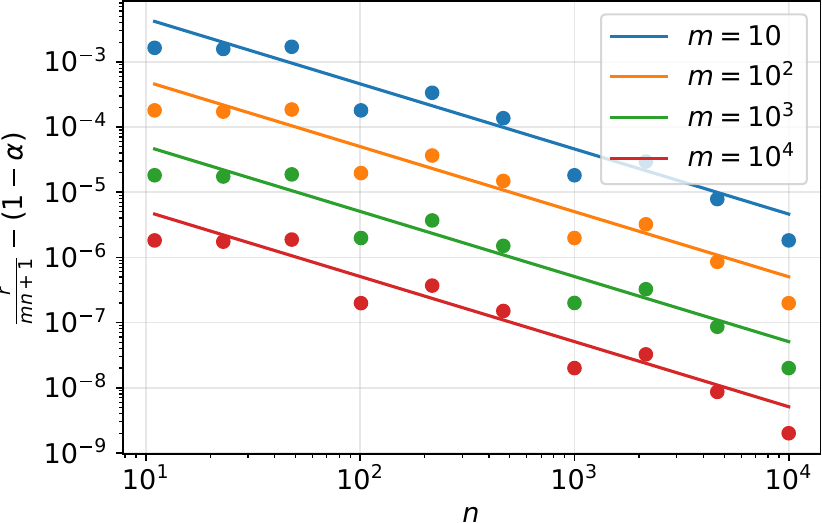}
\caption{%
\methodCentral\textup{:}  
log-log plot of $\Delta\E$ as a function of 
$m$ \textup{(}left\textup{)} or $n$ \textup{(}right\textup{)}. 
Lines show the approximation 
$\log{\Delta \E} \approx \log(c_1) - \gamma_1 \log(m) - \delta_1 \log(n)$ 
with $c_1,\gamma_1,\delta_1$ given by Table~\ref{tab:res_coeff_lin_all}.
\label{fig.DeltaE-Central}
}
\end{figure}

\begin{figure}[H]
	\centering
	\includegraphics[width=0.47\linewidth]{./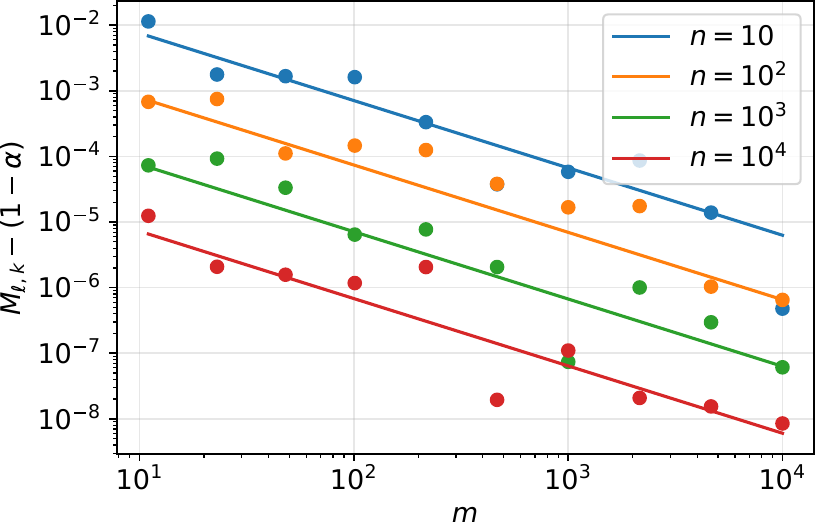}\hspace{2em}
	\includegraphics[width=0.47\linewidth]{./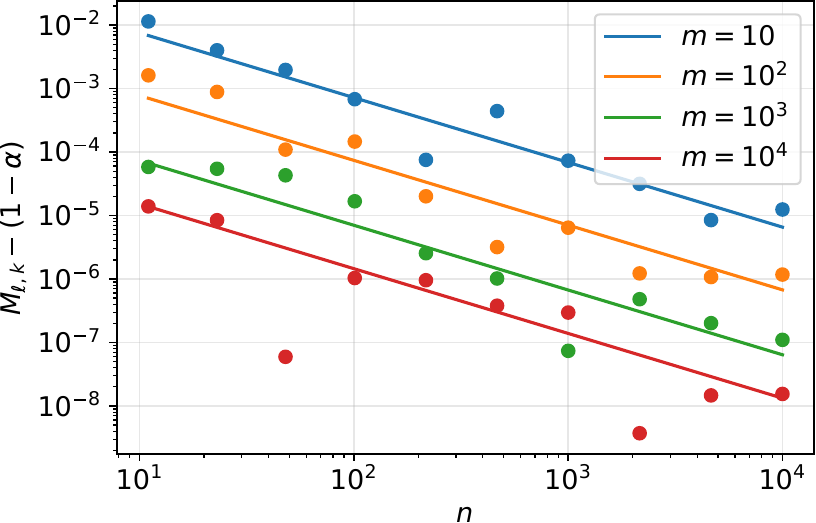}
\caption{%
\method\textup{:}  
log-log plot of $\Delta\E$ as a function of 
$m$ \textup{(}left\textup{)} or $n$ \textup{(}right\textup{)}. 
Lines show the approximation 
$\log{\Delta \E} \approx \log(c_1) - \gamma_1 \log(m) - \delta_1 \log(n)$ 
with $c_1,\gamma_1,\delta_1$ given by Table~\ref{tab:res_coeff_lin_all}. 
\label{fig.DeltaE-Algo1}
}
\end{figure}

\begin{figure}[H]
	\centering
	\includegraphics[width=0.47\linewidth]{./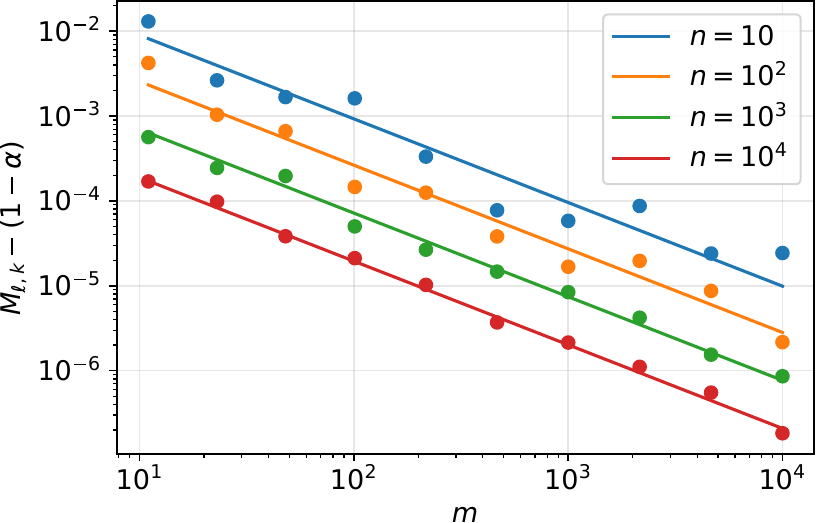}\hspace{2em}
	\includegraphics[width=0.47\linewidth]{./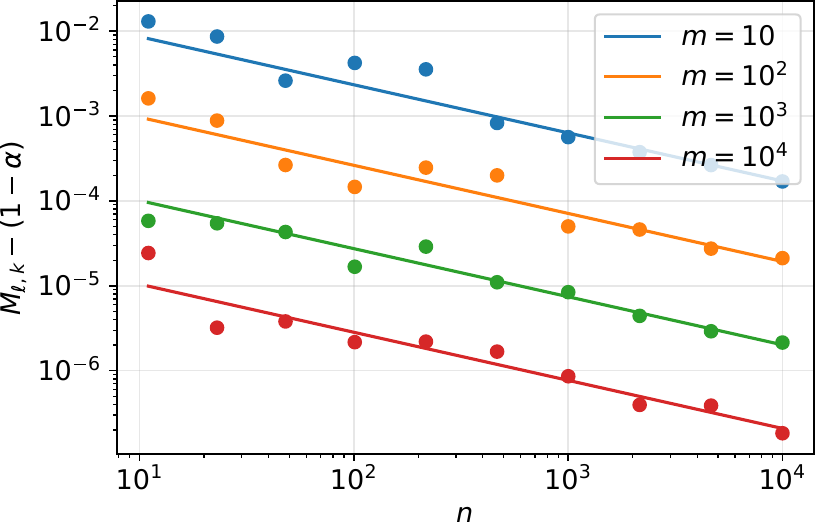}
\caption{%
\methodlow\textup{:}  
log-log plot of $\Delta\E$ as a function of 
$m$ \textup{(}left\textup{)} or $n$ \textup{(}right\textup{)}. 
Lines show the approximation 
$\log{\Delta \E} \approx \log(c_1) - \gamma_1 \log(m) - \delta_1 \log(n)$ 
with $c_1,\gamma_1,\delta_1$ given by Table~\ref{tab:res_coeff_lin_all}.
\label{fig.DeltaE-Algo2}
}
\end{figure}

\begin{figure}[H]
	\centering
	\includegraphics[width=0.47\linewidth]{./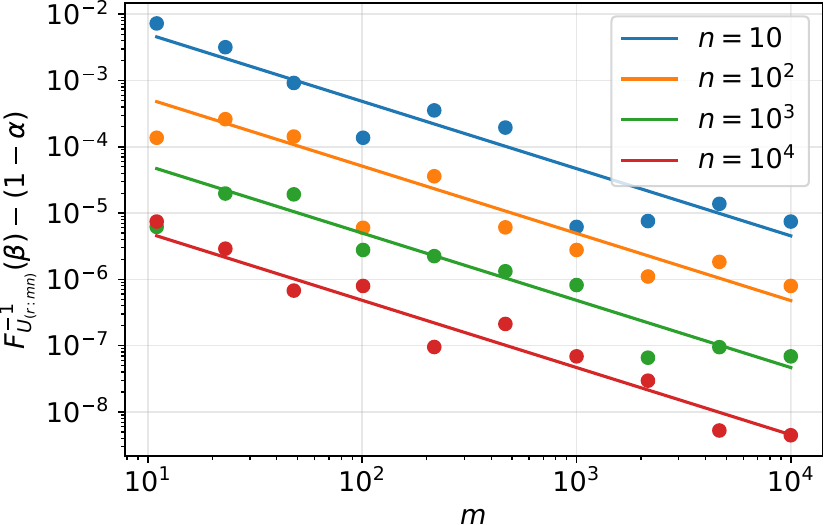}\hspace{2em}
	\includegraphics[width=0.47\linewidth]{./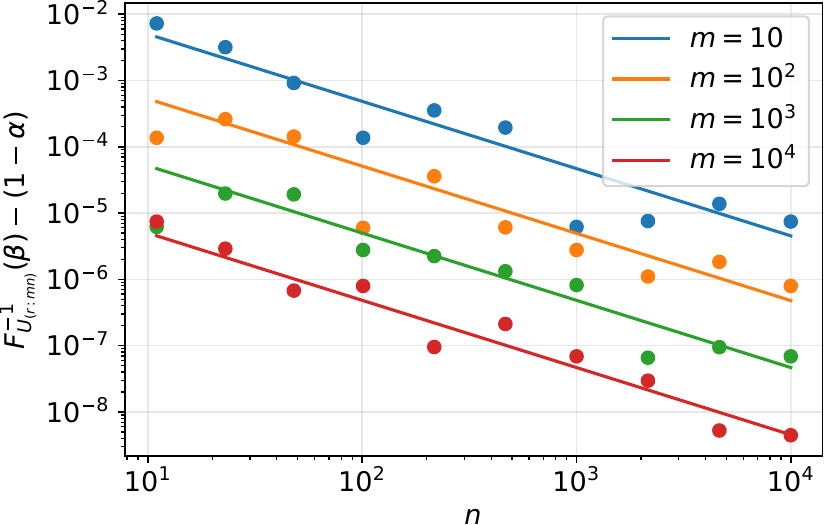}
\caption{%
\methodCentralC\textup{:}  
log-log plot of $\Delta q_{\beta}$ as a function of 
$m$ \textup{(}left\textup{)} or $n$ \textup{(}right\textup{)}. 
Lines show the approximation 
$\log{\Delta q_{\beta}} \approx \log(c_2) - \gamma_2 \log(m) - \delta_2 \log(n)$ 
with $c_2,\gamma_2,\delta_2$ given by Table~\ref{tab:res_coeff_lin_all}.
\label{fig.qbeta-Cond-Central}
}
\end{figure}

\begin{figure}[H]
	\centering
	\includegraphics[width=0.47\linewidth]{./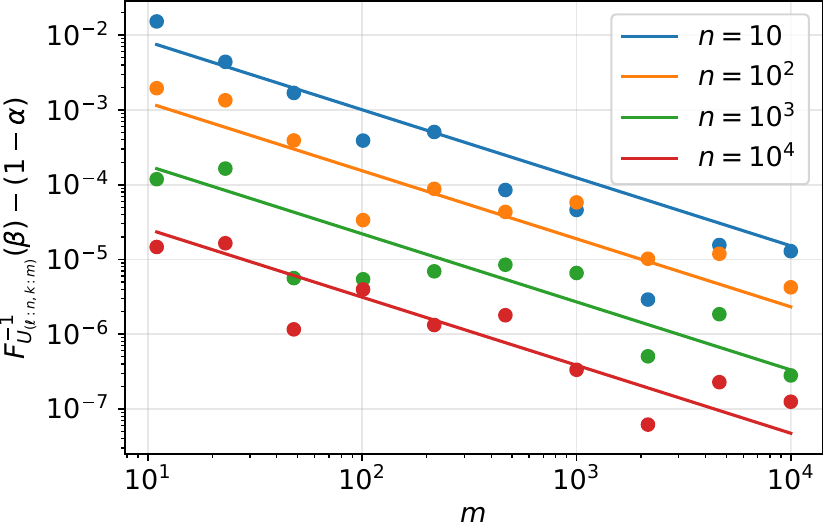}\hspace{2em}
	\includegraphics[width=0.47\linewidth]{./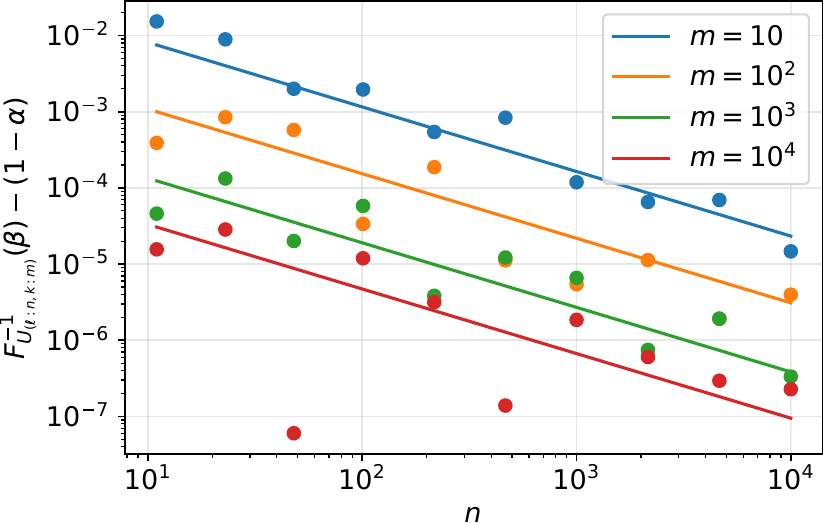}
\caption{%
\methodcondFL\textup{:}  
log-log plot of $\Delta q_{\beta}$ as a function of 
$m$ \textup{(}left\textup{)} or $n$ \textup{(}right\textup{)}. 
Lines show the approximation 
$\log{\Delta q_{\beta}} \approx \log(c_2) - \gamma_2 \log(m) - \delta_2 \log(n)$ 
with $c_2,\gamma_2,\delta_2$ given by Table~\ref{tab:res_coeff_lin_all}.
\label{fig.qbeta-Algo3}
}
\end{figure}

\begin{figure}[H]
	\centering
	\includegraphics[width=0.47\linewidth]{./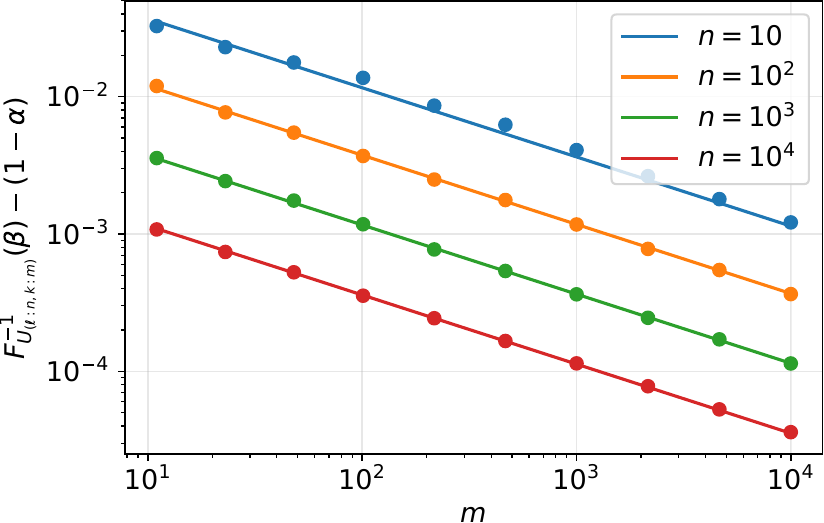}\hspace{2em}
	\includegraphics[width=0.47\linewidth]{./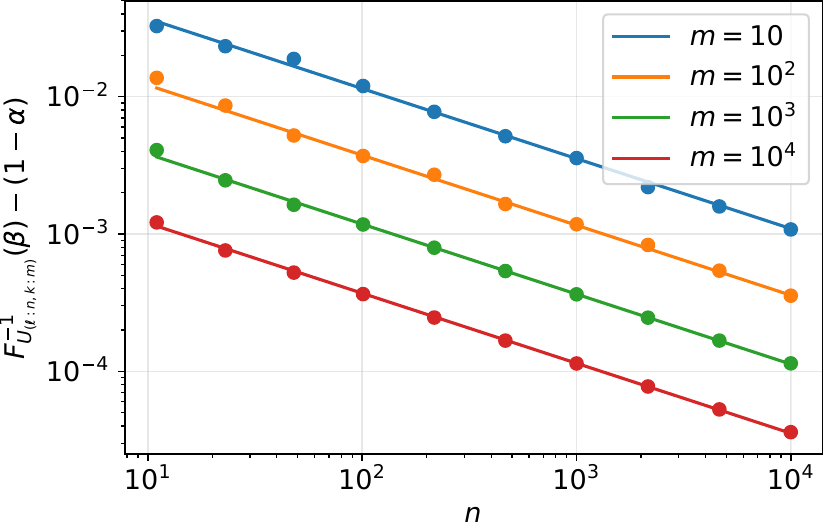}
\caption{%
\methodcondFLk\textup{:}  
log-log plot of $\Delta q_{\beta}$ as a function of 
$m$ \textup{(}left\textup{)} or $n$ \textup{(}right\textup{)}. 
Lines show the approximation 
$\log{\Delta q_{\beta}} \approx \log(c_2) - \gamma_2 \log(m) - \delta_2 \log(n)$ 
with $c_2,\gamma_2,\delta_2$ given by Table~\ref{tab:res_coeff_lin_all}.
\label{fig.qbeta-Algo4}
}
\end{figure}

\begin{figure}[H]
	\centering
	\includegraphics[width=0.47\linewidth]{./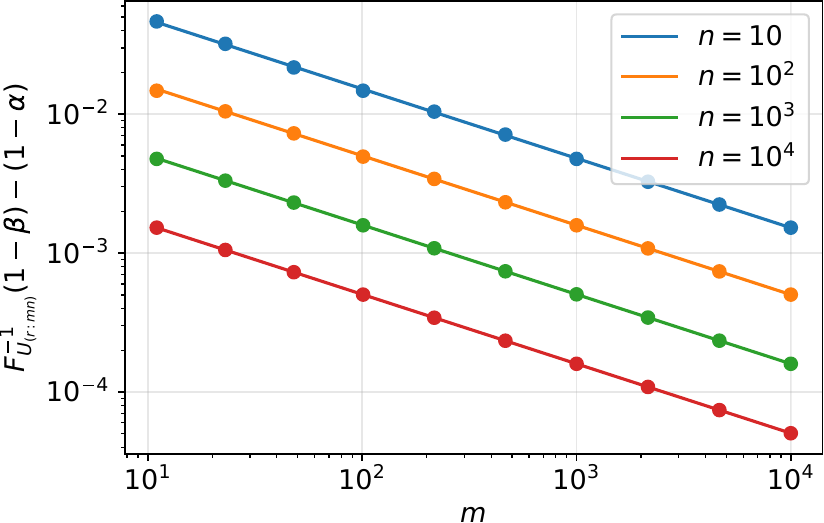}\hspace{2em}
	\includegraphics[width=0.47\linewidth]{./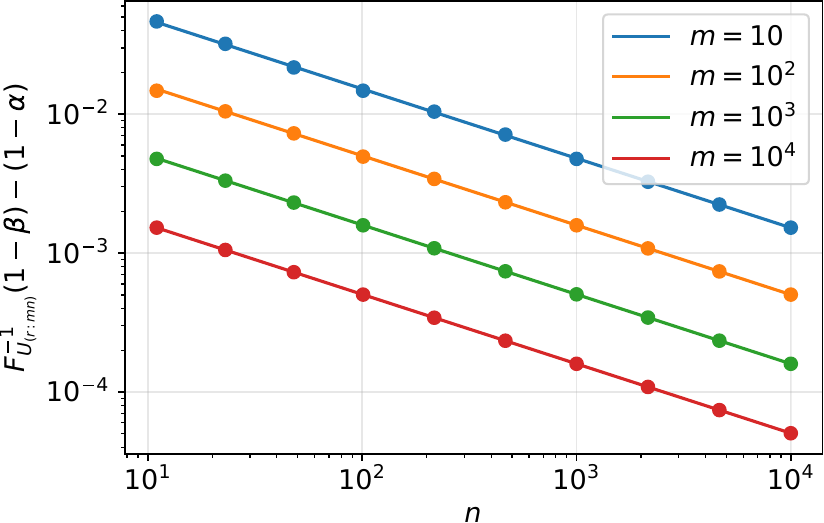}
\caption{%
\methodCentralC\textup{:}  
log-log plot of $\Delta q_{1-\beta}$ as a function of 
$m$ \textup{(}left\textup{)} or $n$ \textup{(}right\textup{)}. 
Lines show the approximation 
$\log{\Delta q_{1-\beta}} \approx \log(c_3) - \gamma_3 \log(m) - \delta_3 \log(n)$ 
with $c_3,\gamma_3,\delta_3$ given by Table~\ref{tab:res_coeff_lin_all}.
\label{fig.1_qbeta-Cond-Central}
}
\end{figure}

\begin{figure}[H]
	\centering
	\includegraphics[width=0.47\linewidth]{./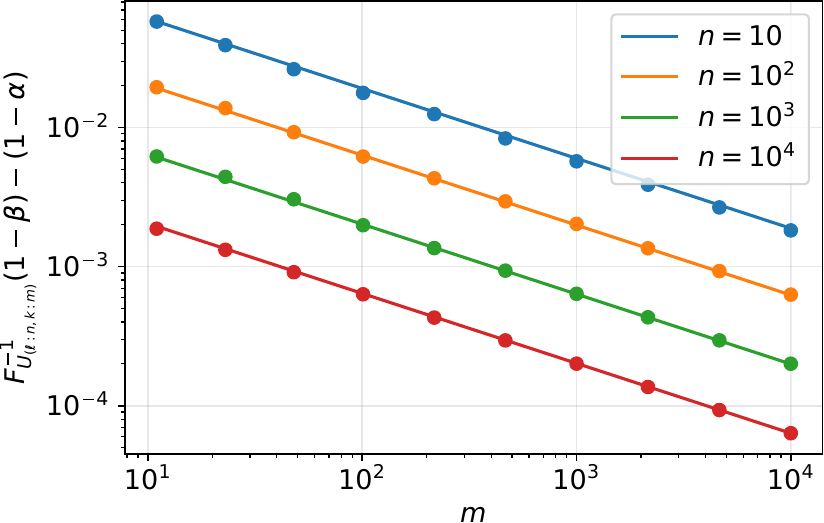}\hspace{2em}
	\includegraphics[width=0.47\linewidth]{./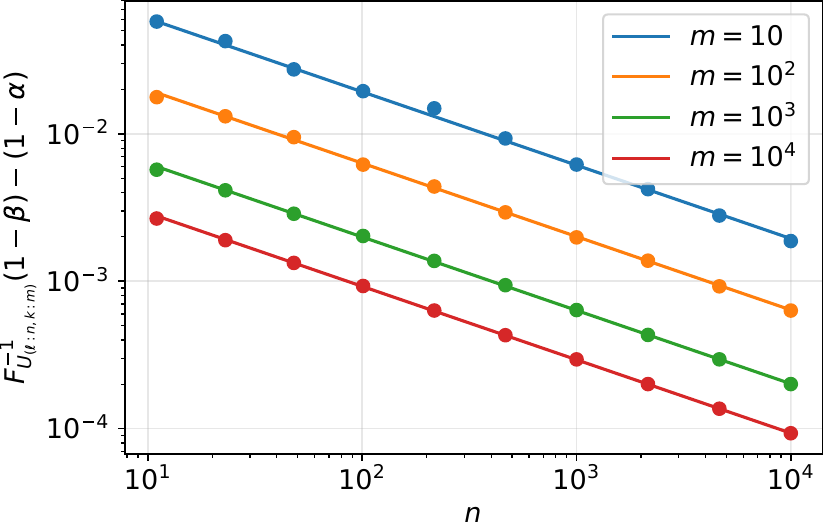}
\caption{%
\methodcondFL\textup{:}  
log-log plot of $\Delta q_{1-\beta}$ as a function of 
$m$ \textup{(}left\textup{)} or $n$ \textup{(}right\textup{)}. 
Lines show the approximation 
$\log{\Delta q_{1-\beta}} \approx \log(c_3) - \gamma_3 \log(m) - \delta_3 \log(n)$ 
with $c_3,\gamma_3,\delta_3$ given by Table~\ref{tab:res_coeff_lin_all}.
\label{fig.1_qbeta-Algo3}
}
\end{figure}

\begin{figure}[H]
	\centering
	\includegraphics[width=0.47\linewidth]{./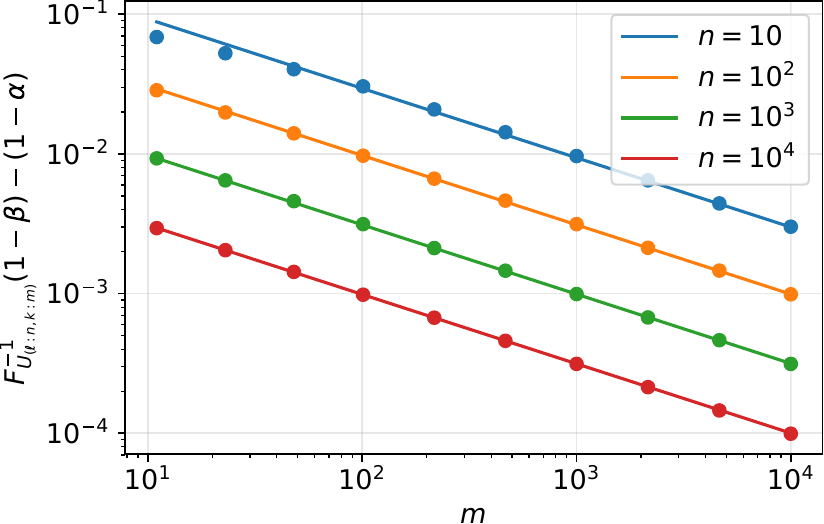}\hspace{2em}
	\includegraphics[width=0.47\linewidth]{./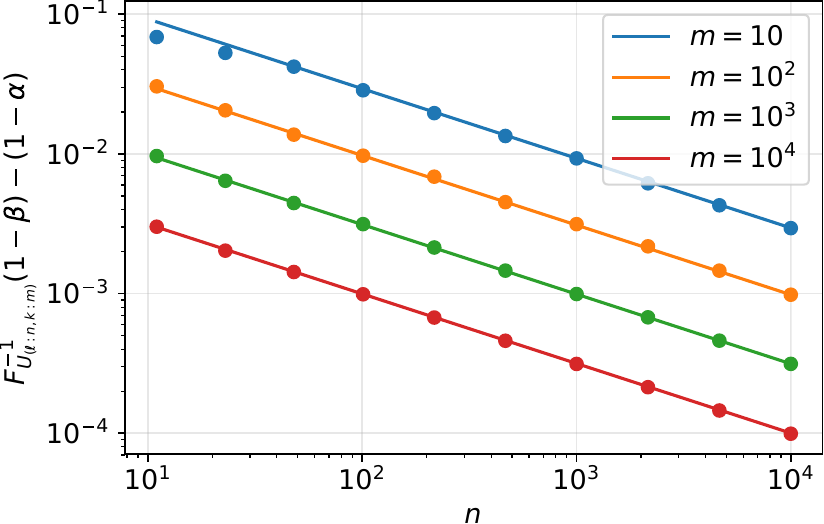}
\caption{%
\methodcondFLk\textup{:}  
log-log plot of $\Delta q_{1-\beta}$ as a function of 
$m$ \textup{(}left\textup{)} or $n$ \textup{(}right\textup{)}. 
Lines show the approximation 
$\log{\Delta q_{\beta}} \approx \log(c_3) - \gamma_3 \log(m) - \delta_3 \log(n)$ 
with $c_3,\gamma_3,\delta_3$ given by Table~\ref{tab:res_coeff_lin_all}.
\label{fig.1_qbeta-Algo4}
}
\end{figure}

\begin{figure}[H]
	\centering
	\includegraphics[width=0.47\linewidth]{./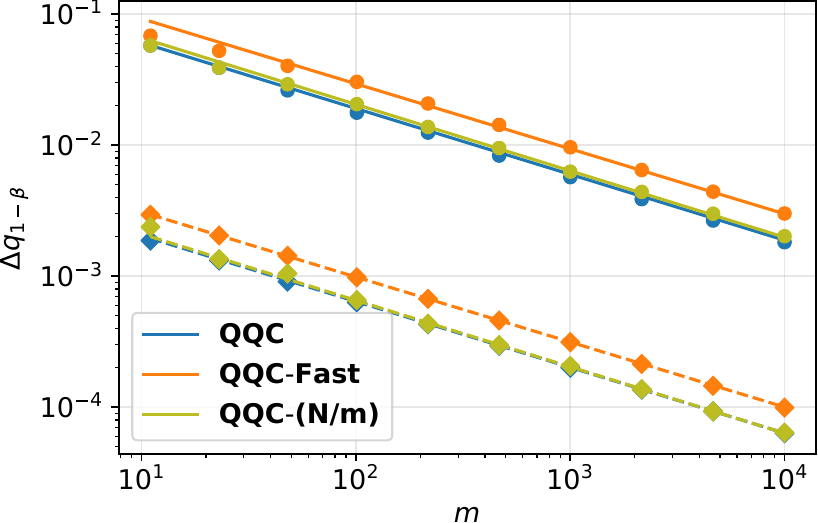}\hspace{2em}
	\includegraphics[width=0.47\linewidth]{./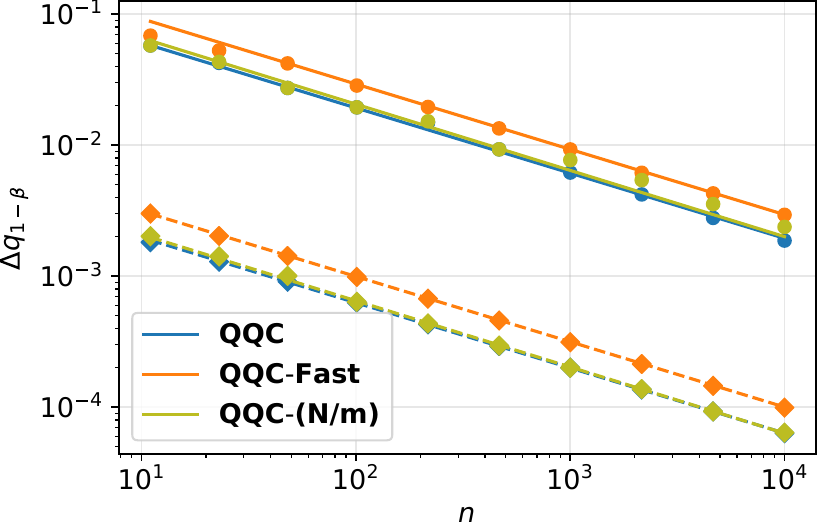}
\caption{%
Conditionally-valid algorithms\textup{:}  
log-log plot of $\Delta q_{1-\beta}$ as a function of 
$m$ \textup{(}left\textup{)} or $n$ \textup{(}right\textup{)}. 
Lines show the approximation 
$\log{\Delta q_{1-\beta}} \approx \log(c_3) - \gamma_3 \log(m) - \delta_3 \log(n)$ 
with $c_3,\gamma_3,\delta_3$ given by Table~\ref{tab:res_coeff_lin_all}. 
Plain lines and dots correspond to 
$n=10$ \textup{(}left\textup{)} or 
$m=10$ \textup{(}right\textup{)}. 
Dashed lines and diamonds correspond to 
$n=10^4$ \textup{(}left\textup{)} or 
$m=10^4$ \textup{(}right\textup{)}. 
\label{fig.q1_beta-Algos-cond}
}
\end{figure}

\subsection{Generic comparison, different  $n_j$}
This subsection provides experimental results 
that complement the ones of Section~\ref{sec:synth_data.diffnj}. 
\subsubsection{With random $n_j$}
\label{app:rate.bound_diffnj}
We start with experiments using $n_j$ (random) that are not equal. 
Figure~\ref{fig.val_nj.25-160} gives the values of 
the $n_j$ for $m \in \{4, 25\}$ in the experiments 
of Section~\ref{sec:synth_data.diffnj}. 
Figure~\ref{fig.diffnj.m-variable.q1-b} provides 
an equivalent of Figure~\ref{fig.diffnj.m-variable.E-qb} right 
(performance of conditionally-valid algorithms as a function of $m$) 
with $\Delta q_{1-\beta}$ instead of $\Delta q_{\beta}$. 

\begin{figure}[H]
	\centering
	\includegraphics[width=0.47\linewidth]{./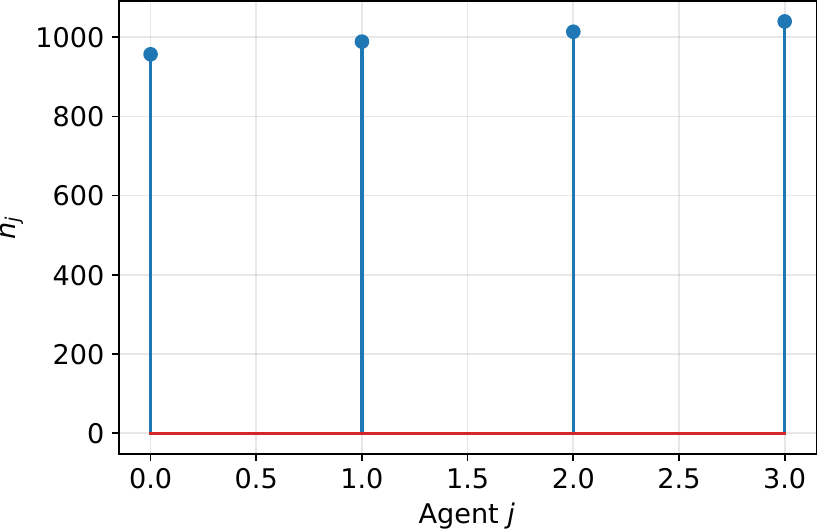}
	\includegraphics[width=0.47\linewidth]{./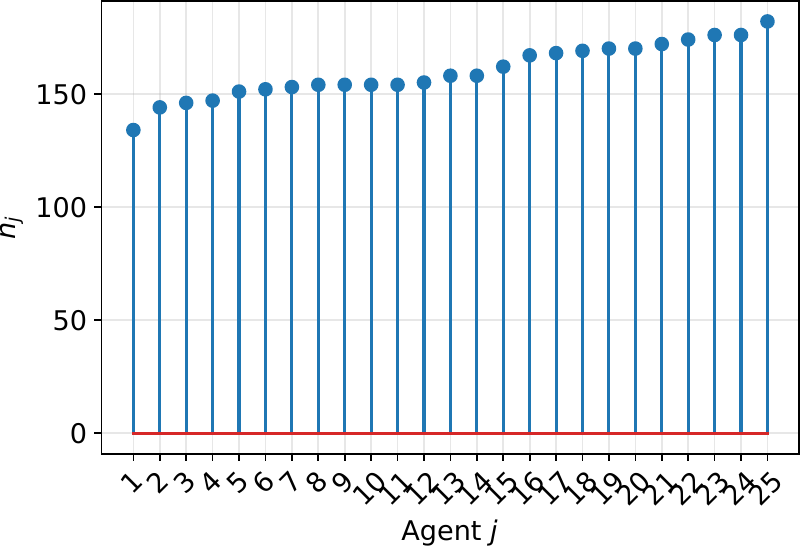}\hspace{2em}
\caption{%
Experiment of Section~\ref{sec:synth_data.diffnj}\textup{:} 
values of $(n_j)_{j \in \intset{m}}$ 
in increasing order. 
Left\textup{:} $m=4$. 
Right\textup{:} $m=25$. 
\label{fig.val_nj.25-160}
}
\end{figure}

\begin{figure}[H]
	\centering
	\includegraphics[width=0.49\linewidth]{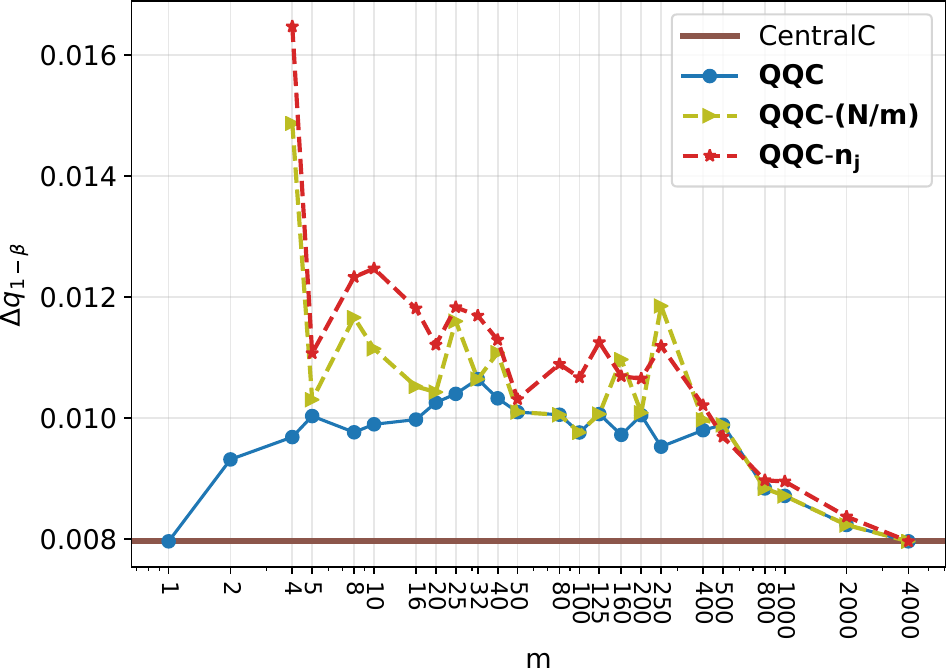}
	\caption{%
Different $n_j$\textup{:}  
log-log plot of the performance 
\textup{(}$\Delta q_{1-\beta}$\textup{)} 
of the conditionally-valid algorithms 
considered in Section~\ref{sec:synth_data.diffnj} 
as a function of the number of agents~$m$.  
The total number of data points $N=\sum_{j=1}^{m} n_j = 4000$ 
is fixed and $m$ describes the set of divisors of~$N$. 
The values when $m=1, 2$ for \methodcondFL-$(N/m)$ and \methodcondFL-$n_j$ are missing because there is no $k$ such that the resulting sets are conditionally valid.
}
\label{fig.diffnj.m-variable.q1-b}
\end{figure}

\subsubsection{Algorithms~\ref{algo.FCP-QQ.1_nj} and \ref{algo.FCP-QQ-cond.1_nj} with $n_j = N/m$}
\label{app:rate.bound_lfix}
As in Section~\ref{sec:synth_data.diffnj}, we now consider 
Algorithms~\ref{algo.FCP-QQ.1_nj} and \ref{algo.FCP-QQ-cond.1_nj} with $n_j = N/m$ for every $h \in \intset{m}$, 
that we call \method-$(N/m)$ and  \methodcondFL-$(N/m)$. 
Table~\ref{tab:res_coeff_lin_N_M} gives the values of the 
coefficients of the robust log-linear regression obtained 
for \method-$(N/m)$ and \methodcondFL-$(N/m)$ (following the exact same procedure 
as for \method{}, \methodlow{}, \methodcondFL{}, and \methodcondFLk{},
as described in Section~\ref{sec:synth_data.equalnj}). 
In Figure~\ref{fig.DeltaE-Algos-margin_lfix}, we compare 
the performances (measured by $\Delta \E$) 
of the marginally-valid algorithms 
\method, \methodlow, and \method-$(N/m)$, 
as a function of either $m$ or $n \egaldef N/m$.
In Figure~\ref{fig.DeltaE-Algos-cond_lfix}, we compare 
the performances (measured by $\Delta q_{\beta}$) 
of the conditionally valid algorithms 
\methodcondFL, \methodcondFLk, and \methodcondFL-$(N/m)$, 
as a function of either $m$ or $n = N/m$. 

\begin{table}[H]
	\footnotesize
	{\centering
		\ra{1.}
		\begin{adjustbox}{max width=\textwidth}
			\begin{tabular}{@{}llll|lll|lll@{}}
				\toprule
				& 
				\multicolumn{3}{c}{
					$\boldsymbol{\Delta\E \approx c_1 m^{-\gamma_1} n^{-\delta_1}}$
				}
				& 
				\multicolumn{3}{c}{
					$\boldsymbol{\Delta q_{\beta} \approx c_2 m^{-\gamma_2} n^{-\delta_2}}$
				}
				& 
				\multicolumn{3}{c}{
					$\boldsymbol{\Delta q_{1-\beta} \approx c_3 m^{-\gamma_3} n^{-\delta_3}}$
				}
				\\ 
				\textbf{Method} & $\boldsymbol{c_1}$ & $\boldsymbol{\gamma_1}$ & $\boldsymbol{\delta_1}$ & $\boldsymbol{c_2}$ & $\boldsymbol{\gamma_2}$ & $\boldsymbol{\delta_2}$ & $\boldsymbol{c_3}$ & $\boldsymbol{\gamma_3}$ & $\boldsymbol{\delta_3}$ \\
				\midrule
				\method-$(N/m)$ & $0.248$ & $0.952$ & $0.521$ & $0.259$ & $0.467$ & $0.509$ & $0.417$ & $0.520$ & $0.511$ \\
				\methodcondFL-$(N/m)$ & $0.408$ & $0.521$ & $0.507$ & $0.462$ & $0.992$ & $0.552$ & $0.713$ & $0.507$ & $0.505$ \\
				\bottomrule
			\end{tabular}
		\end{adjustbox}
	}
\caption{%
Estimated parameters of the log-linear model 
$\log{y} = \log(c_i) - \gamma_i \log(m) - \delta_i \log(n)$ 
where $y$ is either $\Delta\E$, $\Delta q_{\beta}$ 
or $\Delta q_{1-\beta}$, 
for \method-$(N/m)$ and \methodcondFL-$(N/m)$, 
introduced in Section~\ref{sec:synth_data.diffnj}\textup{;} 
See also text of Section~\ref{sec:synth_data.equalnj} for details.
\label{tab:res_coeff_lin_N_M}
}
\end{table}

\begin{figure}[H]
	\centering
	\includegraphics[width=0.47\linewidth]{./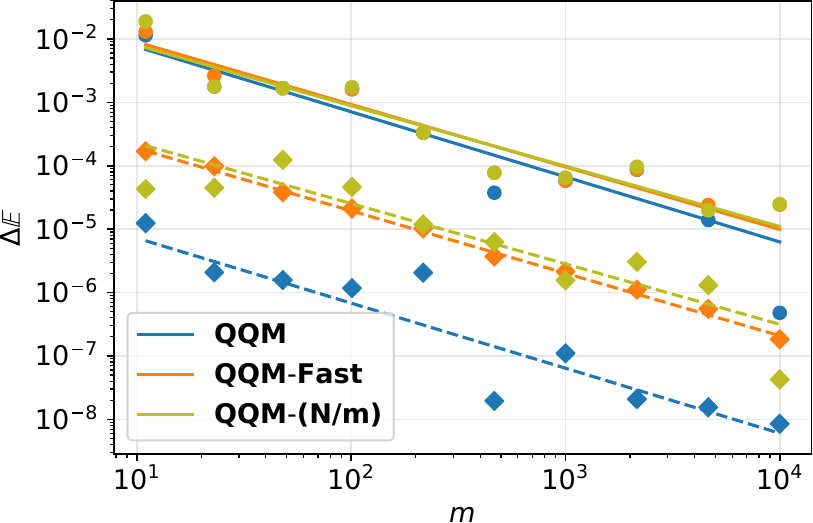}\hspace{2em}
	\includegraphics[width=0.47\linewidth]{./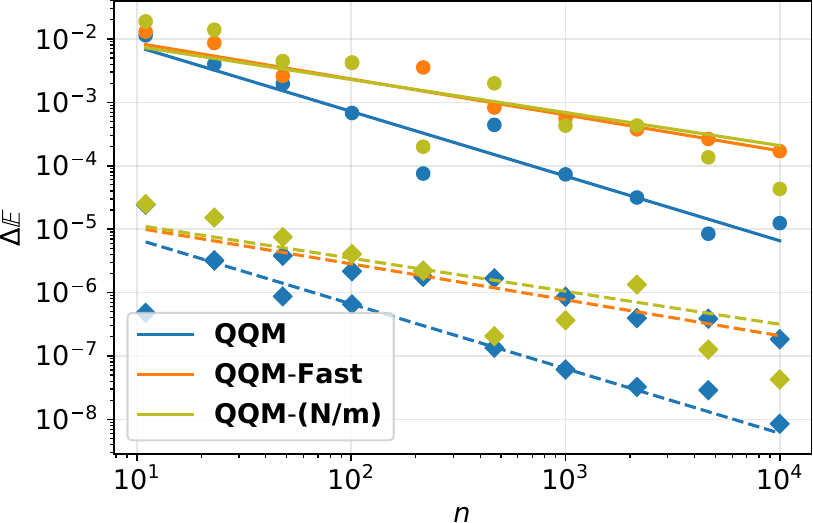}
\caption{%
Marginally-valid algorithms\textup{:}  
log-log plot of $\Delta\E$ as a function of 
$m$ \textup{(}left\textup{)} or $n$ \textup{(}right\textup{)}. 
Lines show the approximation 
$\log{\Delta \E} \approx \log(c) - \gamma \log(m) - \delta \log(n)$. 
Plain lines and dots correspond to 
$n=10$ \textup{(}left\textup{)} or 
$m=10$ \textup{(}right\textup{)}. 
Dashed lines and diamonds correspond to 
$n=10^4$ \textup{(}left\textup{)} or 
$m=10^4$ \textup{(}right\textup{)}. 
\label{fig.DeltaE-Algos-margin_lfix}
}
\end{figure}

\begin{figure}[H]
	\centering
	\includegraphics[width=0.47\linewidth]{./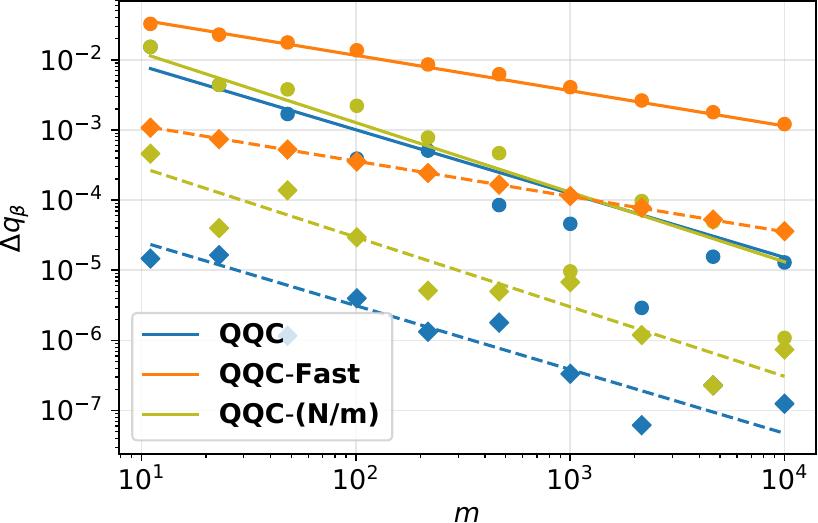}\hspace{2em}
	\includegraphics[width=0.47\linewidth]{./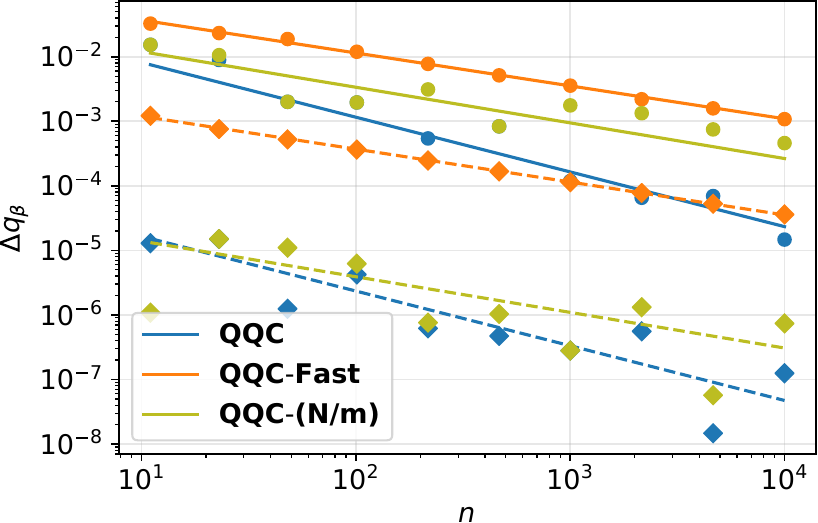}
	\caption{%
		Conditionally-valid algorithms\textup{:}  
		log-log plot of $\Delta q_{\beta}$ as a function of 
		$m$ \textup{(}left\textup{)} or $n$ \textup{(}right\textup{)}. 
		Lines show the approximation 
		$\log{\Delta q_{\beta}} \approx \log(c) - \gamma \log(m) - \delta \log(n)$. 
		Plain lines and dots correspond to 
		$n=10$ \textup{(}left\textup{)} or 
		$m=10$ \textup{(}right\textup{)}. 
		Dashed lines and diamonds correspond to 
		$n=10^4$ \textup{(}left\textup{)} or 
		$m=10^4$ \textup{(}right\textup{)}. 
		\label{fig.DeltaE-Algos-cond_lfix}
	}
\end{figure}


\subsection{Real data: additional results on individual data sets} \label{app:add-xp}
In this section, we present in Figures \ref{fig:bio} to \ref{fig:concrete_low_m} the results of the experiments of Section~\ref{sec:xps:real} on individual data sets.

\begin{figure}[!htb]
	\centering
	\includegraphics[width=0.47\linewidth]{./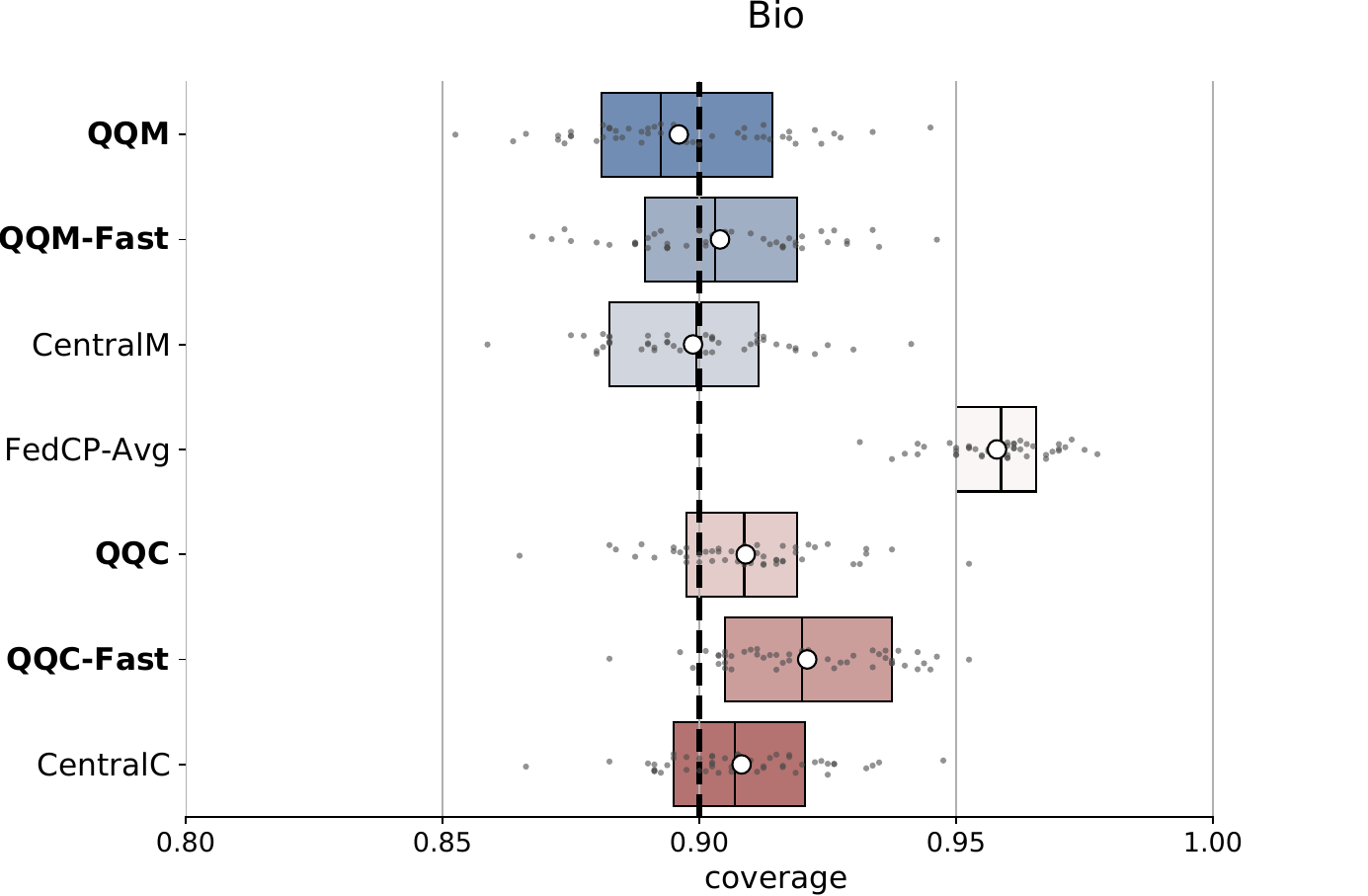}
	\includegraphics[width=0.47\linewidth]{./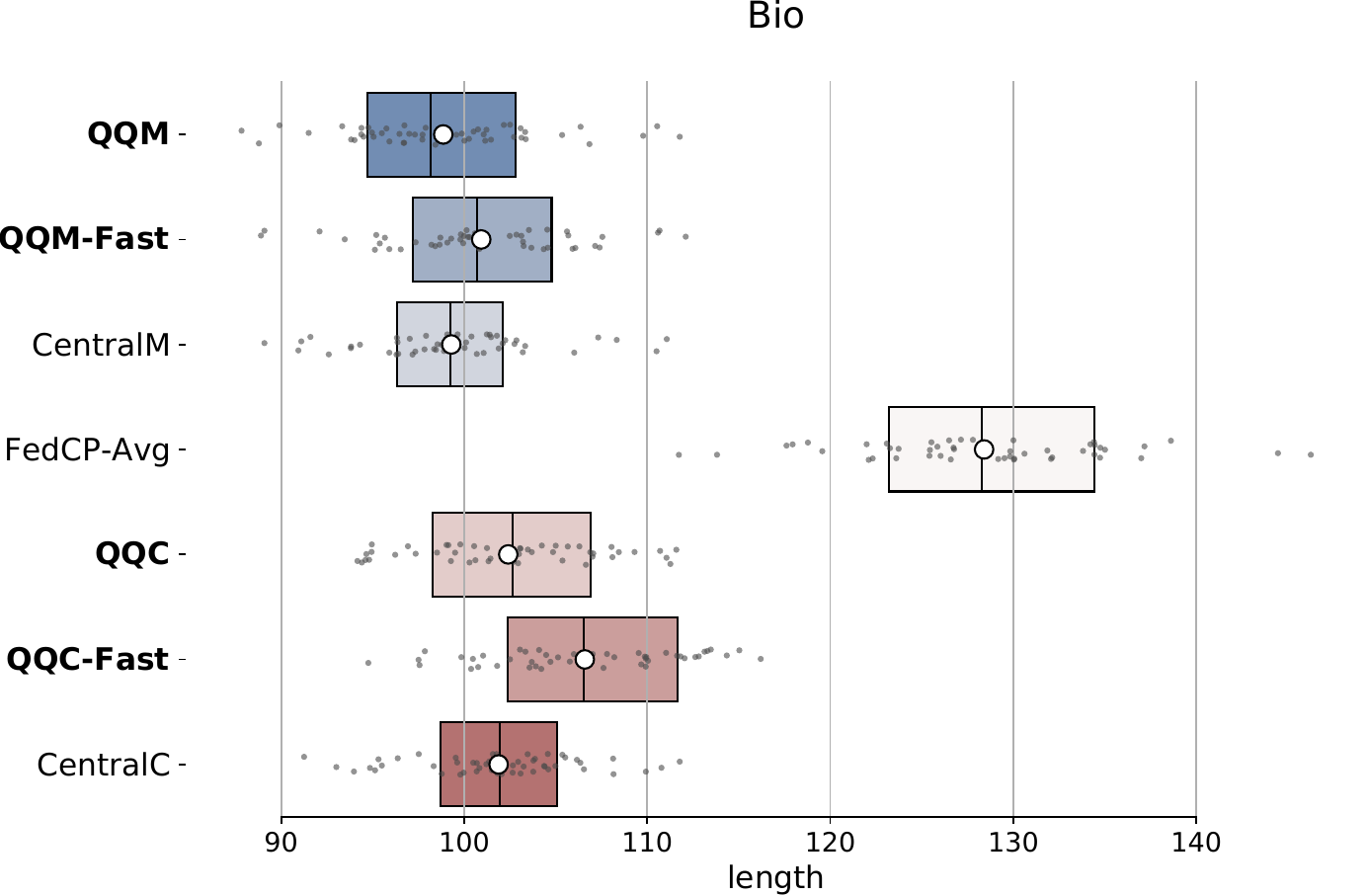}
	\caption{Coverage (left) and average length (right) 
of prediction intervals for $50$ random learning-calibration-test splits. 
The miscoverage is $\alpha=0.1$, $\beta=0.2$, 
and the calibration set is split into $m=80$ disjoint 
subsets of equal size $n=10$. 
The white circle represents the mean and the name of 
the data set is located at the top of each plot.}
	\label{fig:bio}
\end{figure}

\begin{figure}[!htb]
	\centering
	\includegraphics[width=0.47\linewidth]{./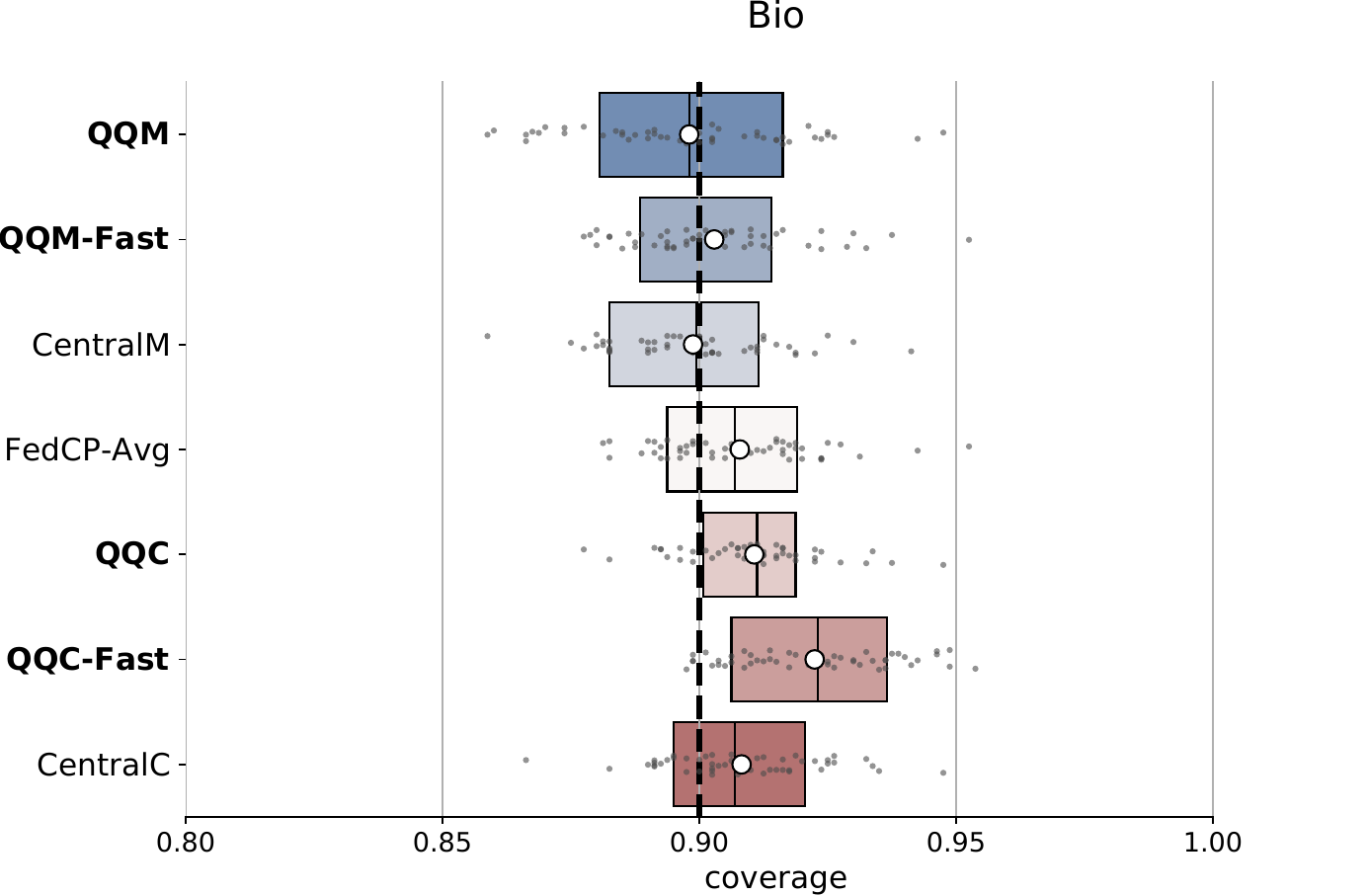}
	\includegraphics[width=0.47\linewidth]{./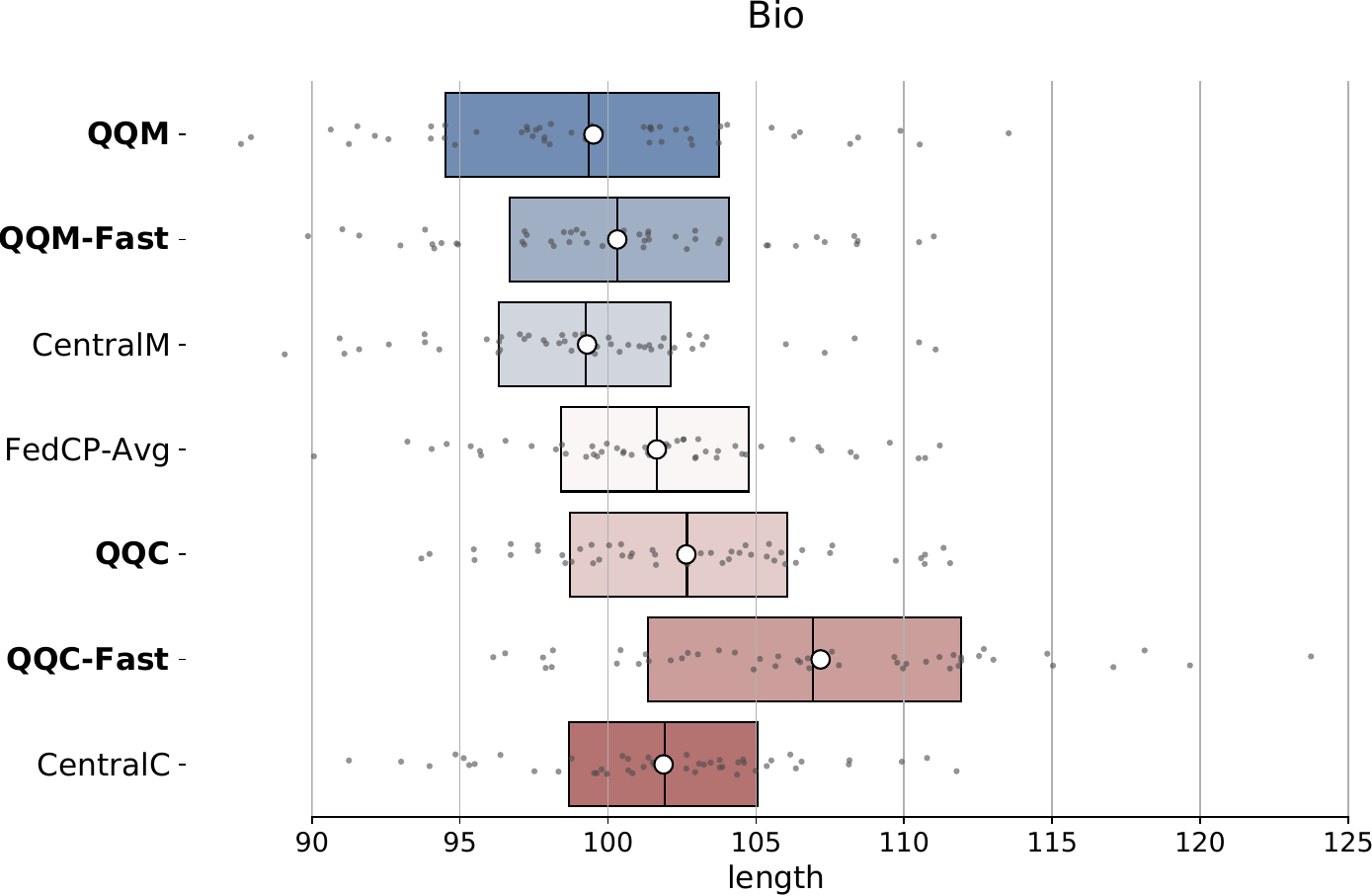}
	\caption{Coverage (left) and average length (right) 
of prediction intervals for $50$ random learning-calibration-test splits. 
The miscoverage is $\alpha=0.1$, $\beta=0.2$, 
and the calibration set is split into $m=10$ disjoint 
subsets of equal size $n=80$. 
The white circle represents the mean and the name of 
the data set is located at the top of each plot.}
	\label{fig:bio_low_m}
\end{figure}

\begin{figure}[!htb]
	\centering
	\includegraphics[width=0.47\linewidth]{./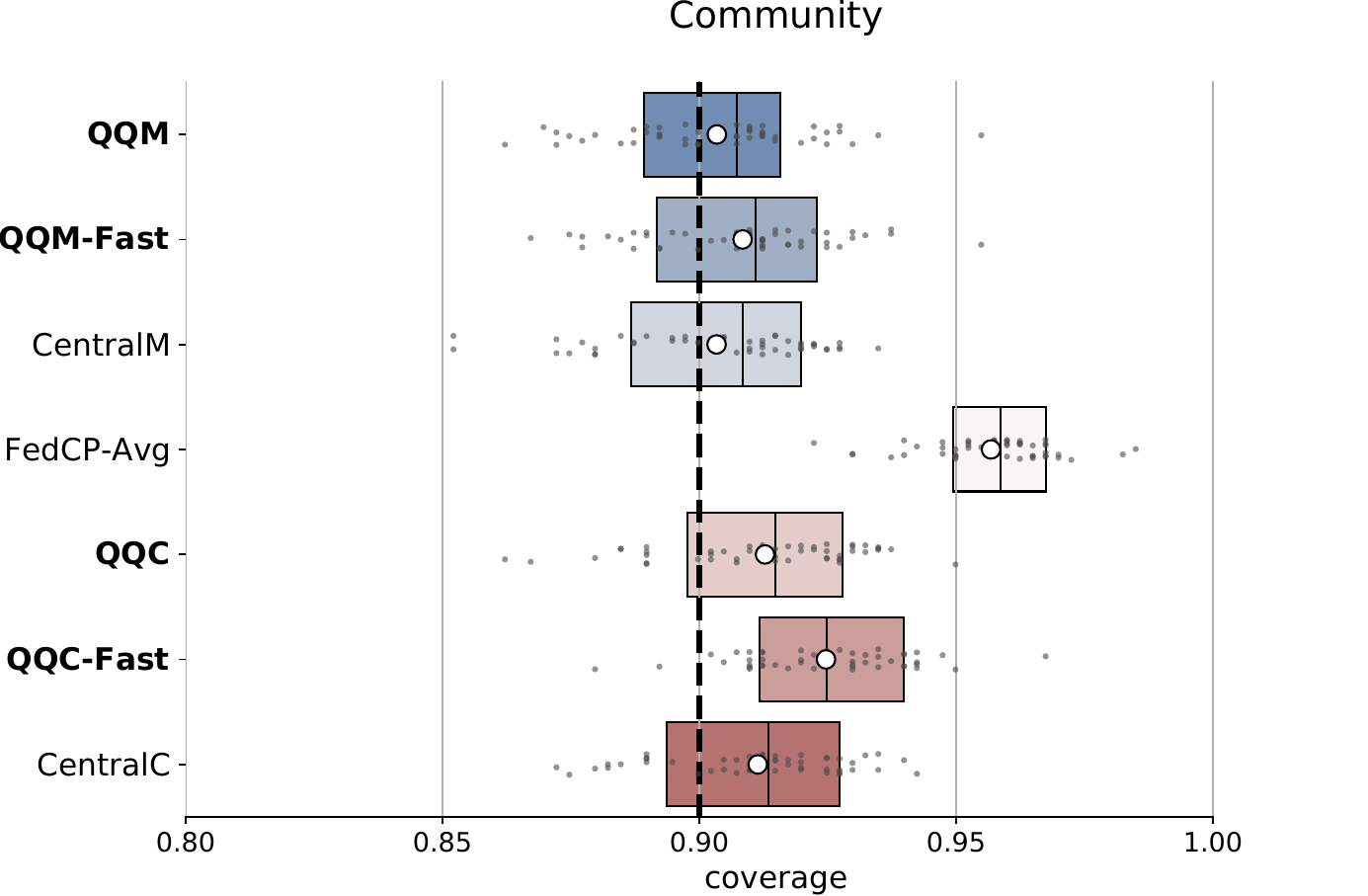}
	\includegraphics[width=0.47\linewidth]{./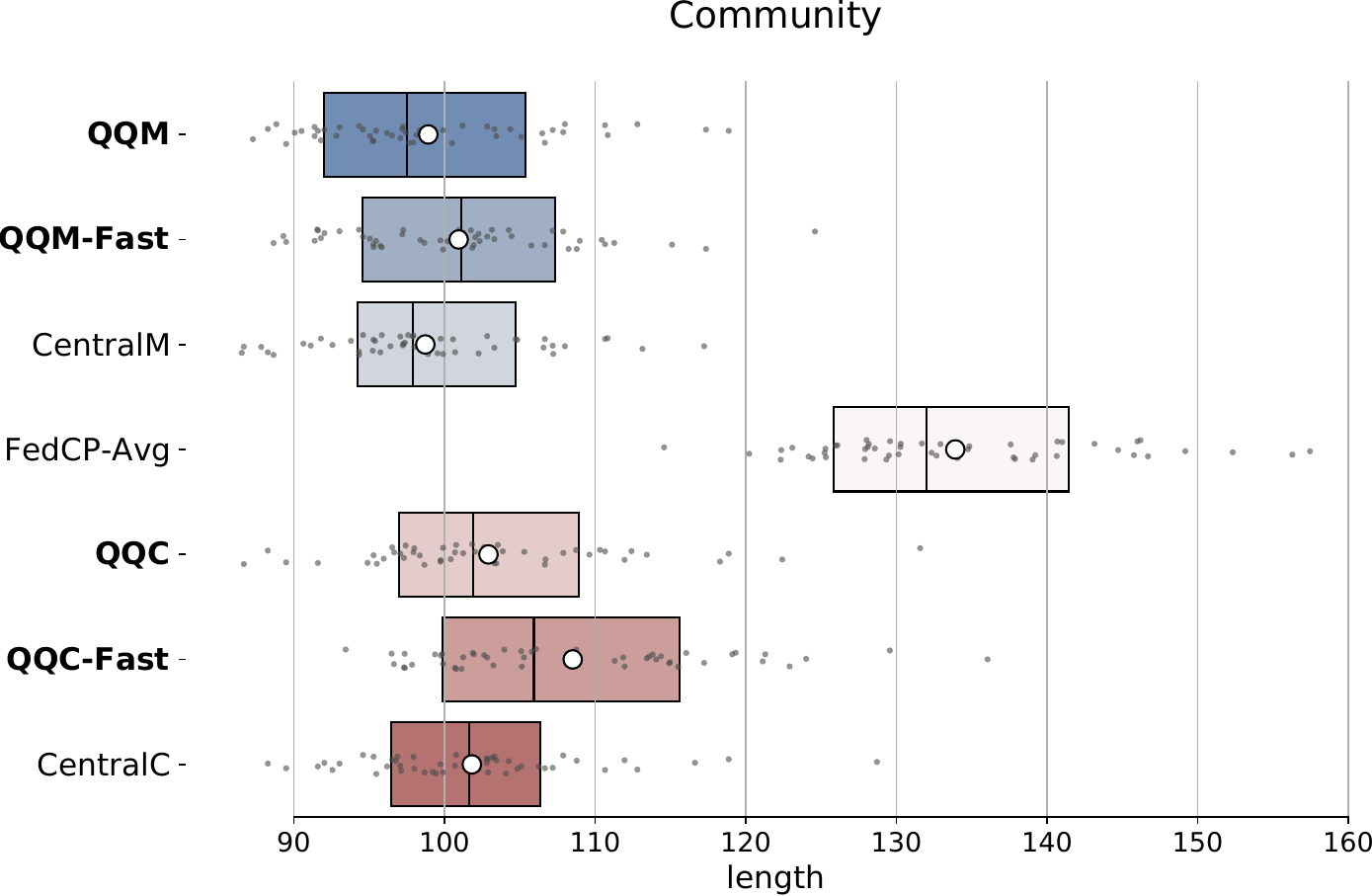}
	\caption{Same as Figure \ref{fig:bio} (see its caption) with $m=80$ and $n=10$.}
	\label{fig:commu}
\end{figure}

\begin{figure}[!htb]
	\centering
	\includegraphics[width=0.47\linewidth]{./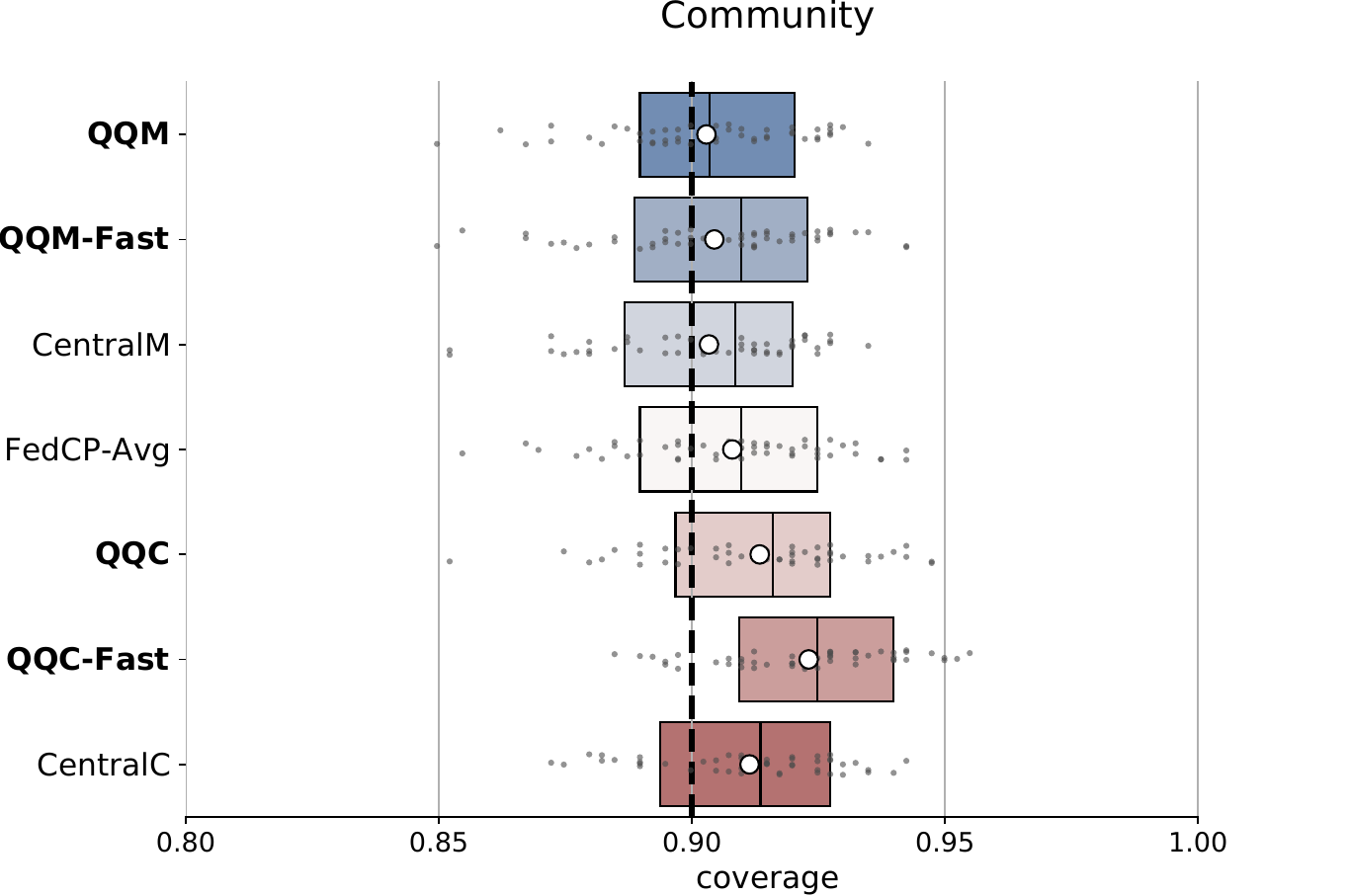}
	\includegraphics[width=0.47\linewidth]{./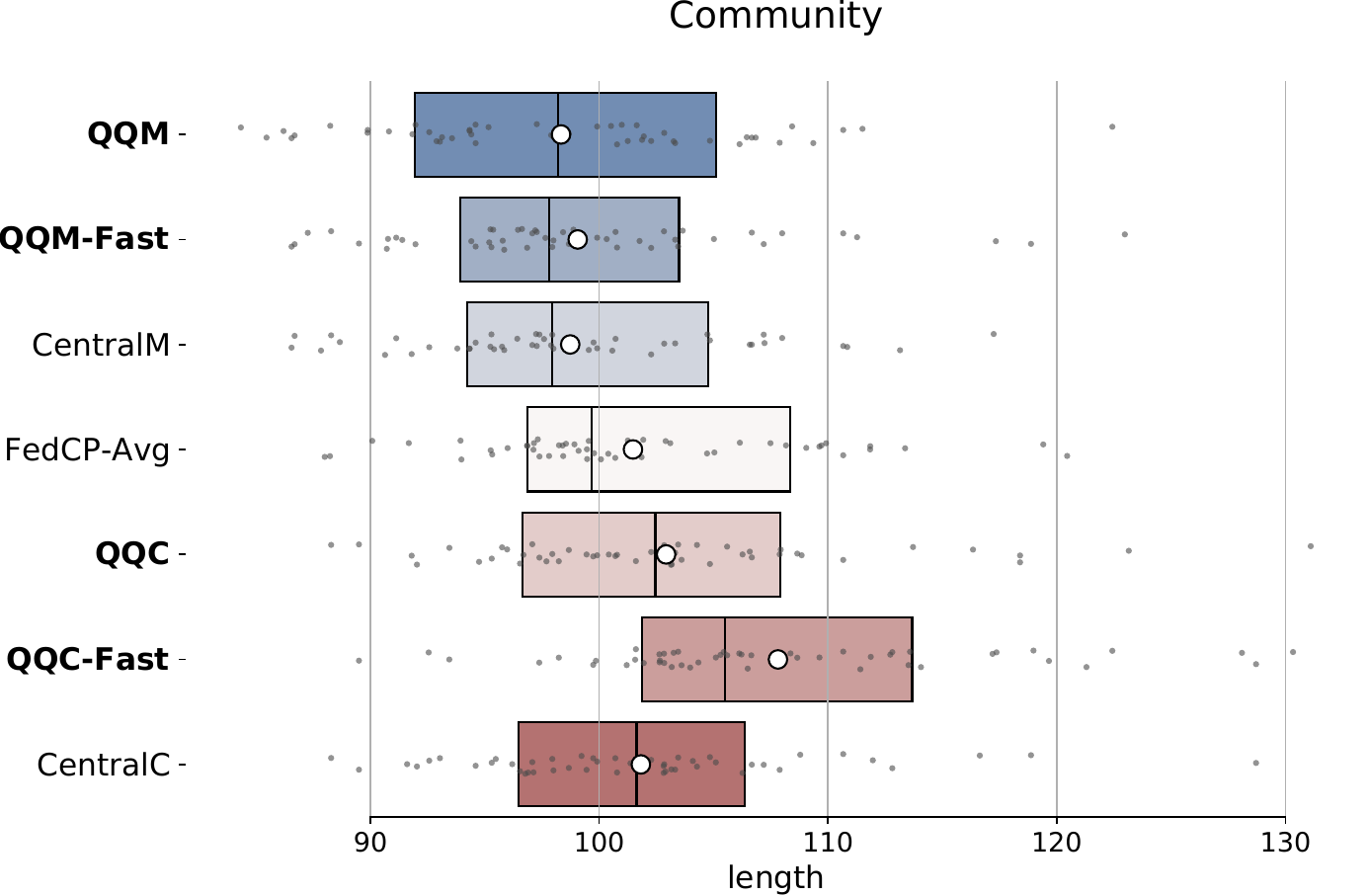}
	\caption{Same as Figure \ref{fig:bio_low_m} (see its caption) with $m=10$ and $n=80$.}
	\label{fig:commu_low_m}
\end{figure}

\begin{figure}[!htb]
	\centering
	\includegraphics[width=0.47\linewidth]{./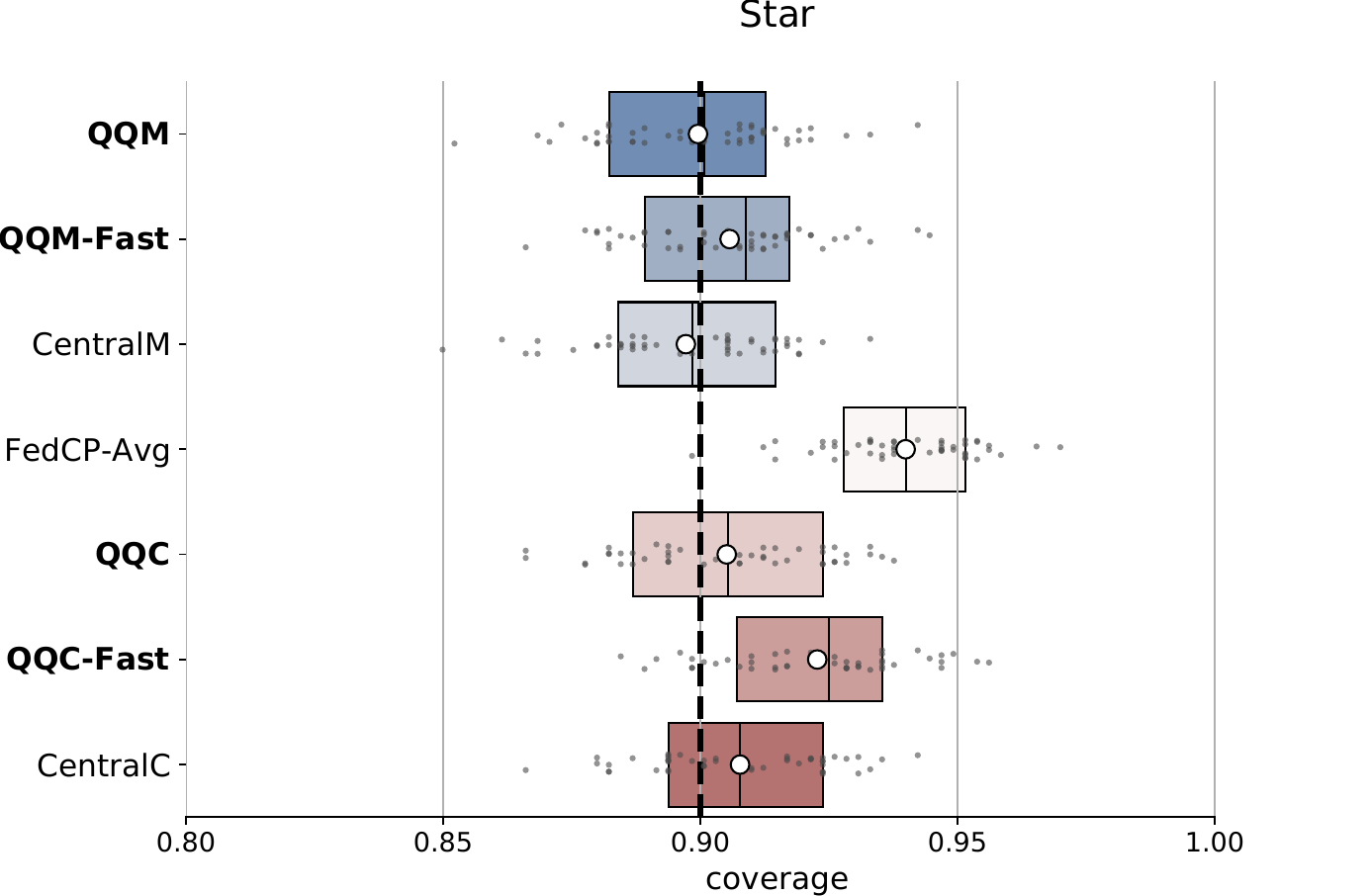}
	\includegraphics[width=0.47\linewidth]{./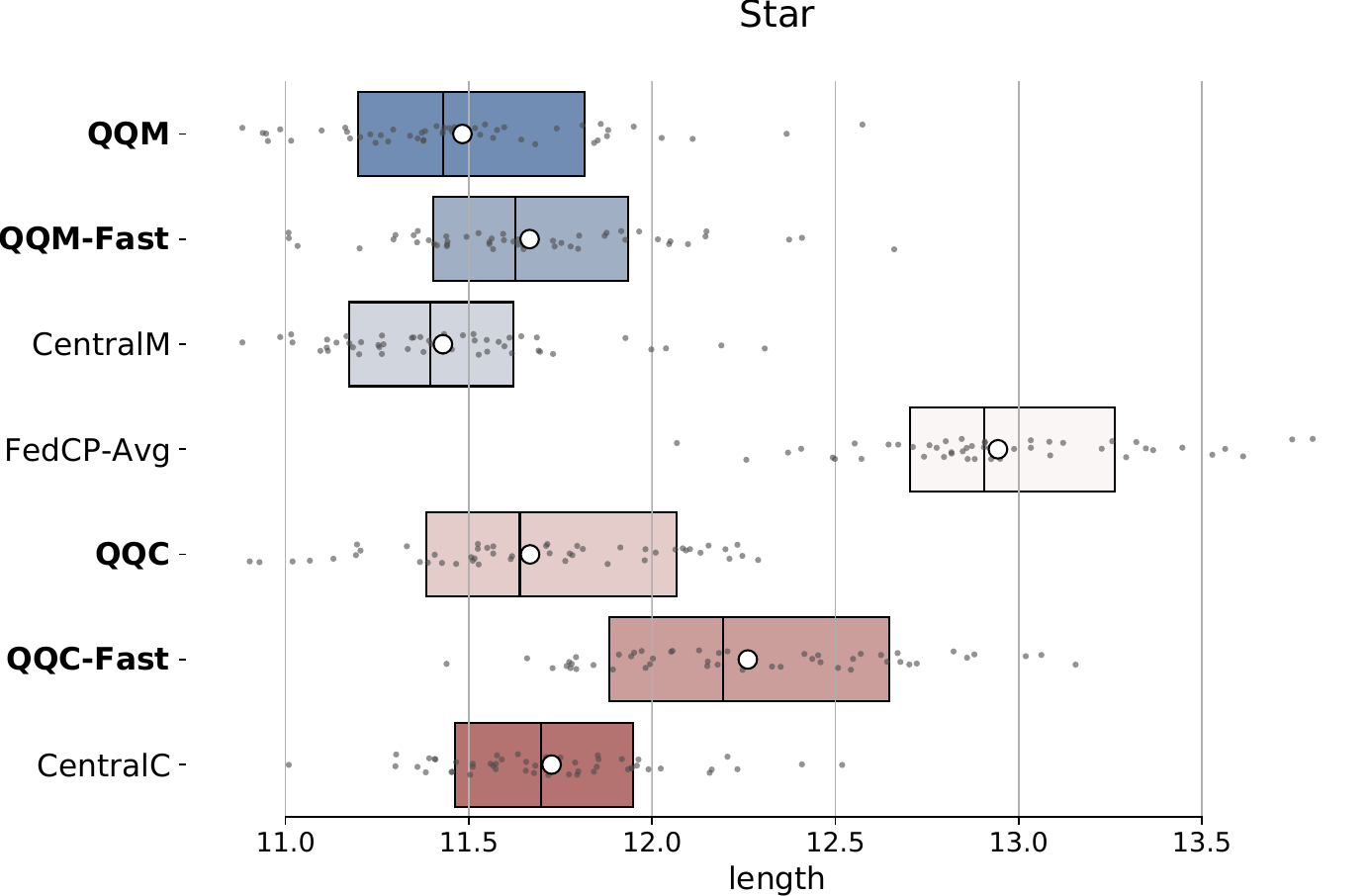}
	\caption{Same as Figure \ref{fig:bio} (see its caption) with $m=80$ and $n=10$.}
	\label{fig:star}
\end{figure}

\begin{figure}[!htb]
	\centering
	\includegraphics[width=0.47\linewidth]{./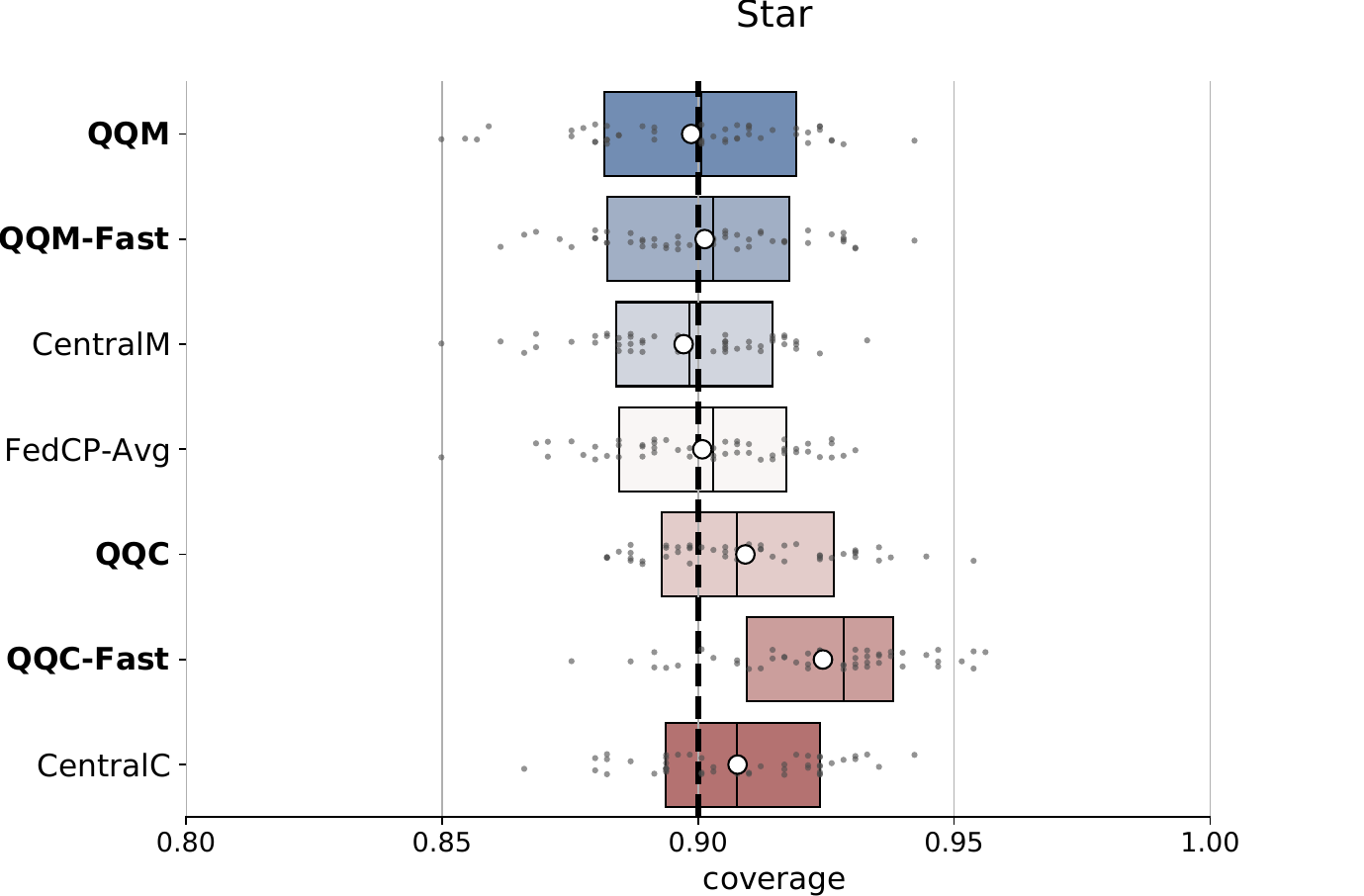}
	\includegraphics[width=0.47\linewidth]{./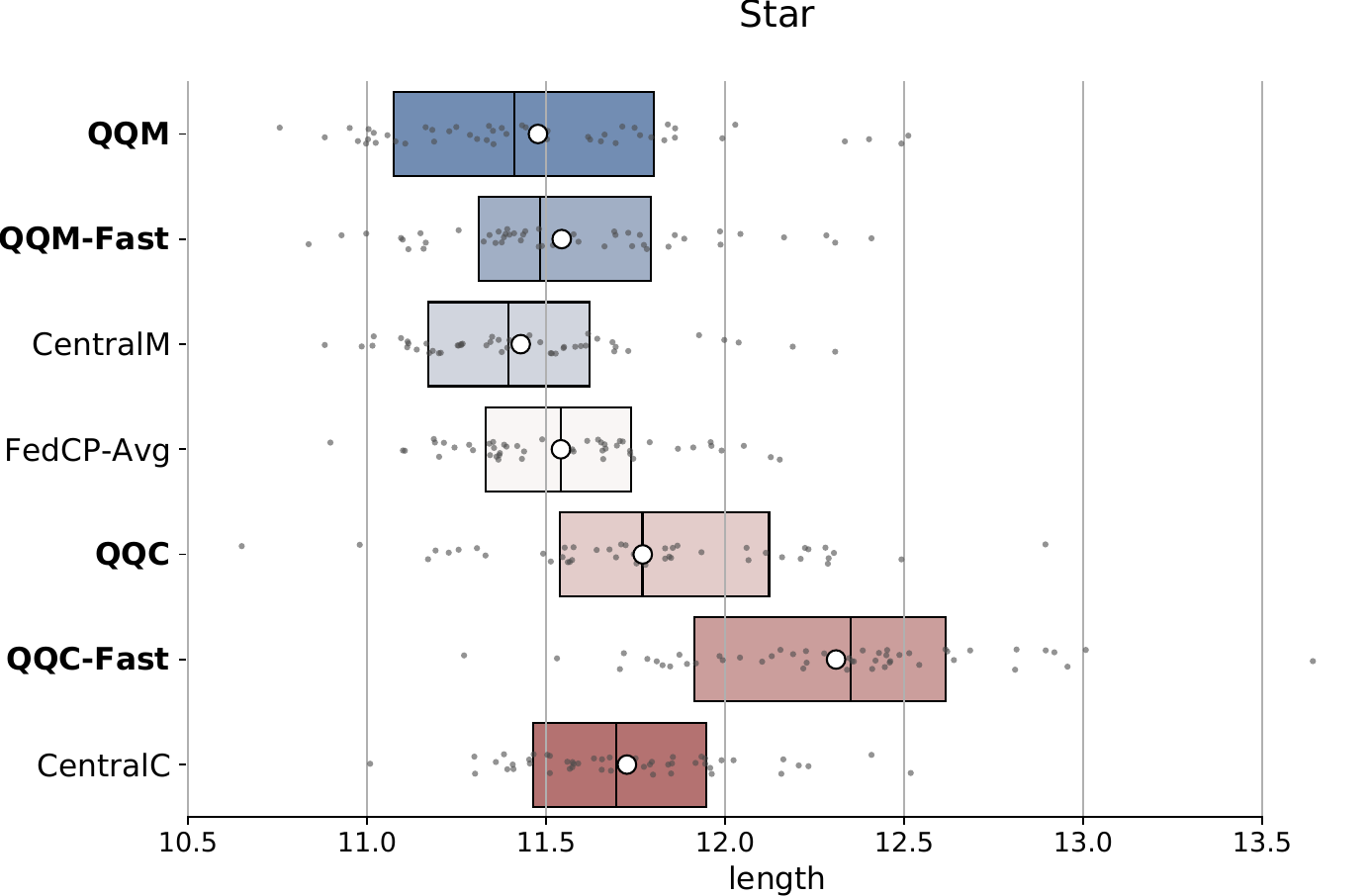}
	\caption{Same as Figure \ref{fig:bio_low_m} (see its caption) with $m=10$ and $n=80$.}
	\label{fig:star_low_m}
\end{figure}

\begin{figure}[!htb]
	\centering
	\includegraphics[width=0.47\linewidth]{./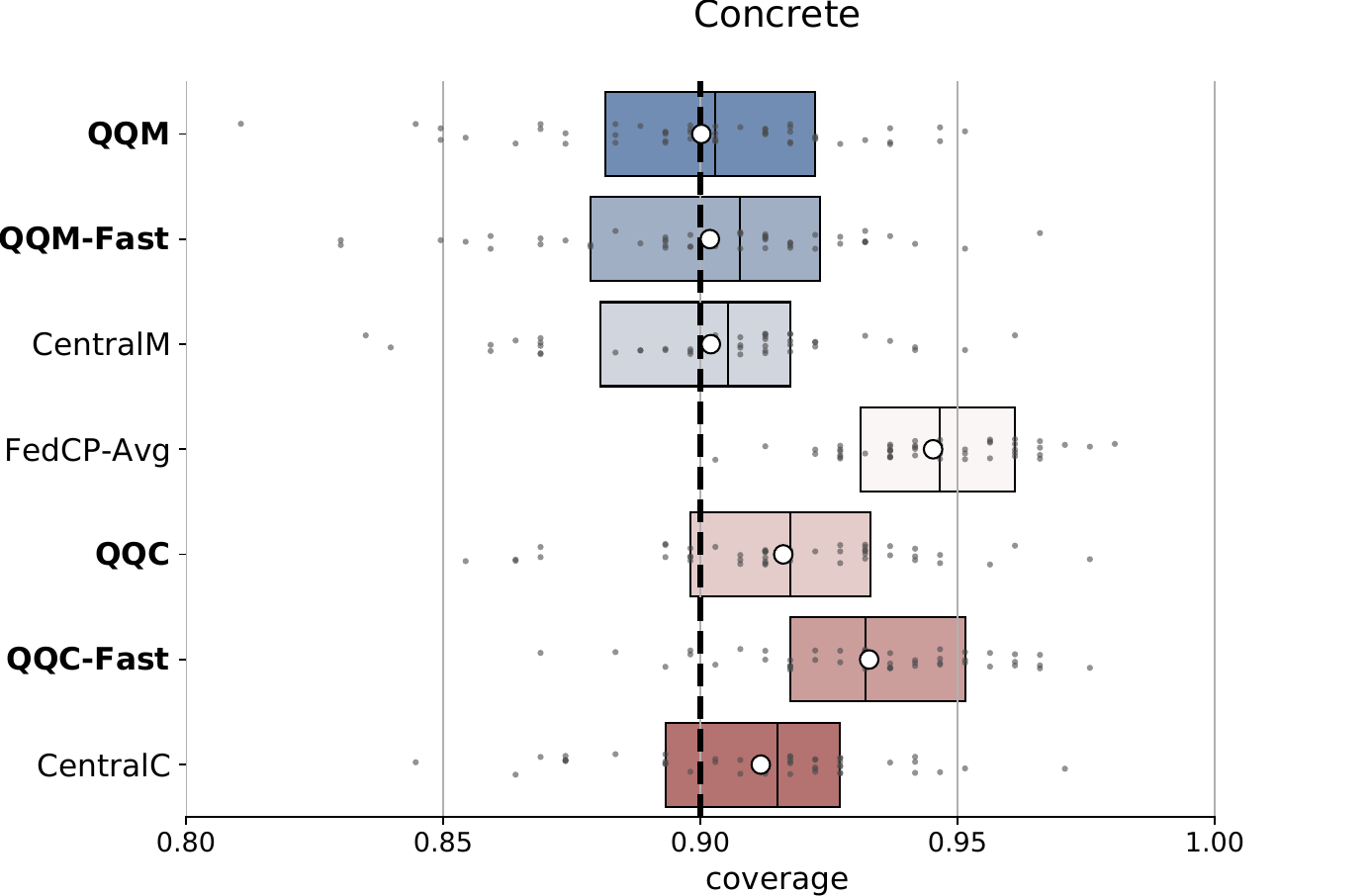}
	\includegraphics[width=0.47\linewidth]{./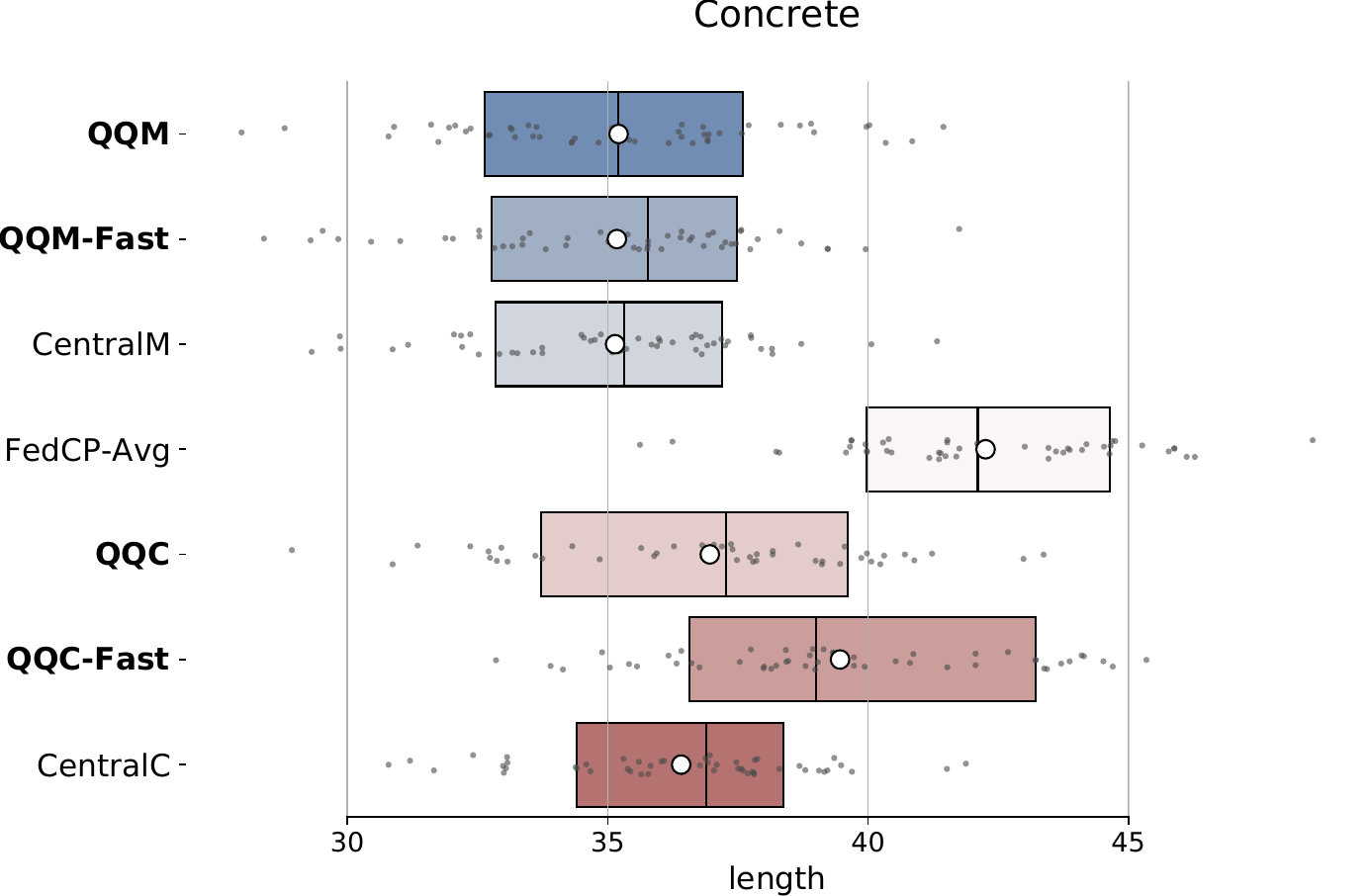}
	\caption{Same as Figure \ref{fig:bio} (see its caption) with $m=40$ and $n=10$.}
	\label{fig:concrete}
\end{figure}

\begin{figure}[!htb]
	\centering
	\includegraphics[width=0.47\linewidth]{./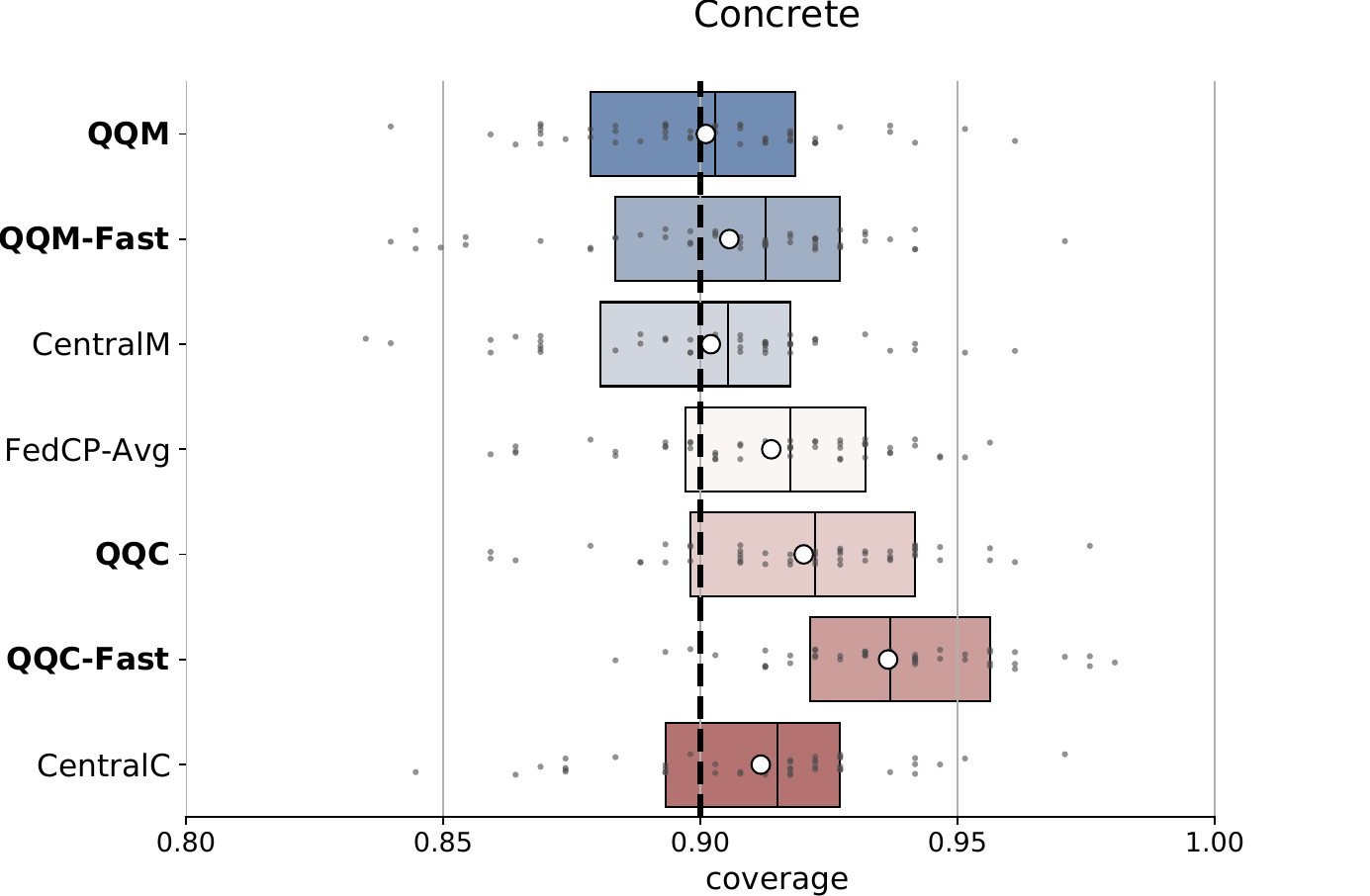}
	\includegraphics[width=0.47\linewidth]{./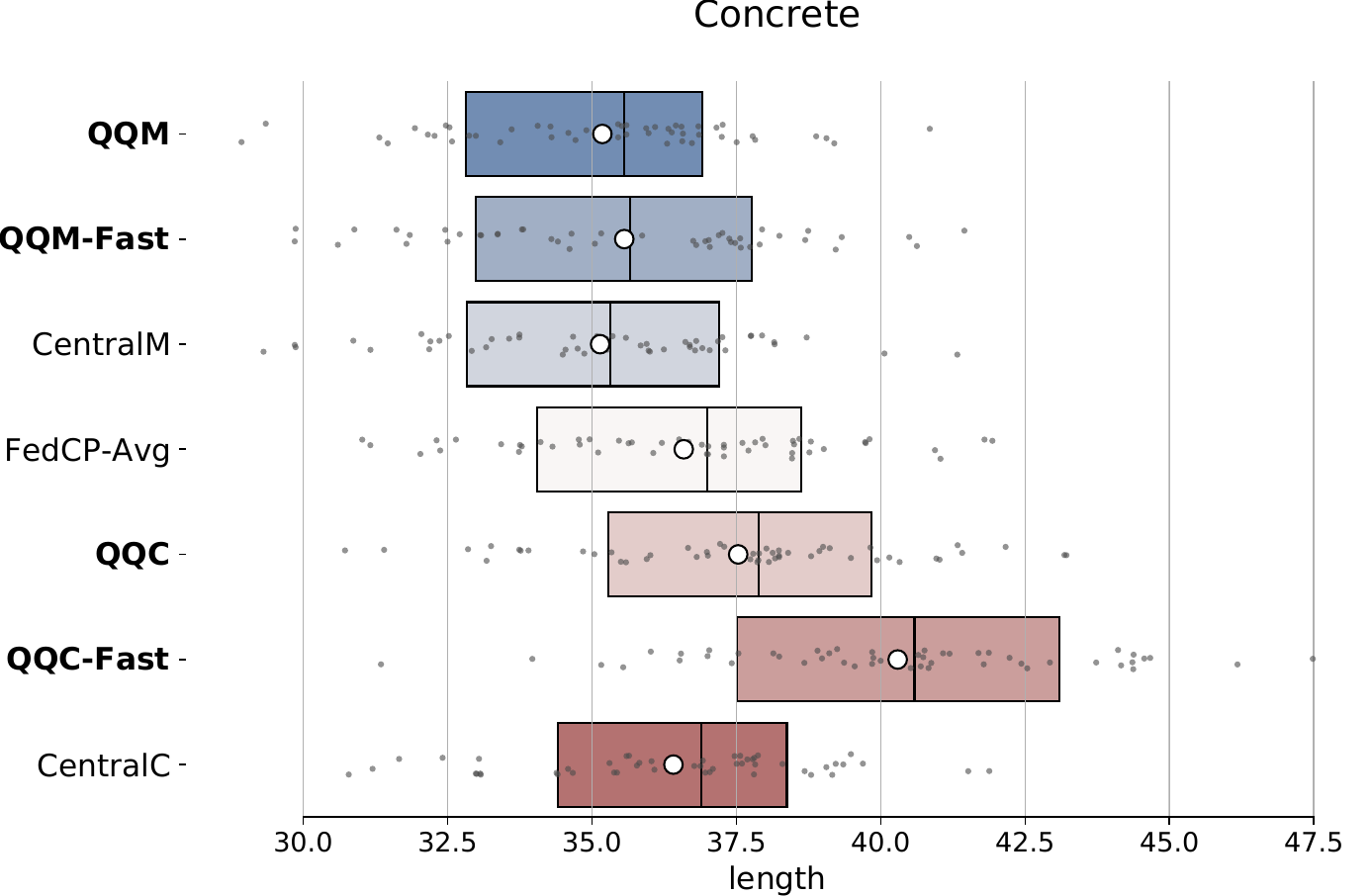}
	\caption{Same as Figure \ref{fig:bio_low_m} (see its caption) with $m=10$ and $n=40$.}
	\label{fig:concrete_low_m}
\end{figure}


\end{document}